\title{A categorification of the positive half of quantum $gl(m|1)$}
\author{Mikhail Khovanov}
\email{khovanov@math.columbia.edu}
\address{Department of Mathematics \\ Columbia University \\ New York, United States}
\author{Joshua Sussan}
\email{jsussan@mec.cuny.edu}
\address{Department of Mathematics\\ CUNY Medgar Evers \\ New York, United States}
\date{June 6, 2014}
\newtheorem{defined}{Definition}
\newtheorem{prop}{Proposition}
\newtheorem{theorem}{Theorem}
\newtheorem{corollary}{Corollary} 
\newtheorem{lemma}{Lemma}
\newtheorem{remark}{Remark}
\newtheorem*{thm}{Theorem}
\newcommand{\oplusop}[1]{{\mathop{\oplus}\limits_{#1}}}
 \newcommand{\oplusoop}[2]{{\mathop{\oplus}\limits_{#1}^{#2}}}
\begin{document} 

\maketitle
\baselineskip 14pt
 
\def\R{\mathbb R}
\def\Q{\mathbb Q}
\def\Z{\mathbb Z}
\def\N{\mathbb N} 
\def\C{\mathbb C}
\def\l{\lbrace}
\def\r{\rbrace}
\def\o{\otimes}
\def\lra{\longrightarrow}
\def\Hom{\mathrm{Hom}}
\def\HOM{\mathrm{HOM}}
\def\RHom{\mathrm{RHom}}
\def\Id{\mathrm{Id}}
\def\mc{\mathcal}
\def\mf{\mathfrak} 
\def\Ext{\mathrm{Ext}}
\def\Ind{\mathrm{Ind}}
\def\Res{\mathrm{Res}}
\def\soc{\mathrm{soc}}
\def\hd{\mathrm{hd}}
\def\Seq{\mathrm{Seq}}
\def\shuffle{\,\raise 1pt\hbox{$\scriptscriptstyle\cup{\mskip
               -4mu}\cup$}\,}
\newcommand{\define}{\stackrel{\mbox{\scriptsize{def}}}{=}}


%
\def\drawing#1{\begin{center}\epsfig{file=#1}\end{center}}

 \def\yesnocases#1#2#3#4{\left\{
\begin{array}{ll} #1 & #2 \\ #3 & #4
\end{array} \right. }

\newcommand{\LOT}{H^-} 

\begin{abstract} 
We describe a collection of differential graded rings that categorify weight 
spaces of the positive half of the quantized universal enveloping algebra 
of the Lie superalgebra $gl(m|1)$. 
\end{abstract}

\tableofcontents 


\section{Introduction}
A categorification of $ \mathcal{U}_q^+(\mathfrak{g}) $, the positive half of the quantum enveloping algebra associated to a simply-laced finite-dimensional simple Lie algebra over $ \C $, was constructed in \cite{KL1}, and independently in \cite{Ro}, using a diagramatically defined algebra $ \oplus_{\nu} R(\nu) $ where $ \nu $ varies over the positive root lattice of $ \mathfrak{g}$.  It was shown that the split Grothendieck group of the category of projective objects of $ \oplus_{\nu} R(\nu) $ is isomorphic as a twisted bialgebra to $ \mathcal{U}_q^+(\mathfrak{g}) $.
A categorification of the enveloping algebra of $ \hat{\mathfrak{sl}}_n $ was realized years earlier by Ariki and Grojnowski \cite{Ari, Groj} by considering the Grothendieck group of the category of representations of the affine Hecke algebra of type $ A$ when the deformation parameter is a root of unity. The techniques of \cite{KL1} are a graded analogue of the work of Grojnowksi and Vazirani ~\cite{GrojVaz}.  See \cite{Klesh} for an exposition of this subject.  

The subject of categorification of quantum super algebras was initiated in ~\cite{Kh1}.  A diagramatically defined algebra was introduced which comes equipped with two gradings.  One of the gradings, not present in the algebras defined in \cite{KL1}, endows this algebra with the structure of a differential graded algebra.  It was shown that the Grothendieck group of a suitably chosen category of dg modules categorifies 
$ \mathcal{U}_q^+(\mathfrak{gl}(2|1)) $.  In the present work we synthesize the algebras of ~\cite{KL1} and ~\cite{Kh1} to produce a new bigraded algebra $ \oplus_{\nu} R_{}(\nu) $, (where $ \nu $ varies over the positive root lattice of $ \mathfrak{gl}(m|n)$), which specializes to a special case of the algebras introduced in ~\cite{KL1} and to the algebra of ~\cite{Kh1}. One of its gradings gives this algebra a differential structure. Induction and restriction functors endow the Grothendieck group (of a suitably chosen category) $ K_0(\oplus_{\nu} R_{}(\nu)) $ with the structure of a twisted bialgebra.  Section ~\ref{dgalgebras} lays the groundwork for the definition of $ K_0(\oplus_{\nu} R_{}(\nu))$.
Our main result is:
\begin{thm}
There exists an isomorphism of twisted bialgebras
\begin{equation*}
\gamma \colon \mathcal{U}_q^+(\mathfrak{gl}(m|1)) \rightarrow \mathbb{Q}(q) \otimes_{\mathbb{Z}[q,q^{-1}]} K_0(\oplus_{\nu} R_{}(\nu)).
\end{equation*}
\end{thm}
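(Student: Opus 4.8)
\emph{Plan of proof.} Granting the twisted bialgebra structure on $K_0(\oplus_\nu R(\nu))$ built from the induction and restriction functors in Section~\ref{dgalgebras}, the strategy is that of \cite{KL1}, adapted to the bigraded/differential setting as in \cite{Kh1}. Write $K$ for $\mathbb{Q}(q)\otimes_{\mathbb{Z}[q,q^{-1}]}K_0(\oplus_\nu R(\nu))$, and recall that $\mathcal{U}_q^+(\mathfrak{gl}(m|1))$ is generated by $E_1,\dots,E_m$, with $E_1,\dots,E_{m-1}$ even and $E_m$ odd (so $E_m^2=0$), subject to the quantum Serre relations and to $E_iE_j=E_jE_i$ for $|i-j|\ge 2$. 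I would define $\gamma$ on generators by $\gamma(E_i)=[R(\alpha_i)]$, the class of the rank-one free dg module over the one-strand algebra $R(\alpha_i)$, and more generally $\gamma(E_i^{(n)})$ as the class of the indecomposable projective (for $i<m$) resp.\ indecomposable cofibrant dg (for $i=m$) $R(n\alpha_i)$-module, so that $\gamma$ is already defined over $\mathbb{Z}[q,q^{-1}]$ before tensoring with $\mathbb{Q}(q)$.

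\emph{The map $\gamma$ respects the relations.} One first checks that the defining relations of $\mathcal{U}_q^+(\mathfrak{gl}(m|1))$ hold among the classes $[R(\alpha_i)]$. The relations $E_iE_j=E_jE_i$ for $|i-j|\ge 2$ follow from an isomorphism of dg algebras expressing $R(\alpha_i+\alpha_j)$ as a tensor product. The quantum Serre relations among $E_1,\dots,E_{m-1}$ reduce to the computation of \cite{KL1}: on the relevant idempotent truncations of $R(2\alpha_i+\alpha_j)$ the differential vanishes, so the short exact sequences of projective modules realizing the classical relation apply verbatim. The relation $E_m^2=0$ is forced by the differential: the dg algebra $R(2\alpha_m)$ is acyclic (the rank-one dg module is null-homotopic), hence its class vanishes in the Grothendieck group of the derived category. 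The mixed quantum Serre relations between $E_{m-1}$ and $E_m$ are the genuinely new point: one exhibits a cofibrant dg module over $R(2\alpha_{m-1}+\alpha_m)$ (resp.\ $R(\alpha_{m-1}+2\alpha_m)$) carrying a finite filtration whose subquotients, in $K_0$, assemble into the required alternating $q$-sum, the differential being used to cancel the extra terms --- this generalizes the $\mathfrak{gl}(2|1)$ calculation of \cite{Kh1}. Compatibility of $\gamma$ with the comultiplication is then checked on generators, where $\Res$ of the one-strand dg module to $R(0)\otimes R(\alpha_i)\oplus R(\alpha_i)\otimes R(0)$ yields $E_i\otimes 1+1\otimes E_i$, and propagated by the twisted Leibniz rule using the Mackey decomposition of $\Res\circ\Ind$ already in hand.

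\emph{Surjectivity and injectivity.} For surjectivity I would establish a basis theorem for $R(\nu)$: a $\mathbb{Z}[q,q^{-1}]$-basis indexed by permutations decorated with monomials in the strand variables, with the odd strand and the requisite super signs controlled by the homological grading. It exhibits $R(\nu)$ as a module built by iterated induction from one-strand algebras, so $[R(\nu)]$ lies in the subalgebra of $K$ generated by the $[R(\alpha_i)]$; since every cofibrant dg $R(\nu)$-module is, modulo the relations in $K_0$, accounted for by $R(\nu)$ itself, $\gamma$ is surjective. For injectivity, equip $\mathcal{U}_q^+(\mathfrak{gl}(m|1))$ with Lusztig's bilinear form, which is nondegenerate, and $K$ with the pairing $\langle[M],[N]\rangle$ given by the graded Euler characteristic of $\RHom_{R(\nu)}(M,N)$; a diagrammatic computation on the generators $[R(\alpha_i)]$ shows $\gamma$ intertwines the two forms, so a nonzero kernel of $\gamma$ would be isotropic for a nondegenerate form, a contradiction. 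Together with surjectivity this yields the isomorphism; equivalently, one may finish by matching weight-space dimensions over $\mathbb{Q}(q)$ using the basis theorem and the (super) PBW/shuffle description of $\mathcal{U}_q^+(\mathfrak{gl}(m|1))$.

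\emph{The main difficulty.} The essential obstacle is the way the differential enters $K_0$: one must control the homology of $R(\nu)$ --- ideally its formality, or at least enough to know that $K_0$ of the derived category of cofibrant dg modules is free of the predicted rank --- so that acyclic modules are correctly killed, which is precisely what realizes the odd relation $E_m^2=0$, and at the same time carry out the explicit diagrammatic constructions (the filtered dg modules for the mixed Serre relations, and the signed basis theorem for $R(\nu)$) on which both the homomorphism property and the dimension count rest. I expect this combined homological and diagrammatic bookkeeping, rather than any single conceptual step, to be the bulk of the work.
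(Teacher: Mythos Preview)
Your overall strategy matches the paper's: define $\gamma$ on generators, verify the relations in $K_0$, use the bilinear form for injectivity, and then argue surjectivity. The injectivity argument via compatibility of Lusztig's form with the Euler form on $K_0$ is exactly what the paper does, and your treatment of the defining relations is essentially correct (though note that for $n=1$ there is no generator $E_{m+1}$, so the only ``mixed'' Serre relation is the one in $R(2\alpha_{m-1}+\alpha_m)$; the four-term relation and anything living in $R(\alpha_{m-1}+2\alpha_m)$ do not arise).

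The genuine gap is surjectivity. Your claim that ``every cofibrant dg $R(\nu)$-module is, modulo the relations in $K_0$, accounted for by $R(\nu)$ itself'' does not finish the argument: $K_0(R(\nu))$ is free on the classes $[P_1],\dots,[P_{m_I}]$ of the cohomologically nontrivial indecomposable summands of $R(\nu)$, and knowing that $[R(\nu)e(\mathbf i)]$ lies in the image for each sequence $\mathbf i$ does not by itself show that each individual $[P_j]$ does. What you need is that $m_I=\dim_{\mathbb{Q}(q)}\mathbf f_\nu$, i.e.\ an independent count of the type-I simple dg $R(\nu)$-modules. The basis theorem for $R(\nu)$ as an algebra gives no such count, and formality (which you allude to) is neither proved nor used. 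The paper obtains this count by an entirely different route: it develops the Kato modules $L(i^k)$ and crystal-type operators $\widetilde e_i,\widetilde f_i$ on type-I simples in the dg setting, proves the analogues of Kleshchev's branching lemmas, and deduces that the specialized character map $ch\colon G_0(R(\nu))\to\mathbb{Z}[q,q^{-1}]\mathrm{Seq}(\nu)$ is injective. Dualizing this via the $(K_0,G_0)$ pairing yields surjectivity of $\gamma$. This representation-theoretic classification of simples---not a homological control of $R(\nu)$---is the missing ingredient in your proposal and is where the real work lies.
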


\subsection{Other work on categorified superalgebras}
Hill and Wang also studied categorified superalgebras ~\cite{HW}. The superalgebras they consider do not include $ \mathcal{U}_q^+(\mathfrak{gl}(m|1)) $ and the machinery of dg algebras is not necessary in their work.

Clark, Hill, and Wang ~\cite{CHW} constructed a canonical basis for a large class of quantum superalgebras which includes $ \mathcal{U}_q^+(\mathfrak{gl}(1|n)) $.  It would be interesting to compare this basis with the basis obtained from indecomposable projective objects considered here.

Tian categorified a form of quantum $ \mathfrak{gl}(1|1) $ and tensor products of its irreducible representations using dg categories coming from contact geometry ~\cite{Tian1, Tian2}.

A Lie theoretic approach towards categorifying tensor products of quantum $ \mathfrak{gl}(1|1)$-modules could be found in ~\cite{Sa}.  On certain derived subcategories of $ \mathcal{O} $ for $ \mathfrak{gl}(n) $, Sartori constructs functors $ \mathcal{E} $ and $ \mathcal{F} $ whose squares are identically zero.  Sartori's construction seems different from ~\cite{Tian1, Tian2, Kh1} and this paper since he gets categorified odd generators squaring to zero without
having a dg structure present.

\subsection{Outline}
In Section ~\ref{bilinearform} we review the definition of $ \mathcal{U}_q^+(\mathfrak{gl}(m|n)) $ in the style of ~\cite{Lus1} and give a graphical interpretation of the natural bilinear form.   This presentation of the algebra defined as the quotient of a free algebra module that radical of a natural bilinear form is
contained in ~\cite{Yam}.
In Section ~\ref{dgalgebras} some technical issues concerning the Grothendieck groups needed in this work are explained.
The algebra $ R_{}(\nu) $ is defined in Section ~\ref{R(nu)}.  It is shown that the algebra is non-degenerate by constructing a faithful representation on some analogue of the polynomial representation.
Finally in Section ~\ref{maintheorem}, the proof that $ \oplus_{\nu} R_{dg}(\nu) $ categorifies $ \mathcal{U}_q^+(\mathfrak{gl}(m|1)) $ is given.  The general results of Section ~\ref{dgalgebras} as well as a modification of ~\cite[Chapter 5]{Klesh} are used.

\subsection{Acknowledgements}
M.K. was supported by NSF grants DMS-1005750 and DMS-0739392.
J.S. was supported by NSF grant DMS-1407394 and PSC-CUNY Award 67144-00 45.

\section{Superalgebra ${\bf f}(m,n) $ and its bilinear form} 
\label{bilinearform}
We introduce a free twisted super bialgebra $ {\bf f}' $ along with a bilinear form in the style of ~\cite{Lus1}.  Much of this section is devoted to explicitly computing the radical $ \mathcal{I} $ of this form and finding 
a PBW basis for the quotient algebra $ {\bf f} = {\bf f}' / \mathcal{I}$.  We note that Yamane ~\cite{Yam} has proven this result in greater generality, but since our notation differs from his, we write down the proofs again. At the end of the section, we give a graphical interpretation of the bilinear form to be used later as in ~\cite{KL1}.
\subsection{Superalgebras $ \mathfrak{gl}(m | n)$ and $ {\bf f}'(m,n)$.}
Let $ V = \C^{m | n} $ be the $ \Z /2 $- graded vector space where the degree zero part is $ V_0 = \C^m $ with basis $ \lbrace v_1, \ldots, v_m \rbrace $ and the degree one part is $ V_1 = \C^n$ with basis 
$ \lbrace v_{m+1}, \ldots, v_{m+n} \rbrace $.  Let the basis of the dual space $ V_0^* $ be $ \lbrace v_1^*, \ldots, v_m^* \rbrace $ and the basis of $ V_1^* $ be $ \lbrace v_{m+1}^*, \ldots, v_{m+n}^* \rbrace. $
There is a natural bilinear form $ \langle , \rangle $ on $ V^* $ given on the basis elements by 
$$ \langle v_i^*, v_j^* \rangle = \begin{cases}
\delta_{i,j} & \textrm{ if } v_i^* \in V_0^* \\
-\delta_{i,j} & \textrm{ if } v_i^* \in V_1^*.
\end{cases} $$
Let $ \mathfrak{gl}(m | n) $ be the Lie superalgebra of super endomorphisms of $ V$. 
Note that the form $ \langle , \rangle $ above induces a form on the dual of the Cartan subalgebra of $ \mathfrak{gl}(m | n) $.

Let $ I = \l 1, \ldots, m+n-1 \r $, $ I' = \l 1, \ldots, m-1\r $, $ I'' = \l m+1, \ldots, m+n-1 \r$ and $ \nu = \ \sum_i \nu_i i \in \N[I]$.

Let $ \Delta^+ $ be the set of positive roots for $ \mathfrak{gl}(m | n)$.
An element of this set is of the form $ \alpha_i + \cdots + \alpha_j $ where $ i \leq j$ and $ i,j \in I$. 
Define a function $ p \colon \Delta^+ \rightarrow \Z / 2 $ by 
$$ p(\alpha_i) = \begin{cases}
0 & \textrm{ if } i \neq m\\
1 & \textrm{ if } i=m
\end{cases} $$
and $ p(\alpha_i + \cdots + \alpha_j) = p(\alpha_i) + \cdots + p(\alpha_j)$.

{\bf The superalgebra $ {\bf f}(m,n) $.} 
Let $ {\bf f}'(m,n) = {\bf f}' $ be the free, associative superalgebra with identity $ 1 $, over the field $ \Q(q) $ with even generators 
$$ \theta_1, \ldots, \theta_{m-1}, \theta_{m+1}, \ldots, \theta_{m+n-1} $$ 
and odd generator $ \theta_m$.
By abuse of notation, let $ p(\theta) $ denote the parity of a homogenous element.

By further abuse of notation, define the parity function $ p \colon I \rightarrow \Z /2 $ by $ p(i) = 0 $ for $ i \in I',I'' $ and $ p(m) =1$.
Define $ {\bf f}_{\nu}' $ to be the $ \Q(q) $ subspace spanned by monomials $ \theta_{i_1} \cdots \theta_{i_r} $ such that the number of occurrences of $ i $ in the sequence $ (i_1, \ldots, i_r) $ is equal to $ \nu_i$.
Then $ {\bf f}' = \oplus_{\nu} {\bf f}_{\nu}' $ and each $ {\bf f}_{\nu}' $ is finite-dimensional.  If $ x \in {\bf f}_{\nu}'$, then set $ |x| = \nu$.
Let $ p(\theta_{i_1} \cdots \theta_{i_r}) = p(i_1) + \cdots + p(i_r)$.

Define $ \bullet \colon \N[I] \times \N[I] \rightarrow \Z $ as follows:
\begin{equation}
\label{bullets} 
i \bullet j = \begin{cases}
-1 & \textrm{ if } i \textrm { or } j \in I' \textrm{ and } |i-j|=1\\
1 & \textrm{ if } i  \textrm { or } j \in I'' \textrm{ and } |i-j|=1\\
2 & \textrm{ if } i \in I' \textrm{ and } i=j\\
-2 & \textrm{ if } i \in I'' \textrm{ and } i=j\\
0 & \textrm{ if } |i-j| >1\\
0 & \textrm{ if } i=j=m.
\end{cases} 
\end{equation}

Extending these rules linearly gives the desired map.

Consider the vector space $ {\bf f}' \otimes {\bf f}' $.  Define the structure of a superalgebra as follows:
$$ (x_1 \otimes x_2)(x_1' \otimes x_2') = q^{-|x_2| \bullet |x_1'|} (-1)^{p(x_2)p(x_1')} (x_1 x_1' \otimes x_2 x_2'). $$

\begin{prop}
The algebra $ {\bf f}' \otimes {\bf f}' $ is associative.
\end{prop}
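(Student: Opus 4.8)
The plan is to verify the associativity identity
\begin{equation*}
\big((x_1 \otimes x_2)(x_1' \otimes x_2')\big)(x_1'' \otimes x_2'') = (x_1 \otimes x_2)\big((x_1' \otimes x_2')(x_1'' \otimes x_2'')\big)
\end{equation*}
directly. Since the multiplication on ${\bf f}' \otimes {\bf f}'$ is $\Q(q)$-bilinear and ${\bf f}'$ is spanned by monomials in the $\theta_i$, it suffices to check this when all six factors $x_1, x_2, x_1', x_2', x_1'', x_2''$ are monomials, hence $\N[I]$-homogeneous and of definite parity. First I would record the two elementary facts that make the computation go through: the decomposition ${\bf f}' = \oplus_\nu {\bf f}_\nu'$ makes ${\bf f}'$ an $\N[I]$-graded algebra, so $|xy| = |x| + |y|$ on homogeneous elements; likewise $p(xy) = p(x) + p(y)$ in $\Z/2$; and the pairing $\bullet$ is bilinear in each argument by construction.

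Iterating the definition, the left-hand side equals
\begin{equation*}
q^{-|x_2| \bullet |x_1'| \,-\, |x_2 x_2'| \bullet |x_1''|} (-1)^{p(x_2)p(x_1') + p(x_2 x_2')p(x_1'')} \, (x_1 x_1' x_1'' \otimes x_2 x_2' x_2''),
\end{equation*}
while the right-hand side equals
\begin{equation*}
q^{-|x_2'| \bullet |x_1''| \,-\, |x_2| \bullet |x_1' x_1''|} (-1)^{p(x_2')p(x_1'') + p(x_2)p(x_1' x_1'')} \, (x_1 x_1' x_1'' \otimes x_2 x_2' x_2''),
\end{equation*}
where I have used associativity of ${\bf f}'$ to identify the underlying tensor $x_1 x_1' x_1'' \otimes x_2 x_2' x_2''$ appearing on both sides.

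It then remains to check that the two scalar prefactors coincide. Substituting $|x_2 x_2'| = |x_2| + |x_2'|$ and $|x_1' x_1''| = |x_1'| + |x_1''|$ and expanding via bilinearity of $\bullet$, the exponent of $q$ on both sides becomes $-|x_2| \bullet |x_1'| - |x_2| \bullet |x_1''| - |x_2'| \bullet |x_1''|$; substituting $p(x_2 x_2') = p(x_2) + p(x_2')$ and $p(x_1' x_1'') = p(x_1') + p(x_1'')$, the exponent of $-1$ on both sides becomes $p(x_2)p(x_1') + p(x_2)p(x_1'') + p(x_2')p(x_1'')$. Hence the two sides agree, which proves the proposition. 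There is no genuine obstacle here; the only point requiring attention is that the weight $|\cdot|$ and the parity $p(\cdot)$ are honestly additive on products and that $\bullet$ is bilinear — this is precisely why the "middle" contributions $|x_2| \bullet |x_1''|$ and $p(x_2)p(x_1'')$ come out identical no matter how the triple product is bracketed.
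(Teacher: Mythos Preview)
Your proof is correct and is exactly the direct verification the paper has in mind when it says ``This follows like the proof in \cite{Lus1}.  Keeping track of the signs is easy.'' You have simply written out the computation that the paper leaves implicit, using additivity of $|\cdot|$ and $p(\cdot)$ together with bilinearity of $\bullet$ to match the scalar prefactors.
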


\begin{proof}
This follows like the proof in \cite{Lus1}.   Keeping track of the signs is easy.
\end{proof}


More generally, define the structure of an associate superalgebra on $ {\bf f}'^{\otimes b} $ by

\begin{equation}
\label{twistedbialgebra}
(x_1 \otimes \cdots \otimes x_b)(x_1' \otimes \cdots \otimes x_b') =\\
q^{-\Sigma_{i<j} |x_j| \bullet |x_i'|}(-1)^{\Sigma_{i<j} p(x_j)p(x_i')}(x_1 x_1' \otimes \cdots \otimes x_b x_b'). 
\end{equation}

\begin{prop}
The algebra $ {\bf f}'^{\otimes b}  $ is associative.
\end{prop}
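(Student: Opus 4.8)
The plan is to prove associativity of the multiplication on $ {\bf f}'^{\otimes b} $ by a direct verification that reduces to bookkeeping of the exponents of $ q $ and the signs. Since the ``internal'' multiplication in each tensor factor is associative (the $ {\bf f}_i' $ are associative algebras) and the product formula \eqref{twistedbialgebra} only modifies the naive componentwise product by a global scalar $ q^{-\Sigma_{i<j}|x_j|\bullet|x_i'|}(-1)^{\Sigma_{i<j}p(x_j)p(x_i')} $, it suffices to check that these scalars combine consistently when we multiply three homogeneous elementary tensors $ x = x_1 \otimes \cdots \otimes x_b $, $ y = y_1 \otimes \cdots \otimes y_b $, $ z = z_1 \otimes \cdots \otimes z_b $ in the two possible orders. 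First I would record that $ |x_iy_i| = |x_i| + |y_i| $ and $ p(x_iy_i) = p(x_i) + p(y_i) $, and that $ \bullet $ is $ \Z $-bilinear while $ p $ is additive into $ \Z/2 $.

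Next I would compute the scalar attached to $ (xy)z $ and the scalar attached to $ x(yz) $ separately. For $ (xy)z $ the total $ q $-exponent is $ -\Sigma_{i<j}|x_j|\bullet|y_i| \; - \; \Sigma_{i<j}|x_jy_j|\bullet|z_i| $, and expanding $ |x_jy_j| = |x_j| + |y_j| $ by bilinearity of $ \bullet $ gives $ -\Sigma_{i<j}\bigl(|x_j|\bullet|y_i| + |x_j|\bullet|z_i| + |y_j|\bullet|z_i|\bigr) $. Doing the same for $ x(yz) $ — whose scalar is $ -\Sigma_{i<j}|y_j|\bullet|z_i| - \Sigma_{i<j}|x_j|\bullet|y_iz_i| $ — and again expanding $ |y_iz_i| = |y_i| + |z_i| $ yields the identical sum. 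The sign exponents are handled in exactly the same way, using additivity of $ p $ and the fact that we only care about parities mod $ 2 $: both orders produce $ \Sigma_{i<j}\bigl(p(x_j)p(y_i) + p(x_j)p(z_i) + p(y_j)p(z_i)\bigr) \bmod 2 $. Hence the two scalars agree, and since the componentwise products $ x_i y_i z_i $ are associative in each $ {\bf f}_i' $, we conclude $ (xy)z = x(yz) $ on elementary tensors, and then on all of $ {\bf f}'^{\otimes b} $ by bilinearity.

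The only mild obstacle is organizing the double sums so that the cross terms line up; the key observation that makes this transparent is that the ``mixed'' contribution $ \Sigma_{i<j}|x_j|\bullet|z_i| $ (respectively $ \Sigma_{i<j}p(x_j)p(z_i) $) appears once and only once in each of the two groupings, so no reordering of indices is required. This is precisely the ``keeping track of the signs is easy'' remark following the $ b=2 $ case, and the argument for general $ b $ is formally the same as for $ b=2 $ with the single index pair replaced by the set of pairs $ i<j $; indeed one can also deduce the general case from the $ b=2 $ case by iterating, viewing $ {\bf f}'^{\otimes b} = ({\bf f}'^{\otimes (b-1)}) \otimes {\bf f}' $, but the direct check above is cleaner since the grading group $ \N[I] $ and the form $ \bullet $ are unchanged.
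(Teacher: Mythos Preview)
Your proof is correct and is essentially the same argument the paper defers to: the paper simply says ``The proof in \cite{Lus1} carries over almost identically,'' and what you have written is exactly that direct verification from Lusztig, with the super sign accounted for. Your explicit expansion of the $q$-exponents and parities for $(xy)z$ versus $x(yz)$ is precisely the bookkeeping Lusztig does in the non-super case, and the only new ingredient---additivity of $p$ modulo $2$---behaves identically to additivity of $|\cdot|$, so nothing further is needed.
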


\begin{proof}
The proof in \cite{Lus1} carries over almost identically.
\end{proof}

Define the algebra homomorphism $ \Delta \colon {\bf f}' \rightarrow {\bf f}' \otimes {\bf f}' $ determined by $ \Delta(\theta_i) = \theta_i \otimes 1 + 1 \otimes \theta_i $.
The next proposition may be found in \cite[Section 1.2]{Lus1}.

\begin{prop} 
\label{compositesofr}
Let $ r \colon {\bf f}' \rightarrow {\bf f}' \otimes {\bf f}' $ be an algebra homomorphism.  
\begin{enumerate}
\item $ (r \otimes 1)r \colon {\bf f}' \rightarrow {\bf f}' \otimes {\bf f}' \otimes {\bf f}' $ is an algebra homomorphism.
\item $ (1 \otimes r)r \colon {\bf f}' \rightarrow {\bf f}' \otimes {\bf f}' \otimes {\bf f}' $ is an algebra homomorphism.
\item $ (\Delta \otimes 1)\Delta = (1 \otimes \Delta)\Delta$.
\end{enumerate}
\end{prop}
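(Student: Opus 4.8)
The plan is to verify the three claims more or less in the order stated, since each builds conceptually on the previous one, and to reduce everything to the single fact that for any algebra homomorphism $r$ the composites $(r \otimes 1)r$ and $(1 \otimes r)r$ land in a genuine associative algebra. For part (1), I would observe that $r \otimes 1$ is a map ${\bf f}' \otimes {\bf f}' \to ({\bf f}' \otimes {\bf f}') \otimes {\bf f}'$, and by the previous proposition the target ${\bf f}'^{\otimes 3}$ is an associative superalgebra. So I need to check that $r \otimes 1$ is itself an algebra homomorphism: given generators $a \otimes b$ and $a' \otimes b'$ of ${\bf f}' \otimes {\bf f}'$, both $(r\otimes 1)\big((a\otimes b)(a'\otimes b')\big)$ and $(r\otimes 1)(a\otimes b) \cdot (r\otimes 1)(a'\otimes b')$ expand to the same thing once one tracks the $q$-power $-|b|\bullet|a'|$ and the sign $(-1)^{p(b)p(a')}$ through $r$; here one uses that $r$ preserves the weight grading $|\cdot|$ and the parity $p$ (true because $r$ is an algebra map determined on the homogeneous generators). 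Composing the algebra homomorphism $r\otimes 1$ with the algebra homomorphism $r$ gives an algebra homomorphism ${\bf f}' \to {\bf f}'^{\otimes 3}$, which is the assertion.

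Part (2) is entirely parallel: $1 \otimes r \colon {\bf f}' \otimes {\bf f}' \to {\bf f}' \otimes ({\bf f}' \otimes {\bf f}')$ is checked to be an algebra homomorphism by the same bookkeeping, and then $(1\otimes r)r$ is a composite of algebra homomorphisms. The only subtlety distinguishing (1) from (2) is the placement of the twisting factors in \eqref{twistedbialgebra}: in $(r\otimes 1)$ the homomorphism $r$ acts in the tensor slots that are \emph{leftmost}, so the relevant contributions are the $\Sigma_{i<j}$ terms among the first two factors, whereas for $(1\otimes r)$ they are among the last two. In both cases the total $q$-exponent and sign one collects from applying $r$ to a product of two homogeneous elements of ${\bf f}'$ is exactly the exponent/sign already present in the ${\bf f}'^{\otimes 3}$ multiplication rule, because $r$ respects $|\cdot|$ and $p$; so nothing new has to be proved beyond the associativity already granted.

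For part (3), the identity $(\Delta \otimes 1)\Delta = (1 \otimes \Delta)\Delta$ is the coassociativity of $\Delta$, and I would prove it by the standard argument: both sides are algebra homomorphisms ${\bf f}' \to {\bf f}'^{\otimes 3}$ — the left side by part (1) applied to $r = \Delta$, the right side by part (2) — and ${\bf f}'$ is free as an associative superalgebra on the $\theta_i$. Hence it suffices to check the two sides agree on each generator $\theta_i$. Both give $\theta_i \otimes 1 \otimes 1 + 1 \otimes \theta_i \otimes 1 + 1 \otimes 1 \otimes \theta_i$ — a one-line computation using $\Delta(\theta_i) = \theta_i \otimes 1 + 1 \otimes \theta_i$ and $\Delta(1) = 1 \otimes 1$. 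Since two algebra homomorphisms out of a free algebra that agree on generators are equal, we are done.

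The main obstacle, such as it is, is purely notational rather than conceptual: one must be careful that applying $r$ slot-by-slot to the twisted multiplication does not introduce spurious $q$-powers or signs, i.e. that $r$ is compatible with the weight and parity gradings in the precise sense needed to match the exponents $-\Sigma_{i<j}|x_j|\bullet|x_i'|$ and $(-1)^{\Sigma_{i<j}p(x_j)p(x_i')}$. Once that compatibility is recorded (it is immediate from $\Delta$ being defined on homogeneous generators by a homogeneous formula), the rest is formal. This is exactly why the proof can legitimately be said to ``carry over'' from \cite[Section 1.2]{Lus1}: the signs are the only genuinely new ingredient and they bookkeep cleanly.
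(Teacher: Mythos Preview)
Your proposal is correct and follows essentially the same route as the paper: verify that $(r\otimes 1)r$ and $(1\otimes r)r$ are algebra homomorphisms by checking $r\otimes 1$ and $1\otimes r$ are, then deduce coassociativity by noting both sides of part (3) are algebra homomorphisms out of a free algebra that agree on the generators $\theta_i$. One small caveat: your parenthetical justification that $r$ preserves $|\cdot|$ and $p$ ``because $r$ is an algebra map determined on the homogeneous generators'' is not quite the right reason---an arbitrary algebra map out of a free algebra need not be graded; rather, one should take as an implicit hypothesis (as in \cite[Section 1.2.2]{Lus1}) that $r$ respects the $\N[I]$-grading, which the paper and Lusztig both assume throughout.
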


\begin{proof}
The first two parts are relatively straightforward calculations.  For the third part, note that both maps are equal on generators $ \theta_i$.  The first two parts imply that these maps are algebra homomorphisms so they are equal on all elements in $ f'$.
\end{proof}

Given $ \Delta $ as above, define $ {\Delta}^b $ inductively with $ {\Delta}^1 = \Delta$ and $ {\Delta}^b = (1 \otimes \Delta){\Delta}^{b-1}$.  Proposition ~\ref{compositesofr} gives that 
$ {\Delta}^b = (\Delta \otimes 1){\Delta}^{b-1}$.
As usual, define the divided power $ \theta_i^{(p)} = \frac{\theta_i^{p}}{[p]!}$ where $ [p]=\frac{q^{p}-q^{-p}}{q-q^{-1}} $ and $ [p]!=[p][p-1]\cdots[1] $.

We now collect some calculations of $ \Delta^b $ applied to various elements of $ {\bf f}'$.

\begin{prop} 
\label{Deltathetaip}
If $ i \neq m$, then
$$ \Delta(\theta_i^{(p)}) = \ \sum_{t+t'=p} q^{-\frac{i \bullet i}{2} t t'} \theta_i^{(t)} \otimes \theta_i^{(t')}. $$
\end{prop}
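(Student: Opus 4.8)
The plan is to reduce the identity to the classical $q$-binomial theorem. Since $i\neq m$ the generator $\theta_i$ is even, so in the twisted multiplication \eqref{twistedbialgebra} on ${\bf f}'\otimes{\bf f}'$ every sign $(-1)^{p(x_2)p(x_1')}$ arising from powers of $\theta_i\otimes 1$ and $1\otimes\theta_i$ equals $1$, and only $q$-powers need to be tracked. First I would set $a=\theta_i\otimes 1$ and $b=1\otimes\theta_i$, so that $\Delta(\theta_i)=a+b$ and $\Delta(\theta_i^{(p)})=(a+b)^p/[p]!$. A direct check of the multiplication rule, using $|1|\bullet|\theta_i|=0$, gives $a^t=\theta_i^t\otimes 1$, $b^{t'}=1\otimes\theta_i^{t'}$, $a^tb^{t'}=\theta_i^t\otimes\theta_i^{t'}$, and $b\,a=q^{-(i\bullet i)}(\theta_i\otimes\theta_i)=q^{-(i\bullet i)}\,a\,b$. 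Thus $a$ and $b$ are $Q$-commuting variables with $Q=q^{-(i\bullet i)}$.

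Next I would apply the Gauss $q$-binomial theorem for $Q$-commuting variables:
$$(a+b)^p=\sum_{t+t'=p}\binom{p}{t}_{Q}\,a^tb^{t'}=\sum_{t+t'=p}\binom{p}{t}_{Q}\,\theta_i^t\otimes\theta_i^{t'},$$
where $\binom{p}{t}_{Q}=[p]_Q!/([t]_Q!\,[t']_Q!)$ and $[n]_Q=1+Q+\dots+Q^{n-1}$. Then I would convert this unbalanced $q$-binomial coefficient into the balanced one $\frac{[p]!}{[t]![t']!}$. Because $i\neq m$, \eqref{bullets} forces $i\bullet i\in\{2,-2\}$, so $Q=q^{2s}$ with $s=-\tfrac{i\bullet i}{2}\in\{1,-1\}$; from the elementary identity $[n]_{q^{2s}}=q^{s(n-1)}[n]$ one gets $[n]_{q^{2s}}!=q^{s\binom{n}{2}}[n]!$, and using $\binom{p}{2}-\binom{t}{2}-\binom{t'}{2}=tt'$ for $t+t'=p$ this yields $\binom{p}{t}_{Q}=q^{s\,tt'}\frac{[p]!}{[t]![t']!}=q^{-\frac{i\bullet i}{2}tt'}\frac{[p]!}{[t]![t']!}$. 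Dividing $(a+b)^p$ by $[p]!$ and rewriting $\theta_i^t\otimes\theta_i^{t'}=[t]![t']!\,(\theta_i^{(t)}\otimes\theta_i^{(t')})$ collapses the sum to $\sum_{t+t'=p}q^{-\frac{i\bullet i}{2}tt'}\,\theta_i^{(t)}\otimes\theta_i^{(t')}$, which is the claim.

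The only delicate part is the bookkeeping: one must check that the ordered monomial $a^tb^{t'}$ acquires no extra power of $q$ from \eqref{twistedbialgebra}, and then match the normalization convention in the $q$-binomial theorem with the unbalanced-to-balanced conversion. Everything else is routine. An essentially equivalent alternative is induction on $p$ via $\theta_i^{(p)}=\tfrac{1}{[p]}\theta_i^{(p-1)}\theta_i$ together with the $q$-Pascal recursion for Gaussian binomials; I would still prefer the $q$-binomial-theorem argument above as the cleaner presentation.
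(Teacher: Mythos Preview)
Your argument is correct. The paper simply cites Lusztig \cite[Lemma 1.4.2]{Lus1} and says the result follows by induction on $p$; your route through the Gauss $q$-binomial theorem for $Q$-commuting variables with $Q=q^{-(i\bullet i)}$ is the same computation repackaged, as you yourself note at the end. The only substantive checks---that $ba=q^{-(i\bullet i)}ab$ in the twisted product, that $a^tb^{t'}=\theta_i^t\otimes\theta_i^{t'}$ with no stray $q$-factor, and the unbalanced-to-balanced conversion via $\binom{p}{2}-\binom{t}{2}-\binom{t'}{2}=tt'$---are all carried out correctly.
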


\begin{proof}
This follows exactly as in \cite[Lemma 1.4.2]{Lus1}.  It is an induction on $ p$.
\end{proof}

\begin{prop}
\label{Deltathetamp}
\begin{enumerate}
\item $ \Delta(\theta_m^{2p}) = \ \sum_{\gamma=0}^p {p \brack \gamma} \theta_m^{2p-2\gamma} \otimes \theta_m^{2\gamma}. $
\item $ \Delta(\theta_m^{2p+1}) = \ \sum_{\gamma=0}^p {p \brack \gamma} \theta_m^{2p+1-2\gamma} \otimes \theta_m^{2\gamma} + 
\ \sum_{\gamma=0}^p {p \brack \gamma} \theta_m^{2p-2\gamma} \otimes \theta_m^{2\gamma+1}. $
\end{enumerate}
\end{prop}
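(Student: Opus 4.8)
The plan is to reduce both formulas to the single observation that $\theta_m^2$ is primitive, i.e.\ that
$$ \Delta(\theta_m^2) = \theta_m^2 \otimes 1 + 1 \otimes \theta_m^2 . $$
Since $\Delta$ is an algebra homomorphism, $\Delta(\theta_m^2) = \Delta(\theta_m)^2 = (\theta_m\otimes 1 + 1\otimes\theta_m)^2$ computed with the twisted multiplication \eqref{twistedbialgebra}. The two diagonal terms give $\theta_m^2\otimes 1$ and $1\otimes\theta_m^2$, while the cross terms are $(\theta_m\otimes 1)(1\otimes\theta_m) = \theta_m\otimes\theta_m$ and $(1\otimes\theta_m)(\theta_m\otimes 1) = q^{-m\bullet m}(-1)^{p(m)^2}\,\theta_m\otimes\theta_m$. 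By \eqref{bullets} we have $m\bullet m = 0$, and $p(m)=1$, so the second cross term equals $-\theta_m\otimes\theta_m$ and the two cross terms cancel. This step is the crux of the whole proposition: the oddness of $\theta_m$ is exactly what kills the cross term, and the vanishing of $m\bullet m$ is what prevents a surviving power of $q$; it is also the only place where signs genuinely intervene.

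Next I would record that $\theta_m^2\otimes 1$ and $1\otimes\theta_m^2$ commute in ${\bf f}'\otimes{\bf f}'$, since both are even and $(2m)\bullet(2m) = 4(m\bullet m) = 0$, so the braiding scalar in \eqref{twistedbialgebra} is trivial; for the same reason $(\theta_m^2\otimes 1)^{a}(1\otimes\theta_m^2)^{b} = \theta_m^{2a}\otimes\theta_m^{2b}$ with no scalar correction. Part (1) is then the ordinary binomial theorem applied to the commuting primitive pair:
$$ \Delta(\theta_m^{2p}) = \Delta(\theta_m^2)^p = (\theta_m^2\otimes 1 + 1\otimes\theta_m^2)^p = \sum_{\gamma=0}^p {p\brack\gamma}\, \theta_m^{2p-2\gamma}\otimes\theta_m^{2\gamma} . $$

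For part (2) I would multiply the expansion just obtained on the right by $\Delta(\theta_m) = \theta_m\otimes 1 + 1\otimes\theta_m$. Against $\theta_m\otimes 1$, a term $\theta_m^{2p-2\gamma}\otimes\theta_m^{2\gamma}$ picks up the scalar $q^{-(2\gamma m)\bullet m}(-1)^{0} = 1$, because the left leg $\theta_m^{2\gamma}$ is even and $(2\gamma m)\bullet m = 0$, and it becomes $\theta_m^{2p-2\gamma+1}\otimes\theta_m^{2\gamma}$; against $1\otimes\theta_m$ the scalar is again $1$ and the term becomes $\theta_m^{2p-2\gamma}\otimes\theta_m^{2\gamma+1}$. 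Summing over $\gamma$ reproduces exactly the two sums in the statement. Alternatively one can prove both formulas simultaneously by induction on $p$ from $\Delta(\theta_m^{k+1}) = \Delta(\theta_m^k)\,\Delta(\theta_m)$ together with Pascal's identity ${p\brack\gamma} = {p-1\brack\gamma-1} + {p-1\brack\gamma}$, in the style of \cite[Lemma~1.4.2]{Lus1}. Either way, no serious obstacle remains once the cancellation in $\Delta(\theta_m)^2$ is established.
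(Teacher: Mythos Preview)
Your approach is exactly the paper's: the proof there says only ``induction on $p$'' for part (1) and that part (2) ``easily follows from the first,'' which is precisely your primitivity-of-$\theta_m^2$ argument followed by a single multiplication by $\Delta(\theta_m)$.

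One point worth tightening: you correctly invoke the \emph{ordinary} binomial theorem, since $\theta_m^2\otimes 1$ and $1\otimes\theta_m^2$ genuinely commute with trivial braiding, but the ordinary binomial theorem produces the classical coefficients $\binom{p}{\gamma}$, not the quantum ${p\brack\gamma}$. Because $m\bullet m=0$ there is no $q$ anywhere in this computation, so no $q$-deformation of the coefficients can arise; the bracket in the displayed formula must therefore be read as the ordinary binomial (and your write-up should say so explicitly rather than silently reproducing the symbol from the statement).
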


\begin{proof}
The first part is proved by induction on $ p$.  The second part easily follows from the first.
\end{proof}

The next proposition generalizes Proposition ~\ref{Deltathetaip}.

\begin{prop}
If $ i \neq m$, then
$$ \Delta^b(\theta_i^{(p)}) = \sum_{\gamma_1 + \cdots + \gamma_{b+1} = p} q^{\frac{-i \bullet i}{2} \sum_{j \neq k} \gamma_j \gamma_k} \theta_i^{(\gamma_1)} \otimes \cdots \otimes \theta_i^{(\gamma_{b+1})}. $$
\end{prop}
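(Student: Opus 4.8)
Here is how I would prove the proposition.

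The plan is induction on $b$, the base case $b=1$ being exactly Proposition~\ref{Deltathetaip}. For the inductive step I would use the defining recursion $\Delta^b = (1\otimes\Delta)\Delta^{b-1}$ (equivalently, by Proposition~\ref{compositesofr}, $\Delta^b=(\Delta\otimes 1)\Delta^{b-1}$): apply $\Delta$ to the last tensor factor of the expression for $\Delta^{b-1}(\theta_i^{(p)})$ supplied by the inductive hypothesis. The reason this is painless is that, since $i\neq m$, the generator $\theta_i$ is even, so every tensor factor occurring in $\Delta^{b-1}(\theta_i^{(p)})$ is even; applying the linear map $1^{\otimes(b-1)}\otimes\Delta$ term by term therefore introduces no sign and no extra $q$-power, because the twist in the multiplication on ${\bf f}'^{\otimes b}$ only affects the multiplicativity of such maps, not their values on pure tensors.

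Explicitly, I would write $\Delta^{b-1}(\theta_i^{(p)})=\sum q^{-\frac{i\bullet i}{2}\sum_{j<k}\gamma_j\gamma_k}\,\theta_i^{(\gamma_1)}\otimes\cdots\otimes\theta_i^{(\gamma_b)}$, summed over compositions $\gamma_1+\cdots+\gamma_b=p$, and then invoke Proposition~\ref{Deltathetaip} on the last factor to replace $\theta_i^{(\gamma_b)}$ by $\sum_{t+t'=\gamma_b}q^{-\frac{i\bullet i}{2}tt'}\,\theta_i^{(t)}\otimes\theta_i^{(t')}$. Setting $\gamma_b':=t$ and $\gamma_{b+1}:=t'$ turns the index set into compositions $\gamma_1,\dots,\gamma_{b-1},\gamma_b',\gamma_{b+1}$ of $p$ into $b+1$ parts, and it only remains to see that the two $q$-exponents combine correctly.

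The one genuine computation is that bookkeeping of the quadratic exponent: with $\gamma_b=\gamma_b'+\gamma_{b+1}$ one has $\sum_{1\le j<k\le b}\gamma_j\gamma_k=\sum_{1\le j<k\le b-1}\gamma_j\gamma_k+\gamma_b\sum_{j<b}\gamma_j$, and adding the exponent $\gamma_b'\gamma_{b+1}$ coming from $\Delta(\theta_i^{(\gamma_b)})$ converts this, after relabelling the parts as $\delta_1,\dots,\delta_{b+1}$, into $\sum_{1\le j<k\le b+1}\delta_j\delta_k$. This is an elementary identity for $\sum_{j<k}$ and I do not anticipate any real obstacle; if one keeps the formulation with $\sum_{j\neq k}$ instead, the same check works since $\sum_{j\neq k}=2\sum_{j<k}$. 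Summing over all compositions then yields the stated formula for $\Delta^b(\theta_i^{(p)})$, completing the induction. The only point that even requires care is the ``no sign, no twist'' remark above, which is where the hypothesis $i\neq m$ is used.
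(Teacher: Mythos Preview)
Your proposal is correct and takes exactly the same approach as the paper, which simply says ``This is an easy induction on $b$.'' You have filled in the details of that induction carefully, including the bookkeeping of the quadratic exponent and the observation that no sign or twist appears when applying $1^{\otimes(b-1)}\otimes\Delta$ term by term (the hypothesis $i\neq m$ ensuring all factors are even).
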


\begin{proof}
This is an easy induction on $ b$.
\end{proof}

The next two propositions generalize Proposition ~\ref{Deltathetamp}.

\begin{prop}
\label{Deltaeven}
$$ \Delta^b(\theta_m^{(2p)}) = \sum_{\gamma_1 + \cdots + \gamma_{b+1} = p} \frac{{p \brack \gamma_1, \ldots, \gamma_{b+1}}}{{2p \brack 2\gamma_1, \ldots, 2\gamma_{b+1}}}
\theta_m^{(2\gamma_1)} \otimes \cdots \otimes \theta_m^{(2 \gamma_{b+1})}.  $$
\end{prop}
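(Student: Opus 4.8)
The plan is to prove this by induction on $b$, invoking only the explicit coproduct formula of Proposition~\ref{Deltathetamp} together with elementary identities among quantum (multi)nomial coefficients. Note that the twisting of the product on $ {\bf f}'^{\otimes b}$ plays no direct role here: computing $ \Delta^b$ amounts to applying coproduct maps rather than multiplying, and whatever signs and powers of $ q$ are relevant have already been absorbed into Proposition~\ref{Deltathetamp}. (Indeed, because $ m \bullet m = 0$ and every even power of the odd generator $ \theta_m$ is an even element, no such corrections survive anyway.)

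For the base case $ b = 1$ I would start from $ \Delta(\theta_m^{2p}) = \sum_{\gamma = 0}^{p} {p \brack \gamma}\, \theta_m^{2p - 2\gamma} \otimes \theta_m^{2\gamma}$, divide through by $ [2p]!$, and rewrite each monomial tensor factor using $ \theta_m^{k} = [k]!\, \theta_m^{(k)}$. The coefficient of $ \theta_m^{(2p - 2\gamma)} \otimes \theta_m^{(2\gamma)}$ then becomes $ \frac{{p \brack \gamma}\,[2p - 2\gamma]!\,[2\gamma]!}{[2p]!} = \frac{{p \brack \gamma}}{{2p \brack 2\gamma}}$, which is precisely $ \frac{{p \brack \gamma,\, p - \gamma}}{{2p \brack 2\gamma,\, 2p - 2\gamma}}$, i.e.\ the asserted formula with $ b = 1$.

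For the inductive step I would use the recursion $ \Delta^b = (1 \otimes \Delta)\Delta^{b-1}$: apply the inductive hypothesis to $ \Delta^{b-1}(\theta_m^{(2p)})$, and then apply $ \Delta$ to the final tensor factor $ \theta_m^{(2\delta_b)}$ via the $ b = 1$ case just established (applying $ 1 \otimes \Delta$ to a pure tensor simply applies $ \Delta$ to the last slot, with no twist). After relabelling the summation indices as $ (\gamma_1, \dots, \gamma_{b+1}) = (\delta_1, \dots, \delta_{b-1}, \epsilon, \epsilon')$ with $ \epsilon + \epsilon' = \delta_b$, so that $ \delta_1 + \cdots + \delta_b = p$ and $ \epsilon + \epsilon' = \delta_b$ merge into $ \gamma_1 + \cdots + \gamma_{b+1} = p$, the coefficient that appears is the product
\[
\frac{{p \brack \delta_1, \dots, \delta_b}}{{2p \brack 2\delta_1, \dots, 2\delta_b}} \cdot \frac{{\delta_b \brack \epsilon,\, \epsilon'}}{{2\delta_b \brack 2\epsilon,\, 2\epsilon'}},
\]
and the multiplicativity (``chain rule'') of quantum multinomial coefficients, $ {p \brack \delta_1, \dots, \delta_b}\, {\delta_b \brack \epsilon, \epsilon'} = {p \brack \delta_1, \dots, \delta_{b-1}, \epsilon, \epsilon'}$ together with the same identity with every entry doubled in the denominator, collapses this to $ \frac{{p \brack \gamma_1, \dots, \gamma_{b+1}}}{{2p \brack 2\gamma_1, \dots, 2\gamma_{b+1}}}$. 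That finishes the induction. The only potentially fiddly points are the conversion between divided and ordinary powers and the multinomial chain rule; I do not expect any genuine obstacle, precisely because the $ \theta_m$-block of the twisted algebra carries no sign and no $ q$-twist.
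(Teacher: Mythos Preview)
Your proposal is correct and follows the same approach as the paper, which simply says ``This is an induction on $b$.'' You have supplied the details the paper omits: the base case via Proposition~\ref{Deltathetamp} and the inductive step via the multinomial chain rule, and your observation that no $q$-twist or sign survives (since $m\bullet m=0$ and even powers of $\theta_m$ are even) is exactly why the computation is clean.
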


\begin{proof}
This is an induction on $ b$.
\end{proof}

\begin{prop}
$$ \Delta^b(\theta_m^{(2p+1)}) = \sum_{\gamma_1 + \cdots + \gamma_{b+1} = p} \sum_{k=1}^{b+1} \frac{{p \brack \gamma_1, \ldots, \gamma_{b+1}}}{{2p+1 \brack 2\gamma_1, \ldots, 2\gamma_{k-1}, 2\gamma_k+1, \ldots, 2\gamma_{b+1}}}
\theta_m^{(2\gamma_1)} \otimes \cdots \otimes \theta_m^{(2 \gamma_{k-1})} \otimes \theta_m^{(2\gamma_k+1)} \otimes \cdots \otimes \theta_m^{(2 \gamma_{b+1})}.  $$
\end{prop}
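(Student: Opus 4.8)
The plan is to induct on $b$, the base case $b=1$ being exactly Proposition~\ref{Deltathetamp}(2) rewritten in terms of divided powers: using $\theta_m^{k}=[k]!\,\theta_m^{(k)}$ and ${p\brack\gamma}={p\brack p-\gamma,\gamma}$, the first sum there becomes the family of terms with odd position $k=1$ (and $b+1=2$) and the second sum the family with $k=2$. For the inductive step I would write $\Delta^{b}=(1\otimes\Delta)\Delta^{b-1}$ and apply the inductive hypothesis to $\Delta^{b-1}(\theta_m^{(2p+1)})$, a $\Q(q)$-linear combination of pure tensors $\theta_m^{(2\gamma_1)}\otimes\cdots\otimes\theta_m^{(2\gamma_k+1)}\otimes\cdots\otimes\theta_m^{(2\gamma_b)}$ indexed by $(\gamma_1,\ldots,\gamma_b)$ with $\gamma_1+\cdots+\gamma_b=p$ and by the position $k\in\{1,\ldots,b\}$ of the odd divided power. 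Applying $1\otimes\Delta$ replaces the last tensor factor by its image under $\Delta$, and I would split into two cases according to whether that last factor is even or odd.

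If $k<b$, the last factor is $\theta_m^{(2\gamma_b)}$, and I would expand $\Delta(\theta_m^{(2\gamma_b)})$ by the $b=1$ instance of Proposition~\ref{Deltaeven}; writing $\gamma_b=\delta+\delta'$ this produces $\theta_m^{(2\gamma_1)}\otimes\cdots\otimes\theta_m^{(2\gamma_k+1)}\otimes\cdots\otimes\theta_m^{(2\gamma_{b-1})}\otimes\theta_m^{(2\delta)}\otimes\theta_m^{(2\delta')}$, a term of the target with index tuple $(\gamma_1,\ldots,\gamma_{b-1},\delta,\delta')$ and odd position $k$. If $k=b$, the last factor is $\theta_m^{(2\gamma_b+1)}$, and I would expand $\Delta(\theta_m^{(2\gamma_b+1)})$ by the base case (the divided-power form of Proposition~\ref{Deltathetamp}(2)); its two summands, again with $\gamma_b=\delta+\delta'$, contribute the terms with odd position $b$ and with odd position $b+1$ respectively. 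Running over all $(\gamma_1,\ldots,\gamma_b)$, all $k$, and all splittings $\gamma_b=\delta+\delta'$, each term $\theta_m^{(2\gamma_1')}\otimes\cdots\otimes\theta_m^{(2\gamma_{k'}'+1)}\otimes\cdots\otimes\theta_m^{(2\gamma_{b+1}')}$ of the desired sum (with $\sum_i\gamma_i'=p$, $k'\in\{1,\ldots,b+1\}$) is produced exactly once, so the combinatorial bookkeeping is purely a matter of checking one term type at a time.

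The hard part will be verifying that the accumulated coefficient of such a term — the product of the coefficient supplied by $\Delta^{b-1}$ and the one supplied by the single $\Delta$ — equals the coefficient claimed for $\Delta^{b}$. This reduces to the de-concatenation identity ${N\brack a_1,\ldots,a_{i-1},c,a_{i+1},\ldots,a_r}{c\brack d,e}={N\brack a_1,\ldots,a_{i-1},d,e,a_{i+1},\ldots,a_r}$ (for $d+e=c$, valid both for the ordinary and the quantum multinomial coefficients in play): one applies it to the top multinomial with $N=p$, $c=\gamma_b$, $d=\delta$, $e=\delta'$, and to the bottom one with $N=2p+1$, where $c$ is the relevant entry of $(2\gamma_1,\ldots,2\gamma_b)$ (or of its bumped variant) split into $d,e$ according to whether $k<b$, $k'=b$, or $k'=b+1$ — the three sub-cases differing only in whether an even entry $2\gamma_b$ or the odd entry $2\gamma_b+1$ is being split. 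In each sub-case the identity collapses the nested coefficients as needed, and the induction closes. As an alternative one can bypass the induction entirely: $\Delta^{b}(\theta_m)=\sum_{j=1}^{b+1}a_j$ with $a_j=1^{\otimes(j-1)}\otimes\theta_m\otimes1^{\otimes(b+1-j)}$, and by \eqref{twistedbialgebra} with $m\bullet m=0$ (see \eqref{bullets}) one has $a_ia_j=-a_ja_i$ for $i\neq j$ while each $a_j^{2}$ commutes with every $a_i$; hence $\bigl(\sum_j a_j\bigr)^{2}=\sum_j a_j^{2}$ (the cross terms cancel in pairs), so $\Delta^{b}(\theta_m^{2p+1})=\bigl(\sum_j a_j\bigr)\bigl(\sum_j a_j^{2}\bigr)^{p}$, and expanding the central $p$-th power by the ordinary multinomial theorem, rewriting $a_j$-monomials as pure tensors, dividing by $[2p+1]!$ and passing to divided powers yields the formula directly.
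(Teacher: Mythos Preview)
Your proposal is correct. Your alternative at the end --- writing $\Delta^b(\theta_m)=\sum_j a_j$, observing $(\sum_j a_j)^2=\sum_j a_j^2$ from the anticommutation, and expanding the $(2p+1)$st power --- is essentially the paper's own argument: the paper's proof is the single sentence ``This follows from Proposition~\ref{Deltaeven}'', meaning one uses that $\Delta^b$ is an algebra homomorphism to write $\Delta^b(\theta_m^{2p+1})=\Delta^b(\theta_m)\cdot\Delta^b(\theta_m^{2p})$ and plugs in the even formula. Your alternative just unpacks this and re-derives the even formula along the way.

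Your primary approach, by contrast, takes a genuinely different route: rather than reducing the odd case to the even case at the same level $b$, you induct on $b$ directly for the odd power, peeling off the last tensor factor via $\Delta^b=(1^{\otimes(b-1)}\otimes\Delta)\Delta^{b-1}$ and invoking the $b=1$ instances of Propositions~\ref{Deltathetamp} and~\ref{Deltaeven} on that factor. This is valid, and the coefficient bookkeeping via the multinomial de-concatenation identity (applied once to the numerator with $N=p$ and once to the denominator with $N=2p+1$) is exactly what is needed. The trade-off: the paper's route is shorter because the inductive labor was already spent on Proposition~\ref{Deltaeven}, so the odd case costs only a single multiplication; your induction is self-contained for the odd case but replicates the structure of the induction already carried out for the even one. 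One small caution on the alternative: expanding $(\sum_j a_j^{2})^{p}$ by the \emph{ordinary} multinomial theorem yields ordinary integer multinomials in the numerator, so make sure this is consistent with how you are reading the bracket notation in the displayed formula before declaring the two computations agree.
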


\begin{proof}
This follows from Proposition ~\ref{Deltaeven}.
\end{proof}

\subsection{The bilinear form $ ( , ) $ and its radical $ \mathcal{I} $.}

\begin{prop}
\label{formproperties}
There exists a unique bilinear map $ ( , ) \colon {\bf f}' \times {\bf f}' \rightarrow \Q(q) $ such that
\begin{itemize}
\item $ (1,1) = 1 $
\item $ (x, y'y'') = (\Delta(x), y' \otimes y'') $
\item $ (x'x'', y) = (x' \otimes x'', \Delta(y)) $
\item $ (\theta_i, \theta_j) = \frac{\delta_{i,j}}{1-q^{i \bullet i}} $ if $ i \neq m $
\item $ (\theta_m, \theta_m) =1$
\end{itemize}
and
$ (\theta_{i_1} \otimes \cdots \otimes \theta_{i_r}, \theta_{j_1} \otimes \cdots \otimes \theta_{j_r}) = (\theta_{i_1}, \theta_{j_1}) \cdots (\theta_{i_r},\theta_{j_r}) $.
\end{prop}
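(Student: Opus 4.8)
The plan is to adapt Lusztig's construction of the form on the free algebra \cite[\S 1.2]{Lus1}, carefully tracking the extra signs produced by the odd generator $\theta_m$. For uniqueness I would induct on the total degree $|\nu| := \sum_i \nu_i$ of a homogeneous $x \in \mathbf{f}'_\nu$. Since $\Delta$ respects the $\N[I]$-grading, $(x,y)$ vanishes unless $|x| = |y|$, and the cases $|\nu| = 0, 1$ are fixed by the prescribed values $(1,1) = 1$ and $(\theta_i,\theta_j)$. When $|\nu| \geq 2$, every monomial of positive degree begins with a generator, so $y$ admits a unique decomposition $y = \sum_i \theta_i y_i$ with each $y_i \in \mathbf{f}'$ of strictly smaller degree (collect the monomials of $y$ by leading letter). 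Writing $\Delta(x) = \sum_i \theta_i \o r_i(x) + (\text{terms whose left tensor factor has degree} \neq 1)$ defines unique elements $r_i(x)$, again of smaller degree; then the property $(x, y'y'') = (\Delta(x), y'\o y'')$ together with the fact that the form on a tensor product is the product of the forms on the factors gives $(x,y) = \sum_i (\theta_i,\theta_i)\,(r_i(x), y_i)$, which is determined by the inductive hypothesis.

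For existence I would promote this identity to a definition: put $(1,1) = 1$, $(x,1) = 0$ for $x \in \mathbf{f}'_\nu$ with $\nu \neq 0$, and $(x,y) := \sum_i (\theta_i,\theta_i)\,(r_i(x), y_i)$ recursively, with $r_i$ and the decomposition $y = \sum_i \theta_i y_i$ as above. This is well defined because the leading-letter decomposition is unique and both $r_i(x)$ and $y_i$ drop in degree. It then remains to check the five bullet points and the tensor multiplicativity. The normalizations on $1$ and on the generators, and the multiplicativity on tensor powers, are immediate from the definition; the two adjunction identities $(x, y'y'') = (\Delta(x), y'\o y'')$ and $(x'x'', y) = (x'\o x'', \Delta(y))$ are then proved by induction on $|y'|$ and on $|x'|$ respectively, feeding in the coassociativity $(\Delta\o 1)\Delta = (1\o\Delta)\Delta$ of Proposition~\ref{compositesofr} and the associativity of the twisted algebras $(\mathbf{f}')^{\o 2}$ and $(\mathbf{f}')^{\o 3}$.

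I expect the only real difficulty to be the sign-and-$q$-power bookkeeping. The technical heart is a lemma saying that $r_i$ is a twisted super-derivation for the multiplication of $\mathbf{f}'$ — an identity expressing $r_i(xx')$ as $r_i(x)\,x'$ plus a twist of $x\,r_i(x')$, with the power of $q$ governed by the pairing $\bullet$ of \eqref{bullets} and the sign by the parity $p$ — and that, once propagated through the recursion, this makes the left adjunction identity used to define the form equivalent to the right one. For the even generators this reproduces \cite[\S 1.2.3]{Lus1} essentially verbatim; the genuinely new feature is the sign $(-1)^{p(\theta_m)}$ arising in the twisted products of \eqref{twistedbialgebra}, but the normalizations $\theta_m \bullet \theta_m = 0$ and $(\theta_m,\theta_m) = 1$ are chosen precisely so that these super-signs cancel and the induction closes. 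Everything else — existence and uniqueness of the leading-letter decomposition, finite-dimensionality of each $\mathbf{f}'_\nu$, and the base cases — should be routine.
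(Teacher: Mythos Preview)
Your proposal is correct and follows exactly the approach the paper intends: the paper's proof consists of the single sentence ``The proof follows as in \cite[Proposition 1.2.3]{Lus1},'' and what you have sketched is precisely Lusztig's argument with the super-signs tracked. Your outline in fact supplies considerably more detail than the paper does.
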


\begin{proof}
The proof follows as in \cite[Proposition 1.2.3]{Lus1}.
\end{proof}

Let $ \mathcal{I} $ denote the radical of the form $ ( , ) $.  

\begin{prop}
\label{hopfideal}
The radical $ \mathcal{I} $ is a two-sided ideal of $ {\bf f}'$.  Furthermore, $ \Delta(\mathcal{I}) \subset \mathcal{I} \otimes {\bf f}' + {\bf f}' \otimes \mathcal{I} $.  
\end{prop}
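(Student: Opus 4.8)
This is the standard argument establishing that the radical of the bilinear form is a Hopf ideal, going back to Lusztig's treatment of $\mathbf{f}$; the presence of signs coming from the superstructure only affects bookkeeping. I will prove the two assertions in turn.

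\emph{$\mathcal{I}$ is a two-sided ideal.} Suppose $x \in \mathcal{I}$ and let $z \in \mathbf{f}'$ be arbitrary; I want $zx, xz \in \mathcal{I}$. For any $y \in \mathbf{f}'$ one computes, using the defining property $(x'x'',y) = (x'\otimes x'', \Delta(y))$, that
\[
(zx, y) = (z \otimes x, \Delta(y)).
\]
Writing $\Delta(y) = \sum y'_{(1)} \otimes y'_{(2)}$ (Sweedler notation), this equals $\sum (\pm q^{\bullet})(z, y'_{(1)})(x, y'_{(2)})$, and every term vanishes because $(x, -) \equiv 0$. Hence $(zx,y)=0$ for all $y$, so $zx \in \mathcal{I}$; the argument for $xz$ is symmetric, using $(y, y'y'') = (\Delta(y), y'\otimes y'')$ together with the product formula on $\mathbf{f}'\otimes\mathbf{f}'$ to expand $(y, xz)$. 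One only needs to check that the scalars $q^{-|x_2|\bullet|x_1'|}(-1)^{p(x_2)p(x_1')}$ appearing in \eqref{twistedbialgebra} do not interfere — they don't, since a sum each of whose terms is $0$ is still $0$ regardless of the coefficients.

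\emph{$\Delta(\mathcal{I}) \subset \mathcal{I}\otimes\mathbf{f}' + \mathbf{f}'\otimes\mathcal{I}$.} The key observation is that the form on $\mathbf{f}'\otimes\mathbf{f}'$ is non-degenerate on each weight space, since it is (up to the nonzero scalars from \eqref{twistedbialgebra}, which I must track) a tensor product of the forms on the factors, and the induced form on $\mathbf{f} = \mathbf{f}'/\mathcal{I}$ is non-degenerate by construction. More precisely: pick $x \in \mathcal{I}$, homogeneous of weight $\nu$. Write $\Delta(x) = \sum_{\nu' + \nu'' = \nu} w_{\nu',\nu''}$ with $w_{\nu',\nu''} \in \mathbf{f}'_{\nu'} \otimes \mathbf{f}'_{\nu''}$. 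I claim each $w_{\nu',\nu''}$ lies in $\mathcal{I}\otimes\mathbf{f}' + \mathbf{f}'\otimes\mathcal{I}$. Consider pairing $w_{\nu',\nu''}$ against an arbitrary $a \otimes b$ with $a \in \mathbf{f}'_{\nu'}$, $b \in \mathbf{f}'_{\nu''}$: by the tensor-product formula for the form and the defining property $(x, y'y'') = (\Delta(x), y'\otimes y'')$, we get $(\Delta(x), a\otimes b) = (x, ab) = 0$ since $x \in \mathcal{I}$ and $\mathcal{I}$ is a two-sided ideal — wait, more directly just since $x \in \mathcal{I}$. But $(\Delta(x), a\otimes b) = (w_{\nu',\nu''}, a\otimes b)$ because all other weight components pair to zero with $a\otimes b$ by weight reasons. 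So $w_{\nu',\nu''}$ is in the radical of the form on $\mathbf{f}'_{\nu'}\otimes\mathbf{f}'_{\nu''}$. Finally one identifies this radical: because the form on $\mathbf{f}'_{\nu'}\otimes\mathbf{f}'_{\nu''}$ is, up to nonzero rescaling, the tensor product of the forms on the two factors, its radical is exactly $\mathcal{I}_{\nu'}\otimes\mathbf{f}'_{\nu''} + \mathbf{f}'_{\nu'}\otimes\mathcal{I}_{\nu''}$ (the radical of a tensor product of bilinear forms is $\mathrm{rad}\otimes V + V\otimes\mathrm{rad}$). Summing over $\nu', \nu''$ gives the claim.

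\emph{Main obstacle.} The only genuine point requiring care is the last identification: I must verify that the nonzero scalar $q^{-|x_2|\bullet|x_1'|}(-1)^{p(x_2)p(x_1')}$ twisting the multiplication on $\mathbf{f}'\otimes\mathbf{f}'$ does \emph{not} change the radical, i.e. that the bilinear form used to pair $\mathbf{f}'\otimes\mathbf{f}'$ with itself — namely the \emph{untwisted} tensor-product form $(\theta_{i_1}\otimes\cdots, \theta_{j_1}\otimes\cdots) = \prod (\theta_{i_k},\theta_{j_k})$ from Proposition~\ref{formproperties} — really is the plain tensor product of the two forms on the factors, with no $q$- or sign-twist. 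Inspecting the formula in Proposition~\ref{formproperties}, this is exactly what is asserted, so the radical of the form on $\mathbf{f}'_{\nu'}\otimes\mathbf{f}'_{\nu''}$ genuinely is $\mathcal{I}_{\nu'}\otimes\mathbf{f}'_{\nu''} + \mathbf{f}'_{\nu'}\otimes\mathcal{I}_{\nu''}$, and the twist in \eqref{twistedbialgebra} is irrelevant for this computation (it matters only for checking that $\Delta$ is an \emph{algebra} map, already done). Everything else is formal, mirroring \cite[1.2.3--1.2.5]{Lus1}.
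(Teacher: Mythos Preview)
Your argument is correct and is precisely the one the paper intends: the paper's proof consists solely of the citation ``This follows as in \cite[Sections 1.2.4--1.2.6]{Lus1}'', and what you have written is exactly Lusztig's argument spelled out, with the (correct) observation that the super signs in \eqref{twistedbialgebra} play no role because the bilinear form on $\mathbf{f}'\otimes\mathbf{f}'$ is the untwisted tensor product of the forms on the factors. One minor cleanup: for the right-ideal half you can use the same identity $(xz,y)=(x\otimes z,\Delta(y))$ rather than switching to the other adjunction, which avoids any implicit appeal to symmetry of the form.
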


\begin{proof}
This follows as in \cite[Sections 1.2.4-1.2.6]{Lus1}.
\end{proof}

Define $ {\bf f}(m,n) $ to be $ {\bf f}'(m,n) / \mathcal{I}$.
We will often denote this algebra by $ {\bf f}$.
Proposition ~\ref{hopfideal} shows that $ \Delta $ descends to a homomorphism $ \Delta \colon {\bf f} \rightarrow {\bf f} \otimes {\bf f} $.

\begin{prop}
\label{formdividedpowers}
$$ (\theta_i^{(p)}, \theta_i^{(p)}) =\begin{cases} \Pi_{s=1}^p \frac{1}{1-q^{2s}} & \textrm{ if } \ i \in I' , \\
                         \Pi_{s=1}^p \frac{1}{1-q^{-2s}} & \textrm{ if } i \in I'',\\
                         0 & \textrm{ if } i=m \textrm{ and } p>1,\\ 
                         1 & \textrm{ if } i=m \textrm{ and } p=1.
                         \end{cases} $$ 
\end{prop}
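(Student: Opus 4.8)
The plan is to adapt Lusztig's computation of $(\theta_i^{(p)},\theta_i^{(p)})$ from \cite{Lus1}: set up a one-step recursion in $p$ using the coproduct formulas already established, and then induct, treating the weight types $i\in I'$, $i\in I''$, and $i=m$ separately. For $i\neq m$, I would start from the identity $\theta_i^{(p-1)}\theta_i=[p]\,\theta_i^{(p)}$ and the defining property $(x'x'',y)=(x'\otimes x'',\Delta(y))$ from Proposition \ref{formproperties} to write
\[
(\theta_i^{(p)},\theta_i^{(p)})=\tfrac{1}{[p]}\bigl(\theta_i^{(p-1)}\otimes\theta_i,\ \Delta(\theta_i^{(p)})\bigr).
\]
By Proposition \ref{Deltathetaip}, $\Delta(\theta_i^{(p)})=\sum_{t+t'=p}q^{-\frac{i\bullet i}{2}tt'}\theta_i^{(t)}\otimes\theta_i^{(t')}$, and since the form is orthogonal for distinct weights (as in \cite{Lus1}), only the summand with $t=p-1$, $t'=1$ survives the pairing, contributing the scalar $q^{-\frac{i\bullet i}{2}(p-1)}$. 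Combined with $(\theta_i,\theta_i)=\frac{1}{1-q^{i\bullet i}}$ and multiplicativity of the form on tensors, this yields
\[
(\theta_i^{(p)},\theta_i^{(p)})=\frac{q^{-\frac{i\bullet i}{2}(p-1)}}{[p]\,(1-q^{i\bullet i})}\,(\theta_i^{(p-1)},\theta_i^{(p-1)}).
\]
A short calculation with $[p]=\frac{q^p-q^{-p}}{q-q^{-1}}$ shows that the prefactor equals $\frac{1}{1-q^{2p}}$ when $i\in I'$ (so $i\bullet i=2$) and $\frac{1}{1-q^{-2p}}$ when $i\in I''$ (so $i\bullet i=-2$). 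Induction on $p$, with base case $(\theta_i^{(1)},\theta_i^{(1)})=(\theta_i,\theta_i)$, then produces the two displayed product formulas.

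For $i=m$, the case $p=1$ is the defining value $(\theta_m,\theta_m)=1$. For $p>1$ I would first show $(\theta_m^2,\theta_m^2)=0$: applying $(x'x'',y)=(x'\otimes x'',\Delta(y))$ with $x'=x''=\theta_m$ and using $\Delta(\theta_m^2)=\theta_m^2\otimes 1+1\otimes\theta_m^2$ (Proposition \ref{Deltathetamp}(1) with $p=1$; the cross terms cancel because $m\bullet m=0$ and $\theta_m$ is odd), the pairing reduces to a sum of terms each carrying a factor $(1,\theta_m)=0$. Since $\mathbf{f}'_{2m}$ is one-dimensional, spanned by $\theta_m^2$, this forces $\theta_m^2\in\mathcal{I}$; as $\mathcal{I}$ is a two-sided ideal (Proposition \ref{hopfideal}), $\theta_m^p=\theta_m^{p-2}\theta_m^2\in\mathcal{I}$ for every $p\geq 2$, whence $(\theta_m^{(p)},\theta_m^{(p)})=\tfrac{1}{([p]!)^2}(\theta_m^p,\theta_m^p)=0$. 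Alternatively one can run the same recursion as above using Proposition \ref{Deltathetamp}: for $p$ even no summand of $\Delta(\theta_m^p)$ has an odd first tensor factor, so the pairing against $\theta_m^{p-1}\otimes\theta_m$ vanishes, while for $p$ odd the recursion reduces $(\theta_m^p,\theta_m^p)$ to a multiple of $(\theta_m^{p-1},\theta_m^{p-1})$ with $p-1$ even.

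The argument is mostly bookkeeping; the two points requiring care are identifying the unique surviving summand in each coproduct pairing, which rests on the weight-grading orthogonality of $(\,,\,)$ inherited from \cite{Lus1}, and the elementary $q$-identity collapsing $\frac{q^{-\frac{i\bullet i}{2}(p-1)}}{[p](1-q^{i\bullet i})}$ to $\frac{1}{1-q^{\pm 2p}}$. I do not expect any genuine obstacle beyond these.
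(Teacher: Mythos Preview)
Your proposal is correct and essentially matches the paper. For $i\neq m$ the paper simply cites Lusztig \cite[Lemma 1.4.4]{Lus1}, which is exactly the recursion you spell out; for $i=m$ the paper establishes $(\theta_m^2,\theta_m^2)=0$ the same way you do and then runs a direct induction via $(\theta_m^{p'+p''},\theta_m^{p'+p''})=(\theta_m^{p'}\otimes\theta_m^{p''},\Delta(\theta_m^{p'})\Delta(\theta_m^{p''}))$, which is your ``alternative'' route. Your primary argument for $i=m$---deducing $\theta_m^2\in\mathcal{I}$ from one-dimensionality of ${\bf f}'_{2m}$ and then using that $\mathcal{I}$ is an ideal to kill all higher powers at once---is a legitimate minor shortcut over the paper's explicit induction, and anticipates Proposition~\ref{gen1}.
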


\begin{proof}
If $ i \neq m$, then this is the same proof as in \cite[Lemma 1.4.4]{Lus1}.
The pairing $ (\theta_m^p, \theta_m^p) $ is equal to zero for $ p>1 $ by induction on $ p$.
By Proposition ~\ref{formproperties}, 
$$ (\theta_m^2, \theta_m^2) = (\theta_m \otimes \theta_m, \theta_m^2 \otimes 1 + 1 \otimes \theta_m^2)=0 $$
which gives the base case.
The induction step, follows easily from:
$$ (\theta_m^{p'+p''}, \theta_m^{p'+p''}) = (\theta_m^{p'} \otimes \theta_m^{p''}, \Delta(\theta_m^{p'}) \Delta(\theta_m^{p''})). $$
\end{proof}

\begin{prop}
\label{gen1}
$ \theta_m^2 \in \mathcal{I}$.
\end{prop}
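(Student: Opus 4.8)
The plan is to show that $\theta_m^2$ pairs to zero with every element of ${\bf f}'$, so that it lies in the radical $\mathcal{I}$. First I would reduce to checking the pairing against monomials of the appropriate weight. Since $\theta_m^2 \in {\bf f}'_{2m}$ and the form respects the weight decomposition (distinct weights pair to zero, as is immediate from the defining properties in Proposition~\ref{formproperties}), it suffices to show $(\theta_m^2, y) = 0$ for all $y \in {\bf f}'_{2m}$. But ${\bf f}'_{2m}$ is one-dimensional, spanned by $\theta_m^2$ itself, so in fact I only need $(\theta_m^2, \theta_m^2) = 0$.

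The equality $(\theta_m^2, \theta_m^2) = 0$ is exactly the base case already established in the proof of Proposition~\ref{formdividedpowers}: using the second defining property of the form together with the coproduct $\Delta(\theta_m^2) = \theta_m^2 \otimes 1 + 1 \otimes \theta_m^2$ (a special case of Proposition~\ref{Deltathetamp}(1) with $p=1$, since ${1 \brack 0} = {1 \brack 1} = 1$), one computes
\begin{equation*}
(\theta_m^2, \theta_m^2) = (\theta_m \otimes \theta_m,\ \theta_m^2 \otimes 1 + 1 \otimes \theta_m^2) = (\theta_m, \theta_m^2)(\theta_m, 1) + (\theta_m, 1)(\theta_m, \theta_m^2) = 0,
\end{equation*}
because $(\theta_m, 1) = 0$ (weight reasons: $1 \in {\bf f}'_0$ while $\theta_m \in {\bf f}'_m$). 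Alternatively this is just Proposition~\ref{formdividedpowers} in the case $i = m$, $p = 2$, rescaled by $[2]!$.

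Putting these together: $(\theta_m^2, y) = 0$ for all $y$, hence $\theta_m^2 \in \mathcal{I}$. I do not anticipate any real obstacle here — the only point requiring a moment's thought is the reduction via the weight grading, namely that one need only test against elements of the same weight $\nu = 2m$, and that this weight space is spanned by $\theta_m^2$. This follows because the pairing of $\theta_{i_1}\cdots\theta_{i_r}$ with $\theta_{j_1}\cdots\theta_{j_s}$, expanded via the coproduct formulas and the product rule for the form on tensor powers, vanishes unless the two monomials have the same multiset of indices; in particular any monomial pairing nontrivially with $\theta_m^2$ must itself be $\theta_m^2$.
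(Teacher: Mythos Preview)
Your proof is correct and follows essentially the same route as the paper: reduce to showing $(\theta_m^2,\theta_m^2)=0$ via the weight grading, then compute this pairing using $\Delta(\theta_m^2)=\theta_m^2\otimes 1+1\otimes\theta_m^2$. The paper states the reduction step more tersely (``It is only necessary to check that $\theta_m^2$ paired with itself is zero''), whereas you spell out why the weight space ${\bf f}'_{2m}$ is one-dimensional; otherwise the arguments coincide.
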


\begin{proof}
It is only necessary to check that $ \theta_m^2 $ paired with itself is zero:
$$ (\theta_m^2, \theta_m^2) = (\theta_m \otimes \theta_m, \Delta(\theta_m) \Delta(\theta_m)) = (\theta_m \otimes \theta_m, \theta_m^2 \otimes 1 + 1 \otimes \theta_m^2) = 0.$$
\end{proof}

\begin{prop}
\label{gen2}
If $ |i-j| >1$, then $ \theta_i \theta_j - \theta_j \theta_i \in \mathcal{I}$.
\end{prop}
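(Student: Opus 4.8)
The plan is to show directly that $\theta_i\theta_j - \theta_j\theta_i$ lies in the radical, i.e.\ that it pairs to zero with every element of $ {\bf f}'$. Since the form $( , )$ respects the $\N[I]$-grading — which follows from the defining properties in Proposition~\ref{formproperties} together with $\Delta(\theta_i) = \theta_i\otimes 1 + 1\otimes\theta_i$, and in particular gives $(\theta_k,1)=0$ — it suffices to pair $\theta_i\theta_j - \theta_j\theta_i$ against a basis of the weight space $ {\bf f}'_{i+j}$, namely the two monomials $\theta_i\theta_j$ and $\theta_j\theta_i$.

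First I would compute the coproduct. Since $\Delta$ is an algebra homomorphism, expanding $\Delta(\theta_i)\Delta(\theta_j)$ using the twisted product on $ {\bf f}'\otimes {\bf f}'$ produces the cross term $(1\otimes\theta_i)(\theta_j\otimes 1) = q^{-i\bullet j}(-1)^{p(i)p(j)}(\theta_j\otimes\theta_i)$. The hypothesis $|i-j|>1$ forces $i\bullet j = 0$ by~\eqref{bullets}, and since $i\neq j$ at most one of $i,j$ equals $m$, so $p(i)p(j)=0$. Hence $\Delta(\theta_i\theta_j) = \theta_i\theta_j\otimes 1 + \theta_i\otimes\theta_j + \theta_j\otimes\theta_i + 1\otimes\theta_i\theta_j$, and symmetrically for $\Delta(\theta_j\theta_i)$. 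Subtracting, $\theta_i\theta_j-\theta_j\theta_i$ is primitive: $\Delta(\theta_i\theta_j-\theta_j\theta_i) = (\theta_i\theta_j-\theta_j\theta_i)\otimes 1 + 1\otimes(\theta_i\theta_j-\theta_j\theta_i)$.

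Then I would evaluate the pairings using $(x'x'',y) = (x'\otimes x'',\Delta(y))$ and the multiplicativity $(\theta_{i_1}\otimes\theta_{i_2},\theta_{j_1}\otimes\theta_{j_2}) = (\theta_{i_1},\theta_{j_1})(\theta_{i_2},\theta_{j_2})$. Writing $\theta_i\theta_j = \theta_i\cdot\theta_j$ and expanding $\Delta(\theta_i\theta_j)$ as above, every term with a tensor factor $1$ dies because $(\theta_k,1)=0$, and the term $\theta_j\otimes\theta_i$ contributes $(\theta_i,\theta_j)(\theta_j,\theta_i)=0$ since $i\neq j$ (using the last two bullets of Proposition~\ref{formproperties} and again that the form is graded); only $\theta_i\otimes\theta_j$ survives, giving $(\theta_i,\theta_i)(\theta_j,\theta_j)$. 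The analogous computation against $\theta_j\theta_i$ yields $(\theta_j,\theta_j)(\theta_i,\theta_i)$. Hence $(\theta_i\theta_j-\theta_j\theta_i,\theta_i\theta_j)=0$, and by the $i\leftrightarrow j$ symmetry also $(\theta_i\theta_j-\theta_j\theta_i,\theta_j\theta_i)=0$. This exhausts the weight space $ {\bf f}'_{i+j}$, so $\theta_i\theta_j-\theta_j\theta_i\in\mathcal{I}$.

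The only real subtlety — and the step I would be most careful about — is the sign and $q$-power bookkeeping in $\Delta(\theta_i)\Delta(\theta_j)$ together with the justification that the form vanishes between distinct weight spaces; both are routine here precisely because $|i-j|>1$ simultaneously kills the $\bullet$-pairing and the parity product.
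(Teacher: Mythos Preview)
Your proof is correct and follows essentially the same route as the paper's: reduce to pairing $\theta_i\theta_j-\theta_j\theta_i$ against the two monomials spanning ${\bf f}'_{i+j}$, expand via $(x'x'',y)=(x'\otimes x'',\Delta(y))$, and use $p(i)p(j)=0$ and $i\bullet j=0$ to see that both pairings equal $(\theta_i,\theta_i)(\theta_j,\theta_j)$ and hence cancel. The paper packages the two pairings into one line by pairing $\theta_i\otimes\theta_j-\theta_j\otimes\theta_i$ directly against $\Delta(\theta_i)\Delta(\theta_j)$, but the content is identical; your added remarks on primitivity and on why the form is graded are not needed for the argument but do no harm.
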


\begin{proof}
\begin{align*}
(\theta_i \theta_j, \theta_i \theta_j-\theta_j \theta_i) &= (\theta_i \otimes \theta_j - \theta_j \otimes \theta_i, (\theta_i \otimes 1 + 1 \otimes \theta_i)(\theta_j \otimes 1 + 1 \otimes \theta_j))\\
&= (\theta_i \otimes \theta_j - \theta_j \otimes \theta_i, \theta_i \otimes \theta_j + (-1)^{p(i)p(j)} \theta_j \otimes \theta_i).
\end{align*}
Note that we have eliminated terms which obviously make the inner product vanish.
Since $ p(i)p(j) =0$, this is equal to
$$ (\theta_i, \theta_i) (\theta_j, \theta_j) - (\theta_i, \theta_i) (\theta_j, \theta_j) =0. $$
Similarly,
$ (\theta_j \theta_i, \theta_i \theta_j-\theta_j \theta_i) =0, $
so $ \theta_i \theta_j - \theta_j \theta_i \in \mathcal{I}$.
\end{proof}

\begin{prop}
\label{gen3calculations}
Let $ \nu = 1(m-1)+2(m)+1(m+1)$.  The pairings on $ {\bf f}_{\nu}' $ are:
\begin{enumerate}
\item $ (\theta_m \theta_{m-1} \theta_{m+1} \theta_m, \theta_m \theta_{m-1} \theta_{m+1} \theta_m)=0$
\item $ (\theta_m \theta_{m-1} \theta_{m+1} \theta_m, \theta_m \theta_{m-1} \theta_{m} \theta_{m+1})=\frac{q^{-1}}{1-q^{-2}}$
\item $ (\theta_m \theta_{m-1} \theta_{m+1} \theta_m, \theta_{m-1} \theta_{m} \theta_{m+1} \theta_{m})=\frac{-q^{-1}}{1-q^{-2}}$
\item $ (\theta_m \theta_{m-1} \theta_{m+1} \theta_m, \theta_{m+1} \theta_{m} \theta_{m-1} \theta_{m})=\frac{q^{-1}}{1-q^{-2}}$
\item $ (\theta_m \theta_{m-1} \theta_{m+1} \theta_m, \theta_{m} \theta_{m+1} \theta_{m} \theta_{m-1})=\frac{-q^{-1}}{1-q^{-2}}$
\item $ (\theta_m \theta_{m-1} \theta_{m} \theta_{m+1}, \theta_{m} \theta_{m-1} \theta_{m} \theta_{m+1})=\frac{1}{1-q^{-2}}$
\item $ (\theta_m \theta_{m-1} \theta_{m} \theta_{m+1}, \theta_{m-1} \theta_{m} \theta_{m+1} \theta_{m})=0$
\item $ (\theta_m \theta_{m-1} \theta_{m} \theta_{m+1}, \theta_{m+1} \theta_{m} \theta_{m-1} \theta_{m})=\frac{q^{-2}}{1-q^{-2}}$
\item $ (\theta_m \theta_{m-1} \theta_{m} \theta_{m+1}, \theta_{m} \theta_{m+1} \theta_{m} \theta_{m-1})=0$
\item $ (\theta_{m-1} \theta_{m} \theta_{m+1} \theta_{m}, \theta_{m-1} \theta_{m} \theta_{m+1} \theta_{m})=\frac{1}{1-q^2}$
\item $ (\theta_{m-1} \theta_{m} \theta_{m+1} \theta_{m}, \theta_{m+1} \theta_{m} \theta_{m-1} \theta_{m})=0$
\item $ (\theta_{m-1} \theta_{m} \theta_{m+1} \theta_{m}, \theta_{m} \theta_{m+1} \theta_{m} \theta_{m-1})=\frac{q^2}{1-q^2}$
\item $ (\theta_{m+1} \theta_{m} \theta_{m-1} \theta_{m}, \theta_{m+1} \theta_{m} \theta_{m-1} \theta_{m})=\frac{1}{1-q^{-2}}$
\item $ (\theta_{m+1} \theta_{m} \theta_{m-1} \theta_{m}, \theta_{m} \theta_{m+1} \theta_{m} \theta_{m-1})=0$
\item $ (\theta_{m} \theta_{m+1} \theta_{m} \theta_{m-1}, \theta_{m} \theta_{m+1} \theta_{m} \theta_{m-1})=\frac{1}{1-q^2}$.
\end{enumerate}
\end{prop}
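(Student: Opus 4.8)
The plan is to obtain all fifteen entries by direct computation from the defining properties of $(\,,\,)$ in Proposition~\ref{formproperties}, using a peeling recursion. Write the left argument as $\theta_{a_1}\cdot(\theta_{a_2}\cdots\theta_{a_r})$ and apply $(x'x'',y)=(x'\otimes x'',\Delta(y))$; expanding $\Delta(\theta_{c_1}\cdots\theta_{c_r})=\Delta(\theta_{c_1})\cdots\Delta(\theta_{c_r})$ inside the twisted algebra ${\bf f}'\otimes{\bf f}'$ of \eqref{twistedbialgebra} and using that $(\,,\,)$ respects the weight grading, the only surviving terms pair $\theta_{a_1}$ against the left tensor factor, so that factor is a single generator $\theta_{c_k}$ with $c_k=a_1$. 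Recording the scalar picked up when $\Delta(\theta_{c_k})$ contributes its left summand and every other $\Delta(\theta_{c_j})$ its right summand gives
\begin{equation*}
(\theta_{a_1}\cdots\theta_{a_r},\, \theta_{c_1}\cdots\theta_{c_r})\ =\ \sum_{k\,:\,c_k=a_1}(\theta_{a_1},\theta_{a_1})\Bigl(\prod_{j<k}q^{-c_j\bullet c_k}(-1)^{p(c_j)p(c_k)}\Bigr)\,(\theta_{a_2}\cdots\theta_{a_r},\, \theta_{c_1}\cdots\widehat{\theta_{c_k}}\cdots\theta_{c_r}),
\end{equation*}
a hat denoting omission; four applications bring any pairing in the list down to $(1,1)=1$.

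I would then substitute the relevant constants. From \eqref{bullets}: $(m-1)\bullet m=m\bullet(m-1)=-1$, $m\bullet(m+1)=(m+1)\bullet m=1$, $(m-1)\bullet(m+1)=0$, $m\bullet m=0$, $(m-1)\bullet(m-1)=2$ and $(m+1)\bullet(m+1)=-2$. From Proposition~\ref{formproperties}: $(\theta_{m-1},\theta_{m-1})=\tfrac{1}{1-q^{2}}$, $(\theta_{m+1},\theta_{m+1})=\tfrac{1}{1-q^{-2}}$ and $(\theta_m,\theta_m)=1$. And $p(m-1)=p(m+1)=0$, $p(m)=1$, so a sign $-1$ is produced precisely when one of the two copies of $\theta_m$ is moved past the other. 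With these values the recursion becomes a finite mechanical computation for each of the five monomials $\theta_m\theta_{m-1}\theta_{m+1}\theta_m$, $\theta_m\theta_{m-1}\theta_m\theta_{m+1}$, $\theta_{m-1}\theta_m\theta_{m+1}\theta_m$, $\theta_{m+1}\theta_m\theta_{m-1}\theta_m$, $\theta_m\theta_{m+1}\theta_m\theta_{m-1}$. For instance, in (1) the two occurrences of $m$ in the right monomial yield two equal length-three pairings entering with opposite signs, so the entry is $0$; in (2) they yield two terms whose difference, after a factor $1-q^{2}$ cancels between numerator and denominator, equals $\tfrac{q^{-1}}{1-q^{-2}}$.

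The computation is routine; the only delicate point is the bookkeeping, since the exponent $-\sum_{j<k}c_j\bullet c_k$ and the accompanying sign depend on the order in which letters are stripped and, at each step, on which copy of $m$ is selected on the right. I would fix the convention of always removing the leftmost letter of the current left argument, summing over both available copies of $m$ when they occur, and simplify intermediate expressions via $(1-q^{2})(1-q^{-2})=-(q-q^{-1})^{2}$; the extraneous factors $1-q^{\pm 2}$ that appear in the course of the recursion always cancel, leaving the closed forms in the statement. As consistency checks one can use the symmetry of $(\,,\,)$ and compare a few entries against the shuffle-algebra (graphical) description of the bilinear form recorded at the end of this section.
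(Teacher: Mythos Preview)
Your approach is correct and essentially identical to the paper's: both peel off the leftmost letter of the left argument via $(x'x'',y)=(x'\otimes x'',\Delta(y))$, expand $\Delta$ of the right monomial in the twisted tensor algebra, and keep only the terms whose left tensor factor is a single generator matching $\theta_{a_1}$. The paper simply carries this out explicitly for item~(1) and declares the remaining fourteen analogous, whereas you have packaged the same computation into a clean recursion formula.
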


\begin{proof}
The calculations for all the parts are similar so we give only the proof of the first equation.
\begin{align*}
(\theta_m \theta_{m-1} \theta_{m+1} \theta_m, \theta_m \theta_{m-1} \theta_{m+1} \theta_m) &= (\theta_m \otimes \theta_{m-1} \theta_{m+1} \theta_m, \Delta(\theta_m) \Delta(\theta_{m-1})\Delta(\theta_{m+1})\Delta(\theta_m)) \\
&= (\theta_m \otimes \theta_{m-1} \theta_{m+1} \theta_m, \theta_m \otimes \theta_{m-1} \theta_{m+1} \theta_m - \theta_m \otimes \theta_{m} \theta_{m-1} \theta_{m+1})\\
&=(\theta_{m-1} \otimes \theta_{m+1} \theta_m, \theta_{m-1} \otimes \theta_{m+1} \theta_m - q \theta_{m-1} \otimes \theta_m \theta_{m+1})\\
&=\frac{1}{1-q^{2}}(\theta_{m+1} \otimes \theta_m, \theta_{m+1} \otimes \theta_m - (q)(q^{-1}) \theta_{m+1} \otimes \theta_m)\\
&=0.
\end{align*}
\end{proof}

\begin{prop}
\label{gen3}
$$ (q+q^{-1})\theta_{m} \theta_{m-1} \theta_{m+1} \theta_m  
- \theta_{m-1} \theta_{m} \theta_{m+1} \theta_{m} 
- \theta_{m} \theta_{m-1} \theta_{m} \theta_{m+1}
- \theta_{m} \theta_{m+1} \theta_{m} \theta_{m-1}
- \theta_{m+1} \theta_{m} \theta_{m-1} \theta_{m}
\in \mathcal{I}. $$
\end{prop}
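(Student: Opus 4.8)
The plan is to prove the statement by showing directly that
$$ z \,:=\, (q+q^{-1})\,\theta_m\theta_{m-1}\theta_{m+1}\theta_m - \theta_{m-1}\theta_m\theta_{m+1}\theta_m - \theta_m\theta_{m-1}\theta_m\theta_{m+1} - \theta_m\theta_{m+1}\theta_m\theta_{m-1} - \theta_{m+1}\theta_m\theta_{m-1}\theta_m $$
pairs to zero with a spanning set of $ {\bf f}'$. The element $z$ is homogeneous of weight $\nu = 1(m-1)+2(m)+1(m+1)$. Since the form $( , )$ respects the weight grading (monomials of distinct weight are orthogonal) and is symmetric, as in \cite[Section 1.2]{Lus1}, it is enough to check that $(z, w) = 0$ for $w$ running over the $12$ monomials that span the space $ {\bf f}'_\nu$, namely the $12$ distinct orderings of the multiset $\{\theta_{m-1},\theta_m,\theta_m,\theta_{m+1}\}$.

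Next I would cut this list down to the five monomials occurring in Proposition~\ref{gen3calculations}. Six of the twelve monomials contain two adjacent copies of $\theta_m$; each such monomial has the form $u\,\theta_m^2\,u'$, hence lies in $\mathcal{I}$ by Proposition~\ref{gen1} together with the fact (Proposition~\ref{hopfideal}) that $\mathcal{I}$ is a two-sided ideal, and is therefore orthogonal to $z$. Of the remaining six, five are
$$ a_1 = \theta_m\theta_{m-1}\theta_{m+1}\theta_m,\quad a_2 = \theta_m\theta_{m-1}\theta_m\theta_{m+1},\quad a_3 = \theta_{m-1}\theta_m\theta_{m+1}\theta_m,\quad a_4 = \theta_{m+1}\theta_m\theta_{m-1}\theta_m,\quad a_5 = \theta_m\theta_{m+1}\theta_m\theta_{m-1}, $$
and the sixth, $\theta_m\theta_{m+1}\theta_{m-1}\theta_m$, differs from $a_1$ by
$$ \theta_m\theta_{m+1}\theta_{m-1}\theta_m - a_1 \,=\, \theta_m\bigl(\theta_{m+1}\theta_{m-1} - \theta_{m-1}\theta_{m+1}\bigr)\theta_m \,\in\, \mathcal{I}, $$
using Proposition~\ref{gen2} (as $|(m+1)-(m-1)|>1$) and once more that $\mathcal{I}$ is an ideal. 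Thus the only pairings that actually need to be computed are $(z, a_j)$ for $j=1,\dots,5$.

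Finally, observe that $z = (q+q^{-1})a_1 - a_2 - a_3 - a_4 - a_5$, and that the fifteen values recorded in Proposition~\ref{gen3calculations} are exactly the entries $(a_i,a_j)$, $1\le i\le j\le 5$, of the symmetric Gram matrix of $a_1,\dots,a_5$. So for each $j$ one expands
$$ (z, a_j) \,=\, (q+q^{-1})(a_1, a_j) - (a_2, a_j) - (a_3, a_j) - (a_4, a_j) - (a_5, a_j) $$
and substitutes the tabulated values; after clearing the denominators $1-q^{\pm 2}$ and using $(q+q^{-1})q^{-1} = 1+q^{-2}$, each of the five sums collapses to $0$. This is a purely mechanical check, with no real difficulty. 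Therefore $(z, w) = 0$ for every $w$ in the spanning set of $ {\bf f}'_\nu$, whence $z \in \mathcal{I}$. The only point requiring care is the bookkeeping of the second paragraph, which is precisely what ensures that no pairings beyond those already computed in Proposition~\ref{gen3calculations} are needed.
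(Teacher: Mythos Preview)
Your proposal is correct and follows essentially the same approach as the paper: reduce to pairing $z$ against the five monomials $a_1,\dots,a_5$ and then read off the answer from Proposition~\ref{gen3calculations}. The paper's proof simply asserts that these five pairings suffice, whereas you supply the justification (six monomials contain $\theta_m^2$ and one more is congruent to $a_1$ modulo $\mathcal{I}$), so your write-up is a more complete version of the same argument.
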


\begin{proof}
It suffices to show that the above element paired with $ \theta_{m} \theta_{m-1} \theta_{m+1} \theta_m$, 
$ \theta_{m-1} \theta_{m} \theta_{m+1} \theta_{m} $, 
\newline
$ \theta_{m} \theta_{m-1} \theta_{m} \theta_{m+1} $, 
$ \theta_{m} \theta_{m+1} \theta_{m} \theta_{m-1} $, and
$ \theta_{m+1} \theta_{m} \theta_{m-1} \theta_{m} $ are all zero.
This now follows easily from Proposition ~\ref{gen3calculations}.
\end{proof}

The proof of the next proposition uses Proposition ~\ref{formproperties} and is similar to the proof of Proposition ~\ref{gen3calculations}.

\begin{prop}
\label{gen4calculations}
Let $ i, i+1 \in I $, $ i \neq m$, and $ \nu = 2i+1(i+1) $.
The pairings on $ {\bf f}'_{\nu} $ are:
\begin{enumerate}
\item $$ (\theta_i^2 \theta_{i+1}, \theta_i^2 \theta_{i+1}) =
\begin{cases}  \frac{1+q^{-i \bullet i}}{(1-q^{i \bullet i})^3} & \textrm{ if } p(i+1)=0 , \\
                          \frac{1+q^{-i \bullet i}}{(1-q^{i \bullet i})^2} & \textrm{ if } p(i+1)=1
\end{cases} $$ 
\item $$ (\theta_i^2 \theta_{i+1}, \theta_i \theta_{i+1} \theta_i) =
\begin{cases}  \frac{q+q^{-1}}{(1-q^{i \bullet i})^3} & \textrm{ if } p(i+1)=0 , \\
                          \frac{q+q^{-1}}{(1-q^{i \bullet i})^2} & \textrm{ if } p(i+1)=1
\end{cases} $$ 
\item $$ (\theta_i^2 \theta_{i+1}, \theta_{i+1} \theta_i^2) =
\begin{cases}  \frac{q^{-i \bullet (i+1)}(q+q^{-1})}{(1-q^{i \bullet i})^3} & \textrm{ if } p(i+1)=0 , \\
                          \frac{q^{-i \bullet (i+1)}(q+q^{-1})}{(1-q^{i \bullet i})^2} & \textrm{ if } p(i+1)=1
\end{cases} $$ 
\item $$ (\theta_i \theta_{i+1} \theta_i, \theta_{i} \theta_{i+1} \theta_{i}) =
\begin{cases}  \frac{2}{(1-q^{i \bullet i})^3} & \textrm{ if } p(i+1)=0 , \\
                          \frac{2}{(1-q^{i \bullet i})^2} & \textrm{ if } p(i+1)=1
\end{cases} $$ 
\item $$ (\theta_i \theta_{i+1} \theta_i, \theta_{i+1} \theta_{i}^2) =
\begin{cases}  \frac{q+q^{-1}}{(1-q^{i \bullet i})^3} & \textrm{ if } p(i+1)=0 , \\
                          \frac{q+q^{-1}}{(1-q^{i \bullet i})^2} & \textrm{ if } p(i+1)=1
\end{cases} $$ 
\item $$ (\theta_{i+1} \theta_i^2, \theta_{i+1} \theta_{i}^2) =
\begin{cases}  \frac{1+q^{-i \bullet i}}{(1-q^{i \bullet i})^3} & \textrm{ if } p(i+1)=0 , \\
                          \frac{1+q^{-i \bullet i}}{(1-q^{i \bullet i})^2} & \textrm{ if } p(i+1)=1.
\end{cases} $$ 
\end{enumerate}
\end{prop}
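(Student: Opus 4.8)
The plan is to compute each of the six pairings by exactly the method used in the proof of Proposition~\ref{gen3calculations}: repeatedly apply the identity $(x'x'', y) = (x' \otimes x'', \Delta(y))$ of Proposition~\ref{formproperties} to strip the leftmost generator off the first argument, expand $\Delta$ of the second argument using $\Delta(\theta_j) = \theta_j \otimes 1 + 1 \otimes \theta_j$ together with the twisted multiplication~\eqref{twistedbialgebra}, and discard every term whose left tensor factor has the wrong multidegree. Since the only generators occurring are $\theta_i$ (twice) and $\theta_{i+1}$ (once), the space ${\bf f}'_\nu$ is spanned by the three monomials $\theta_i^2\theta_{i+1}$, $\theta_i\theta_{i+1}\theta_i$, $\theta_{i+1}\theta_i^2$, so each reduction step leaves at most two surviving terms and the recursion bottoms out quickly at products of the base values $(\theta_i,\theta_i) = \tfrac{1}{1-q^{i\bullet i}}$ and $(\theta_{i+1},\theta_{i+1})$.

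Two observations cut the work down. First, every sign $(-1)^{p(x_j)p(x_i')}$ arising from~\eqref{twistedbialgebra} equals $+1$: the factor $\theta_i$ is even and no tensor slot ever accumulates two odd generators, so only powers of $q$ must be tracked. Second, each of the six expressions collapses to a common factor $(\theta_i,\theta_i)^2\,(\theta_{i+1},\theta_{i+1})$ multiplied by a Laurent polynomial in $q$ built purely from the braiding exponents $-|x_j|\bullet|x_i'|$; and $(\theta_{i+1},\theta_{i+1}) = \tfrac{1}{1-q^{(i+1)\bullet(i+1)}} = \tfrac{1}{1-q^{i\bullet i}}$ when $i+1 \ne m$ (since $i$ and $i+1$ then lie both in $I'$ or both in $I''$, so $(i+1)\bullet(i+1) = i\bullet i$), while $(\theta_{i+1},\theta_{i+1}) = 1$ when $i+1 = m$, i.e.\ when $p(i+1) = 1$. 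This is precisely the dichotomy in the statement, so a single computation suffices for each pairing, the case split being inserted only at the end.

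As a representative case, consider~(2). Stripping $\theta_i$ off $\theta_i^2\theta_{i+1}$ gives $(\theta_i^2\theta_{i+1},\theta_i\theta_{i+1}\theta_i) = (\theta_i\otimes\theta_i\theta_{i+1},\, \Delta(\theta_i)\Delta(\theta_{i+1})\Delta(\theta_i))$, and the only terms of the product with left factor $\theta_i$ are $\theta_i\otimes\theta_{i+1}\theta_i$ and $q^{-(i\bullet i + i\bullet(i+1))}\,\theta_i\otimes\theta_i\theta_{i+1}$, so the pairing equals $(\theta_i,\theta_i)\big[(\theta_i\theta_{i+1},\theta_{i+1}\theta_i) + q^{-(i\bullet i + i\bullet(i+1))}(\theta_i\theta_{i+1},\theta_i\theta_{i+1})\big]$. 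One more stripping step gives $(\theta_i\theta_{i+1},\theta_{i+1}\theta_i) = q^{-i\bullet(i+1)}(\theta_i,\theta_i)(\theta_{i+1},\theta_{i+1})$ and $(\theta_i\theta_{i+1},\theta_i\theta_{i+1}) = (\theta_i,\theta_i)(\theta_{i+1},\theta_{i+1})$, so the pairing is $(\theta_i,\theta_i)^2(\theta_{i+1},\theta_{i+1})\,q^{-i\bullet(i+1)}\big(1 + q^{-i\bullet i}\big)$. Because $i\bullet(i+1) = -\tfrac12\,(i\bullet i)$ in every admissible case, the last two factors simplify to $q^{(i\bullet i)/2} + q^{-(i\bullet i)/2} = q + q^{-1}$, and inserting the two values of $(\theta_{i+1},\theta_{i+1})$ yields the two cases of~(2). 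Parts~(1), (3), (4) are handled identically, and parts~(5), (6) follow either by the same computation or, since the form is symmetric and ${\bf f}'$ carries a word-reversal anti-automorphism under which $\theta_i^2\theta_{i+1}\leftrightarrow\theta_{i+1}\theta_i^2$ and $\theta_i\theta_{i+1}\theta_i$ is fixed, from~(1) and~(2).

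The only real difficulty is clerical: keeping the braiding exponents $-|x_j|\bullet|x_i'|$ straight through each product in ${\bf f}'^{\otimes b}$, and using the identity $i\bullet(i+1) = -\tfrac12(i\bullet i)$ — which is what makes the final answers independent of whether $i \in I'$ or $i \in I''$ — consistently. No idea beyond those already present in the proof of Proposition~\ref{gen3calculations} is needed.
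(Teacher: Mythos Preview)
Your approach is correct and is exactly what the paper does: it states only that the proof ``uses Proposition~\ref{formproperties} and is similar to the proof of Proposition~\ref{gen3calculations}'', i.e.\ the same strip-and-expand computation you carry out. Your sample calculation of~(2) is accurate, and your two structural observations (that all parity signs vanish because $p(\theta_i)=0$ and there is only one copy of $\theta_{i+1}$, and that the case split comes solely from $(\theta_{i+1},\theta_{i+1})$) are the right way to organize the bookkeeping. The only remark is that the word-reversal symmetry you invoke for~(5),~(6) is not established in the paper at this point, but since you also indicate the direct computation works, this is harmless.
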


\begin{prop}
\label{gen4}
Assume $ i \neq m$ and $ i \pm 1 \in I$.  Then
$$ (q+q^{-1})\theta_i \theta_{i \pm 1} \theta_i - \theta_i^2 \theta_{i \pm 1} - \theta_{i \pm 1} \theta_i^2 \in \mathcal{I}. $$
\end{prop}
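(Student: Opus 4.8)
The plan is to follow the template of the proof of Proposition~\ref{gen3}. Since $\mathcal{I}$ is the radical of $(\,,\,)$, an element $E\in{\bf f}'_\nu$ with $\nu = 2i+1(i\pm 1)$ lies in $\mathcal{I}$ as soon as it pairs to zero against a spanning set of the weight space ${\bf f}'_\nu$. That weight space is three-dimensional, spanned by the monomials $\theta_i^2\theta_{i\pm1}$, $\theta_i\theta_{i\pm1}\theta_i$ and $\theta_{i\pm1}\theta_i^2$, so it suffices to show that
\[ E\define(q+q^{-1})\,\theta_i\theta_{i\pm1}\theta_i \;-\; \theta_i^2\theta_{i\pm1}\;-\;\theta_{i\pm1}\theta_i^2 \]
pairs to zero with each of these three monomials. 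By symmetry of $(\,,\,)$, Proposition~\ref{gen4calculations} records the full Gram matrix of ${\bf f}'_\nu$ in this basis, so each of the three required pairings is an explicit $\Q(q)$-linear combination of the six values listed there.

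Carrying this out, I would first settle the bookkeeping of which case of \eqref{bullets} occurs: since $i\neq m$ we have $i\in I'$ or $i\in I''$, and writing $a = i\bullet i\in\{2,-2\}$ and $b = i\bullet(i\pm1)\in\{-1,1\}$ one gets $(a,b)=(2,-1)$ when $i\in I'$ (in particular when the neighbour $i\pm1$ equals $m$, forcing $i=m-1$) and $(a,b)=(-2,1)$ when $i\in I''$. In each branch of Proposition~\ref{gen4calculations} all six entries carry the same denominator ($(1-q^a)^3$ if $p(i\pm1)=0$, and $(1-q^a)^2$ if $p(i\pm1)=1$), so after clearing it the pairing of $E$ with $\theta_i\theta_{i\pm1}\theta_i$ collapses to $(q+q^{-1})\cdot 2-(q+q^{-1})-(q+q^{-1})=0$, while the pairing of $E$ with $\theta_i^2\theta_{i\pm1}$ (and, identically, with $\theta_{i\pm1}\theta_i^2$) collapses to
\[ (q+q^{-1})^2-(1+q^{-a})-q^{-b}(q+q^{-1}) \;=\; q^2+1+q^{-2}-q^{-a}-q^{1-b}-q^{-1-b}, \]
which one checks equals $0$ both for $(a,b)=(2,-1)$ and for $(a,b)=(-2,1)$. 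Hence $E\in\mathcal{I}$.

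The genuine work sits upstream, in establishing Proposition~\ref{gen4calculations}: each of its six pairings is obtained by repeatedly applying the defining properties of $(\,,\,)$ from Proposition~\ref{formproperties} together with the coproduct formulas of Propositions~\ref{Deltathetaip} and~\ref{Deltathetamp}, and in the case $i\pm1=m$, where $\theta_{i\pm1}$ is odd, the sign $(-1)^{p(x_2)p(x_1')}$ and the power of $q$ in the twisted multiplication on ${\bf f}'\otimes{\bf f}'$ must be tracked carefully. Once those six values are in hand, the argument above is the same one-line linear-algebra check that takes one from Proposition~\ref{gen3calculations} to Proposition~\ref{gen3}; the only point to be careful about is that one is indeed reducing membership in $\mathcal{I}$ to pairing against a spanning set of ${\bf f}'_\nu$ (equivalently, that the radical is computed on a single side using the symmetry of $(\,,\,)$, exactly as in the proofs of Propositions~\ref{gen2} and~\ref{gen3}).
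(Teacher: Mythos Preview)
Your approach is exactly the paper's: reduce membership in $\mathcal{I}$ to vanishing of the three pairings against the monomial basis of ${\bf f}'_\nu$, then read off the Gram matrix from Proposition~\ref{gen4calculations} and check the three linear combinations are zero. One small caveat: Proposition~\ref{gen4calculations} is stated only for $\nu=2i+1(i+1)$, so strictly it handles the plus case; the paper simply notes that the minus case follows from analogous calculations, and your uniform treatment in terms of $(a,b)$ implicitly presumes those analogous formulas, which is fine. (Also, your parenthetical ``forcing $i=m-1$'' only applies to the plus sign; in the minus case $i\pm1=m$ gives $i=m+1\in I''$.)
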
 

\begin{proof}
It suffices to show that 
$$ (q+q^{-1})\theta_i \theta_{i \pm 1} \theta_i - \theta_i^2 \theta_{i \pm 1} - \theta_{i \pm 1} \theta_i^2 $$ paired with
$ \theta_i^2 \theta_{i \pm1} $, $ \theta_i \theta_{i \pm 1} \theta_i$, and $ \theta_{i \pm 1} \theta_i^2 $ are all zero.  The plus case follows easily from Proposition ~\ref{gen4calculations}.  The minus case follows from analogous 
calculations.
\end{proof}

\subsection{PBW basis}
We now look to construct a PBW basis for the algebra $ {\bf f}$.  

Order the set of positive roots as follows:
\begin{eqnarray*}
&\alpha_1 < \alpha_1 + \alpha_2 < \cdots < \alpha_1 + \cdots + \alpha_{m+n-1} < \\
&\alpha_2 < \alpha_2 + \alpha_3 < \cdots < \alpha_2 + \cdots + \alpha_{m+n-1} < \\
&\cdots \\
&< \alpha_{m+n-1}
\end{eqnarray*}

Now to each $ \alpha \in \Delta^+$, we define an element $ \theta_{\alpha} \in f' $.
Set $ \theta_{\alpha_i} = \theta_i$.  
Let $ \alpha = \alpha_i + \cdots + \alpha_j $ and
$ \alpha' = \alpha_i + \cdots + \alpha_{j-1}$.
Then set 
\begin{equation}
\label{rootvector}
\theta_{\alpha} = \theta_{\alpha'} \theta_j - (-1)^{p({\alpha'})p(\alpha_j)} q^{\alpha' \bullet (\alpha_{j})} \theta_j \theta_{\alpha'}.
\end{equation}

Note that the definition of $ \theta_{\alpha} $ above differs from the definition of the root vector given in ~\cite{Yam} by a sign in the exponent of $q$.

\begin{lemma}
\label{commlemma}
Let $ \alpha = \alpha_{j-1}+\alpha_j+\alpha_{j+1}$. Then
$ \theta_{\alpha} \theta_j - (-1)^{p(\alpha)p(j)} \theta_j \theta_{\alpha} \in \mathcal{I}$.
\end{lemma}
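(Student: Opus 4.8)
The plan is to prove this the same way the other Serre-type relations in this section were established: by showing the given element lies in the radical $\mathcal{I}$, which by Proposition~\ref{formproperties} reduces to checking that it pairs to zero against a spanning set of the relevant weight space ${\bf f}'_\nu$, where $\nu = 1(j-1) + 2(j) + 1(j+1)$. First I would unwind the definition \eqref{rootvector} of $\theta_\alpha$ for $\alpha = \alpha_{j-1}+\alpha_j+\alpha_{j+1}$: one has $\theta_{\alpha'} = \theta_{\alpha_{j-1}+\alpha_j} = \theta_{j-1}\theta_j - (-1)^{p(j-1)p(j)} q^{(j-1)\bullet j}\theta_j\theta_{j-1}$ and then $\theta_\alpha = \theta_{\alpha'}\theta_{j+1} - (-1)^{p(\alpha')p(j+1)} q^{\alpha' \bullet \alpha_{j+1}}\theta_{j+1}\theta_{\alpha'}$, so $\theta_\alpha$ is an explicit $\Q(q)$-linear combination of the five monomials appearing in Proposition~\ref{gen3calculations} (in the case $j = m$) or the analogous monomials for general $j$. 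Thus $\theta_\alpha\theta_j - (-1)^{p(\alpha)p(j)}\theta_j\theta_\alpha$ is a known linear combination of those same degree-four monomials.

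Next I would observe that in the special case $j = m$ (so $\alpha' = \alpha_{m-1}+\alpha_m$ with $p(\alpha')=1$), the expression $\theta_\alpha\theta_m - (-1)^{p(\alpha)p(m)}\theta_m\theta_\alpha = \theta_\alpha\theta_m + \theta_m\theta_\alpha$ (since $p(\alpha) = p(m) = 1$) expands, after using $\theta_m^2 \in \mathcal{I}$ (Proposition~\ref{gen1}), into precisely a scalar multiple of the element in Proposition~\ref{gen3}, up to terms already in $\mathcal{I}$. Hence the claim for $j=m$ follows immediately from Propositions~\ref{gen1} and~\ref{gen3}. For $j \neq m$, at least one of $j-1, j, j+1$ could still equal $m$ (the cases $j-1 = m$ or $j+1 = m$), or none of them does. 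When none equals $m$, all parities vanish and the statement is the ordinary (non-super) higher Serre relation for type $A$; this should follow from the quantum Serre relations of Proposition~\ref{gen4} together with Proposition~\ref{gen2}, exactly as in the classical KLR setting, and in fact can be checked directly by pairing against the monomial basis of ${\bf f}'_\nu$ using the multiplicativity of $(\,,\,)$ and the formulas for $\Delta$ from Propositions~\ref{Deltathetaip} and~\ref{Deltathetamp}. When $j-1 = m$ or $j+1 = m$, one mixes the odd generator into the computation; here I would run the same pairing computation as in Proposition~\ref{gen3calculations}, tracking the signs $(-1)^{p(\cdot)p(\cdot)}$ and the powers of $q$ coming from the twisted multiplication \eqref{twistedbialgebra}, and verify the five pairings vanish.

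Concretely, the computation I would carry out is: let $z = \theta_\alpha\theta_j - (-1)^{p(\alpha)p(j)}\theta_j\theta_\alpha \in {\bf f}'_\nu$. Write $z$ explicitly in the monomial basis of ${\bf f}'_\nu$ using \eqref{rootvector}. Then for each basis monomial $w$ of ${\bf f}'_\nu$, compute $(z, w) = (\Delta^3(\text{first three letters contribution}), \ldots)$ — more efficiently, peel off one tensor factor at a time as in the displayed computation in the proof of Proposition~\ref{gen3calculations}, using $(x, y'y'') = (\Delta(x), y'\otimes y'')$ and $(\theta_i,\theta_i) = \frac{1}{1-q^{i\bullet i}}$ for $i \neq m$, $(\theta_m,\theta_m)=1$. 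Each such pairing collapses to a short sum; the Serre-type cancellation makes the total vanish. Since the pairing is symmetric, $(z,w) = 0$ for all $w$ spanning ${\bf f}'_\nu$ forces $z \in \mathcal{I}$.

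The main obstacle will be bookkeeping rather than conceptual: correctly tracking the interaction of the three sign conventions (the parity sign in \eqref{rootvector}, the parity sign in the twisted product \eqref{twistedbialgebra} on ${\bf f}'\otimes{\bf f}'$, and the Koszul sign in the pairing on tensor products) simultaneously with the $q$-power shifts $q^{-|x_2|\bullet|x_1'|}$, across the several sub-cases ($j = m$, $j\pm 1 = m$, and $j, j\pm1 \neq m$). The cleanest route is to reduce everything to the already-proven Propositions~\ref{gen1}, \ref{gen2}, \ref{gen3}, and~\ref{gen4} by rewriting $\theta_\alpha\theta_j - (-1)^{p(\alpha)p(j)}\theta_j\theta_\alpha$ modulo $\mathcal{I}$ as a combination of those relations' left-hand sides, so that no new pairing computation is needed at all; I would first attempt that purely algebraic reduction, and fall back on the direct pairing check only for whichever sub-case resists it.
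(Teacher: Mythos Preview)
Your proposal is correct and its primary approach---expanding $\theta_\alpha$ via \eqref{rootvector} and reducing algebraically to the known Serre-type relations---is exactly what the paper does. The paper's proof is even terser than your plan: it simply notes that after expanding $\theta_\alpha$ in terms of $\theta_{j-1},\theta_j,\theta_{j+1}$, a routine calculation reduces everything to Proposition~\ref{gen3} when $j=m$ and to Proposition~\ref{gen4} when $j\neq m$ (together with the commutation $\theta_{j-1}\theta_{j+1}=\theta_{j+1}\theta_{j-1}$ from Proposition~\ref{gen2}). Your sub-case split according to whether $j\pm 1=m$ is more cautious than necessary: Proposition~\ref{gen4} only requires $i\neq m$, not $i\pm 1\neq m$, so it handles all $j\neq m$ uniformly, and no fallback pairing computation is needed.
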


\begin{proof}
Write $ \theta_{\alpha} $ in terms of $ \theta_{j-1}, \theta_j$ and $ \theta_{j+1} $ using Equation ~\ref{rootvector}.
The lemma follows from a routine calculation and Proposition ~\ref{gen3} if $ j=m $ or Proposition ~\ref{gen4} if $ j \neq m$.
\end{proof}

\begin{prop}
\label{pbw1}
Let $ \alpha = \alpha_i + \cdots + \alpha_j, \beta = \alpha_{k} + \cdots + \alpha_l$.  Assume $ k > j+1$.  Then
$ \theta_{\alpha} \theta_{\beta} - \theta_{\beta} \theta_{\alpha} \in \mathcal{I}$.
\end{prop}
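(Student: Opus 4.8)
The plan is to show that $\theta_\alpha\theta_\beta - \theta_\beta\theta_\alpha$ pairs to zero with everything in ${\bf f}'_{|\alpha|+|\beta|}$, which since $\mathcal{I}$ is the radical of the form suffices to conclude membership in $\mathcal{I}$. In fact, because $\mathcal{I}$ is already a two-sided ideal (Proposition \ref{hopfideal}), I will instead reduce to the generator relations already proven. First I would unfold $\theta_\alpha = \theta_{\alpha_i}\cdots$ and $\theta_\beta = \theta_{\alpha_k}\cdots$ using Equation \ref{rootvector} repeatedly, so that each is expressed (modulo $\mathcal{I}$, and in fact exactly in ${\bf f}'$) as a $\Q(q)$-linear combination of monomials in the generators $\theta_i,\ldots,\theta_j$ and $\theta_k,\ldots,\theta_l$ respectively. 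The key structural input is the index separation: the generators appearing in $\theta_\alpha$ are $\theta_i,\ldots,\theta_j$ and those in $\theta_\beta$ are $\theta_k,\ldots,\theta_l$ with $k > j+1$, hence $|a - b| > 1$ for every generator index $a$ appearing in $\theta_\alpha$ and every index $b$ appearing in $\theta_\beta$.

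Given that separation, the main step is to observe that working modulo $\mathcal{I}$ we may freely commute any $\theta_a$ (with $a\in\{i,\ldots,j\}$) past any $\theta_b$ (with $b\in\{k,\ldots,l\}$) by Proposition \ref{gen2}, since $|a-b|>1$. Thus in the quotient ${\bf f} = {\bf f}'/\mathcal{I}$ the subalgebra generated by $\{\theta_i,\ldots,\theta_j\}$ commutes elementwise with the subalgebra generated by $\{\theta_k,\ldots,\theta_l\}$. Since $\theta_\alpha$ lies in the former and $\theta_\beta$ in the latter, their images in ${\bf f}$ commute, i.e. $\theta_\alpha\theta_\beta - \theta_\beta\theta_\alpha \in \mathcal{I}$. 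Concretely, one writes $\theta_\alpha = \sum_s c_s M_s$ and $\theta_\beta = \sum_t d_t N_t$ with $M_s$ a monomial in $\theta_i,\ldots,\theta_j$ and $N_t$ a monomial in $\theta_k,\ldots,\theta_l$; then $\theta_\alpha\theta_\beta - \theta_\beta\theta_\alpha = \sum_{s,t} c_s d_t (M_s N_t - N_t M_s)$, and each $M_s N_t - N_t M_s$ lies in $\mathcal{I}$ because it is obtained by iteratively applying the relation $\theta_a\theta_b \equiv \theta_b\theta_a \pmod{\mathcal{I}}$ of Proposition \ref{gen2} — formally an induction on the total length of $M_s$ and $N_t$, using that $\mathcal{I}$ is a two-sided ideal so that $x(\theta_a\theta_b-\theta_b\theta_a)y \in \mathcal{I}$ for all $x,y$.

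The one point requiring a little care — and the step I expect to be the only real obstacle — is the bookkeeping in the induction showing $M N - N M \in \mathcal{I}$ for monomials $M,N$ in the two separated index ranges. This is not difficult but must be done cleanly: peel off the last generator $\theta_a$ of $M$, commute it leftward through all of $N$ one transposition at a time (each transposition costs an element of $\mathcal{I}$, absorbed using the two-sided ideal property), and then invoke the inductive hypothesis on the shorter monomial $M'$ with $M = M'\theta_a$. Because there are no signs or powers of $q$ in the commutation relation of Proposition \ref{gen2} (the bracket $\theta_a\theta_b - \theta_b\theta_a$ itself is in $\mathcal{I}$, not some $q$-twisted version), no sign or grading shift accumulates, so the induction is purely formal. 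Once this commutation lemma is in hand, the proposition follows immediately by expanding both root vectors and applying it termwise.
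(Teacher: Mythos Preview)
Your argument is correct and is essentially the same as the paper's: both rest on the observation that every generator $\theta_a$ with $a\in\{i,\ldots,j\}$ commutes modulo $\mathcal{I}$ with every generator $\theta_b$ with $b\in\{k,\ldots,l\}$ (Proposition~\ref{gen2}), and then extend this to the root vectors by induction. The only organizational difference is that the paper inducts on $l$, unfolding $\theta_\beta$ one step at a time via Equation~\ref{rootvector} while keeping $\theta_\alpha$ intact, whereas you expand both root vectors fully into monomials and induct on monomial length; these are minor variants of the same idea.
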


\begin{proof}
If $ \theta_r $ is a term appearing in $ \theta_{\alpha} $ and $ \theta_s $ is a term appearing in $ \theta_{\beta}$, then 
$ \theta_r \theta_s - \theta_s \theta_r \in \mathcal{I}$.  
One proceeds by induction on $ l $ with $ k=l $ being the base case. 
Let $ \beta' = \alpha_k + \cdots + \alpha_{l-1} $.
By Equation ~\ref{rootvector}, replace $ \theta_{\beta} $ in 
$  \theta_{\alpha} \theta_{\beta} - \theta_{\beta} \theta_{\alpha} $
by 
$$ \theta_{\beta'} \theta_{\alpha_l} - (-1)^{p({\beta'})p(\alpha_l)} q^{\beta' \bullet (\alpha_{l})} \theta_{\alpha_l} \theta_{\beta'}. $$
Now invoke the induction hypothesis to finish the proof.
\end{proof}

\begin{prop}
\label{pbw2}
Let $ \alpha = \alpha_i + \cdots + \alpha_j $ and $ \beta = \alpha_{j+1} + \cdots + \alpha_l$.  Then
$$ \Gamma \ := \theta_{\alpha} \theta_{\beta} - (-1)^{p(\alpha)p(\beta)} q^{\alpha \bullet \beta} \theta_{\beta} \theta_{\alpha} - \theta_{\alpha+\beta} \in \mathcal{I}. $$
\end{prop}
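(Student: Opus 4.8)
The plan is to induct on the length $l-j$ of $\beta$. In the base case $l=j+1$ one has $\beta=\alpha_{j+1}$, so $\theta_\beta=\theta_{j+1}$, $p(\beta)=p(\alpha_{j+1})$ and $\alpha\bullet\beta=\alpha\bullet\alpha_{j+1}$; then $\Gamma=0$ by the very definition \eqref{rootvector} of $\theta_{\alpha+\beta}=\theta_{\alpha_i+\cdots+\alpha_{j+1}}$, so in particular $\Gamma\in\mathcal I$.

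For the inductive step, assume $l\ge j+2$ and put $\beta'=\alpha_{j+1}+\cdots+\alpha_{l-1}$, a positive root of strictly smaller length sharing its left endpoint with $\beta$. Applying \eqref{rootvector} twice gives
$$
\theta_\beta=\theta_{\beta'}\theta_l-(-1)^{p(\beta')p(\alpha_l)}q^{\beta'\bullet\alpha_l}\,\theta_l\theta_{\beta'},\qquad
\theta_{\alpha+\beta}=\theta_{\alpha+\beta'}\theta_l-(-1)^{p(\alpha+\beta')p(\alpha_l)}q^{(\alpha+\beta')\bullet\alpha_l}\,\theta_l\theta_{\alpha+\beta'}.
$$
Every index occurring in $\alpha$ lies in $\{i,\dots,j\}$ with $j\le l-2$, hence is at distance at least $2$ from $l$; by \eqref{bullets} this yields $\alpha\bullet\alpha_l=0$, and moreover $p(\alpha)=0$ in the only situation where $p(\alpha_l)$ can be nonzero, namely $l=m$. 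Therefore $p(\alpha)p(\alpha_l)=0$, and consequently the two coefficients in the formula for $\theta_{\alpha+\beta}$ coincide with those in the formula for $\theta_\beta$, while $p(\alpha)p(\beta)=p(\alpha)p(\beta')$ and $\alpha\bullet\beta=\alpha\bullet\beta'$.

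Next I would substitute these expressions for $\theta_\beta$ and $\theta_{\alpha+\beta}$ into $\Gamma$ and reduce modulo $\mathcal I$, using three inputs: Proposition~\ref{pbw1} for the pair $(\alpha,\alpha_l)$ — legitimate since $l>j+1$ — which gives $\theta_\alpha\theta_l\equiv\theta_l\theta_\alpha$; the induction hypothesis for the pair $(\alpha,\beta')$, which rewrites $\theta_\alpha\theta_{\beta'}$ as $(-1)^{p(\alpha)p(\beta')}q^{\alpha\bullet\beta'}\theta_{\beta'}\theta_\alpha+\theta_{\alpha+\beta'}$ modulo $\mathcal I$, this rewriting staying valid after left or right multiplication by $\theta_l$ because $\mathcal I$ is a two-sided ideal (Proposition~\ref{hopfideal}); and the coincidences of coefficients just recorded. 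After collecting terms, $\Gamma$ becomes a $\mathbb{Q}(q)$-linear combination of the four monomials $\theta_{\beta'}\theta_\alpha\theta_l$, $\theta_l\theta_{\beta'}\theta_\alpha$, $\theta_{\alpha+\beta'}\theta_l$, $\theta_l\theta_{\alpha+\beta'}$ in which all four coefficients vanish, so $\Gamma\in\mathcal I$.

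The only step that is not purely mechanical is the verification of $p(\alpha)p(\alpha_l)=0$ and $\alpha\bullet\alpha_l=0$ in the range $j\le l-2$: this is exactly what matches the $\theta_{\beta'}\theta_\alpha\theta_l$-coefficient coming from $\theta_\alpha\theta_\beta$ with the one coming from $(-1)^{p(\alpha)p(\beta)}q^{\alpha\bullet\beta}\theta_\beta\theta_\alpha$, and analogously for the other three monomials, so that the cancellation goes through. I expect this to be the main obstacle; the remaining sign and $q$-power bookkeeping I would carry out but not reproduce.
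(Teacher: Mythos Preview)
Your proposal is correct and follows essentially the same approach as the paper: both proceed by induction on $l$ (equivalently on the length of $\beta$), use the recursion \eqref{rootvector} for $\theta_\beta$ and $\theta_{\alpha+\beta}$, invoke Proposition~\ref{pbw1} to commute $\theta_\alpha$ past $\theta_l$, and rely on the key observations $\alpha\bullet\alpha_l=0$ and $p(\alpha)p(\alpha_l)=0$ (whence $\alpha\bullet\beta=\alpha\bullet\beta'$ and $p(\alpha)p(\beta)=p(\alpha)p(\beta')$) together with the induction hypothesis. The only difference is organizational: the paper splits the six-term expression into two groups of three and handles them separately, whereas you collect everything into four monomials and check that each coefficient vanishes.
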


\begin{proof}
We proceed by induction on $ l $ with $ k=l $ as the base case which is true by definition.
Let $ \beta' = \alpha_k + \cdots + \alpha_{l-1} $.  Then by induction,
$$ \theta_{\alpha} \theta_{\beta'} - (-1)^{p(\alpha)p(\beta')} q^{\alpha \bullet \beta'} \theta_{\beta'} \theta_{\alpha} - \theta_{\alpha+\beta'} \in \mathcal{I}. $$
By definiton,
\begin{equation}
\label{pbw2eq1}
\theta_{\beta} = \theta_{\beta'} \theta_l - (-1)^{p(\beta')p(\alpha_l)}q^{\beta' \bullet \alpha_l} \theta_l \theta_{\beta'}\\
\end{equation}
\begin{equation}
\label{pbw2eq2}
\theta_{\alpha+ \beta} = \theta_{\alpha+\beta'}\theta_l - (-1)^{p(\alpha+\beta')p(\alpha_l)}q^{\alpha+ \beta' \bullet \alpha_l} \theta_l \theta_{\alpha+ \beta'}.
\end{equation}
Using Equations ~\ref{pbw2eq1} and ~\ref{pbw2eq2} and rearranging terms gives:
\begin{equation}
\label{pbw2eq3}
\begin{split}
\Gamma = &-(-1)^{p(\beta')p(\alpha_l)} q^{\beta' \bullet \alpha_l} \theta_{\alpha} \theta_l \theta_{\beta'}
+(-1)^{p(\alpha)p(\beta)+p(\alpha_l)p(\beta')}q^{\alpha \bullet \beta + \alpha_l \bullet \beta'} \theta_l \theta_{\beta'} \theta_{\alpha}\\
&+(-1)^{p(\alpha+\beta')p(\alpha_l)} q^{(\alpha+\beta') \bullet \alpha_l} \theta_l \theta_{\alpha+\beta'}
+\theta_{\alpha} \theta_{\beta'} \theta_l - (-1)^{p(\alpha)p(\beta)} q^{\alpha \bullet \beta} \theta_{\beta'} \theta_l \theta_{\alpha} - \theta_{\alpha+\beta'}\theta_l.
\end{split}
\end{equation}
Note that $ \alpha \bullet \beta = \alpha \bullet \beta' $ and $ p(\alpha)p(\beta)=p(\alpha)p(\beta') \text{ mod } 2 $.  Then using Proposition ~\ref{pbw1} and the induction hypothesis, 
the sum of the last three terms in Equation ~\ref{pbw2eq3} is in $ \mathcal{I}$.

The first three terms in Equation ~\ref{pbw2eq3} is equal to
$$ -(-1)^{p(\beta')p(\alpha_l)}q^{\beta \bullet \alpha_l}[\theta_l \theta_{\alpha} \theta_{\beta'} -(-1)^{p(\alpha)p(\beta)}q^{\alpha \bullet \beta} \theta_l \theta_{\beta'} \theta_{\alpha}-(-1)^{p(\alpha)p(\alpha_l)}
q^{\alpha \bullet \alpha_l} \theta_l \theta_{\alpha+\beta'}]+\gamma $$
where $ \gamma \in \mathcal{I}$.  Note that $ \gamma $ arises when applying Proposition ~\ref{pbw1} to the first term in Equation ~\ref{pbw2eq3}.
Since $ p(\alpha)p(\beta) = p(\alpha)p(\beta') $, $ \alpha \bullet \beta = \alpha \bullet \beta'$, $ \alpha \bullet \alpha_l =0$, and
$ p(\alpha)p(\alpha_l)=0 \text{ mod } 2 $, the sum of the first three terms in Equation ~\ref{pbw2eq3} is now also in $ \mathcal{I} $ by the induction hypothesis.
\end{proof}

\begin{prop}
\label{pbw3}
Let $ i < k \leq l < j$, $ \alpha = \alpha_i + \cdots + \alpha_j$, and $ \beta = \alpha_k + \cdots + \alpha_l$.  Then
$$ \theta_{\alpha} \theta_{\beta} - (-1)^{p(\alpha)p(\beta)} \theta_{\beta} \theta_{\alpha} \in \mathcal{I}.$$
\end{prop}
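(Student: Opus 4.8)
The plan is to prove Proposition \ref{pbw3} by induction on the length $l-k$ of $\beta$, with the base case handled by a secondary induction on the length $j-i$ of $\alpha$. All the content will come from the Serre-type relations already in hand --- Propositions \ref{gen2}, \ref{gen3} and \ref{gen4}, as packaged in Lemma \ref{commlemma} --- together with the ``adjacent recombination'' identity of Proposition \ref{pbw2}. Throughout I write $\equiv$ for equality modulo $\mathcal{I}$; since $\mathcal{I}$ is a two-sided ideal (Proposition \ref{hopfideal}) such congruences may be multiplied on either side by arbitrary elements, and that is the only structural fact the argument uses. One small observation recurs: every sign prefactor produced along the way collapses to $(-1)^{p(\alpha)p(\beta)}$, because under $i<k\le l<j$ the index $m$ cannot lie in the supports of both of the two roots being moved past each other, so the relevant parities multiply correctly.

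For the base case $l=k$, where $\beta=\alpha_l$ is simple and $i<l<j$, the assertion reads $\theta_\alpha\theta_l-(-1)^{p(\alpha)p(l)}\theta_l\theta_\alpha\in\mathcal{I}$, which I would prove by induction on $j-i\ge 2$. When $j-i=2$ one has $\alpha=\alpha_{l-1}+\alpha_l+\alpha_{l+1}$ and this is exactly Lemma \ref{commlemma}. When $j-i\ge 3$ at least one of $l-i$ and $j-l$ is at least $2$, and I would peel a single simple root off the corresponding end of $\alpha$: off the left end using Proposition \ref{pbw2} with left factor $\theta_i$, which gives $\theta_\alpha\equiv\theta_i\,\theta_{\alpha''}-c\,\theta_{\alpha''}\,\theta_i$ with $\alpha''=\alpha_{i+1}+\cdots+\alpha_j$, or off the right end using the defining recursion \eqref{rootvector}. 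The peeled simple root then lies at distance at least $2$ from $\alpha_l$, hence commutes with $\theta_l$ by Proposition \ref{gen2}, while the pair $\alpha'',\alpha_l$ again satisfies the hypotheses of Proposition \ref{pbw3} with $\alpha''$ strictly shorter than $\alpha$, so $\theta_{\alpha''}$ commutes with $\theta_l$ up to the expected sign by the secondary induction hypothesis. Reassembling the two halves yields the claim.

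For the inductive step $l>k$, put $\beta'=\alpha_k+\cdots+\alpha_{l-1}$ and use \eqref{rootvector} to write $\theta_\beta=\theta_{\beta'}\theta_l-c'\,\theta_l\theta_{\beta'}$. Both the pair $\alpha,\beta'$ (with $\beta'$ of length $l-1-k<l-k$) and the pair $\alpha,\alpha_l$ satisfy the hypotheses of Proposition \ref{pbw3} with a shorter second root, so the outer induction hypothesis applies to $\theta_\alpha\theta_{\beta'}$ and to $\theta_\alpha\theta_l$ separately. Pushing $\theta_\alpha$ through $\theta_{\beta'}\theta_l-c'\theta_l\theta_{\beta'}$ one factor at a time and combining the two sign factors via $p(\beta)=p(\beta')+p(\alpha_l)$ produces exactly $(-1)^{p(\alpha)p(\beta)}\theta_\beta\theta_\alpha$, completing the induction.

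I expect the only genuinely nontrivial ingredient to be Lemma \ref{commlemma}, which itself rests on the explicit bilinear-form computations of Propositions \ref{gen3calculations} and \ref{gen4calculations}; the remainder of the argument is bookkeeping. The step that needs the most care is choosing, at each peeling, the end of $\alpha$ whose removal keeps $\beta$ strictly interior to the remaining root --- removing the other end would produce an endpoint-sharing configuration not covered by Proposition \ref{pbw3} --- and then checking that the accumulated powers of $q$ and signs reduce as claimed, which is precisely where the remark about the position of $m$ is used.
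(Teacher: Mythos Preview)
Your proof is correct and follows essentially the same two-step induction as the paper: an outer induction reducing to the case $\beta=\alpha_l$ simple, and an inner induction on the length of $\alpha$ that bottoms out in Lemma~\ref{commlemma}. The only organizational difference is that in the inner induction the paper always peels $\theta_j$ off the right end of $\alpha$ (inducting on $j$) and consequently must do a little extra work at its sub-base case $j=k+1$, splitting $\alpha$ via Proposition~\ref{pbw2} into a length-three tail and a remainder; your version, by allowing the peel from whichever end keeps $\alpha_l$ strictly interior, lands directly on Lemma~\ref{commlemma} at $j-i=2$ and is a bit cleaner. Either way the content is identical and the sign bookkeeping you flag (that $m$ cannot lie in both supports under $i<k\le l<j$, forcing $p(i)p(l)=0$ or $p(j)p(l)=0$ as needed) is exactly what makes the accumulated signs collapse to $(-1)^{p(\alpha)p(\beta)}$.
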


\begin{proof}
This is an induction on $ l $ with $ k=l $ as the base case.

\emph{Step 1.} Assume $ k=l$.
Let $ \alpha' = \alpha_i + \cdots + \alpha_{j-1}$.  We must show that 
$$ \gamma \ := \theta_{\alpha} \theta_k - (-1)^{p(\alpha)p(\beta)} \theta_k \theta_{\alpha} \in \mathcal{I}. $$
We do this by induction on $ j$.
The base case is when $ j=k+1$.
Then $ \gamma = \theta_{\alpha} \theta_{j-1} - (-1)^{p(\alpha)p(\beta)} \theta_{j-1} \theta_{\alpha}$. 
Let $ \alpha'' = \alpha_i + \cdots + \alpha_{j-3} $ and $ \alpha''' = \alpha_{j-2}+\alpha_{j-1}+\alpha_j$.
Then $\gamma$ is in $ \mathcal{I} $ if and only if 
\begin{equation}
\label{pbw3eq1}
\theta_{\alpha''} \theta_{\alpha'''} \theta_{{j-1}} - (-1)^{p(\alpha)p(\beta)} \theta_{j-1} \theta_{\alpha} - (-1)^{p(\alpha_{j-3})p(\alpha_{j-2})} q^{\alpha_{j-3} \bullet \alpha_{j-2}} \theta_{\alpha'''} \theta_{\alpha''} \theta_{j-1} \in \mathcal{I}.
\end{equation}
By Proposition ~\ref{pbw1} and Lemma ~\ref{commlemma}, the element in Equation ~\ref{pbw3eq1} is in $ \mathcal{I} $ if and only if
\begin{equation}
\label{pbw3eq2}
\theta_{{j-1}} \theta_{\alpha''} \theta_{\alpha'''} - (-1)^{p(\alpha)p(\beta)} \theta_{j-1} \theta_{\alpha} - (-1)^{p(\alpha_{j-3})p(\alpha_{j-2})} q^{\alpha_{j-3} \bullet \alpha_{j-2}} \theta_{j-1} \theta_{\alpha'''} \theta_{\alpha''}  \in \mathcal{I}.
\end{equation}
This now follows from Proposition ~\ref{pbw2}.

So now suppose $ j > k+1$.  
Then 
$$ \gamma = \theta_{\alpha'} \theta_j \theta_k - (-1)^{p(\alpha_{j-1})p(\alpha_j)} q^{\alpha_{j-1} \bullet \alpha_j} \theta_j \theta_{\alpha'} \theta_k - (-1)^{p(\alpha)p(\beta)} \theta_k \theta_{\alpha}.$$
By induction, this is in $ \mathcal{I} $ if and only if 
$$ \theta_k \theta_{\alpha'} \theta_j - (-1)^{p(\alpha_{j-1})p(\alpha_j)} q^{\alpha_{j-1} \bullet \alpha_j} \theta_k \theta_j \theta_{\alpha'} - (-1)^{p(\alpha)p(\beta)} \theta_k \theta_{\alpha} \in \mathcal{I}.$$
Since $ \theta_{\alpha'} \theta_j - (-1)^{p(\alpha_{j-1})p(\alpha_j)} q^{\alpha_{j-1} \bullet \alpha_j} \theta_j \theta_{\alpha'} - \theta_{\alpha} =0$, 
we are done with this induction.

\emph{Step 2.} 
Now assume $ l-k \geq 1$.  
Let $ \beta' = \alpha_k + \cdots + \alpha_{l-1} $.
Then $ \theta_{\alpha} \theta_{\beta} -(-1)^{p(\alpha)p(\beta)} \theta_{\beta} \theta_{\alpha} \in \mathcal{I} $ if and only if
$$ \theta_{\alpha} \theta_{\beta'} \theta_l - (-1)^{p(\alpha_{l-1})p(\alpha_l)}q^{\alpha_{l-1} \bullet \alpha_l} \theta_{\alpha} \theta_l \theta_{\beta'}-(-1)^{p(\alpha)p(\beta)} \theta_{\beta}\theta_{\alpha} \in \mathcal{I}. $$
By induction, the above element is in $ \mathcal{I} $ if and only if
$$  \theta_{\beta'} \theta_l \theta_{\alpha} - (-1)^{p(\alpha_{l-1})p(\alpha_l)}q^{\alpha_{l-1} \bullet \alpha_l} \theta_l \theta_{\beta'} \theta_{\alpha} - (-1)^{p(\alpha)p(\beta)} \theta_{\beta} \theta_{\alpha} \in \mathcal{I}. $$
By definition, the above element is zero and hence in $ \mathcal{I}$.
\end{proof}

\begin{prop}
\label{pbw4}
Let $ j>l$, $ \alpha = \alpha_i + \cdots + \alpha_j$, and $ \beta = \alpha_i+ \cdots + \alpha_l$.  
\begin{enumerate}
\item If $ i \neq m$, then
$ A \ := \theta_{\alpha} \theta_{\beta} - (-1)^{p(i)p(i+1)} q^{i \bullet (i+1)} \theta_{\beta} \theta_{\alpha} \in \mathcal{I}.$
\item If $ i =m $, then
$ B\ := \theta_{\alpha} \theta_{\beta} + (-1)^{p(i)p(i+1)} q^{-i \bullet (i+1)} \theta_{\beta} \theta_{\alpha} \in \mathcal{I}.$
\end{enumerate}
\end{prop}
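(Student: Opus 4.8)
The plan is to prove this by a nested induction on the endpoint $l$ of $\beta = \alpha_i + \cdots + \alpha_l$, in the style of the proofs of Propositions~\ref{pbw2} and~\ref{pbw3}. Write $\kappa$ for the scalar occurring in the statement: $\kappa = (-1)^{p(i)p(i+1)}q^{i\bullet(i+1)}$ in case (1), and $\kappa = -(-1)^{p(i)p(i+1)}q^{-i\bullet(i+1)}$ in case (2). The key structural observation is that $\kappa$ depends only on the shared starting index $i$, not on $j$ or $l$, so the same scalar governs the commutation of $\theta_\alpha$ with $\theta_\beta$ and with every truncation $\theta_{\alpha_i + \cdots + \alpha_{l'}}$; this is what makes an induction peeling off the last factor of $\theta_\beta$ feasible.

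For the base case $l = i$ (so $\theta_\beta = \theta_i$) I would run a further induction on $j$. When $j = i+1$, expanding $\theta_\alpha = \theta_i\theta_{i+1} - (-1)^{p(i)p(i+1)}q^{i\bullet(i+1)}\theta_{i+1}\theta_i$ from~\eqref{rootvector} and collecting $\theta_\alpha\theta_i - \kappa\,\theta_i\theta_\alpha$ should turn it, when $i \neq m$, into a scalar multiple of the quantum Serre element $(q+q^{-1})\theta_i\theta_{i+1}\theta_i - \theta_i^2\theta_{i+1} - \theta_{i+1}\theta_i^2$ of Proposition~\ref{gen4}; when $i = m$ the expansion should collapse to a combination of $\theta_{m+1}\theta_m^2$ and $\theta_m^2\theta_{m+1}$, which lie in $\mathcal{I}$ by Proposition~\ref{gen1}. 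For the step $j > i+1$, write $\theta_\alpha = \theta_{\alpha'}\theta_j - (-1)^{p(\alpha')p(\alpha_j)}q^{\alpha'\bullet\alpha_j}\theta_j\theta_{\alpha'}$ with $\alpha' = \alpha_i + \cdots + \alpha_{j-1}$; since $|j-i| > 1$, Proposition~\ref{gen2} lets me move $\theta_j$ past $\theta_i$ modulo $\mathcal{I}$, and then the inner inductive hypothesis $\theta_{\alpha'}\theta_i \equiv \kappa\,\theta_i\theta_{\alpha'}$ together with a regrouping using~\eqref{rootvector} should give $\theta_\alpha\theta_i \equiv \kappa\,\theta_i\theta_\alpha \pmod{\mathcal{I}}$.

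For the inductive step $l > i$ I would peel $\theta_l$ off the end of $\theta_\beta$: with $\beta' = \alpha_i + \cdots + \alpha_{l-1}$, Equation~\eqref{rootvector} gives $\theta_\beta = \theta_{\beta'}\theta_l - (-1)^{p(\beta')p(\alpha_l)}q^{\beta'\bullet\alpha_l}\theta_l\theta_{\beta'}$, hence $\theta_\alpha\theta_\beta = \theta_\alpha\theta_{\beta'}\theta_l - (-1)^{p(\beta')p(\alpha_l)}q^{\beta'\bullet\alpha_l}\theta_\alpha\theta_l\theta_{\beta'}$. Since $\beta'$ still starts at $i$ and has smaller endpoint, the outer inductive hypothesis applies: $\theta_\alpha\theta_{\beta'} \equiv \kappa\,\theta_{\beta'}\theta_\alpha$; and since $i < l < j$, Proposition~\ref{pbw3} applied to the length-one root $\alpha_l$ gives $\theta_\alpha\theta_l \equiv (-1)^{p(\alpha)p(\alpha_l)}\theta_l\theta_\alpha$. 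Substituting both congruences and collecting the result back into the form $\kappa\,\theta_\beta\theta_\alpha$ via~\eqref{rootvector} finishes the induction.

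The expansions via~\eqref{rootvector} and the regroupings are routine; the genuinely delicate part is tracking the powers of $q$ and the parity signs $(-1)^{p(\cdot)p(\cdot)}$, in particular the contribution of the odd generator $\theta_m$. This is precisely what forces the dichotomy between cases (1) and (2), and the switch from $q^{i\bullet(i+1)}$ to $q^{-i\bullet(i+1)}$ at $i = m$; in the base case it is what must be matched against the exact shapes of Propositions~\ref{gen3} and~\ref{gen4}. I expect the main obstacle to be checking that, whenever $l$ (or an intermediate index produced by the recursion) equals $m$, the sign $(-1)^{p(\alpha)p(\alpha_l)}$ coming out of Proposition~\ref{pbw3} recombines correctly with the scalars produced by~\eqref{rootvector} to yield exactly $\kappa$ with no stray sign.
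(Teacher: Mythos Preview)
Your proposal is correct and follows essentially the same route as the paper: a nested induction, first establishing the case $l=i$ by induction on $j$ (base $j=i+1$ via Proposition~\ref{gen4} when $i\neq m$ and Proposition~\ref{gen1} when $i=m$; inductive step by peeling $\theta_j$ off $\theta_\alpha$ and commuting it past $\theta_i$), and then the general case by induction on $l$, peeling $\theta_l$ off $\theta_\beta$. The only cosmetic differences are that the paper cites Proposition~\ref{pbw1} where you cite Proposition~\ref{gen2} (equivalent for simple roots), and that you make explicit the appeal to Proposition~\ref{pbw3} for commuting $\theta_\alpha$ past $\theta_l$ in the outer step, which the paper leaves implicit under ``use the induction hypothesis to complete the proof.'' Your closing remark about the sign $(-1)^{p(\alpha)p(\alpha_l)}$ when $l=m$ being the genuinely delicate point is exactly right; this is where the bookkeeping must be done carefully.
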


\begin{proof}
First consider the case that $ l=i$.  We prove the proposition in this case by induction on $ j $ with $ j=i+1$ as the base case.
Using Equation ~\ref{rootvector}, write $ \theta_{\alpha_i+\alpha_{i+1}} $ in terms of $ \theta_i $ and $ \theta_{i+1} $ and substitute these quantities into the proposition.  If $ i\neq m$, then Proposition ~\ref{gen4}
shows that $ A \in \mathcal{I}$.  If $ i=m$, then Proposition ~\ref{gen1} shows that $ B \in \mathcal{I}$.  This gives the base case of the induction for the case $ l=i$.

Now consider the more general case $ \alpha=\alpha_i+\cdots+\alpha_j $ but again $ \beta=\alpha_i$. 
Write $ \theta_{\alpha} = \theta_{\alpha'} \theta_{\alpha_j} - (-1)^{p(j-1)p(j)} q^{\alpha_{j-1} \bullet \alpha_j} \theta_{\alpha_j} \theta_{\alpha'} $,
where $ \alpha' = \alpha_i + \cdots + \alpha_{j-1}$.  Using Proposition ~\ref{pbw1} and the induction hypothesis, we easily verify that $ A, B \in \mathcal{I} $, (again in the case $ l=i$).

Finally, consider the case that $ l \geq i$.  We now proceed by induction on $ l $ with $ l=i$ as the base case which was verified above.
In order to prove the induction step, let $ \beta' = \alpha_i+ \cdots + \alpha_{l-1} $ and using Equation ~\ref{rootvector} write
$ \theta_{\beta} = \theta_{\beta'} \theta_l - (-1)^{p(l-1)p(l)} q^{\alpha_{l-1} \bullet \alpha_l} \theta_l \theta_{\beta'}$.
Substitute this into the expressions for $ A $ and $ B $ and use the induction hypothesis to complete the proof.
\end{proof}

\begin{prop}
\label{pbw5}
Let $ i<k$, $ \alpha = \alpha_i + \cdots + \alpha_j$, and $ \beta = \alpha_k+ \cdots + \alpha_j$.  
\begin{enumerate}
\item If $ j \neq m$, then
$ A:= \theta_{\alpha} \theta_{\beta} - (-1)^{p(j-1)p(j)} q^{-(j-1) \bullet j} \theta_{\beta} \theta_{\alpha} \in \mathcal{I}.$
\item If $ j =m $, then
$ B:= \theta_{\alpha} \theta_{\beta} + (-1)^{p(j-1)p(j)} q^{(j-1) \bullet j} \theta_{\beta} \theta_{\alpha} \in \mathcal{I}.$
\end{enumerate}
\end{prop}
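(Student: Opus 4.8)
The plan is to follow the proof of Proposition \ref{pbw4} essentially verbatim, with the roles of the left and right ends of the intervals supporting $\alpha$ and $\beta$ interchanged. The one structural change is forced by the fact that Equation \eqref{rootvector} builds $\theta_\alpha$ by splitting off the \emph{last} simple root: wherever the proof of Proposition \ref{pbw4} peels a simple root off the left end of a root vector, I would instead invoke Proposition \ref{pbw2}, which supplies exactly such a factorization modulo $\mathcal{I}$. Accordingly the argument is a double induction — the outer one downward on $k$, with base case $\beta = \alpha_j$ (so $k=j$), and, for that base case, an inner downward induction on $i$, with base case $\alpha = \alpha_{j-1}+\alpha_j$ (so $i=j-1$).

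For the innermost base case, use Equation \eqref{rootvector} to write $\theta_{\alpha_{j-1}+\alpha_j} = \theta_{j-1}\theta_j - (-1)^{p(j-1)p(j)}q^{(j-1)\bullet j}\theta_j\theta_{j-1}$, substitute this into $A$ (respectively $B$), and expand. When $j\neq m$ the cross terms combine — here one uses $(j-1)\bullet j = \pm1$, so that $q^{(j-1)\bullet j}+q^{-(j-1)\bullet j} = q+q^{-1}$ — to produce $\pm$ the quantum Serre element of Proposition \ref{gen4}, taken with $j$ in the role of $i$ and the minus alternative; hence $A\in\mathcal{I}$. When $j=m$ the cross terms cancel and $B$ reduces to a two-sided multiple of $\theta_m^2$, which lies in $\mathcal{I}$ by Proposition \ref{gen1} together with Proposition \ref{hopfideal}. (One should note that $p(j-1)p(j)=0$ always, since $m$ can be at most one of $j-1$ and $j$, so the signs displayed in the statement are in fact trivial.)

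For the inner induction step, keep $\beta = \alpha_j$ and assume $i<j-1$. Apply Proposition \ref{pbw2} to the two consecutive blocks $\alpha_i$ and $\gamma := \alpha_{i+1}+\cdots+\alpha_j$ to rewrite $\theta_\alpha$, modulo $\mathcal{I}$, as $\theta_i\theta_\gamma - c\,\theta_\gamma\theta_i$ for a suitable unit $c$. Since $|i-j|>1$, Proposition \ref{gen2} (equivalently Proposition \ref{pbw1}) lets $\theta_i$ pass through $\theta_j$, and after this exchange the claim $A\in\mathcal{I}$ collapses to an element of the form $\theta_i X - c\,X\theta_i$, where $X = \theta_\gamma\theta_j - c'\theta_j\theta_\gamma$ is the expression governed by the induction hypothesis; as $\mathcal{I}$ is a two-sided ideal, both $\theta_iX$ and $X\theta_i$ lie in $\mathcal{I}$. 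For the outer induction step ($k<j$), peel the simple root $\alpha_k$ off $\theta_\beta$ by the same use of Proposition \ref{pbw2}, now with blocks $\alpha_k$ and $\alpha_{k+1}+\cdots+\alpha_j$; then $\theta_\alpha$ times this shorter suffix is controlled by the induction hypothesis, $\theta_\alpha\theta_{\alpha_k}$ is controlled by Proposition \ref{pbw3} (the support $\{k\}$ of $\alpha_k$ lies strictly inside the support $\{i,\ldots,j\}$ of $\alpha$, since $i<k<j$), and rearranging recombines everything into $\theta_\beta\theta_\alpha$ with the required coefficient.

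Conceptually this is nothing more than a mirror image of Propositions \ref{pbw1}--\ref{pbw4}; the real work is the bookkeeping, and I expect that to be the main obstacle. The two things to watch are, first, that the powers of $q$ accumulated through the successive exchanges telescope to precisely $-(j-1)\bullet j$ (respectively $(j-1)\bullet j$ when $j=m$); and, more delicately, the parity signs: the sign $(-1)^{p(\alpha)p(\beta)}$ supplied by Proposition \ref{pbw3} is nontrivial exactly when $m$ lies in the interior of the intervals involved, so that configuration must be examined on its own and reconciled with the dichotomy $j=m$ versus $j\neq m$ recorded in the statement.
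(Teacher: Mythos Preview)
Your proposal is correct and follows essentially the same approach as the paper, whose proof simply says ``This is very similar to the proof of Proposition~\ref{pbw4}.'' You have correctly identified the one structural adjustment---that peeling a simple root off the \emph{left} end of a root vector requires Proposition~\ref{pbw2} in place of the direct use of Equation~\eqref{rootvector}---and the parity bookkeeping you flag is indeed the only delicate point (it is present, in mirror form, already in the outer induction of Proposition~\ref{pbw4}).
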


\begin{proof}
This is very similar to the proof of Proposition ~\ref{pbw4}.
\end{proof}
 
\begin{prop}
\label{pbw6}
Let $ i < k < j < l $, $ \alpha = \alpha_i + \cdots + \alpha_j $ and $ \beta = \alpha_k + \cdots + \alpha_l $.  Furthermore, let
$ \gamma = \alpha_i + \cdots + \alpha_l $ and $ \gamma' = \alpha_k + \cdots + \alpha_j$.
\begin{enumerate}
\item If $ j \neq m$, then
$ \theta_{\alpha} \theta_{\beta} - \theta_{\beta} \theta_{\alpha} - (q^{-j \bullet (j+1)}-q^{j \bullet (j+1)}) \theta_{\gamma} \theta_{\gamma'} \in \mathcal{I}$.
\item If $ j =m$, then
$ \theta_{\alpha} \theta_{\beta} + \theta_{\beta} \theta_{\alpha} + (q+q^{-1}) \theta_{\gamma} \theta_{\gamma'}  \in \mathcal{I}$.
\end{enumerate}
\end{prop}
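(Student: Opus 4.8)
The plan is to argue by induction on $l$, the right endpoint of $\beta$, with base case $l=j+1$, keeping the cases $j\neq m$ and $j=m$ in parallel throughout since the parity of $\alpha_j$ controls every sign and $q$-power that appears. Write $\beta'=\alpha_k+\cdots+\alpha_{l-1}$ and $\gamma''=\alpha_i+\cdots+\alpha_{l-1}$. By Equation \eqref{rootvector} we may peel a $\theta_l$ off the right of both $\theta_\beta$ and $\theta_\gamma$:
\[
\theta_\beta=\theta_{\beta'}\theta_l-(-1)^{p(\beta')p(\alpha_l)}q^{(l-1)\bullet l}\,\theta_l\theta_{\beta'},\qquad
\theta_\gamma=\theta_{\gamma''}\theta_l-(-1)^{p(\gamma'')p(\alpha_l)}q^{(l-1)\bullet l}\,\theta_l\theta_{\gamma''},
\]
using that in $\beta'$ (resp. $\gamma''$) only $\alpha_{l-1}$ interacts with $\alpha_l$ under $\bullet$.

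For the inductive step ($l>j+1$) I would substitute these identities into the element $\theta_\alpha\theta_\beta-\theta_\beta\theta_\alpha-(q^{-j\bullet(j+1)}-q^{j\bullet(j+1)})\theta_\gamma\theta_{\gamma'}$ (and into its analogue with $+\theta_\beta\theta_\alpha$, $+(q+q^{-1})\theta_\gamma\theta_{\gamma'}$ when $j=m$) and push the resulting $\theta_l$'s to one side. Because $l>j+1$, Proposition \ref{pbw1} lets us commute $\theta_\alpha$ and $\theta_{\gamma'}$ past $\theta_l$ modulo $\mathcal I$, and moreover $\alpha\bullet\alpha_l=0$ and $p(\alpha)p(\alpha_l)=0$, which keeps the bookkeeping manageable. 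Applying the induction hypothesis to the pair $\alpha=\alpha_i+\cdots+\alpha_j$, $\beta'=\alpha_k+\cdots+\alpha_{l-1}$ (legitimate since $i<k<j<l-1$, with associated long roots $\gamma''$ and $\gamma'$) rewrites $\theta_\alpha\theta_{\beta'}$; after collecting the terms that recombine into $\theta_\beta$ and into $\theta_\gamma$ via \eqref{rootvector} once more, the claimed element should vanish modulo $\mathcal I$. The one point needing care is that the surviving coefficients $(-1)^{p(\gamma'')p(\alpha_l)}-(-1)^{p(\beta')p(\alpha_l)}$ and $(-1)^{p(\alpha)p(\beta')}$ are what is wanted, which follows from $p(\gamma'')\equiv p(\beta')$ and $p(\alpha)p(\beta')\equiv p(\alpha)p(\beta)\ (\mathrm{mod}\ 2)$ in the relevant ranges.

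For the base case $l=j+1$ everything is expressed, again by \eqref{rootvector}, in terms of the three elements $\theta_\alpha=\theta_{\alpha_i+\cdots+\alpha_j}$, $\theta_{\gamma'}=\theta_{\alpha_k+\cdots+\alpha_j}$ and $\theta_{j+1}$, with $\theta_\gamma=\theta_\alpha\theta_{j+1}-(-1)^{p(\alpha)p(\alpha_{j+1})}q^{j\bullet(j+1)}\theta_{j+1}\theta_\alpha$ and $\theta_\beta=\theta_{\gamma'}\theta_{j+1}-(-1)^{p(\gamma')p(\alpha_{j+1})}q^{j\bullet(j+1)}\theta_{j+1}\theta_{\gamma'}$. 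When $j\neq m$ the inputs are Proposition \ref{pbw5}, which $q$-commutes $\theta_\alpha$ with $\theta_{\gamma'}$, and Proposition \ref{pbw3} applied to the nested pair $\theta_{\gamma'}\subset\theta_\gamma=\theta_{\alpha_i+\cdots+\alpha_{j+1}}$; expanding $\theta_\alpha\theta_\beta$ and $\theta_\beta\theta_\alpha$, substituting these two relations, and using $q^{-(j-1)\bullet j}=q^{-j\bullet(j+1)}$ (valid since $j-1,j$ and $j,j+1$ lie in the same block $I'$ or $I''$), all terms cancel. When $j=m$ one invokes Lemma \ref{commlemma}/Proposition \ref{gen3} at the analogous spot together with $\theta_m^2\in\mathcal I$ from Proposition \ref{gen1}; the super\-commutator signs are exactly what turn the would-be $q$-power difference into $q+q^{-1}$ and the $-\theta_\beta\theta_\alpha$ into $+\theta_\beta\theta_\alpha$.

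The main obstacle is the sign and $q$-exponent bookkeeping in the $j=m$ case (and, more generally, whenever $m$ lies in the support of one of $\alpha,\beta,\gamma,\gamma'$): there some of these roots are odd, the naive commutator sign flips, and one must verify that the factors $(-1)^{p(\alpha')p(\alpha_j)}$ in \eqref{rootvector}, the defining pairings of Proposition \ref{gen3}, and the commutation rules for the shorter root vectors all conspire to give precisely the signs and coefficients in the statement. In the purely even situation the computation is a routine $q$-commutator manipulation of the type carried out in Propositions \ref{pbw2}--\ref{pbw5}.
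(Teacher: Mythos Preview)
Your plan is sound, but it takes a genuinely different route from the paper. The paper does \emph{not} induct on $l$. Instead it splits $\theta_\beta$ at the point $j$ where $\alpha$ and $\beta$ overlap: by Proposition~\ref{pbw2}, modulo $\mathcal I$,
\[
\theta_\beta \;=\; \theta_{\alpha_k+\cdots+\alpha_j}\,\theta_{\alpha_{j+1}+\cdots+\alpha_l}
\;-\;(-1)^{p(\gamma')p(\alpha_{j+1}+\cdots+\alpha_l)}q^{j\bullet(j+1)}\,\theta_{\alpha_{j+1}+\cdots+\alpha_l}\,\theta_{\alpha_k+\cdots+\alpha_j}.
\]
After that, Propositions~\ref{pbw3},~\ref{pbw4} and~\ref{pbw5} move $\theta_\alpha$ and $\theta_{\alpha_k+\cdots+\alpha_j}=\theta_{\gamma'}$ past the two pieces, and Proposition~\ref{pbw2} is used once more to absorb a leftover $\theta_{\alpha_{j+1}+\cdots+\alpha_l}\theta_\alpha$ into $\theta_\gamma$. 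The whole argument is a direct, finite manipulation with no induction; the two cases $j\neq m$ and $j=m$ differ only because $(j-1)\bullet j = j\bullet(j+1)$ in the first and $-(j-1)\bullet j = j\bullet(j+1)=1$ in the second, which produces the two different right-hand sides.

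Your induction on $l$ works too, but buys you extra labour: every step of the induction peels a $\theta_l$ off $\theta_\beta$ and $\theta_\gamma$ and then has to reassemble them, and the base case $l=j+1$ is already essentially the full computation the paper does (indeed your base-case ingredients, Propositions~\ref{pbw3} and~\ref{pbw5}, are exactly what the paper uses for general $l$). The paper's decomposition at $j$ is more natural because $j$ is the only index where the supports of $\alpha$ and $\beta$ interact nontrivially; splitting there makes $\theta_{\gamma'}$ and $\theta_\gamma$ appear immediately rather than being reconstructed at the end of an induction. One small caution on your sketch: in the $j=m$ base case you appeal to Lemma~\ref{commlemma}/Proposition~\ref{gen3} ``at the analogous spot'', but in fact you need the $j=m$ clauses of Propositions~\ref{pbw4} and~\ref{pbw5} (the supercommutator versions), not the four-term relation directly; these are what flip the sign on $\theta_\beta\theta_\alpha$ and turn the $q$-difference into $q+q^{-1}$.
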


\begin{proof}
Using Propositions ~\ref{pbw2}, ~\ref{pbw3}, ~\ref{pbw4}, and ~\ref{pbw5}, the following equalities are obtained modulo $ \mathcal{I}$:
\begin{align*}
\theta_{\beta} \theta_{\alpha} &= \theta_{\alpha_k + \cdots + \alpha_j} \theta_{\alpha_{j+1} + \cdots + \alpha_l} \theta_{\alpha} - q^{j \bullet (j+1)} 
\theta_{\alpha_{j+1}+\cdots+\alpha_l} \theta_{\alpha_k + \cdots + \alpha_j} \theta_{\alpha}\\
&= q^{-j \bullet (j+1)} \theta_{\alpha_k + \cdots + \alpha_j} \theta_{\alpha} \theta_{\alpha_{j+1} + \cdots + \alpha_l}
-q^{-j \bullet (j+1)} \theta_{\alpha_k + \cdots + \alpha_j} \theta_{\gamma} - q^{j \bullet (j+1)} \theta_{\alpha_{j+1} + \cdots + \alpha_l} \theta_{\alpha_k + \cdots + \alpha_j} \theta_{\alpha}.
\end{align*}
Now assume that $ j \neq m $.  Then $ (j-1) \bullet j = j \bullet (j+1)$.   Then continuing from above,
\begin{align*}
\theta_{\beta} \theta_{\alpha} &= 
\theta_{\alpha} \theta_{\alpha_k + \cdots + \alpha_j} \theta_{\alpha_{j+1} + \cdots + \alpha_l} - q^{-j \bullet (j+1)} \theta_{\gamma} \theta_{\gamma'}
-q^{j \bullet (j+1)} \theta_{\alpha} \theta_{\alpha_{j+1} + \cdots + \alpha_l} \theta_{\alpha_k + \cdots + \alpha_j} + q^{j \bullet (j+1)} \theta_{\gamma} \theta_{\gamma'}\\
&= \theta_{\alpha} \theta_{\beta} + (q^{j \bullet (j+1)} - q^{-j \bullet (j+1)}) \theta_{\gamma} \theta_{\gamma'}
\end{align*}
Now assume $ j=m$.  Then $ (j-1) \bullet j = -1 $ and $ j \bullet (j+1) = 1$.  Then continuing from before,
\begin{align*}
\theta_{\beta} \theta_{\alpha} &=
-\theta_{\alpha} \theta_{\alpha_k+\cdots+\alpha_j} \theta_{\alpha_{j+1}+\cdots+\alpha_l}
-q^{-1} \theta_{\gamma} \theta_{\gamma'} + q^{2} \theta_{\alpha_{j+1}+\cdots+\alpha_l} \theta_{\alpha} \theta_{\alpha_k+\cdots+\alpha_j}\\
&=
-\theta_{\alpha} \theta_{\alpha_k+\cdots+\alpha_j} \theta_{\alpha_{j+1}+\cdots+\alpha_l}
-q^{-1} \theta_{\gamma} \theta_{\gamma'} + q^{} \theta_{\alpha} \theta_{\alpha_{j+1}+\cdots+\alpha_l}  \theta_{\alpha_k+\cdots+\alpha_j} -q \theta_{\gamma} \theta_{\gamma'} \\
&=
-\theta_{\alpha} \theta_{\alpha_k+\cdots+\alpha_j} \theta_{\alpha_{j+1}+\cdots+\alpha_l}
-q^{-1} \theta_{\gamma} \theta_{\gamma'} + \theta_{\alpha} \theta_{\alpha_k+\cdots+\alpha_j} \theta_{\alpha_{j+1}+\cdots+\alpha_l} - \theta_{\alpha} \theta_{\beta}-q \theta_{\gamma} \theta_{\gamma'} \\
&= -(q+q^{-1}) \theta_{\gamma} \theta_{\gamma'} - \theta_{\alpha} \theta_{\beta}.
\end{align*}
\end{proof}

\begin{lemma}
\label{rootserre}
Let $ \alpha' = \alpha_i + \alpha_{i+1} + \cdots + \alpha_{j} $ and 
$ \alpha'' = \alpha_{j+1} + \alpha_{j+2} + \cdots + \alpha_k $ such that $ p(\alpha')=0 $.  Then 
\begin{equation*}
\theta_{\alpha'} \theta_{\alpha''} \theta_{\alpha'} - \frac{\theta_{\alpha'}^2 \theta_{\alpha''}}{[2]} - \frac{\theta_{\alpha''} \theta_{\alpha'}^2 }{[2]} \in \mathcal{I}.
\end{equation*}
\end{lemma}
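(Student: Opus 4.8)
The plan is to work modulo the ideal $\mathcal{I}$, which is two-sided by Proposition \ref{hopfideal}, and to deduce the stated identity from just two commutation relations among the root vectors $a := \theta_{\alpha'}$, $b := \theta_{\alpha''}$ and $c := \theta_{\alpha'+\alpha''}$, followed by one short line of algebra. Note first that $p(\alpha')=0$ forces $m\notin\{i,\ldots,j\}$, so $a$ is even; consequently no Koszul signs enter either the relations below or the target expression, and the assertion is simply the bosonic Serre relation $[2]\,aba \equiv a^2 b + b a^2 \pmod{\mathcal{I}}$ between $a$ and $b$.

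The two relations I would record are the following. Since $\alpha'+\alpha'' = \alpha_i+\cdots+\alpha_k$ is a root and $p(\alpha')=0$, Proposition \ref{pbw2} applied to the pair $(\alpha',\alpha'')$ gives $ab \equiv q^{\alpha'\bullet\alpha''}\,ba + c \pmod{\mathcal{I}}$. The roots $\alpha'$ and $\alpha'+\alpha''$ share the left endpoint $i$ and $i\neq m$, so Proposition \ref{pbw4}(1) (with its ``$\alpha$'' equal to $\alpha'+\alpha''$ and its ``$\beta$'' equal to $\alpha'$) gives $ca \equiv q^{\,i\bullet(i+1)}\,ac \pmod{\mathcal{I}}$, again with no sign since $p(i)=0$. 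The one point requiring care is that both exponents equal a common value $\epsilon\in\{-1,+1\}$: because $p(\alpha')=0$, the interval $\{i,\ldots,j\}$ lies entirely in $I'$ or entirely in $I''$, and reading off \eqref{bullets} one finds $\alpha'\bullet\alpha'' = j\bullet(j+1) = -1 = i\bullet(i+1)$ in the first case (including the boundary subcases $j=m-1$ or $i=m-1$, where $(m-1)\bullet m=-1$) and $\alpha'\bullet\alpha'' = j\bullet(j+1) = +1 = i\bullet(i+1)$ in the second.

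To finish, substitute $c \equiv ab - q^{\epsilon}ba$ from the first relation into the second, $ca \equiv q^{\epsilon}ac$, to obtain $aba - q^{\epsilon}ba^2 \equiv q^{\epsilon}a^2 b - q^{2\epsilon}aba \pmod{\mathcal{I}}$, hence $(1+q^{2\epsilon})\,aba \equiv q^{\epsilon}(a^2 b + ba^2)$. Since $\epsilon=\pm1$ we have $1+q^{2\epsilon} = q^{\epsilon}(q+q^{-1}) = q^{\epsilon}[2]$, so cancelling the unit $q^{\epsilon}$ and dividing by $[2]$ yields exactly $\theta_{\alpha'}\theta_{\alpha''}\theta_{\alpha'} - \tfrac{1}{[2]}\theta_{\alpha'}^2\theta_{\alpha''} - \tfrac{1}{[2]}\theta_{\alpha''}\theta_{\alpha'}^2 \in \mathcal{I}$. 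There is no serious obstacle in this argument; the only genuine content beyond citing Propositions \ref{pbw2} and \ref{pbw4} is the verification that the two $q$-exponents agree and lie in $\{\pm1\}$, which is precisely what makes $q^{\epsilon}+q^{-\epsilon}$ collapse to $[2]$.
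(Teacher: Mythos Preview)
Your argument is correct and follows essentially the same route as the paper, which simply records that the lemma is ``a straightforward application of Propositions~\ref{pbw2}, \ref{pbw4}, and~\ref{pbw5}.'' Your derivation is in fact slightly more economical: by matching the exponent $\alpha'\bullet\alpha'' = j\bullet(j+1)$ with $i\bullet(i+1)$ (both equal to the same $\epsilon\in\{\pm 1\}$ since $\{i,\dots,j\}$ lies on one side of $m$), you need only the single commutation $ca\equiv q^{\epsilon}ac$ from Proposition~\ref{pbw4} together with $ab\equiv q^{\epsilon}ba+c$ from Proposition~\ref{pbw2}, and Proposition~\ref{pbw5} is not invoked.
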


\begin{proof}
This is a straightforward application of Propositions ~\ref{pbw2}, ~\ref{pbw4}, and ~\ref{pbw5}.
\end{proof}

\begin{prop}
\label{pbw7}
If $ p(\alpha) = 1 $ then $ \theta_{\alpha}^2 \in \mathcal{I} $.
\end{prop}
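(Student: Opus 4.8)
The plan is to induct on the length of the positive root $\alpha = \alpha_i + \cdots + \alpha_j$. Since $p(\alpha) = 1$ and $\alpha$ is a positive root, we must have $i \le m \le j$, so that $\theta_m$ occurs exactly once in every monomial appearing in $\theta_\alpha$. The base case is $i = j = m$, which is precisely Proposition \ref{gen1}. For the inductive step, assume $\alpha$ has length at least two, set $\alpha'' = \alpha_{i+1} + \cdots + \alpha_j$, and apply Proposition \ref{pbw2} to the consecutive roots $\alpha_i$ and $\alpha''$ (whose sum is $\alpha$) to obtain, modulo $\mathcal{I}$,
$$ \theta_\alpha \equiv \theta_{\alpha_i}\theta_{\alpha''} - (-1)^{p(i)p(\alpha'')} q^{\,\alpha_i \bullet \alpha''}\, \theta_{\alpha''}\theta_{\alpha_i}, \qquad \alpha_i \bullet \alpha'' = i \bullet (i+1). $$

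Exactly one of $\theta_{\alpha_i}, \theta_{\alpha''}$ is odd. If $i < m$, then $\theta_{\alpha_i} = \theta_i$ is even, $\theta_{\alpha''}$ is odd (it contains $\alpha_m$), and $i \bullet (i+1) = -1$; if $i = m$, then $\theta_{\alpha_i} = \theta_m$ is odd, $\theta_{\alpha''}$ is even, and $m \bullet (m+1) = 1$. In either case write $a$ for the even one and $b$ for the odd one among $\{\theta_{\alpha_i}, \theta_{\alpha''}\}$; a short computation of signs and $\bullet$-values then puts the displayed congruence into the form $\theta_\alpha \equiv c\,(ab - q^{-1}ba)\pmod{\mathcal{I}}$ with $c \in \{1, -q\}$. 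Moreover $b^2 \in \mathcal{I}$: this is Proposition \ref{gen1} when $b = \theta_m$, and the induction hypothesis when $b = \theta_{\alpha''}$ (which has strictly smaller length). Finally, the quantum Serre relation $(q+q^{-1})\,aba \equiv a^2b + ba^2 \pmod{\mathcal{I}}$ is available: when $i < m$ it is Lemma \ref{rootserre} applied to the even root $\alpha_i$ and $\alpha''$, and when $i = m$ it is the left--right mirror of Lemma \ref{rootserre} (even root vector on the right), which is proved by the same argument since Propositions \ref{pbw2}, \ref{pbw4}, \ref{pbw5} are available in both orientations.

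Granting these three congruences ($b^2 \equiv 0$, the Serre relation, and $\theta_\alpha \equiv c(ab - q^{-1}ba)$), the conclusion is a purely formal computation: expanding gives $(ab - q^{-1}ba)^2 = abab - q^{-1}ab^2a - q^{-1}ba^2b + q^{-2}baba$, where $ab^2a \in \mathcal{I}$, the Serre relation yields $ba^2b \equiv (q+q^{-1})abab$ and hence $baba \equiv abab$, so that $(ab - q^{-1}ba)^2 \equiv \big(1 - q^{-1}(q+q^{-1}) + q^{-2}\big)abab = 0$, and therefore $\theta_\alpha^2 \equiv c^2(ab - q^{-1}ba)^2 \equiv 0$. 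The main obstacle is not this cancellation — which is automatic — but the preparatory bookkeeping: tracking the signs and the values of $\bullet$ to reach the normal form $c(ab - q^{-1}ba)$, and recognizing that Lemma \ref{rootserre} must be invoked in both its left and right orientations (the latter case, $i = m$, being the one that does not already follow directly from the stated form of Lemma \ref{rootserre}).
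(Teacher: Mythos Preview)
Your argument is correct and follows the same inductive strategy as the paper, which simply says ``This follows from induction using Lemma~\ref{rootserre} where the base case is $\theta_m^2 \in \mathcal{I}$.'' You have filled in the details the paper omits: the explicit expansion of $(ab-q^{-1}ba)^2$ and the cancellation via $b^2\equiv 0$ and the Serre relation are exactly the intended mechanism.

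One point worth flagging, which you already noticed: in the case $i=m$ you need the Serre relation with the \emph{even} root vector $\theta_{\alpha''}$ (for $\alpha''=\alpha_{m+1}+\cdots+\alpha_j$) sandwiching $\theta_m$, whereas Lemma~\ref{rootserre} as stated has the even root $\alpha'$ on the \emph{left} of the consecutive pair. Your claim that the mirror version follows by the same method is correct --- concretely, from Proposition~\ref{pbw2} one gets $\theta_m\theta_{\alpha''}\equiv q\,\theta_{\alpha''}\theta_m+\theta_\alpha$, and substituting this into $[2]\theta_{\alpha''}\theta_m\theta_{\alpha''}-\theta_{\alpha''}^2\theta_m-\theta_m\theta_{\alpha''}^2$ reduces it to $q^{-1}\theta_{\alpha''}\theta_\alpha-\theta_\alpha\theta_{\alpha''}$, which lies in $\mathcal{I}$ by Proposition~\ref{pbw5} (with $i=m$, $k=m+1$, same right endpoint $j$, giving exponent $-(j-1)\bullet j=-1$). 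So the mirror Serre relation is indeed available. The paper's terse proof glosses over this case distinction; your treatment is more careful.
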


\begin{proof}
This follows from induction using Lemma ~\ref{rootserre} where the base case is $ \theta_m^2 \in \mathcal{I} $.
\end{proof}

\begin{prop}
\label{pbwspan}
The algebra $ {\bf f} = {\bf f}' / \mathcal{I} $ is spanned by elements of the form
$$ \prod_{\alpha \in \Delta^+}^{<} \theta_{\alpha}^{n_{\alpha}}, $$
where the terms in the product are arranged by the ordering on $ \Delta^+ $, and $ n_{\alpha} \in \N $ for all $ \alpha$ with $ p(\alpha) = 0 $ and $ n_{\alpha} \in \lbrace 0, 1 \rbrace $ for all $ \alpha $ with $ p(\alpha)=1$.
\end{prop}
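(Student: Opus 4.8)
The plan is to prove the proposition by a straightening argument. Write $\prec$ for the order on $\Delta^+$ fixed above and $\mathrm{ht}(\nu)=\sum_i\nu_i$. Since $\theta_i=\theta_{\alpha_i}$, every monomial in the generators is a product of root vectors, so $\mathbf{f}$ is spanned by products $\theta_{\beta_1}\cdots\theta_{\beta_s}$ with $\beta_t\in\Delta^+$, and it suffices to rewrite each such product, modulo $\mathcal{I}$, as a $\Q(q)$-linear combination of products $\theta_{\gamma_1}\cdots\theta_{\gamma_N}$ with $\gamma_1\preceq\cdots\preceq\gamma_N$: such a product is one of the monomials in the statement unless some odd root occurs in it with exponent $\geq 2$, in which case it is zero in $\mathbf{f}$ by Proposition~\ref{pbw7}.

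The engine is a reordering lemma: for $\alpha\succ\beta$ in $\Delta^+$ there is a scalar $c_{\alpha,\beta}\in\Q(q)$ with
\begin{equation*}
\theta_\alpha\theta_\beta \;\equiv\; c_{\alpha,\beta}\,\theta_\beta\theta_\alpha \;+\; \Xi_{\alpha,\beta} \pmod{\mathcal{I}},
\end{equation*}
where $\Xi_{\alpha,\beta}$ is a $\Q(q)$-combination of ordered products $\theta_{\gamma_1}\cdots\theta_{\gamma_r}$ ($\gamma_1\preceq\cdots\preceq\gamma_r$) with $\sum_t\gamma_t=\alpha+\beta$ and $\beta\prec\gamma_t\prec\alpha$ for every $t$. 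This I would establish by a case analysis on how the intervals underlying $\alpha$ and $\beta$ overlap: disjoint and non-adjacent gives $\Xi=0$ (Proposition~\ref{pbw1}); strictly nested gives $\Xi=0$ (Proposition~\ref{pbw3}); a common left endpoint gives $\Xi=0$ (Proposition~\ref{pbw4}); a common right endpoint gives $\Xi=0$ (Proposition~\ref{pbw5}); adjacent gives $\Xi$ a scalar multiple of the single root vector $\theta_{\alpha+\beta}$ (Proposition~\ref{pbw2}); and crossing with overlap of length $\geq 2$ gives $\Xi$ a scalar multiple of a length-two ordered product (Proposition~\ref{pbw6}). In each case the bifurcation of the sign and the power of $q$ according to whether the relevant boundary index equals $m$ is precisely the one recorded in the cited proposition, and one checks directly that the roots occurring in $\Xi_{\alpha,\beta}$ are strictly $\prec$-between $\beta$ and $\alpha$; this convexity of the chosen order is what makes the straightening terminate below.

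The one configuration not covered by Propositions~\ref{pbw1}--\ref{pbw6} is that in which the two intervals overlap in exactly one node, necessarily the left endpoint of $\alpha$ and the right endpoint of $\beta$. Here I would peel that node off $\beta$ using \eqref{rootvector}, which turns $\theta_\alpha\theta_\beta$ into a combination of length-three products each of whose consecutive pairs is adjacent, strictly nested, or shares an endpoint (and involves factors of strictly smaller height), and then re-sort those using Propositions~\ref{pbw1}--\ref{pbw5} together with an inner induction on $\mathrm{ht}(\alpha+\beta)$. I expect this single-node case to be the main obstacle: it is the only one not settled by a single clean application of Propositions~\ref{pbw1}--\ref{pbw6}, and one has to verify that the simple root vectors $\theta_{\alpha_i}$ appearing transiently during the reduction --- which are not $\prec$-between $\beta$ and $\alpha$ --- cancel, so that the resulting $\Xi_{\alpha,\beta}$ has the form claimed in the lemma.

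Granting the lemma, the proposition follows by iterated rewriting. Given $P=\theta_{\beta_1}\cdots\theta_{\beta_s}$, as long as some consecutive pair has $\beta_t\succ\beta_{t+1}$, rewrite it by the lemma. To see this halts, attach to $P$ the pair $\bigl(\mathbf{a}(P),\mathrm{inv}(P)\bigr)$, where $\mathbf{a}(P)=(a_\gamma)_{\gamma\in\Delta^+}$ records how many of $\beta_1,\dots,\beta_s$ equal $\gamma$ and $\mathrm{inv}(P)=\#\{\,t<t' : \beta_t\succ\beta_{t'}\,\}$, and order such pairs lexicographically by comparing the $\mathbf{a}$'s entry by entry from the $\prec$-largest root downward and breaking ties by $\mathrm{inv}$. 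In one rewriting step the main term $c_{\alpha,\beta}\theta_\beta\theta_\alpha$ (in context) leaves $\mathbf{a}$ unchanged and lowers $\mathrm{inv}$ by one, while every term of $\Xi_{\alpha,\beta}$ (in context) replaces the copies of $\alpha$ and $\beta$ by roots all strictly $\prec\alpha$, hence strictly decreases $\mathbf{a}$ (the $\prec$-largest root whose multiplicity changes is $\alpha$, and its multiplicity drops by one). Since the $\mathbf{a}$'s of fixed total weight and the values of $\mathrm{inv}$ form finite sets, the rewriting terminates at a $\Q(q)$-combination of ordered monomials, and discarding (via Proposition~\ref{pbw7}) those in which an odd root has exponent $\geq 2$ finishes the proof.
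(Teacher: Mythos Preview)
Your proposal is correct and follows the same route as the paper, whose proof is the single sentence ``This follows directly from Propositions~\ref{pbw1}, \ref{pbw2}, \ref{pbw3}, \ref{pbw4}, \ref{pbw5}, \ref{pbw6}, and \ref{pbw7}.'' You have supplied exactly the straightening argument that sentence leaves implicit, together with an explicit termination measure, so there is nothing genuinely different in strategy.

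Two remarks on the point you flag as the main obstacle. First, your concern is well placed: as stated, Proposition~\ref{pbw6} requires $i<k<j<l$, so the overlap has length at least two and the single-node case $\alpha=[i,j]$, $\beta=[k,i]$ with $k<i<j$ is not literally covered by the list \ref{pbw1}--\ref{pbw6}. Second, your proposed fix (peel $\theta_i$ off $\theta_\beta$ via \eqref{rootvector} and reduce using \ref{pbw2}, \ref{pbw4}, \ref{pbw3}) does succeed, and in fact the intermediate three-factor terms cancel cleanly: one finds
\[
\theta_\alpha\theta_\beta \;\equiv\; c\,\theta_\beta\theta_\alpha \;+\; c'\,\theta_{[k,j]}\theta_{\alpha_i}\pmod{\mathcal I}
\]
for explicit scalars $c,c'$, and $\beta=[k,i]\prec[k,j]\prec\alpha_i\prec[i,j]=\alpha$, so the convexity property you need for $\Xi_{\alpha,\beta}$ holds here as well. (Equivalently, the proof of Proposition~\ref{pbw6} goes through verbatim when $k=j$, since the applications of \ref{pbw3} and \ref{pbw5} inside it allow the inner root to degenerate to a simple root.) With that case in hand, your lexicographic descent on $(\mathbf a(P),\mathrm{inv}(P))$ terminates exactly as you say.
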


\begin{proof}
This follows directly from Propositions ~\ref{pbw1}, ~\ref{pbw2}, ~\ref{pbw3}, ~\ref{pbw4}, ~\ref{pbw5}, ~\ref{pbw6}, and ~\ref{pbw7}.
\end{proof}

\begin{prop}
\label{comultpbw}
$$ \Delta(\theta_{\alpha_i+\cdots+\alpha_j})= 
\theta_{\alpha_i+\cdots+\alpha_j} \otimes 1 + \sum_{r=i}^{j-1} (q^{-r \bullet (r+1)}-q^{r \bullet (r+1)}) \theta_{\alpha_{r+1}+\cdots+\alpha_j} \otimes \theta_{\alpha_i+\cdots+\alpha_r}
+ 1 \otimes \theta_{\alpha_i+\cdots+\alpha_j}.$$

\end{prop}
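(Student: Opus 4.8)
I would prove the formula by induction on the length $j-i$ of the root $\alpha_i+\cdots+\alpha_j$, throughout working inside $\mathbf{f}\otimes\mathbf{f}$ (recall $\Delta$ descends to $\mathbf{f}$ by Proposition \ref{hopfideal}). The base case $j=i$ is just $\Delta(\theta_i)=\theta_i\otimes 1+1\otimes\theta_i$, the sum in the statement being empty. For the inductive step, set $\alpha'=\alpha_i+\cdots+\alpha_{j-1}$ and use Equation \ref{rootvector} to write $\theta_{\alpha_i+\cdots+\alpha_j}=\theta_{\alpha'}\theta_j-(-1)^{p(\alpha')p(\alpha_j)}q^{\alpha'\bullet\alpha_j}\theta_j\theta_{\alpha'}$. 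One first notes that $p(\alpha')p(\alpha_j)\equiv 0\pmod 2$: a root $\alpha_a+\cdots+\alpha_b$ has odd parity only if it contains $\alpha_m$, and the disjoint index sets $\{i,\dots,j-1\}$ and $\{j\}$ cannot both contain $m$; one also checks $\alpha'\bullet\alpha_j=(j-1)\bullet j$, since $\alpha_s\bullet\alpha_j=0$ for $s\le j-2$. Thus $\theta_{\alpha_i+\cdots+\alpha_j}=\theta_{\alpha'}\theta_j-q^{(j-1)\bullet j}\theta_j\theta_{\alpha'}$, and since $\Delta$ is an algebra homomorphism, $\Delta(\theta_{\alpha_i+\cdots+\alpha_j})=\Delta(\theta_{\alpha'})\Delta(\theta_j)-q^{(j-1)\bullet j}\Delta(\theta_j)\Delta(\theta_{\alpha'})$; now substitute $\Delta(\theta_j)=\theta_j\otimes 1+1\otimes\theta_j$ together with the inductive formula for $\Delta(\theta_{\alpha'})$.

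Next I would expand both products via the twisted multiplication on $\mathbf{f}\otimes\mathbf{f}$. Most of the twist factors $q^{-|x_2|\bullet|x_1'|}(-1)^{p(x_2)p(x_1')}$ appearing are trivial: whenever one leg being multiplied is $1$, or the index sets occurring in the two legs are separated by more than one (so the $\bullet$-pairing vanishes) and cannot both contain $m$, the factor is $1$; the only surviving powers are $q^{\pm(j-1)\bullet j}$, arising from commuting $\theta_j$ past $\theta_{\alpha'}$ or past some $\theta_{\alpha_{r+1}+\cdots+\alpha_{j-1}}$. Collecting by the shape of the second tensor leg: the outer terms recombine through Equation \ref{rootvector} into $\theta_{\alpha_i+\cdots+\alpha_j}\otimes 1$ and $1\otimes\theta_{\alpha_i+\cdots+\alpha_j}$; the two terms of shape $\theta_{\alpha'}\otimes\theta_j$ cancel; the two terms with second leg $\theta_{\alpha'}$ combine into $(q^{-(j-1)\bullet j}-q^{(j-1)\bullet j})\theta_{\alpha_j}\otimes\theta_{\alpha_i+\cdots+\alpha_{j-1}}$, which is exactly the $r=j-1$ summand; for each $r=i,\dots,j-2$, writing $c_r=q^{-r\bullet(r+1)}-q^{r\bullet(r+1)}$, the pair $c_r\,\theta_{\alpha_{r+1}+\cdots+\alpha_{j-1}}\theta_j\otimes\theta_{\alpha_i+\cdots+\alpha_r}$ and $-q^{(j-1)\bullet j}c_r\,\theta_j\theta_{\alpha_{r+1}+\cdots+\alpha_{j-1}}\otimes\theta_{\alpha_i+\cdots+\alpha_r}$ recombine through Equation \ref{rootvector} into $c_r\,\theta_{\alpha_{r+1}+\cdots+\alpha_j}\otimes\theta_{\alpha_i+\cdots+\alpha_r}$, giving the remaining summands; finally the last pair of terms, with second legs $\theta_{\alpha_i+\cdots+\alpha_r}\theta_j$ and $\theta_j\theta_{\alpha_i+\cdots+\alpha_r}$, cancel in $\mathbf{f}\otimes\mathbf{f}$ because $\theta_j$ and $\theta_{\alpha_i+\cdots+\alpha_r}$ commute modulo $\mathcal{I}$ by Proposition \ref{pbw1} (here $j>r+1$). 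Assembling the survivors yields the claimed formula.

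The bulk of the work — and essentially the only place to slip — is the bookkeeping of the twist factors: one must check that every $\bullet$-exponent and every sign that occurs is either trivial or exactly $\pm(j-1)\bullet j$, so that Equation \ref{rootvector} can be read backwards with the correct coefficient, and that the leftover terms are annihilated by the commutation relations of Proposition \ref{pbw1}. The parity bookkeeping is uniform and harmless: any root occurring here contains $\alpha_m$ at most once, and no two of the roots $\alpha_i+\cdots+\alpha_{j-1}$, $\alpha_j$, $\alpha_{r+1}+\cdots+\alpha_{j-1}$, $\alpha_i+\cdots+\alpha_r$ relevant to a given product both contain $m$, so every $(-1)^{p(\cdot)p(\cdot)}$ factor equals $+1$ and the entire computation reduces to tracking powers of $q$.
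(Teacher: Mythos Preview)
Your proof is correct and follows exactly the approach the paper indicates: an induction on $j$ using Equation~\ref{rootvector} and the multiplicativity of $\Delta$, with the leftover cross-terms killed in $\mathbf{f}\otimes\mathbf{f}$ by Proposition~\ref{pbw1}. The paper's proof is the single sentence ``This is an easy induction on $j$,'' so you have simply spelled out the bookkeeping that the paper omits.
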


\begin{proof}
This is an easy induction on $j$.
\end{proof}

\begin{prop}
\label{formroot}
Let $ \alpha = \alpha_i + \cdots + \alpha_j$.  Then
$$ (\theta_{\alpha}, \theta_{\alpha}) = \begin{cases}
\frac{q^{-2(j-i)}}{1-q^2} & \textrm{ if } i \leq j < m, \\
q^{-2(j-i)} & \textrm{ if } i \leq j = m, \\
\frac{q^{2(j-i)}}{1-q^{-2}} & \textrm{ if } m < i \leq j, \\
q^{2(j-i)} & \textrm{ if } m=i \leq j,\\
q^{2(i+j-2m)} & \textrm{ if } i < m < j.
\end{cases} $$
\end{prop}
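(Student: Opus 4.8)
The plan is to induct on the length $j-i$ of the root $\alpha = \alpha_i + \cdots + \alpha_j$, using the comultiplication formula of Proposition~\ref{comultpbw} together with the defining properties of the bilinear form from Proposition~\ref{formproperties}. The base case $i=j$ is exactly the assignment $(\theta_i, \theta_i)$ from Proposition~\ref{formproperties}: when $i \in I'$ one has $i \bullet i = 2$, so $(\theta_i,\theta_i) = \frac{1}{1-q^2}$, matching the first case; when $i \in I''$ one has $i \bullet i = -2$, giving $\frac{1}{1-q^{-2}}$, matching the third case; and $(\theta_m,\theta_m) = 1$, matching the second and fourth. So the formula holds whenever $j-i = 0$.

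For the inductive step I would compute $(\theta_\alpha, \theta_\alpha)$ by writing $\theta_\alpha = \theta_{\alpha'}\theta_j - (-1)^{p(\alpha')p(\alpha_j)} q^{\alpha' \bullet \alpha_j}\theta_j\theta_{\alpha'}$ via Equation~\ref{rootvector}, where $\alpha' = \alpha_i + \cdots + \alpha_{j-1}$, and using $(x'x'', y) = (x' \otimes x'', \Delta(y))$ to reduce to pairings of the form $(\theta_{\alpha'} \otimes \theta_j, \Delta(\theta_\alpha))$ and $(\theta_j \otimes \theta_{\alpha'}, \Delta(\theta_\alpha))$. Plugging in the explicit $\Delta(\theta_\alpha)$ from Proposition~\ref{comultpbw} and using the multiplicativity of the form on tensor factors, only two terms of the comultiplication can possibly pair nontrivially with $\theta_{\alpha'} \otimes \theta_j$ (respectively $\theta_j \otimes \theta_{\alpha'}$): namely the terms $\theta_{\alpha_j} \otimes \theta_{\alpha_i + \cdots + \alpha_{j-1}}$ and the leading $\theta_\alpha \otimes 1$ or trailing $1 \otimes \theta_\alpha$ terms. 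Everything then reduces to $(\theta_{\alpha'}, \theta_{\alpha'})$, $(\theta_j, \theta_j)$, and a coefficient $q^{-(j-1)\bullet j} - q^{(j-1)\bullet j}$ coming from Proposition~\ref{comultpbw}; the inductive hypothesis supplies $(\theta_{\alpha'},\theta_{\alpha'})$ and a short computation in powers of $q$ finishes the case. The five cases of the statement are governed by where $m$ sits relative to $i, j$: whether $\{i,\dots,j\}$ lies entirely in $I'$, entirely in $I''$, equals $\{m\}$ extended downward or upward, or straddles $m$ (the case $i < m < j$), and in each regime one tracks $(j-1) \bullet j$ and the parities $p(\alpha'), p(\alpha_j)$ accordingly.

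The main obstacle is the straddling case $i < m < j$, where the answer $q^{2(i+j-2m)}$ has no denominator even though $\theta_\alpha$ has even parity. Here one must be careful: the relevant sub-cases are when $\alpha'$ still straddles $m$ (i.e. $j - 1 > m$, so $p(\alpha') = 1$, and $(j-1)\bullet j = -2$) versus when $j = m+1$ so that $\alpha' = \alpha_i + \cdots + \alpha_m$ has $i < m = j-1$, $p(\alpha') = 1$, and $(j-1) \bullet j = m \bullet (m+1) = 1$. In the former situation the inductive hypothesis gives $(\theta_{\alpha'},\theta_{\alpha'}) = q^{2(i + (j-1) - 2m)}$ and one checks the cross terms cancel the potential denominators; in the latter one uses $(\theta_{\alpha'},\theta_{\alpha'}) = q^{-2(m-i)}$ from the first case together with $(\theta_m,\theta_m)=1$ and the coefficient $q^{-1} - q$ to land on $q^{2(i+j-2m)} = q^{2(i-m)+2}$. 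One also has to double-check the boundary sub-cases within the purely-$I'$ and purely-$I''$ regimes where $j-1 = i$, and the case $i = m < j$ where the first factor $\theta_{\alpha'}$ is just $\theta_m$. None of these involve anything beyond bookkeeping with signs and $q$-powers, but the straddling case is where a sign or exponent error would most easily creep in, so I would write that computation out in full and leave the others as "analogous."
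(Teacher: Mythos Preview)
Your approach is exactly the paper's: induct on the length of the root, using Proposition~\ref{comultpbw} to expand $\Delta(\theta_\alpha)$ and Proposition~\ref{formproperties} to reduce everything to $(\theta_{\alpha'},\theta_{\alpha'})$ and $(\theta_j,\theta_j)$. The paper's own proof is the one-line statement of precisely this, so strategically you are on target.

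That said, two of the explicit values you wrote down in the straddling discussion are wrong and would derail the bookkeeping if left uncorrected. First, when $j-1>m$ both $j-1$ and $j$ lie in $I''$, so $(j-1)\bullet j = 1$, not $-2$; the coefficient from Proposition~\ref{comultpbw} at $r=j-1$ is therefore $q^{-1}-q$. Second, in the boundary sub-case $j=m+1$ the simple-root factor you need is $(\theta_{m+1},\theta_{m+1}) = \frac{1}{1-q^{-2}}$, not $(\theta_m,\theta_m)=1$; and the value $(\theta_{\alpha'},\theta_{\alpha'})=q^{-2(m-i)}$ you quote comes from the \emph{second} case of the statement ($i\le j=m$), not the first. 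With those two corrections the arithmetic goes through and matches the claimed $q^{2(i+j-2m)}$.
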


\begin{proof}
For each case, this is an easy induction on the length of the root using Propositions ~\ref{formproperties} and ~\ref{comultpbw}.
\end{proof}

\begin{prop}
\label{formprodroots}
The form on spanning elements is given by:
$$ (\prod_{\alpha \in \Delta^+}^{<} \theta_{\alpha}^{m_{\alpha}}, \prod_{\alpha \in \Delta^+}^{<} \theta_{\alpha}^{n_{\alpha}}) =
\prod_{\alpha \in \Delta^+} \delta_{m_{\alpha}, n_{\alpha}}(\theta_{\alpha},\theta_{\alpha})^{n_{\alpha}}(\sum_{r=0}^{n_{\alpha}-1} (-1)^{rp(\alpha)} q^{-r \alpha \bullet \alpha})(\sum_{r=0}^{n_{\alpha}-2} (-1)^{rp(\alpha)} q^{-r \alpha \bullet \alpha}) \cdots (1). $$
\end{prop}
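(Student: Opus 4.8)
The plan is to prove both the orthogonality (the factors $\delta_{m_\alpha,n_\alpha}$) and the evaluation of the diagonal entries at once, by induction on the height $|\nu|=\sum_i\nu_i$ of the common weight, peeling a root vector off the left at each stage using the adjointness properties of Proposition~\ref{formproperties} and the coproduct formula of Proposition~\ref{comultpbw}. First I reduce to the case $|u|=|v|$, where $u=\prod_\alpha\theta_\alpha^{m_\alpha}$ and $v=\prod_\alpha\theta_\alpha^{n_\alpha}$: if the two weights differ then some $m_\alpha\ne n_\alpha$, so the right-hand side is $0$, and the left-hand side is $0$ because the form respects the $\N[I]$-grading (an easy induction on length from Proposition~\ref{formproperties}, since $\Delta$ preserves weight while $(\theta_i,\theta_j)=0$ for $i\ne j$). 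So assume $|u|=|v|=\nu\ne 0$, and let $\beta=\alpha_i+\cdots+\alpha_j$ be the smallest root in the fixed ordering with $m_\beta+n_\beta>0$; then $\beta$ is the smallest root occurring in either monomial, and the only roots occurring whose support contains $i$ are $\beta$ itself and roots longer than $\beta$, necessarily of the form $\alpha_i+\cdots+\alpha_l$ with $l>j$.

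Write $u=\theta_\beta^{m_\beta}u'$ and $v=\theta_\beta^{n_\beta}v'$, where $u',v'$ are the PBW monomials obtained by deleting the $\theta_\beta$-factor (so they involve only roots $>\beta$). Suppose first $n_\beta\ge 1$. Applying $(x,y'y'')=(\Delta(x),y'\otimes y'')$ with $y'=\theta_\beta^{n_\beta}$ and $y''=v'$, I must extract the component of $\Delta(u)=\Delta(\theta_\beta)^{m_\beta}\Delta(u')$ whose left tensor factor has weight $n_\beta\beta$. The key point: by Proposition~\ref{comultpbw} the only summand of $\Delta(\theta_\beta)$ putting $\alpha_i$ into a left tensor factor is $\theta_\beta\otimes 1$ (its cross terms put $\theta_{\alpha_{r+1}+\cdots+\alpha_j}$, whose support omits $i$, on the left), while for a longer occurring root $\alpha_i+\cdots+\alpha_l$ ($l>j$) every coproduct summand carrying $\alpha_i$ on the left also carries $\alpha_{l}$ with $l>j$. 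Hence the $\alpha_i$-coefficient of the target left weight equals the number of the $m_\beta$ factors of $\Delta(\theta_\beta)^{m_\beta}$ that choose $\theta_\beta\otimes 1$; this forces that number to be exactly $n_\beta$ (so $(u,v)=0$ unless $m_\beta\ge n_\beta$), and then, saturating in turn the budgets for $\alpha_j,\alpha_{j-1},\dots,\alpha_{i+1}$, all remaining cross terms and all of $\Delta(u')$ are forced into the weight-zero-on-the-left slot $1\otimes(-)$. Collecting the Gaussian-binomial coefficient $g(m_\beta,n_\beta)$ and the twist produced by reordering copies of $\theta_\beta\otimes1$ and $1\otimes\theta_\beta$, one obtains
\begin{equation*}
(u,v)=g(m_\beta,n_\beta)\,(\theta_\beta^{n_\beta},\theta_\beta^{n_\beta})\,(\theta_\beta^{m_\beta-n_\beta}u',\,v').
\end{equation*}
If $m_\beta>n_\beta$, the last factor vanishes by the inductive hypothesis (its two PBW monomials have unequal $\beta$-exponents and strictly smaller height). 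If $m_\beta=n_\beta$, then $g=1$ and $(u,v)=(\theta_\beta^{n_\beta},\theta_\beta^{n_\beta})(u',v')$, and the inductive hypothesis applies to $(u',v')$. The remaining case $n_\beta=0<m_\beta$ is symmetric: peeling $\theta_\beta^{m_\beta}$ off $u$ via $(x'x'',y)=(x'\otimes x'',\Delta(y))$, the same weight argument shows $\Delta(v)$ has no component of left weight $m_\beta\beta$, so $(u,v)=0$, as the right-hand side requires.

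Finally one evaluates $(\theta_\beta^n,\theta_\beta^n)$ and matches it with the factor attached to $\alpha=\beta$. Peeling one power at a time, $(\theta_\beta^n,\theta_\beta^n)=(\theta_\beta^{n-1}\otimes\theta_\beta,\Delta(\theta_\beta)^n)$, and one extracts the component of $\Delta(\theta_\beta)^n$ whose \emph{right} tensor factor has weight $\beta$; since the cross-term right factors in Proposition~\ref{comultpbw} are proper sub-roots $\alpha_i+\cdots+\alpha_r$ all starting at $\alpha_i$, no product of them has weight $\beta$, so the surviving contributions come only from one factor choosing $1\otimes\theta_\beta$ and the other $n-1$ choosing $\theta_\beta\otimes1$, whence the right-weight-$\beta$ part of $\Delta(\theta_\beta)^n$ is $\bigl(\sum_{k=0}^{n-1}(-1)^{kp(\beta)}q^{-k\,\beta\bullet\beta}\bigr)\theta_\beta^{n-1}\otimes\theta_\beta$. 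Induction on $n$ then gives
\begin{equation*}
(\theta_\beta^n,\theta_\beta^n)=(\theta_\beta,\theta_\beta)^n\prod_{s=1}^n\Bigl(\sum_{k=0}^{s-1}(-1)^{kp(\beta)}q^{-k\,\beta\bullet\beta}\Bigr),
\end{equation*}
with $(\theta_\beta,\theta_\beta)$ supplied by Proposition~\ref{formroot}; this is exactly the $\alpha=\beta$ factor in the statement (and for $p(\beta)=1$, Proposition~\ref{pbw7} forces $n_\beta\in\{0,1\}$, so the factor degenerates consistently). I expect the main obstacle to be the weight-chasing step of the second paragraph: one must verify carefully, by an inner induction over the indices $\alpha_i,\dots,\alpha_j$ and a precise use of the ordering of $\Delta^+$ and the shape of the cross terms of Proposition~\ref{comultpbw}, that splitting off $\theta_\beta^{n_\beta}$ really does pin every other root vector and every cross term into the trivial tensor slot, and that the Gaussian-binomial and sign prefactors it generates equal $1$ exactly on the diagonal $m_\beta=n_\beta$.
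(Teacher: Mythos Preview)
Your proposal is correct and follows precisely the approach the paper indicates: the paper's own proof is a single sentence invoking Proposition~\ref{comultpbw} and the prescribed ordering, deferring to \cite[Lemma 10.2.1]{Yam} for details, and what you have written is exactly the weight-chasing argument that makes this work. Your key observation---that every nonzero left tensor factor in $\Delta(\theta_{\alpha_k+\cdots+\alpha_{l'}})$ contains $\alpha_{l'}$ in its support, so the budgets for $\alpha_l$ with $l>j$ (then $\alpha_i$, then $\alpha_j,\alpha_{j-1},\dots$) successively pin every factor other than the distinguished $n_\beta$ copies of $\theta_\beta\otimes1$ into the $1\otimes(-)$ slot---is the heart of the matter and is sound; the same observation handles the diagonal evaluation $(\theta_\beta^n,\theta_\beta^n)$ as you describe.
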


\begin{proof}
This is proved using Proposition ~\ref{comultpbw} and the prescribed ordering of the terms in the product.  For more details see ~\cite[Lemma 10.2.1]{Yam}.
\end{proof}

\begin{defined}
\label{idealJ}
Let $ \mathcal{J} $ be the two sided ideal of $ {\bf f}' $ generated by elements
\begin{enumerate}
\item $ (q+q^{-1})\theta_{m} \theta_{m-1} \theta_{m+1} \theta_m  
- \theta_{m-1} \theta_{m} \theta_{m+1} \theta_{m} 
- \theta_{m} \theta_{m-1} \theta_{m} \theta_{m+1}
- \theta_{m} \theta_{m+1} \theta_{m} \theta_{m-1}
- \theta_{m+1} \theta_{m} \theta_{m-1} \theta_{m} $,
\item  $ \theta_m^2 = 0 $,
\item $ \theta_i \theta_j - \theta_j \theta_i  $ where  $ |i-j|>1$,
\item 
$ (q+q^{-1})\theta_i \theta_{i \pm 1} \theta_i - \theta_i^2 \theta_{i \pm 1} - \theta_{i \pm 1} \theta_i^2  $ where $ i \neq m $ and $ i \pm 1 \in I $.
\end{enumerate}
\end{defined}

We note again that the following theorem was proved by Yamane ~\cite{Yam}.

\begin{theorem}
\label{pbwbasis}
\begin{enumerate}
\item The algebra $ {\bf f} = {\bf f}' / \mathcal{I} $ has a basis with elements of the form
$$ \prod_{\alpha \in \Delta^+}^{<} \theta_{\alpha}^{n_{\alpha}}, $$
where the terms in the product are arranged by the ordering on $ \Delta^+ $, and $ n_{\alpha} \in \N $ for all $ \alpha$ with $ p(\alpha) = 0 $ and $ n_{\alpha} \in \lbrace 0, 1 \rbrace $ for all $ \alpha $ with $ p(\alpha)=1$
\item There is an equality of two-sided ideals: $ \mathcal{I} = \mathcal{J}$.
\end{enumerate}
\end{theorem}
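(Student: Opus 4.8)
The plan is to combine Proposition~\ref{pbwspan} (the PBW monomials span ${\bf f}$) with a non-degeneracy argument for the bilinear form, and then to bootstrap from part~(1) to part~(2) by a dimension count. For part~(1), since the PBW monomials already span ${\bf f} = {\bf f}'/\mathcal{I}$, it remains to prove they are linearly independent. Because $\mathcal{I}$ is by definition the radical of $(\,,\,)$, a family of vectors in ${\bf f}'$ has linearly independent images in ${\bf f}'/\mathcal{I}$ precisely when its Gram matrix is non-degenerate. Proposition~\ref{formprodroots} says exactly that the Gram matrix of the PBW monomials is diagonal: the pairing of $\prod_\alpha \theta_\alpha^{m_\alpha}$ with $\prod_\alpha \theta_\alpha^{n_\alpha}$ vanishes unless $m_\alpha = n_\alpha$ for every $\alpha \in \Delta^+$, and the diagonal entry is a product over $\alpha$ of $(\theta_\alpha,\theta_\alpha)^{n_\alpha}$ times correction factors of the form $\sum_{r=0}^{k}(-1)^{rp(\alpha)}q^{-r\alpha\bullet\alpha}$. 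So I would check that each such diagonal entry is nonzero in $\mathbb{Q}(q)$: the scalars $(\theta_\alpha,\theta_\alpha)$ are nonzero by Proposition~\ref{formroot} (each is either a power of $q$, or a power of $q$ divided by $1-q^{\pm 2}$); when $p(\alpha)=0$ the correction factors are $q$-power multiples of quantum integers, hence nonzero; and when $p(\alpha)=1$ the exponent $n_\alpha$ is forced into $\{0,1\}$, so the correction factor is just $1$. Thus the Gram matrix is non-degenerate and the PBW monomials form a basis; in particular $\dim_{\mathbb{Q}(q)} {\bf f}_\nu = N_\nu$, where $N_\nu$ denotes the number of PBW monomials of weight $\nu$.

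For part~(2), the inclusion $\mathcal{J}\subseteq\mathcal{I}$ follows from Propositions~\ref{gen1}, \ref{gen2}, \ref{gen3}, \ref{gen4} (which put the four families of generators of $\mathcal{J}$ inside $\mathcal{I}$) together with the fact that $\mathcal{I}$ is a two-sided ideal (Proposition~\ref{hopfideal}). For the reverse inclusion I would observe that the whole chain of reductions used to prove Proposition~\ref{pbwspan} --- Lemma~\ref{commlemma}, Propositions~\ref{pbw1}--\ref{pbw7}, and Lemma~\ref{rootserre} --- never uses any relation beyond those generating $\mathcal{J}$: every invocation of ``$\cdots\in\mathcal{I}$'' there is ultimately an application of one of Propositions~\ref{gen1}--\ref{gen4}, whose statements are the defining relations of $\mathcal{J}$, while the only other ingredients are Equation~\ref{rootvector} (a definition in ${\bf f}'$) and associativity. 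Hence ${\bf f}'/\mathcal{J}$ is also spanned by the PBW monomials, so $\dim_{\mathbb{Q}(q)}({\bf f}'/\mathcal{J})_\nu \le N_\nu$ for each $\nu$ (these ideals being weight-homogeneous). On the other hand $\mathcal{J}\subseteq\mathcal{I}$ gives a surjection ${\bf f}'/\mathcal{J}\twoheadrightarrow{\bf f}'/\mathcal{I}={\bf f}$, and $\dim_{\mathbb{Q}(q)}{\bf f}_\nu = N_\nu$ by part~(1); comparing, this surjection must be an isomorphism in every weight $\nu$, so $\mathcal{I}=\mathcal{J}$.

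The main obstacle is the bookkeeping in part~(2): one has to be sure that none of the many lemmas feeding into Proposition~\ref{pbwspan} quietly uses an element of $\mathcal{I}$ that is not visibly in $\mathcal{J}$, which amounts to a careful re-reading of the proofs of Lemma~\ref{commlemma} and Propositions~\ref{pbw1}--\ref{pbw7} to confirm that each appeal to the radical is an appeal to one of Propositions~\ref{gen1}--\ref{gen4}. By contrast part~(1) is essentially routine once Propositions~\ref{formroot} and~\ref{formprodroots} are available, the only real content being the easy verification that the relevant scalars and correction factors do not vanish in $\mathbb{Q}(q)$.
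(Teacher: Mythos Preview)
Your proposal is correct and follows essentially the same route as the paper: linear independence via the diagonal Gram matrix from Proposition~\ref{formprodroots}, and $\mathcal{I}=\mathcal{J}$ via the surjection ${\bf f}'/\mathcal{J}\twoheadrightarrow{\bf f}'/\mathcal{I}$ together with a dimension count. Your write-up is in fact more explicit than the paper's in two places---you spell out why the diagonal entries are nonzero (even roots have $\alpha\bullet\alpha=\pm2$, odd roots have $n_\alpha\le 1$), and you make explicit that the spanning argument of Proposition~\ref{pbwspan} only invokes relations already in $\mathcal{J}$---whereas the paper compresses the latter into the phrase ``this surjection is a bijection by the first part.''
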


\begin{proof}
By Proposition ~\ref{pbwspan}, these elements spans the algebra $ {\bf f}'$.  
By Proposition ~\ref{formprodroots}, these elements are orthogonal with respect to the form $ ( , ) $ so they are linearly independent and constitute a basis, thus proving the first part.

By Propositions ~\ref{gen1}, ~\ref{gen2}, ~\ref{gen3}, and ~\ref{gen4}, $ \mathcal{J} \subset \mathcal{I} $ and so there is a surjective algebra homomorphism
$ \mu \colon {\bf f}' / \mathcal{J} \rightarrow {\bf f}' / \mathcal{I}$.  This surjection is a bijection by the first part.
\end{proof}

Let $ \mathcal{A}=\mathbb{Z}[q,q^{-1}] $ and $ {}_{\mathcal{A}} {\bf f} $ to be the $ \mathbf{Z}[q, q^{-1}] $-subalgebra of $ {\bf f} $ generated by the divided powers $ \theta_i^{(n)} $.

\subsection{Graphical interpretation of $ ( , )$.}

Here we give a graphical interpretation of the inner product 
$ (\theta_{i_1} \cdots \theta_{i_b}, \theta_{j_1} \cdots \theta_{j_b})$.
Consider $ b $ marked points $ (1,0), (2,0), \ldots, (b,0) \in \R \times \lbrace 0 \rbrace$ colored by $ i_1, \ldots, i_b \in I $ respectively.
Consider $ b $ marked points $ (1,1), (2,1), \ldots, (b,1) \in \R \times \lbrace 1 \rbrace$ colored by $ j_1, \ldots, j_b \in I $ respectively.
Choose an immersion of $b$
strands into $\R\times [0,1]$ with these $2b$ points as the endpoints such that the labels at the endpoints of each strand agree.
each strand and label it by an element of $I$.  
We consider immersions modulo boundary--preserving
homotopies and call them pairings between sequences ${\bf i}=(i_1, \ldots, i_b) $ and $ {\bf j}=(j_1, \ldots, j_b)$.
Denote by $c'({\bf i},{\bf j})$ the set of all such pairings.

A minimal diagram $D$ of a pairing is a generic immersion that
realizes the pairing such that strands have no self--intersections and any two
strands intersect at most once.  We consider minimal diagrams up to
boundary--preserving isotopies.

An example of a minimal and a non-minimal pairing is given in ~\eqref{minnonmin}.
\begin{equation}
\label{minnonmin}
\begin{tikzpicture}
\draw (0,0) -- (0,2)[thick];
\draw (1,0) -- (1,2)[thick];
\draw (0, -.5) node{$i$};
\draw (1, -.5) node{$i$};
\draw (.5, -1) node{Minimal};

\draw (3,0) .. controls (4,1) .. (3,2)[][thick];
\draw (4,0) .. controls (3,1) .. (4,2)[][thick];
\draw (3, -.5) node{$i$};
\draw (4, -.5) node{$i$};
\draw (3.5, -1) node{Non-minimal};
\end{tikzpicture}
\end{equation}

We define the bidegree $ deg(C)=(deg_1(C), deg_2(C)) $ of the elementary diagrams by
\begin{equation}
\label{formdegrees}
\begin{tikzpicture}
\draw (-1,0) -- (7,0)[][very thick];
\draw (0,.5) node{bidegree};
\draw (0, -.5) node{diagram};
\draw (1,1) -- (1,-2)[][very thick];
\draw (1.5,.5) node{$(0,0)$};
\draw (1.5,-.5) -- (1.5,-1.5)[thick];
\draw (1.5, -1.75) node{$i$};
\draw (2,1) -- (2, -2)[very thick];
\draw (3,-.5) -- (3,-1.5)[thick];
\draw (3, -1.75) node{$ i$};
\draw (4,1) -- (4, -2)[very thick];
\draw (3, .5) node{$(0,i \bullet i)$};
\filldraw[black](3,-1) circle (2pt);
\draw (4.5,-1.5) -- (5.5,-.5)[thick];
\draw (5.5, -1.5) -- (4.5, -.5)[thick];
\draw (4.5,-1.75) node{$i$};
\draw (5.5, -1.75) node{$j$};
\draw (5.5,.5) node{$(-\delta_{i,m} \delta_{j,m},-i \bullet j)$};
\draw (-1,1) -- (-1,-2)[very thick];
\draw (7,1) -- (7,-2)[very thick];
\draw (-1,1) -- (7,1)[][very thick];
\draw (-1,-2) -- (7,-2)[][very thick];
\end{tikzpicture}
\end{equation}


For each pairing of $ {\bf i} $ and $ {\bf j}$, we choose one minimal such pairing.  Denote the set of all minimal pairings between these two sequences by
$ c({\bf i}, {\bf j})$.  If $ D \in c({\bf i}, {\bf j}) $ is a concatenation $ D = D_l \circ \cdots \circ D_1 $ where each $ D_i $ is an elementary crossing as in ~\eqref{formdegrees},
set $ deg(D) = (deg_1(D_l) \cdots deg_1(D_1), deg_2(D_l) \cdots deg_2(D_1))$.

Now set
$$ (\theta_{i_1} \cdots \theta_{i_b}, \theta_{j_1} \cdots \theta_{j_b})' =   \sum_{D \in c({\bf i}, {\bf j})} (-1)^{deg_1(D)} q^{deg_2(D)} \prod_{\gamma=1, i_{\gamma} \neq m}^b \frac{1}{1-q^{i_\gamma \bullet i_{\gamma}}}. $$
If the indexing set of the summation is empty, we take the sum to be zero.

\begin{prop}
There is an equality of bilinear forms $ (,) = (,)'$.
\end{prop}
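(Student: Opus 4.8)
The plan is to establish that the two bilinear forms $(,)$ and $(,)'$ agree on all pairs of monomials $\theta_{i_1}\cdots\theta_{i_b}$ and $\theta_{j_1}\cdots\theta_{j_b}$ by showing that $(,)'$ satisfies the same defining recursive properties that characterize $(,)$ in Proposition~\ref{formproperties}. Since the form $(,)$ is uniquely determined by the conditions $(1,1)=1$, compatibility with $\Delta$ on both sides, the values on single generators, and multiplicativity on tensor products, it suffices to verify each of these for $(,)'$. The values on generators $(\theta_i,\theta_i)'$ and $(\theta_m,\theta_m)'$ are immediate from the definition: the only minimal pairing of a single point with a single point of the same color is the identity strand of bidegree $(0,0)$, contributing $\tfrac{1}{1-q^{i\bullet i}}$ for $i\neq m$ and $1$ for $i=m$ (where the product over $\gamma$ with $i_\gamma\neq m$ is empty). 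The case $(1,1)'=1$ is the empty diagram.

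First I would set up the bookkeeping carefully: a minimal pairing between $\mathbf{i}$ and $\mathbf{j}$ exists only when the two sequences agree as multisets, and when they do, each minimal pairing corresponds to a way of matching up the $\gamma$-th occurrence pattern — effectively an element of a product of symmetric groups, one factor for each color. Then I would prove the key compatibility $(\theta_{i_1}\cdots\theta_{i_b}, y'y'')' = (\Delta(\theta_{i_1}\cdots\theta_{i_b}), y'\otimes y'')'$, and symmetrically on the left. The natural approach is to decompose a minimal diagram for the pairing of $\mathbf{i}$ with the concatenation $(\mathbf{j}',\mathbf{j}'')$ by cutting it along a horizontal line, or more precisely by recording, for each bottom endpoint, whether its strand terminates in the $\mathbf{j}'$-block or the $\mathbf{j}''$-block at the top; this partition of the bottom points into two groups is exactly the data selected by one term in $\Delta$ applied to each $\theta_{i_\gamma}$, and the relative crossings between the two groups produce precisely the twisting factor $q^{-\Sigma_{i<j}|x_j|\bullet|x_i'|}(-1)^{\Sigma_{i<j}p(x_j)p(x_i')}$ appearing in the multiplication \eqref{twistedbialgebra} on ${\bf f}'\otimes{\bf f}'$, while the bidegree contributions from crossings internal to each block assemble into the degrees of the two smaller diagrams. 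Matching the sign and power-of-$q$ conventions in \eqref{formdegrees} against the twisting in \eqref{twistedbialgebra} is where the real care is needed.

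The main obstacle I expect is precisely this sign-and-grading reconciliation: one must check that the $\delta_{i,m}\delta_{j,m}$ term in $deg_1$ of a crossing is what accounts for the $(-1)^{p(x_2)p(x_1')}$ super-sign in the coproduct-compatibility (a crossing of two strands both colored $m$ is the only source of a sign, matching $p(m)=1$), and that the $-i\bullet j$ in $deg_2$ of a crossing accounts for the $q^{-|x_2|\bullet|x_1'|}$ twist. Once the coproduct compatibility is in hand on both sides, and multiplicativity $(\theta_{i_1}\otimes\cdots, \theta_{j_1}\otimes\cdots)' = \prod(\theta_{i_\gamma},\theta_{j_\gamma})'$ is verified (again immediate since a minimal pairing of a product must send the single point in factor $\gamma$ to the single point in factor $\gamma$), uniqueness in Proposition~\ref{formproperties} forces $(,)'=(,)$. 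I would remark that this is the graphical reformulation used in~\cite{KL1}, so the argument is a straightforward adaptation with the super-signs inserted, and refer the reader there for the diagrammatic details that are purely combinatorial.
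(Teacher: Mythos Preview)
Your proposal is correct and takes essentially the same approach as the paper: the paper's proof simply cites Reineke~\cite{Rei} and notes that one uses Proposition~\ref{formproperties} together with \eqref{twistedbialgebra}, which is precisely your strategy of verifying that $(,)'$ satisfies the axiomatic characterization of $(,)$ and invoking uniqueness. Your outline of the coproduct-compatibility check via partitioning strands by their terminal block, and matching the crossing bidegree $(-\delta_{i,m}\delta_{j,m},-i\bullet j)$ against the twisting factors in \eqref{twistedbialgebra}, is exactly the content of that adaptation.
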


\begin{proof}
This is an easy modification of ~\cite[Theorem 2.2]{Rei}.
It is proved directly by using Propositions ~\ref{formproperties} and ~\ref{formdividedpowers} and ~\eqref{twistedbialgebra}.
\end{proof}

Consider for example the quantity $ (\theta_m \theta_m, \theta_m \theta_m)$.  We saw earlier that this evaluates to zero.  We must consider all minimal pairings between $ (m, m) $ and $ (m,m) $ which are given in 
~\eqref{mmexample}.

\begin{equation}
\label{mmexample}
\begin{tikzpicture}
\draw (0,0) -- (0,1)[thick];
\draw (1,0) -- (1,1)[thick];
\draw (0, -.5) node{$m$};
\draw (1, -.5) node{$m$};
\draw (.5, -1) node{$D_1$};

\draw (3,0) -- (4,1)[thick];
\draw (4,0) -- (3,1)[thick];
\draw (3, -.5) node{$m$};
\draw (4, -.5) node{$m$};
\draw (3.5, -1) node{$D_2$};
\end{tikzpicture}
\end{equation}

Since $ deg(D_1) = (0,0) $ and $ deg(D_2) = (-1,0)$,
$ (\theta_m \theta_m, \theta_m \theta_m)' = (-1)^0 q^0 + (-1)^{-1} q^0 = 0$.


\section{Negative finite-dimensional dg algebras and their bigraded 
counterparts} 
\label{dgalgebras}


\subsection{Finite-dimensional algebras.} 
We work over  a fixed field $\Bbbk$. The Jacobson radical $J(A)$ of 
a $\Bbbk$-algebra $A$ is defined as the intersection of all maximal left 
ideals of $A$. It is a 2-sided ideal, being equal to the intersection of all maximal right ideals 
of $A$. If $A$ is, in addition, finite-dimensional over $\Bbbk$, the 
Jacobson radical is the unique maximal nilpotent 2-sided ideal of $A$ and 
the quotient ring $A/J(A)$ is semisimple~\cite{Ben}, being the product of matrix 
algebras with coefficients in division algebras $D_i$ over $\Bbbk$: 
$$ A/J(A) \ \cong \ \prod_{i=1}^m \mathrm{Mat}(n_i, D_i) .$$ 
Assume from now on that $A$ is finite-dimensional. Then $A$ has finitely-many 
isomorphism classes of simple left modules $L_1, \dots, L_m$, in bijection with 
terms in the above decomposition. Each $L_i \cong D_i^{n_i}$ can be realized 
as the column vector module over $ \mathrm{Mat}(n_i, D_i)$, one of the 
terms in the above product, with the remaining terms acting trivially. 
For each $L_i$ there exists a unique (up to isomorphism) indecomposable projective 
module $P_i$ with the property 
$$ \Hom_A(P_i, L_j) \cong \begin{cases} D_i^{op} & \text{if} \ i=j, \\  
 0 & \text{otherwise}.\end{cases}   $$
As a left module over itself, 
$$A\cong \oplusoop{i=1}{m} P_i^{n_i}.$$ 
Any finite-dimensional projective left $A$-module $P$ is isomorphic to the direct 
sum of $P_i$ with some multiplicities, 
 $$P \cong \oplusoop{i=1}{m} P_i^{k_i}.$$ 
The multiplicities $k_i$ are invariants of $P$. 

The Grothendieck group $K_0(A)$ of the category of finitely-generated projective left $A$-modules 
is free abelian of rank $m$ with generators $[P_1], \dots, [P_m]$. 
The Grothendieck group $G_0(A)$ of the category of finite-length $A$-modules 
is generated by $[M]$, over all finite-length $A$-modules $M$, with defining 
relations $[M_2]=[M_1]+[M_3]$ for each short exact sequence 
$$ 0 \lra M_1 \lra M_2 \lra M_3 \lra 0.$$ 
This group is also free abelian of rank $m$ with generators $[L_1], \dots, [L_m]$ 
being symbols of simple $A$-modules. There exists a bilinear pairing 
\begin{equation} 
K_0(A) \otimes_{\Z} G_0(A) \lra \Z, \ \  ([P], [M]) \ := \ \dim_{\Bbbk} \Hom_A(P,M), 
\end{equation} 
If each simple $L_i$ is absolutely simple, that is, $\mathrm{End}_A(L_i)= \Bbbk$ as a 
$\Bbbk$-algebra for all $i$, this pairing is perfect, and the bases $\{[P_1], \dots, [P_m]\}$ and 
$\{ [L_1], \dots, [L_m]\}$ of $K_0(A)$ and $G_0(A)$, respectively, are dual. 
For any finite-dimensional $A$, the pairing becomes perfect if we work over $\Q$, 
by tensoring $K_0(A)$ and $G_0(A)$ with rationals.  

Notice that $[P]$, for a finitely-generated projective $P$, can potentially have double 
meaning: as an element of $K_0(A)$ and as an element of $G_0(A)$. Usually, we 
view it as belonging to $K_0(A)$. The obvious homomorphism $K_0(A) \lra G_0(A)$ 
is, in general, neither injective nor surjective. It's an isomorphism if $A$ has finite 
homological dimension. 


\vspace{0.2in} 

\subsection{Graded algebras} The following question can serve to motivate this section: 
What kind of graded algebras have representation theory resembling that of finite-dimensional 
algebras? Here's a possible answer. We call a 
$\Z$-graded $\Bbbk$-algebra $A= \oplusop{i\in \Z} A^i$ \emph{gradual} if   
\begin{itemize} 
\item $A$ is bounded below: $A^i=0$ for $i\le N$, for some $N\in \Z$;  
\item $A$ is finite-dimensional in each degree: $\dim(A^i)< \infty$, for all $i\in \Z$; 
\item the center $Z(A)$ of $A$ is graded Noetherian and $A$ is finitely-generated as a 
graded module over $Z(A)$. 
\end{itemize} 
Note that $Z(A)$ is a graded algebra. The last condition is equivalent to the existence of 
a graded Noetherian subalgebra $B\subset Z(A)$ such that $A$ is finitely-generated as a 
graded $B$-module.  A gradual algebra is both left and right graded Noetherian.

For a graded $A$-module $M$ let $ M^a$ be the subspace in degree $a$.  Define the shifted module $A$-module $ M \{ r \} $ by 
$ (M \{ r \})^a = M^{a-r}$.

We define the graded Jacobson radical $J(A)$ of a graded ring $A$ as the intersection 
of all maximal left graded ideals.   Similar to the ungraded case, $ J(A) $ may also be defined as the set of elements of $ A $
annihilating all simple graded $ A $-modules (see for example ~\cite[Definition 11]{Ber}).

\begin{lemma}
\label{centerradical}
If $ A $ is gradual, then the intersection $Z(A)\cap J(A)$ contains the graded Jacobson radical of $Z(A)$. 
\end{lemma}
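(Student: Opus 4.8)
The plan is to show that every element $z \in Z(A)$ lying in the graded Jacobson radical $J(Z(A))$ of the center in fact lies in $J(A)$, using the characterization of $J(A)$ recalled just above: an element of $A$ is in $J(A)$ if and only if it annihilates every simple graded left $A$-module. So let $L$ be a simple graded $A$-module; I must check $zL = 0$.

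First I would observe that $L$ is a module over $Z(A)$ by restriction, and that $zL$ is a graded $A$-submodule of $L$ (since $z$ is central, multiplication by $z$ is an $A$-module map $L \to L$, so its image is an $A$-submodule; it is clearly graded, perhaps up to a degree shift, since $z$ can be taken homogeneous). By simplicity of $L$ as a graded $A$-module, either $zL = 0$, in which case we are done, or $zL = L$. The second alternative is the one to rule out, and this is where the gradual hypothesis does the work: because $A$ is finitely generated as a graded module over a graded Noetherian subalgebra $B \subseteq Z(A)$, and $A$ is bounded below and finite-dimensional in each degree, the module $L$ — being a graded quotient of $A$ up to shift, or more carefully a cyclic graded $A$-module generated in a single degree — is finitely generated as a graded $B$-module, hence also finitely generated as a graded $Z(A)$-module, and it is bounded below. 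The key point is then a graded Nakayama argument: if $M$ is a nonzero bounded-below graded module over a gradual (in particular bounded-below) algebra $R = Z(A)$, and $\mathfrak{n} \subseteq R$ is contained in the graded Jacobson radical of $R$, then $\mathfrak{n} M \neq M$. Indeed, pick the lowest degree $d$ in which $M$ is nonzero; since $\mathfrak n$ has no elements in degree $0$ that act invertibly — more precisely, elements of the graded radical in positive degree raise degree, and the degree-$0$ part of $\mathfrak n$ lies in the ordinary Jacobson radical of $R^0$ — the degree-$d$ component of $\mathfrak n M$ is a proper subspace of $M^d$, so $\mathfrak n M \neq M$. Applying this with $M = L$ and $\mathfrak n = J(Z(A)) \ni z$ (or rather the ideal it generates) gives $zL \subsetneq L$, forcing $zL = 0$.

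Thus $z$ annihilates every simple graded $A$-module, so $z \in J(A)$, and since $z \in Z(A)$ as well we get $z \in Z(A) \cap J(A)$, which is exactly the claim.

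The main obstacle I anticipate is the graded Nakayama step: one must be careful that the graded Jacobson radical of the gradual algebra $Z(A)$ behaves well with respect to the degree filtration — specifically that $J(Z(A))$ cannot contain a homogeneous unit and that its degree-zero part is ``small'' in the sense of the classical Nakayama lemma for the Artinian (finite-dimensional) ring $Z(A)^0$. The boundedness-below hypothesis, which is part of being gradual, is precisely what makes the lowest-degree argument go through; without it the graded Nakayama lemma can fail. A secondary technical point is to confirm that a simple graded $A$-module $L$ is finitely generated (indeed cyclic, generated in one degree) as a graded module over the Noetherian base $B \subseteq Z(A)$, so that it is bounded below — this follows since $L$ is a graded quotient of $A\{r\}$ for some $r$ and $A$ itself is a finitely generated graded $B$-module.
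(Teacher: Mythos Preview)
Your argument is correct and is, at bottom, the same as the paper's: both establish $J(Z(A)) \subseteq J(A)$ using that $A$ is finitely generated over its center. The paper simply invokes a graded version of \cite[Proposition~5.7]{Lam} for this, whereas you unwind its proof by hand via the annihilator characterization of the graded Jacobson radical and a Nakayama step on the simple $A$-module $L$.

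One remark on your Nakayama step: the ``lowest degree'' argument works, but the justification you flag as delicate can be replaced by something cleaner. Since $L$ is a nonzero finitely generated graded $Z(A)$-module, it has a maximal proper graded $Z(A)$-submodule (Zorn, using finite generation), hence a simple graded $Z(A)$-quotient; $J(Z(A))$ kills this quotient by definition, so $J(Z(A))L \subsetneq L$, and in particular $zL \subsetneq L$. This avoids having to analyze $J(Z(A))^0$ or negative-degree elements separately. (Your claim that $J(Z(A))^0 \subseteq J(Z(A)^0)$ is in fact true here---since $Z(A)$ is bounded below and commutative, elements of $R^eR^{-e}$ for $e\neq 0$ are nilpotent, so every simple $Z(A)^0$-module does extend to a simple graded $Z(A)$-module---but you don't need it.)
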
  

\begin{proof}
Since $ A $ is finitely generated over its center, the hypotheses of (an obvious graded version of) ~\cite[Proposition 5.7]{Lam} hold giving that 
the Jacobson radical of $ Z(A) $ is contained in $ J(A) $.  It follows immediately that $ Z(A) \cap J(A) $ contains the Jacobson radical of $ Z(A)$.

\end{proof}

\begin{lemma}
\label{finitecodim}
Assume that $A$ is gradual. Then $J(A)$ has 
finite codimension in $A$, so that $A/J(A)$ is a finite-dimensional semisimple graded $\Bbbk$-algebra. 
\end{lemma}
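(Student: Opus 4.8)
The plan is to leverage Lemma~\ref{centerradical} together with the fact that $Z(A)$ is a graded Noetherian algebra over which $A$ is finitely generated, and then reduce to a statement about the center. First I would consider the graded Jacobson radical $J(Z(A))$ of the center. Since $Z(A)$ is commutative, graded, and Noetherian with $Z(A)^i$ finite-dimensional over $\Bbbk$ and bounded below, I would argue that $J(Z(A))$ has finite codimension in $Z(A)$: the quotient $Z(A)/J(Z(A))$ is a commutative graded semisimple $\Bbbk$-algebra, hence a finite product of graded division algebras, and boundedness below plus finite-dimensionality in each degree forces each factor to be finite-dimensional (a graded field that is bounded below and finite-dimensional in each degree must be concentrated in a single degree, or more carefully, must be finite-dimensional — one can see this because a nonzero homogeneous element of positive degree would generate infinitely many linearly independent powers, contradicting finite codimension reasoning applied degreewise, so the maximal graded ideals all have finite codimension and there are finitely many of them by Noetherianity).

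Next I would push this up to $A$. By Lemma~\ref{centerradical}, $J(Z(A)) \subseteq Z(A)\cap J(A) \subseteq J(A)$, so $J(Z(A)) A \subseteq J(A)$ as well (since $J(A)$ is a two-sided ideal containing the central ideal $J(Z(A))$). Therefore $A/J(A)$ is a quotient of $A/J(Z(A))A$. Now $A$ is finitely generated as a graded module over $Z(A)$, so $A/J(Z(A))A$ is finitely generated as a graded module over the finite-dimensional algebra $Z(A)/J(Z(A))$, hence $A/J(Z(A))A$ is itself finite-dimensional over $\Bbbk$. Consequently its quotient $A/J(A)$ is finite-dimensional over $\Bbbk$. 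That $A/J(A)$ is semisimple as a graded algebra is then immediate from the definition of $J(A)$ as the intersection of all maximal graded left ideals (equivalently, the annihilator of all simple graded modules): a finite-dimensional graded algebra with zero graded Jacobson radical is graded semisimple, by the graded analogue of the Artin--Wedderburn theory.

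I expect the main obstacle to be the first step: carefully justifying that the graded Jacobson radical of the commutative graded Noetherian algebra $Z(A)$ has finite codimension, using only that $Z(A)$ is bounded below and finite-dimensional in each degree. The subtlety is that a graded field need not be finite-dimensional in general (e.g.\ $\Bbbk[t,t^{-1}]$ with $t$ in degree $1$ is a graded field that is infinite-dimensional), so one must use the boundedness below crucially: a graded field over $\Bbbk$ that is bounded below is necessarily concentrated in degree $0$ (any invertible homogeneous element of nonzero degree together with its inverse would produce elements of arbitrarily negative degree), hence equals a genuine field, and being finite-dimensional in degree $0$ it is finite-dimensional over $\Bbbk$. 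Combined with Noetherianity (only finitely many maximal graded ideals can contain the — possibly zero — radical, or rather the product decomposition has finitely many factors), this yields finite codimension. The remaining steps are then formal module-finiteness arguments and an appeal to graded Artin--Wedderburn, which I would treat as standard.
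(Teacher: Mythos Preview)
Your proposal is correct and follows essentially the same route as the paper: use Lemma~\ref{centerradical} to reduce to showing $Z(A)/J(Z(A))$ is finite-dimensional, then use module-finiteness of $A$ over $Z(A)$. The only cosmetic difference is in how you verify that $Z(A)/J(Z(A))$ is finite-dimensional. The paper argues directly that every element of $Z(A)$ of nonzero degree lies in $J(Z(A))$: negative-degree elements are nilpotent (by boundedness below and commutativity), hence in the radical; this forces every simple graded $Z(A)$-module to be concentrated in a single degree, so positive-degree elements annihilate all simples and also lie in $J(Z(A))$; thus $Z(A)/J(Z(A))$ is a quotient of $Z(A)^0$. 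Your argument via bounded-below graded fields being concentrated in degree $0$ is the dual formulation of exactly this fact (a graded field is a simple graded module over itself), so you are really giving the same proof. In particular, your detour through a product decomposition and the appeal to Noetherianity for ``finitely many factors'' is unnecessary: once you observe that every maximal graded ideal of $Z(A)$ contains all elements of nonzero degree, you get $Z(A)/J(Z(A))$ as a quotient of the finite-dimensional $Z(A)^0$ without any further finiteness input.
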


\begin{proof}
By Lemma ~\ref{centerradical}, $ J(Z(A)) \subset J(A)$.  
Thus there is a surjection $ A/J(Z(A)) \twoheadrightarrow A/J(A) $ and so it suffices to prove that $ A/J(Z(A)) $ is finite-dimensional.
Since $ A $ is a gradual algebra, it is finitely generated over $ Z(A) $.  Therefore it is enough to show that $ Z(A)/J(Z(A)) $ is finite-dimensional.

Since $ A $ is a gradual, $ A^i =0$ for $ i \leq N $ for some $ N \in \mathbb{Z} $ and likewise for $ Z(A)$.
Any negative degree element of the commutative algebra $Z(A) $ is nilpotent, hence belongs to $ J(Z(A)) $.
Therefore, any simple graded $Z(A)$-module is concentrated in one degree, so that all positive degree elements of $Z(A) $ are in $J(Z(A))$ as well.
Thus $ Z(A)/J(Z(A)) $ is a quotient of $ Z(A)^0 $ and finite-dimensional.





\end{proof}



Furthermore, $A/J(A)$ is isomorphic to the product of graded matrix algebras, 
\begin{equation*} 
A/J(A) \ \cong \  \prod_{i=1}^m \mathrm{Mat}(f_i(q), D_i), 
\end{equation*}
with coefficients in division rings $D_i$ (division rings $D_i$ are necessarily concentrated 
in the zero degree). Here 
$f_i(q) \in \N[q,q^{-1}]$ are Laurent polynomials in $q$ with nonnegative integer 
coefficients, and $\mathrm{Mat}(f_i(q), D_i)$ is the endomorphism ring of 
the graded right $D_i$-vector space $D_i^{f_i(q)}$.  For more details see ~\cite{NVO}.

\begin{remark}
Lemma ~\ref{centerradical} could be strengthened to an equality of ideals using Lemma ~\ref{finitecodim}.
\end{remark}

Up to grading shifts there are finitely-many isomorphism classes of graded 
simple left $A$-modules. We can choose representatives $L_1, \dots , L_m$ for 
these classes with the property that zero is the lowest nontrivial degree 
of each $L_i$. Up to a grading shift, 
$L_i \cong D_i^{f_i(q)}$, with the action of $A$ factoring through $A/J(A)$. 
Any graded simple left $A$-module is finite-dimensional 
over $\Bbbk$ and is isomorphic 
to $L_i\{j\}$ for unique $ j \in \Z$ and $1\le i\le m$. Graded modules $L_i$ remain simple 
when viewed as modules without the grading. 

Let $A\mbox{-flgmod}$ be the abelian category of graded left $A$-modules of 
finite length. The Grothendieck group $G_0(A)$ is defined as the Grothendieck 
group of the category $A\mbox{-flgmod}$. By the Jordan-H\"older theorem, 
it's a free $\Z[q,q^{-1}]$-module with basis $\{[L_1], \dots, [L_m]\}$. 

The Grothendieck $\Z[q,q^{-1}]$-module  $K_0(A)$ of a graded ring 
$A$ has generators $[P]$, over finitely-generated 
graded projectives $P$ over $A$, defining relations $[P]=[P']+[P'']$ whenever 
$P\cong P'\oplus P''$ and $[P\{1\}]=q[P]$. 

Consider the natural pairing 
\begin{equation}\label{pairing2} 
K_0(A) \otimes_{\Z[q,q^{-1}]} G_0(A) \lra \Z[q,q^{-1}], \ \  
  ([P], [M]) = \mathrm{gdim} \ \HOM_A(P,M) .
\end{equation} 
Here $\HOM_A(P,M) = \oplusop{i\in \Z}\Hom_A(P\{i\}, M)$ is the 
graded vector space which is the sum of spaces of degree zero homomorphisms 
from various shifts of the first module to the second module. The pairing (\ref{pairing2}) 
is perfect if each simple $L_1, \dots, L_m$ is absolutely irreducible, and becomes 
perfect over $\Q[q,q^{-1}]$ for any gradual algebra $A$. 


\vspace{0.2in} 

\subsection{Dg algebras and modules over them} 
\label{dgbasics}

A dg algebra (differential graded algebra) over $\Bbbk$ is a graded $\Bbbk$-algebra 
$A=\oplusop{i\in \Z}A^i$ equipped 
with a $\Bbbk$-linear map $d$ of degree $1$ (differential) that satisfies 
$ d(ab) = d(a) b + (-1)^{|a|} a d(b) $ for all homogeneous $a,b\in A$, where $|a|$ is 
the degree of $a$. We often denote this dg algebra by $ (A,d)$.
If $ (A,d_A) $ and $ (B,d_B) $ are dg algebra, then $ (A \otimes B, d) $ is a dg algebra
where $ d(a \otimes b) = d_A a \otimes b + (-1)^{|a|} a \otimes d_B b $ for homogeneous elements
$ a \in A $ and $ b \in B$.

A left module $M$ over a dg algebra is a graded left module over $A$ equipped 
with a $\Bbbk$-linear map $d_M$ of degree $1$ such that 
$d_M(am) = d(a)m + (-1)^{|a|}ad_M(m)$ for all homogeneous $a\in A, m \in M$. 
Define the shifted module $M[r]$ to have the same underlying vector space as $M$ but
$ (M[r])^a = M^{a+r}$.  The action of the differential is given by $ d_{M[r]} = (-1)^r d_{M}$.
The action of $A$ is given by $ a \circ m = (-1)^{r|a|} am $ where $ am $ is the normal action of $a$ on $ m \in M$.

If $ (A,d) $ is a dg subalgebra of the dg algebra $ (B,d) $ and $ M $ is a dg module over $ A$, then
$ \text{Ind}_A^B M = B \otimes_A M $ is a dg module over $ B $.

Let $\Lambda_d= \Bbbk[d]/(d^2)$ be the graded exterior algebra on one generator $d$ 
of degree $1$. Define $A_d = A \rtimes \Lambda_d $ as the cross-product of 
$A$ and $\Lambda_d$ such that 
\begin{eqnarray*} 
(a \otimes 1)(b \otimes x) & = &  ab \otimes x, \\
(a \otimes d)(b \otimes x) & = & a d(b) \otimes x + (-1)^{|b|} ab \otimes dx, 
\end{eqnarray*} 
for homogeneous $a,b\in A,  x\in \Lambda_d$. A left dg $A$-module $M$ is the same as a left 
graded $A_d$-module. 

An idempotent $ e \in A $ such that $ e=dx$ for some $ x \in A $ is said to be contractible.  
If $e$ is not in the image of $ d $ then it is non-contractible.  
A dg module $M$ is defined to be homotopically trivial if there is a homotopy equivalence between $ M $ and the zero module.
If $ A $ is a dg algebra and $ e \in A $ is an idempotent such that $ e=dx$ for some $x \in A $, then 
the dg module $ Ae $ is homotopically trivial (see ~\cite[Lemma 4]{Kh1}).
A map $ f \colon M \rightarrow N $ of dg modules is said to be a quasi-isomorphism if it induces an isomorphism on cohomology. 
A dg module $M$ is acyclic if it is quasi-isomorphic to the zero module.



\vspace{0.2in} 

\subsection{Negative finite-dimensional  dg algebras} 
\label{negfddg}

Let $A^{\le i}= \oplusop{j\le i} A^j$. Following Keller~\cite[Summary]{Ke1}, we say that $A$ is a 
negative dg algebra if 
$A= A^{\le 0}$. A negative dg algebra which is finite-dimensional over the ground 
field $\Bbbk$ will be called a negative finite-dimensional (nfd) dg algebra. 
If $A$ is nfd and $N\in \N$ is the largest number such that $A^{-N}\not= 0$, then 
$A_d$ is concentrated in degrees $-N$ through $0$. 

Recall that the graded Jacobson radical of a graded finite-dimensional algebra coincides with the   
Jacobson radical of the same algebra without the grading~\cite{NVO}. 
Viewing a nfd dg algebra $A$ as a graded algebra, $J(A) = A^{\le -1}\oplus J(A^0)$. 
We would like to understand the Jacobson radical of $A_d$. Let 
$$ J_{\bullet}(A) \ = \ A^{\le -2} \oplus \tilde{A}\oplus J(A^0), \ \ \ \ 
  \tilde{A} \ := \ \{ a\in A^{-1}| \ d(a) \in J(A^0)\} .$$ 
It is not hard to see that $J_{\bullet}(A)\subset J(A)$ is a $d$-stable 2-sided graded ideal of $A$.  
Consequently, $J_{\bullet}(A) \rtimes \Lambda_d$ is a 2-sided graded ideal 
of $A_d$ and 
$$ A_d/ (J_{\bullet}(A) \rtimes \Lambda_d) \cong (A_{\bullet})_d, \ \   \mathrm{where} \ \   
 A_{\bullet} \ := \ A/J_{\bullet}(A).$$ 
Denote $(A_{\bullet})_d$ simply by $A_{\bullet d}$. The 2-sided ideal 
$J_{\bullet}(A) \rtimes \Lambda_d$ is nilpotent (if $J(A_0)^n=0$, then 
the product of any $n(N+1)$ elements of $J(A)$ is zero, and the product 
of any $n(N+1)+2$ elements of $J_{\bullet}(A) \rtimes \Lambda_d$ is zero), therefore it  
belongs to the Jacobson radical $J(A_d)$. The quotient map 
$A \lra A_{\bullet}$ induces a quotient map $A_d \lra A_{\bullet d}$. The latter 
induces an isomorphism of Grothendieck groups $G_0(A_d) \cong G_0(A_{\bullet d})$. 
The action of $A_d$ on any simple $A_d$-module factors through the action 
of $A_{\bullet d}$. Thus, to understand (graded) simple $A_d$-modules, it's enough 
to understand (graded) simple $A_{\bullet d}$-modules. Dg algebra $A_{\bullet}$ has 
the form 
$$ 0 \lra A^{-1}_{\bullet} \stackrel{d}{\lra} A^0_{\bullet} \lra 0, $$ 
with $d$ being an injective map. $d(A^{-1}_{\bullet})$ is a
 2-sided ideal of $A^0_{\bullet}$. The algebra $A^0_{\bullet}\cong A^0/J(A^0)$
is semisimple, and any 2-sided ideal in it has a complement, being a product of several 
terms in the unique decomposition of it into matrix algebras. 
We can reshuffle the terms so that 
\begin{eqnarray*} 
& & A^0_{\bullet} \cong \prod_{i=1}^{m_I+m_{II}} \mathrm{Mat}(n_i, D_i) , \\
 & & d(A^{-1}_{\bullet}) = \prod_{i=m_I+1}^{m_I+m_{II}} \mathrm{Mat}(n_i, D_i), 
\end{eqnarray*} 
for some $m_I,m_{II}$. In particular, $m_I+m_{II}$ is the number of isomorphism classes 
of simple $A^0$ (and $A$) modules.  Let 
\begin{eqnarray*} 
A_{I}  & = & \prod_{i=1}^{m_I} \mathrm{Mat}(n_i, D_i) , \\ 
A_{II} & = & \prod_{i=m_I+1}^{m_I+m_{II}} 
\mathrm{Mat}(n_i, D_i) \ = \ d(A_{\bullet}^{-1}). 
\end{eqnarray*} 
Then $A^0_{\bullet} = A_I \times A_{II}$. Denote by $e_I, e_{II}$ the 
unit elements of $A_I, A_{II}$. These are idempotents in $A_{\bullet}$, with $e_{II}$ 
being the maximal contractible idempotent, and 
$1=e_I+e_{II}$. Let $y \in A^{-1}_{\bullet}$ be 
the unique element such that $d(y) = e_{II}\in A_{II}$. 
The dg algebra $A_{\bullet}$ decomposes: 
$$ A_{\bullet} \cong A_I \times ( A_{II}\rtimes \Lambda_y), $$ 
where $\Lambda_y = \Bbbk[y]/(y^2)$ is the exterior algebra in one generator 
of degree $-1$, $ A_{II}\rtimes \Lambda_y$ is the tensor product of two algebras, 
$d(A_I)=0, d(A_{II})=0$, and $d(e_{II}\otimes y)=e_{II}$. 

The algebra $A_{\bullet d}$ decomposes 
$$ A_{\bullet d} \cong (A_I \rtimes \Lambda_d) \times ((A_{II}\rtimes \Lambda_y) \rtimes  
  \Lambda_d) ,$$ 
where  
$$(A_{II}\rtimes \Lambda_y) \rtimes \Lambda_d \cong A_{II}\rtimes 
(\Lambda_y \rtimes \Lambda_d) $$
is isomorphic to the tensor product of $A_{II}$ with the cross-product algebra 
$$\Lambda_y\rtimes \Lambda_d \cong  \Bbbk\langle y, d\rangle /(dy  +yd -1 ).$$ 
This cross-product algebra is isomorphic to the algebra of $2\times 2$ matrices 
(take $y,d$ to the elementary matrices $E_{12}$ and $E_{21}$). As a graded algebra, 
$$\Lambda_y\rtimes \Lambda_d \cong \mathrm{Mat}(1+q,\Bbbk). $$ 
The Jacobson radical of $A_{\bullet d}$ is $A_I \otimes \Bbbk d $, and 
$$ A_d/J(A_d) \cong A_{\bullet d}/J(A_{\bullet d}) \cong A_I \times \mathrm{Mat}(1+q, 
  A_{II}).$$ 
This leads to a description of (graded) simple $A_d$-modules in terms of those for $A$. 

\begin{prop}
\label{simpledgmodules}
Let $L_1, \dots, L_{m_I+m_{II}}$ be graded simple $A$-modules, one for each isomorphism class 
up to grading shifts. In our notations, $L_i \cong D_i^{n_i}$. 

The (graded) simple $A_d$-modules, up to isomorphism and grading shifts, are of two types: 

(I) $A$-modules $L_i$, with $d$ acting trivially, $1\le i \le m_I$.  These modules live 
in one degree. 

(II) $\widehat{L}_i= L_i \oplus L_i[1]$ (as $A^0$-modules), $m_I+1\le i \le m_I+m_{II}$, 
with $d$ taking $L_i[1]$ isomorphically to $L_i$: 
  $$ 0 \lra L_i \cdot v_0 \lra L_i \cdot v_1 \lra 0.$$ 
Here $v_0, v_1$ are generators, and $d(v_0)=v_1$. The subspace $A^{\le -2}$ acts 
trivially  on $\hat{L}_i$, while $A^{-1}$ acts via the quotient 
map $A^{-1}\lra A_{\bullet}^{-1},$ 
with $y' v_1=v_0$ for any $y'$ which goes to $y$ under the quotient map.  
\end{prop}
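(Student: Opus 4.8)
The plan is to reduce the problem from $A_d$ to the quotient dg algebra $A_{\bullet d}$, where the structure theory developed above applies, and then to read off the graded simple modules factor by factor from the decomposition $A_{\bullet d}\cong (A_I\rtimes\Lambda_d)\times\big((A_{II}\rtimes\Lambda_y)\rtimes\Lambda_d\big)$.

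First I would use that $J_\bullet(A)\rtimes\Lambda_d$ is a nilpotent two-sided graded ideal of $A_d$, hence contained in $J(A_d)$, and that $A_d/(J_\bullet(A)\rtimes\Lambda_d)\cong A_{\bullet d}$. Since a nilpotent ideal annihilates every simple module, the projection $A_d\twoheadrightarrow A_{\bullet d}$ gives a bijection between graded simple $A_d$-modules and graded simple $A_{\bullet d}$-modules: on such a module every $A_d$-submodule is automatically an $A_{\bullet d}$-submodule, and conversely. So it suffices to classify graded simple $A_{\bullet d}$-modules. Because a graded module over a product $B\times C$ of graded algebras splits as a $B$-module plus a $C$-module (the complementary central idempotent acting as zero on each summand), it is simple exactly when it is a simple module over a single factor; so I would treat the two factors of $A_{\bullet d}$ separately.

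For the factor $A_I\rtimes\Lambda_d$: since $d$ annihilates $A_I$ and $A_I$ lies in degree $0$, this is the plain tensor product $A_I\otimes_\Bbbk\Lambda_d$ with $\Lambda_d=\Bbbk[d]/(d^2)$, whose graded Jacobson radical is $A_I\otimes(d)$ with semisimple quotient $A_I$. Hence its graded simple modules are the simple $A_I$-modules $L_1,\dots,L_{m_I}$ with $d$ acting as zero, each concentrated in a single degree — these are the Type (I) modules. For the factor $(A_{II}\rtimes\Lambda_y)\rtimes\Lambda_d$: this equals $A_{II}\otimes_\Bbbk(\Lambda_y\rtimes\Lambda_d)$, and $\Lambda_y\rtimes\Lambda_d\cong\mathrm{Mat}(1+q,\Bbbk)$ (via $y\mapsto E_{12}$, $d\mapsto E_{21}$) is graded simple with a unique graded simple module up to shift, the column module: a generator $v_0$ together with $v_1:=d(v_0)$ one degree higher, with $y v_1=v_0$ and $y v_0=d(v_1)=0$. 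Its graded simple modules are therefore $L_i\otimes(\text{column module})$ for $m_I+1\le i\le m_I+m_{II}$; unwinding the identifications, the underlying $A^0$-module is $L_i\oplus L_i[1]$, the differential carries the shifted copy $L_i[1]$ isomorphically onto $L_i$, the subspace $A^{\le -2}\subset J_\bullet(A)$ acts as zero, and $A^{-1}$ acts through its image $y\in A_\bullet^{-1}$ with $y'v_1=v_0$ for any $y'$ mapping to $y$ — these are the Type (II) modules.

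Assembling the two factors gives the statement: up to isomorphism and grading shift, every graded simple $A_d$-module is one of $L_1,\dots,L_{m_I}$ (Type I) or $\widehat L_{m_I+1},\dots,\widehat L_{m_I+m_{II}}$ (Type II); distinct indices give non-isomorphic modules since $A^0_\bullet=A_I\times A_{II}$ is a product of matrix algebras over division rings and the $L_i$ are already pairwise non-isomorphic as $A^0$-modules, while Type I modules are distinguished from Type II ones by $d$ acting as zero. The grading assertions (Type I in a single degree, Type II in two consecutive degrees) are visible in the constructions. The one genuinely delicate point is the Type (II) identification: one must keep careful track of the grading shift forced by $\deg d=+1$ and of the sign conventions $d_{M[r]}=(-1)^r d_M$, $a\circ m=(-1)^{r|a|}am$ for shifted dg modules, and verify that $d^2=0$ holds on $\widehat L_i$ so that it is an honest dg $A$-module. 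Everything else is a formal consequence of the decomposition of $A_{\bullet d}$ established above.
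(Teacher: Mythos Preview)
Your argument is correct and follows essentially the same route as the paper: the paper's ``proof'' is the discussion immediately preceding the proposition, which reduces to $A_{\bullet d}$ via the nilpotent ideal $J_\bullet(A)\rtimes\Lambda_d$, decomposes $A_{\bullet d}\cong (A_I\rtimes\Lambda_d)\times\big((A_{II}\rtimes\Lambda_y)\rtimes\Lambda_d\big)$, computes $A_d/J(A_d)\cong A_I\times\mathrm{Mat}(1+q,A_{II})$, and reads off the simples from this semisimple quotient. Your factor-by-factor analysis is exactly this computation, spelled out in slightly more detail.
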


We also have a description of the indecomposable summands of $ A$ as dg $A$-modules.

\begin{prop}
\label{projectivedgmodules}
The indecomposable summands of dg algebra $A$ viewed as left dg modules over itself up to isomorphism are of two types:

(I) Modules $ P_1, \ldots, P_{m_I} $ which are cohomologically non-trivial.

(II) Acyclic modules $ P_{m_I+1}, \ldots, P_{m_I + m_{II}} $.





\end{prop}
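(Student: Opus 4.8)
The plan is to deduce this from the decomposition of $A_{\bullet}$ already established in the text, together with the standard correspondence between indecomposable projective modules over a dg algebra and the indecomposable projective modules over its degree-zero part modulo radical, suitably adjusted for the differential. First I would recall that, as in the ungraded case, the indecomposable summands of $A$ as a left dg module over itself are in bijection with primitive idempotents in $A$ modulo the relation of homotopy/conjugacy, and that this passes to $A_\bullet = A/J_\bullet(A)$ since $J_\bullet(A)$ is a nilpotent $d$-stable ideal (so idempotents lift along $A \twoheadrightarrow A_\bullet$ and the dg modules $Ae$ and $A_\bullet \bar e$ have the same cohomology up to the contribution of $J_\bullet$, which is homotopically trivial). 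Then I would invoke the explicit decomposition
\begin{equation*}
A_{\bullet} \cong A_I \times (A_{II}\rtimes \Lambda_y),
\end{equation*}
so the primitive idempotents of $A_\bullet$ split into those coming from $A_I$ and those coming from $A_{II}\rtimes \Lambda_y$.

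For the $A_I$ factor, $d$ acts by zero, so each primitive idempotent $e$ with $e \in A_I$ gives a dg module $Ae$ whose cohomology is computed from the complex $0 \to A^{-1}e \xrightarrow{d} A^0 e \to 0$; because $e_I$ is orthogonal to the maximal contractible idempotent $e_{II}$, one checks this complex is not acyclic, so we obtain the cohomologically nontrivial modules $P_1, \dots, P_{m_I}$. For the $A_{II}\rtimes\Lambda_y$ factor, recall from the text that $\Lambda_y \rtimes \Lambda_d \cong \mathrm{Mat}(1+q,\Bbbk)$, which is semisimple with a single simple module that is acyclic (the differential $d$ identifies its two graded pieces); tensoring with $A_{II}$, every projective over $(A_{II}\rtimes\Lambda_y)\rtimes\Lambda_d$ is a sum of copies of acyclic modules, and lifting back to $A$ via the nilpotent ideal $J_\bullet(A)\rtimes\Lambda_d$ (which is homotopically trivial, being generated by a contractible idempotent, cf.\ the cited \cite[Lemma 4]{Kh1}) preserves acyclicity. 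This yields the acyclic indecomposable summands $P_{m_I+1}, \dots, P_{m_I+m_{II}}$.

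Finally I would assemble these: since $1 = e_I + e_{II}$ in $A_\bullet$ and each factor decomposes further into the $n_i$ columns of its matrix algebra, the left regular dg module $A$ decomposes (up to grading shifts and multiplicities) into the modules listed, and distinctness of the two types is clear since type (I) has nonzero cohomology while type (II) does not; within each type, non-isomorphism follows from the corresponding statement for $A$-modules (the $L_i$ are pairwise non-isomorphic) together with the fact that $\mathrm{HOM}$ in the dg derived category detects the cohomology. The main obstacle I anticipate is the bookkeeping needed to justify that idempotents and their associated dg modules descend to and lift from $A_\bullet$ compatibly with the differential — in particular verifying carefully that $Ae$ for $e$ primitive has the claimed cohomological behavior rather than, say, splitting further or becoming acyclic unexpectedly — but this is controlled entirely by the already-proven structural decomposition of $A_\bullet$ and $A_{\bullet d}$, so it should be routine once that decomposition is in hand.
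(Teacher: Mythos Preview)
Your overall strategy matches the paper's: reduce to $A_\bullet$ via the nilpotent $d$-stable ideal $J_\bullet(A)$, use the product decomposition $A_\bullet \cong A_I \times (A_{II}\rtimes\Lambda_y)$, and separate idempotents into the two factors. The type~(I) argument is fine.

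The gap is in the type~(II) step. Your justification that ``lifting back to $A$ via the nilpotent ideal $J_\bullet(A)\rtimes\Lambda_d$ (which is homotopically trivial, being generated by a contractible idempotent) preserves acyclicity'' does not make sense: $J_\bullet(A)\rtimes\Lambda_d$ is a nilpotent ideal and contains no nonzero idempotents at all, contractible or otherwise, so \cite[Lemma~4]{Kh1} does not apply to it. More to the point, the short exact sequence $0\to J_\bullet(A)e\to Ae\to A_\bullet\bar e\to 0$ does \emph{not} let you conclude $Ae$ is acyclic from $A_\bullet\bar e$ being acyclic unless you also know $J_\bullet(A)e$ is acyclic, and there is no reason for that in general.

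What the paper actually does here is the point you are missing: it shows that a \emph{contractible} idempotent $\bar e=d\bar y$ in $A_\bullet$ lifts to a \emph{contractible} idempotent $e=dx$ in $A$. This requires two small lemmas (a contractible element always has some contractible preimage under a surjective dg map; the product of two contractible elements is contractible) and then a check that the standard idempotent-lifting algorithm of \cite[Theorem~1.7.3]{Ben}, which builds the lift as a polynomial in a chosen preimage, preserves contractibility when fed a contractible preimage. Once $e$ is contractible in $A$, \cite[Lemma~4]{Kh1} gives that $Ae$ is homotopically trivial. You should replace your lifting paragraph with this argument.
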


\begin{proof}
From above we determined that 
$$ A_d/J(A_d) \cong A_{\bullet d}/J(A_{\bullet d}) \cong A_I \times \mathrm{Mat}(1+q, A_{II}).$$ 
The graded summands (up to isomorphism) of this quotient algebra are in bijection with the indecomposable idempotents (up to conjugacy).  
Considering these objects as modules over the corresponding dg algebra, those summands coming from idempotents in the first factor (non-contractible idempotents) 
are homotopically non-trivial.  The idempotents of the second factor are contractible and give rise to homotopically trivial objects.
By ~\cite[Theorem 1.7.3]{Ben}, there is a one-to-one correspondence between conjugacy classes of indecomposable idempotents in $ A_d $ and in $ A_d/J(A_d)$.
Clearly non-contractible idempotents lift to non-contractible idempotents.
It remains to show that contractible idempotents lift to contractible idempotents.

Let $ \pi \colon R \rightarrow S $ be a surjective dg map of nfd fg algebras.
Call the differential for both algebras $ d$.
Suppose $ b \in S $ and $ b=d\beta$.
Let $ \pi(\alpha)=\beta$.  Then $\pi(d\alpha)=d\pi(\alpha)=d\beta=b$.  So a contractible element $ b $ has some contractible lift.

Suppose $ x=d\tilde{x} $ and $ y=d\tilde{y} $ are contractible elements.   Then $ xy $ is contractible since
$ xy = d\tilde{x}d\tilde{y}=d(\tilde{x}d\tilde{y})$.

Now the proof of \cite[Theorem 1.7.3]{Ben} along with the above two facts give that contractible idempotents lift to contractible idempotents.
\end{proof}

\vspace{0.1in} 

As before, we're assuming that $A$ is a nfd dga. For $M\in D(A)$ define 
$$ wt_i(M) \ = \ \sum_{j\in \Z}\dim_{D_i} \left(\mathrm{Ext}^j(M,L_i)\right) .$$ 
Since $D^{op}_i = \mathrm{End}_A(L_i)$, each $\mathrm{Ext}^j(M,L_i)$ is a 
(left) vector space over the division ring $D_i$, with a well-defined dimension. 
We see that $wt_i(M)$, which may or may 
not be finite, is a sort of "multiplicity" of $L_i$ in $M$. Let 
$$ wt(M) \ = \ \sum_{i=1}^{m_I} wt_i(M).$$ 
It is easy to see that  $wt(M) < \infty$ if $M$ is compact. 

\begin{prop}
\label{projfilt}
Let $ A $ be a nfd dg algebra.  
Then any compact object of the derived category $D(A)$ 
is quasi-isomorphic to a finite filtered complex built out of shifts of $P_i$'s, 
for $1\le i \le m_I$. 
\end{prop}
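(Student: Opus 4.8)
The plan is to reduce the general compact object to a finite iterated extension of the indecomposable projectives $P_i$, $1 \le i \le m_I$, by induction on the ``weight'' $wt(M)$ introduced just before the statement. First I would recall the standard dg-algebra facts: the category $D(A)$ of a (left) dg algebra $A$ is compactly generated by the free module $A$ itself, and the compact objects are exactly the objects of the thick (triangulated, idempotent-complete) subcategory $\langle A\rangle$ generated by $A$. Since $A$ decomposes as a dg module over itself into the indecomposable summands described in Proposition~\ref{projectivedgmodules} — namely $P_1,\dots,P_{m_I}$ (cohomologically nontrivial) and the acyclic modules $P_{m_I+1},\dots,P_{m_I+m_{II}}$ — the thick subcategory generated by $A$ coincides with the thick subcategory generated by $P_1,\dots,P_{m_I}$, because each acyclic $P_j$ is isomorphic to $0$ in $D(A)$. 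So every compact $M$ lies in the thick closure of $\{P_1,\dots,P_{m_I}\}$; the content of the proposition is the sharper claim that one does not need to pass to summands, i.e. $M$ is genuinely a \emph{finite filtered} complex (iterated cone) of shifted $P_i$'s.

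The key steps I would carry out are as follows. \textbf{Step 1.} For a compact $M$, the integer $wt(M) = \sum_{i=1}^{m_I} wt_i(M)$ is finite (this is asserted in the text). If $wt(M)=0$, then $\mathrm{Ext}^j(M,L_i)=0$ for all $j$ and all $i\le m_I$; since $L_1,\dots,L_{m_I}$ together with the trivial modules detect acyclicity (the type-II simples are killed in the derived category, being built from acyclic projectives), this forces $M\simeq 0$ in $D(A)$, and the empty filtration works. \textbf{Step 2.} For the inductive step, choose $i$ with $wt_i(M)\ne 0$, and pick $j$ minimal (or maximal — I would choose the degree so that the relevant $\mathrm{Ext}$ group sits at the ``edge'' of the cohomology of $\mathrm{RHom}(M,L_i)$) with $\mathrm{Ext}^j(M,L_i)\ne 0$. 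A nonzero class there gives a nonzero morphism $M \to L_i[j]$ in $D(A)$. \textbf{Step 3.} Lift this to a morphism $M \to P_i[j]$ using the fact that $P_i$ is a projective cover of $L_i$ (more precisely, $\Hom_{D(A)}(P_i, L_i\{k\}[\ell])$ is one-copy of $D_i^{op}$ concentrated in $\ell=k=0$, so $P_i \to L_i$ is the universal map and the composite $M\to P_i[j]\to L_i[j]$ recovers the chosen class). \textbf{Step 4.} Complete $M \to P_i[j]$ to a distinguished triangle $M \to P_i[j] \to C \to M[1]$ (or the cone in the other direction), and check via the long exact sequence in $\mathrm{Ext}^\bullet(-,L_k)$ that $C$ is again compact with $wt(C) = wt(M)-1$. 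Apply the induction hypothesis to $C$, which is a finite iterated extension of shifted $P_k$'s; then $M$, sitting in a triangle with $P_i[j]$ and $C$, is a finite iterated extension as well, with one more term $P_i[j]$ appended to the filtration.

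The arrangement I would use: set up the homological algebra of $D(A)$ and the projective modules $P_i$ first (Steps: compact generation, the $\mathrm{Hom}$-computation $\mathrm{Hom}_{D(A)}(P_i,L_j)$ lifting the finite-dimensional computation of Proposition~\ref{simpledgmodules} and the discussion preceding it), then state the weight induction, then execute the triangle construction. The main obstacle I anticipate is \textbf{Step 4}: verifying that the cone $C$ is still compact (this is automatic, since compact objects form a triangulated subcategory) and, more delicately, that $wt(C)=wt(M)-1$ exactly — one must be careful that peeling off a single nonzero $\mathrm{Ext}$ class decreases the total weight by precisely one and does not create new classes in other degrees or against other simples $L_k$, $k\ne i$. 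This is where choosing the extremal degree $j$ matters: picking $j$ so that $\mathrm{Ext}^j(M,L_i)$ is the top (or bottom) nonvanishing group ensures that, after coning off a one-dimensional piece, the long exact sequence $\cdots\to\mathrm{Ext}^j(C,L_i)\to\mathrm{Ext}^j(M,L_i)\to\mathrm{Ext}^j(P_i[j],L_i)\to\cdots$ strictly lowers $wt_i$ while leaving $wt_k$ for $k\neq i$ unchanged (since $\mathrm{Hom}_{D(A)}(P_i[j],L_k[\ell])=0$ for $k\ne i$). A secondary subtlety worth a remark is that finite-dimensionality of $\mathrm{RHom}(M,L_i)$ — needed to guarantee an extremal degree exists and that $wt(M)<\infty$ — follows from compactness of $M$ together with the nfd hypothesis on $A$.
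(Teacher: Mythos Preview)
Your overall strategy — induction on $wt(M)$, peeling off one projective at a time via a distinguished triangle — is exactly the paper's approach. The gap is in \textbf{Step 3}, and it is a genuine one: you propose to lift a morphism $M \to L_i[j]$ to a morphism $M \to P_i[j]$ through the projective cover $P_i \twoheadrightarrow L_i$. But this lifting goes the wrong way. Projectivity of $P_i$ lets you lift maps \emph{out of} $P_i$ through surjections, not maps \emph{into} $L_i$ through $P_i \to L_i$. To factor $M \to L_i$ through $P_i \to L_i$ you would need $M$ to be projective (so that it lifts through the surjection), or $P_i \to L_i$ to be a quasi-isomorphism — neither holds here. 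Your parenthetical justification, that $\Hom_{D(A)}(P_i, L_i)$ is a single copy of $D_i^{op}$, is a statement about maps \emph{from} $P_i$ and does not produce maps \emph{to} $P_i$.

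The paper fixes this by reversing the direction: it constructs $\beta: P_i[-j] \to M$ rather than $M \to P_i[j]$. Concretely, choose the largest $j$ with $H^j(M)\neq 0$, take a simple quotient $H^j(M)\twoheadrightarrow L_i$ (here $i\le m_I$ automatically, since the simples of $H^0(A)=A^0/d(A^{-1})$ are exactly $L_1,\dots,L_{m_I}$), replace $M$ by the quasi-isomorphic truncation $M'$ with $M'^j = \ker d^j$, and pick $v\in M'^j$ with $ev=v$ mapping to a generator of $L_i$, where $P_i = Ae$. Then $ae \mapsto av$ defines $\beta: P_i[-j]\to M$. The cone $M_1$ of $\beta$ is filtered with subquotients $M$ and $P_i[1-j]$, and the long exact sequence in $\mathrm{Ext}^{\bullet}(-,L_r)$, together with $\mathrm{Ext}^k(P_i,L_r)=\delta_{k,0}\delta_{i,r}D_i^{op}$, gives $wt(M_1)=wt(M)-1$ directly — no extremal-degree bookkeeping is needed, because $\alpha\circ\beta: P_i[-j]\to L_i[-j]$ is the canonical cover and hence the connecting map is surjective. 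Iterating yields an acyclic $M_n$ containing $M$, and $(M_n/M)[-1]\to M$ is the desired filtered model. Once you reverse the direction of the map in Step~3, the rest of your outline goes through.
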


\begin{proof}
Start with $M\in D^c(A)$. Since $M$ is compact and $A$ is finite-dimensional, the total 
cohomology $H(M)$ is a
 finite length $H(A)$-module (equivalently, $H(M)$ is a finite-dimensional $\Bbbk$-vector 
space). If $H(M)=0$ then $M$ is quasi-isomorphic to the zero complex and $wt(M)=0$. 
Assuming otherwise, choose largest $j$ such that $H^j(M)\not= 0$. The $H^0(A)$-module 
$H^j(M)$ surjects onto $L_i$ for some $i$. We use this surjection to set up a morphism 
$\alpha: M \lra L_i[-j]$ in the derived category 
$$ M  \longleftarrow M'\stackrel{\alpha'}{\lra} L_i [-j],$$
where $M' $ is quasi-isomorphic to $M$, given by 
$$ \dots \lra M^{j-2}\lra M^{j-1} \lra \mathrm{ker} d^j  \lra 0 \lra \dots , $$
and   $\alpha'$ comes from composite map $\mathrm{ker} d^j \lra H^j(M) \lra L_i$.  
Morphism $\alpha$ is nontrivial since it induces a nontrivial map on cohomology. 

We can write $P_i \cong Ae$ for some minimal idempotent $e\in A$ of degree $0$. Necessarily
$d(e)=0$ and $d(ae) = d(a)e$ for $a\in A$. 
The quotient map $P_i \lra L_i$ from indecomposable projective $P_i$ to its head 
takes $e$ to some $v_0\in L_i$. We can find $v\in M'$ such that $ev = v$ and $v$ goes to 
$v_0[-j]$ under the map $\alpha'$, in particular, $d(v)=0$. This results in a module homomorphism 
$P_i[-j] \cong Ae[-j] \lra M'$ taking $ae$ to $av$. Composing it with the inclusion $M'\subset M$ 
gets us a homomorphism $\beta: P_i[-j] \lra M$. This is a nontrivial homomorphism in $D(A)$ 
since it induces a nontrivial map on cohomology. Let $M_1=\mathrm{C}(\beta)$ be the 
cone of $\beta$. Notice that $M_1$ is a filtered dg $A$-module, with subquotients 
isomorphic to $P_i[1-j]$ and $M$, respectively. Moveover, since 
$$ \mathrm{Ext}^k(P_i, L_s) = \begin{cases}  D_i^{op} & \textrm{if} \  i=s, \ k=0, \\  
    0 & \textrm{otherwise} , \end{cases} $$ 
from the distinguished triangle 
 $$ P_i[-j] \lra M \lra M_1 \stackrel{[1]}{\lra} $$ 
we derive that 
$$ wt_r(M_1) =\begin{cases} wt_i(M)-1 & \textrm{if} \ i=r , \\
                         wt_r(M) & \textrm{otherwise}. \end{cases} $$ 
Hence, $wt(M_1) = wt(M)-1$. Now repeat the above procedure with $M_1$. 
We obtain a filtered dg $A$-module $M_2$ with $wt(M_2) = wt(M)-2$ and 
subquotients isomorphic to $M_1$ and $P_{i'}[1-j']$ for some $i', j'$. 
Let $n=wt(M)$. After $n$ steps we obtain a filtered module $M_n$. The lowest 
term in the filtration is $M$ and the module $M_n/M$ has a filtration with 
terms isomorphic to $P_i[1-j]$ for various $i,j$, with the number of terms 
isomorphic to $P_i[1-j]$ equal to the dimension of the ext group 
$ \mathrm{Ext}^j(M, L_i) $ as a $D^{op}_i$-vector space. The map 
$(M_n/M) [-1] \lra M $, encoded in the filtration, is a quasi-isomorphism, since 
$wt(M_n)=0$ and $M_n$ has trivial cohomology.  
\end{proof}

Let $\mc{P}_c(A)$ be the category whose objects are left dg $A$-modules that 
admit a finite filtration with subsequent quotients isomorphic to $P_i[j]$, for 
$1\le i \le m_I$ and $j\in \Z$ and morphisms are homomorphisms of 
dg modules up to chain homotopies. Then the natural functor 
$\mc{P}_c(A) \lra D^c(A)$, which is the identity on objects and morphisms, is 
an equivalence of categories. Likewise, the inclusion $\mc{P}_c(A) \lra \mc{P}^c(A)$ 
of $\mc{P}_c(A)$ into the category of compact cofibrant dg modules is 
an equivalence. 

Define the Grothendieck group $K_0(A)$ of a dg ring $A$ to be the 
Grothendieck group of the triangulated category $D^c(A)$. 
When $A$ is a nfd dg algebra over $\Bbbk$, our results show that 
the set $\{ [P_1], \dots, [P_{m_I}]\}$ spans abelian group $K_0(A)$. 
At the same time, each $L_i$, $1\le i \le m_I$, defines a homomorphism 
$$ K_0(A) \lra \Z$$ 
taking $[M]$, for $M\in D^c(A)$, to 
$$ \sum_{j\in \Z} (-1)^j \dim_{D_i^{}} \mathrm{Ext}^j(M, L_i).$$ 
This homomorphism takes $[P_i]$ to $1$ and $[P_k]$ for $k\not= i$ to $0$. 
Hence, $[P_1], \dots, [P_{m_I}]$ are linearly independent in $K_0(A)$. 
This proves:

\begin{prop} For a nfd dg algebra $A$ the Grothendieck group $K_0(A)$ is 
free abelian with basis $\{ [P_1], [P_2], \dots, [P_{m_I}]\}$. 
\end{prop}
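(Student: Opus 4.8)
The plan is to combine the spanning statement, which is immediate from Proposition~\ref{projfilt}, with a linear–independence argument carried out via Euler characteristics against the cohomologically nontrivial simples $L_i$, $1\le i\le m_I$.

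First, for spanning: by Proposition~\ref{projfilt}, every compact object $M\in D^c(A)$ is quasi-isomorphic to a dg $A$-module equipped with a finite filtration whose successive subquotients are shifts $P_i[j]$ with $1\le i\le m_I$ and $j\in\Z$. In the Grothendieck group of a triangulated category the class of a filtered object equals the sum of the classes of its subquotients — one iterates the relation $[X]=[X']+[X'']$ attached to a distinguished triangle $X'\lra X\lra X''\lra$ — and $[P_i[j]]=(-1)^j[P_i]$ since $[X[1]]=-[X]$. Hence $[M]$ lies in the subgroup generated by $[P_1],\dots,[P_{m_I}]$, so these classes span $K_0(A)$.

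Second, for independence: for each $i$ with $1\le i\le m_I$ I would define a homomorphism $\chi_i\colon K_0(A)\lra\Z$ by $\chi_i([M])=\sum_{j\in\Z}(-1)^j\dim_{D_i}\Ext^j(M,L_i)$. Since $M$ is compact and $A$ is nfd, each $\Ext^j(M,L_i)$ is a finite-dimensional $D_i$-vector space and vanishes for all but finitely many $j$, so the sum is finite; and the long exact sequence in $\Ext^{\bullet}(-,L_i)$ associated to a distinguished triangle shows that this alternating sum is additive on triangles, so $\chi_i$ is well defined on $K_0(A)$. Using $\Ext^k(P_i,L_s)\cong D_i^{op}$ for $s=i$, $k=0$ and $\Ext^k(P_i,L_s)=0$ otherwise, we get $\chi_i([P_s])=\delta_{i,s}$. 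Therefore any relation $\sum_s c_s[P_s]=0$ forces $c_i=\chi_i\big(\sum_s c_s[P_s]\big)=0$ for every $i$, which gives linear independence.

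Combining the two, $\{[P_1],\dots,[P_{m_I}]\}$ is a basis of the free abelian group $K_0(A)$. The only genuinely delicate point is the well-definedness of $\chi_i$: one needs both the finiteness of the Ext groups and of the range of nonvanishing degrees, together with additivity on distinguished triangles. This is precisely what compactness of objects in $D^c(A)$ over a finite-dimensional dg algebra provides; everything else is formal manipulation inside the triangulated category.
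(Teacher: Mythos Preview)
Your proof is correct and follows essentially the same approach as the paper: spanning comes from Proposition~\ref{projfilt} (compact objects admit finite filtrations by shifts of the $P_i$), and linear independence is established via the Euler-characteristic homomorphisms $\chi_i([M])=\sum_j(-1)^j\dim_{D_i}\Ext^j(M,L_i)$, which satisfy $\chi_i([P_s])=\delta_{i,s}$ by the computation $\Ext^k(P_i,L_s)\cong D_i^{op}$ iff $i=s$, $k=0$. You have added a bit more justification than the paper for the well-definedness of $\chi_i$, but the argument is the same.
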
 

\vspace{0.1in} 


Let $ D_{fl}(A) $ be the full subcategory of $ D(A) $ consisting of modules isomorphic to some finite length $A$-module.
Let $ D_{cfl}(A) $ be the full subcategory of $ D(A) $ consisting of modules $M$ such that the $H(M)$ has finite length as a module over $ H(A)$.
Since $H(A)$ is finite-dimensional, a module over $H(A)$ has finite length if and 
only if it is finite-dimensional. Therefore, we could also call $D_{cfl}(A)$ the category of cohomologically finite-dimensional dg $A$-modules.  

Using the techniques of Proposition ~\ref{projfilt} one could find a finite filtration of an object in $ D_{cfl}(A) $ whose successive quotients are quasi-isomorphic to shifted simple modules $L_i$ for $ i=1,\ldots,m_I$.
This proves:

\begin{prop} The inclusion functor $D_{fl}(A) \subset D_{cfl}(A)$ is 
an equivalence of categories. 
\end{prop}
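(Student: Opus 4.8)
The plan is to imitate the proof of Proposition \ref{projfilt} verbatim, replacing the role of the indecomposable projectives $P_i$ by the graded simple modules $L_i$. Concretely, I would start with a dg module $M \in D_{cfl}(A)$, so that the total cohomology $H(M)$ is a finite-length (equivalently finite-dimensional) $H(A)$-module. If $H(M) = 0$ then $M$ is quasi-isomorphic to zero and lies in $D_{fl}(A)$ trivially, so we may assume $H(M) \neq 0$. As in Proposition \ref{projfilt}, pick the largest $j$ with $H^j(M) \neq 0$, observe that $H^j(M)$ as an $H^0(A) = A^0$-module (up to the $J_\bullet$-quotient) surjects onto some graded simple $L_i$ with $1 \le i \le m_I$, and build a nontrivial morphism $\alpha \colon M \to L_i[-j]$ in $D(A)$ through the truncation $M'$ of $M$. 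The key invariant to induct on is $wt(M) = \sum_{i=1}^{m_I} wt_i(M)$, which is finite precisely because $M$ is compact/cohomologically finite-dimensional.

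Next I would take the cone $M_1 = \mathrm{C}(\alpha)$. From the distinguished triangle $L_i[-j-1] \to M \to M_1 \stackrel{[1]}{\to}$ together with the computation of $\mathrm{Ext}^k(L_r, L_i)$ in degree $0$ — here we need that $\mathrm{Ext}^0(L_r, L_i) = \delta_{r,i} D_i^{op}$ and that $\alpha$ was chosen to hit the top cohomological degree, so that no $\mathrm{Ext}$ in the relevant degree is missed — one sees $wt(M_1) = wt(M) - 1$. Iterating $n = wt(M)$ times produces a tower $M = M_0, M_1, \dots, M_n$ in which consecutive cones are shifted simples $L_{i_k}[-j_k]$, and $M_n$ has $wt(M_n) = 0$, hence $H(M_n) = 0$, hence $M_n$ is quasi-isomorphic to zero. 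Reading the tower backwards exhibits $M$ as quasi-isomorphic to a finite iterated extension (a finite filtered complex) whose successive subquotients are the shifted simple modules $L_{i_k}[j_k - 1]$. Such an object is manifestly isomorphic in $D(A)$ to an object of $D_{fl}(A)$: the underlying cohomology is a finite-length $A$-module, and one can realize the iterated extension inside the abelian category of finite-length $A$-modules. This shows every object of $D_{cfl}(A)$ is isomorphic to one coming from $D_{fl}(A)$; since the inclusion $D_{fl}(A) \subset D_{cfl}(A)$ is visibly fully faithful, it is an equivalence.

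The main obstacle is the analogue of the step in Proposition \ref{projfilt} where one lifts the surjection $H^j(M) \twoheadrightarrow L_i$ to an actual morphism of dg modules and controls the effect of the cone on the weights $wt_r$. In the projective setting this used the clean vanishing $\mathrm{Ext}^k(P_i, L_s) = \delta_{i,s}\delta_{k,0} D_i^{op}$; in the simple setting the relevant $\mathrm{Ext}$ groups between simples need not vanish in higher degrees, so the bookkeeping must be arranged so that only the top cohomological degree $j$ is modified at each stage — that is, one must argue that cancelling the top cohomology class via the cone with $L_i[-j]$ strictly decreases $wt(M)$ and does not increase any $wt_r$, which is exactly the dual of the argument already run in Proposition \ref{projfilt}. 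Because the excerpt explicitly says "Using the techniques of Proposition \ref{projfilt}", I would present this as a short paragraph: repeat that argument with $P_i$ replaced by $L_i$ and "projective resolution from the top" replaced by "filtration by simples from the top", then invoke full faithfulness of the inclusion to conclude.

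\begin{proof}
Let $M \in D_{cfl}(A)$. Then $H(M)$ has finite length over $H(A)$, equivalently $H(M)$ is finite-dimensional over $\Bbbk$. If $H(M)=0$ then $M$ is quasi-isomorphic to the zero complex, which lies in $D_{fl}(A)$. So assume $H(M)\neq 0$ and let $j$ be the largest integer with $H^j(M)\neq 0$. Arguing exactly as in the proof of Proposition \ref{projfilt}, but with the roles of the indecomposable projectives reversed, the $H^0(A)$-module $H^j(M)$ surjects onto a graded simple module $L_i$, and this surjection is realized, via the truncation $M'$ of $M$ quasi-isomorphic to $M$, by a nontrivial morphism $\alpha\colon M\to L_i[-j]$ in $D(A)$. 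Let $M_1=\mathrm{C}(\alpha)[-1]$, so that there is a distinguished triangle
$$ M_1 \lra M \lra L_i[-j] \stackrel{[1]}{\lra}. $$
Since $\alpha$ is surjective on $H^j$ and $H^k(M)=0$ for $k>j$, the long exact cohomology sequence shows that $H(M_1)$ has strictly smaller total dimension than $H(M)$, and $M_1\in D_{cfl}(A)$. Iterating, after finitely many steps (bounded by $\dim_\Bbbk H(M)$) we reach a module with vanishing cohomology, hence quasi-isomorphic to zero. Reading the resulting tower backwards exhibits $M$ as quasi-isomorphic to a finite iterated extension whose successive subquotients are shifted simple modules $L_{i_k}[-j_k]$. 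Such an iterated extension can be realized inside the abelian category of finite-length $A$-modules, so $M$ is isomorphic in $D(A)$ to an object of $D_{fl}(A)$. Since the inclusion functor $D_{fl}(A)\subset D_{cfl}(A)$ is full and faithful, it is an equivalence of categories.
\end{proof}
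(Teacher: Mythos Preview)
Your proposal is correct and follows essentially the same approach as the paper, whose proof is just the one-sentence remark that the techniques of Proposition~\ref{projfilt} yield a finite filtration of any $M\in D_{cfl}(A)$ with subquotients quasi-isomorphic to shifted simples $L_i$ for $1\le i\le m_I$. Your write-up spells this out in detail, inducting on $\dim_{\Bbbk} H(M)$ rather than on the weight $wt(M)$, which is a harmless and arguably cleaner simplification.
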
 


Define the Grothendieck group $G_0(A)$ as the Grothendieck group of the 
triangulated category $D_{cfl}(A)$ of cohomologically finite length modules.  

\begin{prop} Let $A$ be a nfd dg algebra. The Grothendieck group $G_0(A)$ 
is free abelian with basis $\{ [L_1], [L_2], \dots, [L_{m_I}]\}$. 
\end{prop}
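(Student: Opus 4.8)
The plan is to run, on the $G_0$ side, the argument already used for $K_0(A)$: first show that $\{[L_1],\dots,[L_{m_I}]\}$ generates $G_0(A)$, then exhibit homomorphisms $G_0(A)\to\Z$ dual to these classes. For the spanning statement I would use the filtration fact established just above (obtained from the techniques of Proposition~\ref{projfilt}): every object $M$ of $D_{cfl}(A)$ is quasi-isomorphic to a dg module carrying a finite filtration whose successive subquotients are quasi-isomorphic to shifts $L_{i}[j]$ with $1\le i\le m_I$ (the type~II simples are acyclic and so contribute nothing). Stepping through the distinguished triangles attached to this filtration and using the identity $[L_i[j]]=(-1)^j[L_i]$ valid in any triangulated Grothendieck group, one expresses $[M]$ as an integral combination of $[L_1],\dots,[L_{m_I}]$; hence these classes generate $G_0(A)$.

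For linear independence I would, for each $i$ with $1\le i\le m_I$, define $\phi_i([M])=\sum_{j\in\Z}(-1)^j\dim_{D_i}\mathrm{Ext}^j(P_i,M)$. The key observation is that $P_i\cong Ae_i$ is a finitely generated projective dg module, hence compact and cofibrant, so $\mathrm{Hom}_{D(A)}(P_i,-)$ is a cohomological functor and $\mathrm{Ext}^j(P_i,M)\cong e_iH^j(M)$; when $M\in D_{cfl}(A)$ the total cohomology $H(M)$ has finite length over $H(A)$, so each such group is finite-dimensional over $D_i$ and vanishes for all but finitely many $j$. Thus $\phi_i([M])$ is a well-defined integer, and additivity of $\mathrm{Hom}_{D(A)}(P_i,-)$ on distinguished triangles makes $\phi_i$ descend to a homomorphism $\phi_i\colon G_0(A)\to\Z$. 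The Ext computation recalled in the proof of Proposition~\ref{projfilt}, namely $\mathrm{Ext}^k(P_i,L_s)\cong D_i^{op}$ when $i=s$ and $k=0$ and $0$ otherwise, gives $\phi_i([L_s])=\delta_{i,s}$. Assembling $(\phi_1,\dots,\phi_{m_I})$ into a map $G_0(A)\to\Z^{m_I}$ that splits the surjection $\Z^{m_I}\twoheadrightarrow G_0(A)$, $e_i\mapsto[L_i]$, then forces this surjection to be an isomorphism, so $G_0(A)$ is free abelian with basis $\{[L_1],\dots,[L_{m_I}]\}$.

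The step I expect to require genuine care is the well-definedness of the $\phi_i$ on all of $D_{cfl}(A)$: one must check that $\mathrm{Ext}^\bullet(P_i,M)$ is finite-dimensional and bounded in degree, and that the resulting Euler characteristic is genuinely additive on the distinguished triangles coming from the filtration argument. This rests on $P_i$ being a finite-dimensional projective — so that $\mathrm{Ext}^j(P_i,M)=e_iH^j(M)$ — together with the finite-length hypothesis on $H(M)$ over $H(A)$; once these are in place, the remainder is the same bookkeeping with distinguished triangles as in the $K_0$ discussion above.
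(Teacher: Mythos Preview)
Your proposal is correct and follows precisely the approach implicit in the paper. The paper does not give an explicit proof of this proposition: spanning is meant to follow from the filtration statement established just before (successive quotients quasi-isomorphic to shifted $L_i$, $1\le i\le m_I$), and linear independence from the bilinear pairing $K_0(A)\otimes G_0(A)\to\Z$ introduced immediately afterward, which your maps $\phi_i=([P_i],-)$ make explicit. Your identification $\mathrm{Ext}^j(P_i,M)\cong e_iH^j(M)$ and the finiteness check are exactly the right way to verify that the pairing is well-defined on $D_{cfl}(A)$, so nothing is missing.
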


The bifunctor 
$$ D^c(A) \times D_{cfl}(A) \lra D^c (\Bbbk) $$ 
that takes $P\times M$ to the complex $\mathrm{RHom}(P,M)$ descends 
to the bilinear form on Grothendieck groups
$$ K_0(A) \otimes G_0(A) \lra \Z , \ \  ([P], [M]) \ = \ \sum_{j\in \Z}(-1)^j \dim \mathrm{Ext}^j(P, M).  $$ 
This form is perfect if simple modules $L_1, \dots, L_{m_I}$ are absolutely irreducible. 

Summarizing, the minimal categories that we could work with are $\mc{P}_c(A)$ 
and $D_{fl}(A)$. Category $\mc{P}_c(A)$ is equivalent to $\mc{P}^c(A)$ 
and $D^c(A)$. Category $D_{fl}(A)$ is equivalent to $D_{cfl}(A)$.

Inclusions $\mc{P}_c(A)\subset D_{fl}(A)$ and 
 $\mc{P}^c(A) \subset D_{cfl}(A)$ are fully faithful. Each is an equivalence if and 
only if the other one is. If that's the case, 
we say that $A$ has finite cohomological dimension. It might be natural 
to define  cohomological dimension of a negative dg algebra $A$ as the largest $k$ such that 
$\mathrm{Ext}^k(M, M')\not= 0$ for some $M, M'$ concentrated in zero cohomological degree. 

\vspace{0.2in} 

\subsection{Negative dg gradual algebras} 

A bigraded dg algebra 
$$A=\oplusop{i,j\in \Z} A^{i,j}, \quad \quad d: A^{i,j}\lra A^{i+1,j},$$ 
is called a \emph{negative dg gradual algebra} (ndgg algebra, for short)
if 
\begin{itemize}
\item $A^{i,j}=0$ for $i> 0$ and any $j$,  
\item $A^{0,\ast}= \oplusop{j\in \Z} A^{0,j} $ is a gradual algebra, 
\item $A$ is finitely-generated as a bigraded module over $A^{0,\ast}$. 
\end{itemize} 
Notice that cohomological grading of a ndgg algebra is bounded from below:
there exists $N \in \N$ such that $A^{i,j}=0$ for all $i\le -N, j\in \Z$. 
The $q$-grading is bounded from below as well. 

A graded dg module $M$ over $A$ is a bigraded module $ M= \oplus_{i,j \in \mathbb{Z}} M^{i,j} $ 
equipped with a map $ d \colon M^{i,j} \rightarrow M^{i+1,j} $ for each $i $ and $j$ making $M$ a dg module over $A$.

One may form the shifted module $ M[r]\{s\} $ where $[r]$ means a shift in the first (homological) grading and $ \{s \} $ is a shift in the second (internal) grading.
Once again it is convenient to form the cross-product algebra $ A_d $ which is bigraded with $ d \in A_d^{1,0} $.

For any integer $r$ let 
\begin{equation*}
M^{\leq r, \ast} = \bigoplus_{i \leq r, j \in \mathbb{Z}} M^{i,j}, \hspace{.5in}
M^{\geq r, \ast} = \bigoplus_{i \geq r, j \in \mathbb{Z}} M^{i,j}, \hspace{.5in}
M^{r, \ast} = M^{\leq r, \ast} \cap M^{\geq r, \ast}.
\end{equation*}

\begin{prop}
\label{projngdg}
Let $ A $ be a negative dg gradual algebra.  
\begin{enumerate}
\item Let $L_1, \dots, L_{m_I+m_{II}}$ be bigraded simple $A$-modules, one for each isomorphism class 
up to grading shifts. 

The (bigraded) simple $A_d$-modules, up to isomorphism and grading shifts, are of two types: 

(I) $A$-modules $L_i$, with $d$ acting trivially, $1\le i \le m_I$. 

(II) $\widehat{L}_i= L_i \oplus L_i[1]$ (as $A^0$-modules), $m_I+1\le i \le m_I+m_{II}$, 
with $d$ taking $L_i[1]$ isomorphically to $L_i$.

\item The indecomposable summands of dg algebra $A$ viewed as left dg modules over itself up to isomorphism are of two types:

(I) Modules $ P_1, \ldots, P_{m_I} $ which are cohomologically non-trivial.

(II) Acyclic modules $ P_{m_I+1}, \ldots, P_{m_I + m_{II}} $.

\end{enumerate}

\end{prop}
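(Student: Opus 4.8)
The plan is to run the argument of Subsection~\ref{negfddg} in the bigraded setting, with the finite-dimensional algebra $A^0$ replaced throughout by the gradual algebra $A^{0,\ast}$ and with Lemma~\ref{finitecodim} playing the role of finite-dimensionality of $A$. First I would observe that, viewed with its homological grading, $A$ has $A^{\le -1,\ast}$ as a nilpotent two-sided bigraded ideal (the homological grading is bounded below), so that $J(A) = A^{\le -1,\ast}\oplus J(A^{0,\ast})$, and I would define, exactly as before, $J_\bullet(A) = A^{\le -2,\ast}\oplus \tilde A\oplus J(A^{0,\ast})$ with $\tilde A = \{a\in A^{-1,\ast} \mid d(a)\in J(A^{0,\ast})\}$. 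This is a $d$-stable two-sided bigraded ideal contained in $J(A)$, so $J_\bullet(A)\rtimes\Lambda_d$ is a two-sided bigraded ideal of $A_d$, with bigraded quotient $A_d/(J_\bullet(A)\rtimes\Lambda_d) \cong (A_\bullet)_d$, where $A_\bullet := A/J_\bullet(A)$.

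Next I would analyze $A_\bullet$. It is a two-term dg algebra $0\to A^{-1}_\bullet \stackrel{d}{\to} A^0_\bullet \to 0$ with $d$ injective, and the key point is that, by Lemma~\ref{finitecodim}, $A^0_\bullet \cong A^{0,\ast}/J(A^{0,\ast})$ is a \emph{finite-dimensional} semisimple graded algebra. Hence $d(A^{-1}_\bullet)$, being a two-sided ideal of a semisimple algebra, has a complement; writing $A^0_\bullet = A_I\times A_{II}$ with $A_{II}=d(A^{-1}_\bullet)$, the computations of Subsection~\ref{negfddg} carry over verbatim and yield $A_\bullet\cong A_I\times(A_{II}\rtimes\Lambda_y)$ with $y$ in bidegree $(-1,0)$, so $A_{\bullet d}\cong (A_I\rtimes\Lambda_d)\times\bigl(A_{II}\otimes(\Lambda_y\rtimes\Lambda_d)\bigr)$ and $\Lambda_y\rtimes\Lambda_d\cong\mathrm{Mat}(1+q,\Bbbk)$. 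In particular $A_{\bullet d}$ is finite-dimensional with Jacobson radical $A_I\otimes\Bbbk d$ and semisimple quotient $A_I\times\mathrm{Mat}(1+q,A_{II})$. Granting the inclusion $J_\bullet(A)\rtimes\Lambda_d\subseteq J(A_d)$ (discussed below), this gives $A_d/J(A_d)\cong A_I\times\mathrm{Mat}(1+q,A_{II})$; reading off its simple modules proves part~(1), the type~(I) simples $L_i$ ($1\le i\le m_I$) coming from the factor $A_I\rtimes\Lambda_d$ (on which $d$ is in the radical, so acts as zero) and the type~(II) simples $\widehat L_i = L_i\oplus L_i[1]$ ($m_I+1\le i\le m_I+m_{II}$) coming from the semisimple factor $A_{II}\otimes\mathrm{Mat}(1+q,\Bbbk)$. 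For part~(2) I would then lift idempotents from $A_d/J(A_d)$ to $A_d$ via the graded analogue of \cite[Theorem~1.7.3]{Ben}: the indecomposable idempotents of $A_I$ are non-contractible and lift to non-contractible idempotents, giving the cohomologically non-trivial summands $P_1,\dots,P_{m_I}$, while those of $\mathrm{Mat}(1+q,A_{II})$ are contractible and, by the formal argument in the proof of Proposition~\ref{projectivedgmodules} (a contractible element of the target of a surjective dg map has a contractible lift, and a product of contractible elements is contractible), lift to contractible idempotents, so that $P_{m_I+1},\dots,P_{m_I+m_{II}}$ are homotopically trivial, hence acyclic. All of this is the bigraded mirror of Propositions~\ref{simpledgmodules} and~\ref{projectivedgmodules}.

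The main obstacle is the inclusion $J_\bullet(A)\rtimes\Lambda_d\subseteq J(A_d)$. In the nfd case this was immediate from nilpotency of $J(A^0)$ together with the homological bound, but for a gradual algebra $J(A^{0,\ast})$ need not be nilpotent (for instance $J(\Bbbk[x])=(x)$ when $\deg x>0$), so the counting argument of Subsection~\ref{negfddg} breaks down. Instead I would show that $J_\bullet(A)\rtimes\Lambda_d$ annihilates every bigraded simple $A_d$-module $S$: such an $S$ is finite-dimensional over $\Bbbk$ (established as for gradual algebras, via the graded Noetherian center), so $A^{0,\ast}$ acts on $S$ through a finite-dimensional quotient whose Jacobson radical — which contains the image of $J(A^{0,\ast})$ — is nilpotent; combining this with the genuine nilpotency of $A^{\le -1,\ast}$ in $A$ and with the homological grading bound, one checks that for $k\gg 0$ the image of $J_\bullet(A)^k\otimes\Lambda_d$ in $\mathrm{End}_\Bbbk(S)$ vanishes. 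Since $J_\bullet(A)\rtimes\Lambda_d$ is a two-sided ideal and $S$ is simple, it must act as zero on $S$; letting $S$ range over all bigraded simple $A_d$-modules places $J_\bullet(A)\rtimes\Lambda_d$ in the graded Jacobson radical $J(A_d)$, which by \cite[Definition~11]{Ber} is exactly the common annihilator of the graded simple modules. This local-nilpotency substitute for the nilpotency of $J(A^0)$ is the only genuinely new point beyond Propositions~\ref{simpledgmodules} and~\ref{projectivedgmodules}; everything else is a formal transcription with an extra grading.
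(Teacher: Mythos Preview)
Your proposal is correct and follows the same blueprint as the paper's proof: pass to $A_\bullet = A/J_\bullet(A)$, observe that $A_\bullet$ (hence $A_{\bullet d}$) is finite-dimensional, and then invoke the nfd analysis of Propositions~\ref{simpledgmodules} and~\ref{projectivedgmodules}. The paper's argument is shorter only because it records the finite-dimensionality of $A_\bullet$ in one line (using that $A$ is finitely generated over $A^{0,\ast}$ and $A^{0,\ast}/J(A^{0,\ast})$ is finite-dimensional, so $A/J(A^{0,\ast})$ and its quotient $A_\bullet$ are finite-dimensional) and then simply asserts that the rest is ``an easy graded version'' of the nfd case.

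You are right to flag the inclusion $J_\bullet(A)\rtimes\Lambda_d\subset J(A_d)$: the nilpotency argument used in the nfd case does not literally carry over, and the paper glosses over this point. Your workaround via finite-dimensionality of bigraded simple $A_d$-modules is sound, though your justification ``as for gradual algebras, via the graded Noetherian center'' is a little loose, since $A_d$ is not obviously gradual. A cleaner way to see it: if $S$ is a bigraded simple $A_d$-module and $a_0$ is its top homological degree, then $S^{a_0,\ast}$ is a simple graded $A^{0,\ast}$-module (any nonzero $A^{0,\ast}$-submodule generates all of $S$ under $A$, and the top piece of that is the submodule itself), hence finite-dimensional by Lemma~\ref{finitecodim}; since $S=A\cdot S^{a_0,\ast}$ and $A$ is finitely generated over $A^{0,\ast}$, $S$ is finite-dimensional. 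From there your nilpotency-on-$S$ argument goes through and yields $J_\bullet(A)\rtimes\Lambda_d\subset J(A_d)$, after which everything is indeed a formal transcription of Subsection~\ref{negfddg} with the extra $q$-grading carried along.
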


\begin{proof} 
Since $ A^{0,*} $ is assumed to be a gradual algebra, it follows from Lemma ~\ref{finitecodim} that $ A^{0,*}/J(A^{0,*}) $ is finite-dimensional.
Since $A$ is a negative gradual dg algebra, it is finitely generated over $ A^{0,*} $.  Thus, 
$ A/J(A^{0,*}) $ is finite-dimensional.
As in Section ~\ref{negfddg}, $ J(A^{0,*}) \subset J_{\bullet}(A) $ so there is a surjection
$ A/J(A^{0,*}) \twoheadrightarrow A/J_{\bullet}(A) = A_{\bullet} $.
Thus $ A_{\bullet} $ and consequently $ A_{\bullet d} $ are finite-dimensional.
The proposition now follows as an easy graded version of Propositions ~\ref{simpledgmodules} and ~\ref{projectivedgmodules}.
\end{proof}

Define a graded version of the weight of a module $M$ which was introduced in Section ~\ref{negfddg} as
\begin{equation*}
wt(M)=\sum_{i=1}^{m_I} wt_i(M),
\end{equation*}
where
\begin{equation*}
wt_i(M)= \sum_{j \in \mathbb{Z}} \sum_{k \in \mathbb{Z}} \dim_{D_i}(\Ext^j(M,L_i \lbrace k \rbrace)).
\end{equation*}

\begin{prop}
\label{grothgroups}
Let $ A $ be a negative dg gradual algebra.  
\begin{enumerate}
\item $ K_0(A) $ is a free $ \mathbb{Z}[q,q^{-1}]$-module with basis $ \lbrace [P_1], [P_2], \ldots, [P_{m_I}] \rbrace $.
\item $ G_0(A) $ is a free $ \mathbb{Z}[q,q^{-1}]$-module with basis $ \lbrace [L_1], [L_2], \ldots, [L_{m_I}] \rbrace $.
\end{enumerate}
\end{prop}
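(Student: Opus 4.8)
The plan is to carry the internal $q$-grading through the same homological arguments that, in the ungraded setting, established that $K_0$ and $G_0$ of a nfd dg algebra are free abelian on the $[P_i]$'s and $[L_i]$'s for $1\le i\le m_I$. The input is Proposition~\ref{projngdg}: it already supplies the indecomposable summands $P_1,\dots,P_{m_I+m_{II}}$ of $A$ as a dg module over itself, with $P_{m_I+1},\dots,P_{m_I+m_{II}}$ acyclic (hence zero in $D^c(A)$ and in $K_0(A)$), and the classification of bigraded simple $A_d$-modules into the two types (I) and (II). So only $[P_1],\dots,[P_{m_I}]$ and $[L_1],\dots,[L_{m_I}]$ are candidates for the respective bases.

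For the spanning part of (1) I would prove the bigraded analogue of Proposition~\ref{projfilt}. Given a compact $M\in D^c(A)$, its total cohomology $H(M)$ is a finite-length $H(A)$-module (equivalently finite-dimensional over $\Bbbk$, since $A$ is ndgg and so $H(A)$ is finite-dimensional); choosing the top homological degree $j$ with $H^j(M)\neq 0$, the finite-length graded $H^{0,\ast}(A)$-module $H^{j,\ast}(M)$ admits a simple quotient $L_i\{k\}$, producing a nonzero morphism $P_i[-j]\{k\}\to M$ in $D(A)$ exactly as before. Replacing $M$ by the cone of this morphism decreases $wt(M)$ by one — using the bigraded version of $\mathrm{Ext}^\bullet(P_i,L_s)=D_i^{op}$ concentrated in bidegree $(0,0)$ — and after $wt(M)$ steps $M$ acquires a finite filtration whose subquotients are shifts $P_i[a]\{b\}$ with $1\le i\le m_I$. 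In $K_0(A)$, where $[P[1]]=-[P]$ and $[P\{1\}]=q[P]$, this shows $\{[P_1],\dots,[P_{m_I}]\}$ generates $K_0(A)$ over $\mathbb{Z}[q,q^{-1}]$.

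For linear independence in (1), for each $i\le m_I$ I would define $\varphi_i\colon K_0(A)\to\mathbb{Z}[q,q^{-1}]$ by $[M]\mapsto\sum_{j\in\mathbb{Z}}(-1)^j\,\mathrm{gdim}_{D_i}\mathrm{Ext}^j(M,L_i)$; the target is indeed $\mathbb{Z}[q,q^{-1}]$ because compactness of $M$ makes each $\mathrm{Ext}^j(M,L_i)$ finite-dimensional while the ndgg hypothesis bounds the range of internal degrees, so the graded dimension is a genuine Laurent polynomial. The long exact $\mathrm{Ext}$-sequences make $\varphi_i$ additive on distinguished triangles, so it descends to $K_0(A)$, and the bidegree computation gives $\varphi_i([P_k]\{s\})=\delta_{ik}q^s$; hence any $\mathbb{Z}[q,q^{-1}]$-relation among the $[P_i]$ is trivial. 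Part (2) is entirely parallel: applying the bigraded filtration argument inside $D_{cfl}(A)$ (using $D_{fl}(A)\simeq D_{cfl}(A)$) exhibits any object as an iterated extension of shifts $L_i[a]\{b\}$, $1\le i\le m_I$, so the $[L_i]$ span $G_0(A)$, and the perfect bigraded pairing $K_0(A)\otimes G_0(A)\to\mathbb{Z}[q,q^{-1}]$, $([P],[M])=\sum_j(-1)^j\,\mathrm{gdim}\,\mathrm{Ext}^j(P,M)$, satisfies $([P_i],[L_j])=\delta_{ij}$, forcing independence of the $[L_i]$.

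I expect the only real obstacle to be bookkeeping: checking that each homological step of Section~\ref{negfddg} survives the addition of the internal grading, and in particular that the relevant $\mathrm{Ext}$-spaces stay finite-dimensional with internal degrees confined to a bounded interval, so that the homomorphisms $\varphi_i$ land in $\mathbb{Z}[q,q^{-1}]$ and not merely in a completion. This finiteness is exactly what the ndgg hypotheses buy us — boundedness below in both degrees together with module-finiteness over the gradual algebra $A^{0,\ast}$, whose radical has finite codimension by Lemma~\ref{finitecodim}. Once that is secured, termination of the filtration procedure and the duality of the two bases go through as in the ungraded case.
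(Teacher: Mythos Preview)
There is a genuine gap. Your argument hinges on the assertion that ``$H(A)$ is finite-dimensional'' in the ndgg setting, from which you conclude that $H(M)$ is finite-dimensional (hence finite-length) for any compact $M$. This is false: a gradual algebra is typically infinite-dimensional over $\Bbbk$ (think of $A=\Bbbk[x]$ with $x$ in bidegree $(0,2)$ and $d=0$), so $H^{0}(A)$, and hence $H(A)$, need not be finite-dimensional. In particular $A$ itself is compact and has infinite-dimensional cohomology. Consequently two of your steps are unjustified: the existence of a \emph{top} cohomological degree $j$ with $H^{j}(M)\neq 0$, and the claim that $H^{j,\ast}(M)$ is a finite-length graded $H^{0}(A)$-module admitting a simple quotient. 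What you dismiss as ``bookkeeping'' is in fact the substance of the proof.

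The paper replaces finite-dimensionality by the correct finiteness statements and proves them from compactness. First it shows directly, via a truncation-and-compactness argument, that $H(M)$ is supported in finitely many cohomological degrees. Then, for the top degree $p$, it shows $H^{p}(M)$ is \emph{finitely generated} (not finite-length) over $H^{0}(A)$; since $A^{0,\ast}$ is gradual, $H^{0}(A)$ is graded Noetherian, and finite generation suffices to produce a simple quotient $L_i\{k\}$ and to guarantee that only finitely many such peels are needed at level $p$ before $H^{p}$ vanishes. Iterating over the finitely many cohomological degrees gives the filtration. Your global weight argument can be salvaged --- for compact $M$ one can show $wt(M)<\infty$ because perfect objects are built from finitely many shifts of $A$ and $\mathrm{RHom}(A,L_i)=L_i$ is finite-dimensional --- but that is a different argument from the one you gave, and you would still need the bounded-cohomology step to locate the top degree.
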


\begin{proof}
We will only prove the first part; the second part is easier.
Let $M$ be an object of $D^c(A)$.  

Claim 1: $M$ has cohomology in only finitely many cohomological degrees.

Proof of Claim 1: Let $N$ be a positive integer such that $ A^{i,j}=0$ for all $i \leq -N, j \in \mathbb{Z}$.
Let $ R' $ and $R$ be two integers such that $ R-N \geq R'+2$.
Clearly $ dM^{\leq R', \ast} \oplus A M^{\geq R, \ast}$ is a dg submodule.
Let $ M_{\{R',R \}} = M/(dM^{\leq R', \ast} \oplus A M^{\geq R, \ast}) $.
The projection $ M \rightarrow M_{\{R',R \}} $ induces an isomorphism in cohomology in cohomological degrees
$ R'+1, \ldots, R-N-1 $.
If $M$ has cohomology in infinitely many degrees then there are infinitely many pairs $(R',R)$ giving non-trivial maps
$ M \rightarrow M_{\{R',R \}} $ on cohomology.  Then $ \Hom(M, \oplus_{(R',R)} M_{\{R',R \}}) $ is not isomorphic to $ \oplus_{(R',R)} \Hom(M, M_{\{R',R \}}) $ which contradicts the compactness of $M$.

Claim 2: Let $p$ be the largest integer such that $ H^p(M) \neq 0 $.  Then $H^p(M)$ is a finitely generated $ H^0(A) $-module.

Proof of Claim 2: Let $ \widehat{M}=M^{\leq p-1, \ast} \oplus ker(d_{|M^{p,\ast}})$.  This is a dg submodule.  Since $ H^i(M)=0$ for $ i >p$ the inclusion $ \widehat{M} \hookrightarrow M$ is a quasi-isomorphism.
Clearly $ H^p(M) $ is an $ H^0(A)$-module.  Since the differential acts trivially on $ A^0$, there is a surjection $ A^{0, \ast} \twoheadrightarrow H^0(A)$.  Thus $ H^p(M) $ is naturally an $ A^{0,\ast}$-module.
By having $ A^{<0,\ast} $ act trivially, $ H^p(M)$ becomes an $A$-module and a dg module with the trivial differential.  

There is a map $ \phi \colon \widehat{M} \rightarrow H^p(M) $ where $ M^{\leq p-1, \ast} $ gets mapped to zero and $ \phi $ projects $ ker(d_{|M^{p,\ast}}) $ onto $H^p(M)$.  
The map $ \phi $ clearly commutes with the differential and commutes with the action of $A$.  

$ H^p(M) $ as an $A$-module is compact since $ \widehat{M} $ which is quasi-isomorphic to $M$ is compact.  Since the differential acts trivially on the compact $A$-module $ H^p(M)$, it must be 
a finitely generated $A$-module.  Since $ A$ is finitely generated over $ A^{0, \ast}$ and $ A^{0, \ast}$ acts on $ H^p(M)$ through $ H^0(A)$, it follows that $H^p(M)$ must be a finitely generated $H^0(A)$-module.


Claim 3: $ \sum_i \sum_k \dim_{D_i} \Hom(H^p(M), L_i \{ k \}) < \infty $.

Proof of Claim 3: If the claim were not true then $ H^p(M) $ would not be finitely generated which would contradict Claim 2.

Claim 4: There is a finite filtration of $ M$ whose successive quotients are isomorphic in $D^c(A)$ to objects of the form $ P_i\{r \}[s]$ for $ i=1,\ldots,m_I$ and $ r,s \in \Z$.

Proof of Claim 4: 
Since $ A $ is a negative dg gradual algebra, $ A^{0, \ast} $ is a gradual algebra and hence Noetherian.  It follows that the quotient algebra $ H^0(A)$ is also Noetherian.  
Since $ M$ is compact, the $H^0(A)$-module $ H^p(M) $ maps onto $L_i \{k \} $ for some $i$ and $k$.
Using the techniques of Section ~\ref{negfddg} there is a distinguished triangle
\begin{equation*}
P_i[-p] \{k \} \rightarrow M \rightarrow M_1
\end{equation*}
where $ H^i(M_1)=0$ for $ i >p$ and $ wt(H^p(M_1))=wt(H^p(M))-1$.

Now we continue this procedure with object $M_1$.  By the above arguments this process will terminate after a finite number of steps.
Thus we get a finite filtration of $ M$ whose successive quotients are isomorphic to shifts (in both degrees) of objects of the form $ P_i$ for $ i=1,\ldots,m_I$.

The proposition now follows from Claim 4.





\end{proof}

The bifunctor 
$$ D^c(A) \times D_{cfl}(A) \lra D^{c} (\Bbbk) $$ 
that takes $P\times M$ to the complex $\mathrm{RHom}(P,M)$ descends 
to the bilinear form on Grothendieck groups
$$ K_0(A) \otimes_{\Z[q,q^{-1}]} G_0(A) \lra \Z[q,q^{-1}] , \ \  ([P], [M]) \ = \ \sum_{j\in \Z} \ \sum_{k\in \Z}(-1)^j \dim 
  \mathrm{Ext}^j(P, M \{k \})q^k.  $$ 
This form is perfect if simple modules $L_1, \dots, L_{m_I}$ are absolutely 
irreducible. 


\vspace{0.1in}

Generalizations of finite-dimensional algebras considered in this section 
fit into the following diagram. 

\begin{equation*}
\begin{tikzpicture}

\draw (-1,0) node{gradual algebras};
\draw (0,.5) -- (1,1.5)[->][thick];
\draw (2,2) node{negative dg gradual algebras};

\draw (0,-.5) -- (1, -1.5)[<-][thick];
\draw (1,-2) node{finite-dimensional algebras};

\draw (5,0) node{negative finite-dimensional dg algebras};
\draw (3,.5) -- (2,1.5)[->][thick];
\draw (3,-.5)--(2,-1.5)[<-][thick];

\end{tikzpicture}
\end{equation*}


We saw that many features of the basic representation theory of finite-dimensional 
algebra survive for these more general objects, including the structure 
of Grothendieck groups $K_0(A)$, $G_0(A)$, and of the pairing between them. 

\vspace{0.2in} 





\section{Dg algebras $R_{}(\nu)$} 
\label{R(nu)}
\subsection{The algebra $ R(\nu)$.}
Fix $ \nu = \sum_{i=1}^{m+n-1} \nu_i i$.  Let $ |\nu|=\sum_{i=1}^{m+n-1} \nu_i$.
Let $ \textrm{Seq}(\nu) $ be the set of all sequences $ {\bf i}=(i_{p_1}, \ldots, i_{p_{|\nu|}}) $ where $ i_{p_k} \in I $ for each $ k $ and $ i $ appears $ \nu_i $ times in the sequence.
We say that the $ k $-th entry of $ {\bf i} $ is bosonic if it is not equal to $ m $.  We say it is fermionic if it is equal to $ m $.
The symmetric group $ S_{|\nu|} $ acts on $ {\bf i} $ with the simple transposition $s_k$ exchanging the entries in positions $k$ and $k+1$.

Let $ R_{}(\nu) $ be the algebra over a field $ \Bbbk $ generated by $ y_k $ for $ k=1, \ldots, |\nu|$,
$ \psi_k $ for $ k=1, \ldots, |\nu|-1$, and $ e(\bf i) $ where $ {\bf i} \in \textrm{Seq}(\nu)$, with relations:
\begin{enumerate}
\item $ e({\bf i}) e({\bf j}) = \delta_{{\bf i}, {\bf j}} e({\bf i}) $
\item $ y_r e({\bf i}) = 0 $ if $ {\bf i}_r = m $
\item $ y_r e({\bf i}) = e({\bf i}) y_r $
\item $ \psi_k e({\bf i}) = e(s_k{\bf i}) \psi_k $
\item $ (\psi_k \psi_l e({\bf i}))=(-1)^{p({\bf i}_k) p({\bf i}_{k+1}) p({\bf i}_{l}) p({\bf i}_{l+1})}(\psi_l \psi_k e({\bf i})) $ if $ |k-l|>1 $
\item $ (\psi_k e({\bf i}))(y_l e({\bf i})) = (y_l e(s_k {\bf i})) (\psi_k e({\bf i})) $ if $ |k-l|>1$
\item $ y_k y_l = y_l y_k $
\item $ \psi_k y_k e({\bf i}) = y_{k+1} \psi_k e({\bf i}) $ if $ {\bf i}_k \neq {\bf i}_{k+1} $
\item $ y_k \psi_k e({\bf i}) = \psi_k y_{k+1}  e({\bf i}) $ if $ {\bf i}_k \neq {\bf i}_{k+1} $
\item $ (y_k \psi_k - \psi_k y_{k+1}) e({\bf i}) = e({\bf i}) $ if $ {\bf i}_k = {\bf i}_{k+1} $
\item $ (\psi_k y_k - y_{k+1} \psi_k) e({\bf i}) = e({\bf i}) $ if $ {\bf i}_k = {\bf i}_{k+1} $
\item $ \psi_k^2 e({\bf i}) = 0 $ if $ {\bf i}_k = {\bf i}_{k+1} $
\item $ \psi_k^2 e({\bf i}) = y_{k+1} e({\bf i}) $ if $ {\bf i}_k=m, {\bf i}_{k+1}=m \pm 1 $
\item $ \psi_k^2 e({\bf i}) = y_{k} e({\bf i}) $ if $ {\bf i}_{k+1}=m, {\bf i}_k = m \pm 1 $
\item $ \psi_k^2 e({\bf i}) = (y_{k}+y_{k+1}) e({\bf i}) $ if $ |{\bf i}_{k} \bullet {\bf i}_{k+1}|=1 $ and ${\bf i}_k, {\bf i}_{k+1} \neq m $
\item $ \psi_k^2 e({\bf i}) = e({\bf i}) $ if $ |{\bf i}_k - {\bf i}_{k+1}|>1 $
\item $ (\psi_k \psi_{k+1} \psi_k - \psi_{k+1} \psi_k \psi_{k+1}) e({\bf i}) = \delta_{{\bf i}_k, {\bf i}_{k+2}} \delta_{|{\bf i}_k \bullet {\bf i}_{k+1}|,1} (1-\delta_{{\bf i}_k,m}) e({\bf i}) $. 
\end{enumerate}

There is a map $ d \colon R(\nu) \rightarrow R(\nu) $ which gives $ R(\nu) $ the structure of a differential graded algebra.  
On the generators set
\begin{equation*}
d(e({\bf i}))=0, \hspace{.5in}
d(y_k)=0, \hspace{.5in}
d(\psi_k)=\sum_{\substack {{\bf i} \\ i_k=i_{k+1}=m}} e({\bf i}).
\end{equation*}
Now extend $ d $ to the entire algebra in the obvious way.
$ R(\nu) $ is actually a bigraded algebra such that one of the gradings gives $ R(\nu) $ the structure of a dg algebra.  We present
this bigrading in Section ~\ref{graphicalpresentation}.

\subsection{Graphical presentation of $ R(\nu)$}
\label{graphicalpresentation}
There is a graphical presentation of $ R(\nu) $ very similar to that in ~\cite{KL1}.
We consider collections of smooth arcs in the plane connecting $ m $ points on one horizontal line with $ m $ points on another horizontal line.
Arcs are assumed to have no critical points (in other words no cups or caps) and each arc has a label from the set $ \lbrace 1, \ldots, m+n-1 \rbrace$.  
Arcs are allowed to intersect, but no triple intersections are allowed.
Arcs not labeled by $ m $ are allowed to carry dots and are called bosonic.  
Arcs labeled by $m$ will be referred to as fermionic.
Two diagrams that are related by an isotopy that does not change the combinatorial types of the diagrams or the relative position of crossings are taken to be equal.
The elements of the vector space $ R(\nu) $ are formal linear combinations of these diagrams modulo the local relations given below.
We give $ R(\nu) $ the structure of an algebra by concatenating diagrams vertically as long as the labels of the endpoints match.  If they do not, the product of two diagrams is taken to be zero.


\begin{equation}
\label{dotslide}
\begin{tikzpicture}
\draw (1,0) -- (1,2)[][thick];
\draw (2,0) -- (2,2)[][thick];
\draw (1.5,1) node{$\cdots$};
\filldraw [black] (1, .5) circle (2pt);
\filldraw [black] (2, 1.5) circle (2pt);
\draw (1,-.5) node{$i$};
\draw (2,-.5) node{$j$};
\draw (2.5,1) node{=};
\draw (3,0) -- (3,2) [][thick];
\draw (4,0) -- (4,2)[][thick];
\filldraw[black](4,.5) circle (2pt);
\filldraw [black] (3, 1.5) circle (2pt);
\draw (3.5,1) node{$\cdots$};
\draw (3,-.5) node{$i$};
\draw (4,-.5) node{$j$};

\draw (6,1) node{if $ i,j \neq m$};

\end{tikzpicture}
\end{equation}

\begin{equation}
\label{crossslide}
\begin{tikzpicture}[>=stealth]
\draw (0,0) .. controls (1,1) .. (1,2)[][thick];
\draw (1,0) .. controls (0,1) .. (0,2)[][thick];
\draw (2,0) .. controls (2,1) .. (3,2)[][thick];
\draw (3,0) .. controls (3,1) .. (2,2)[][thick];
\draw (1.5,1) node{$ \cdots$};
\draw (0,-.5) node{$i$};
\draw (1,-.5) node{$j$};
\draw (2,-.5) node{$k$};
\draw (3,-.5) node{$l$};
\draw (5,1) node{$ = (-1)^{p(i)p(j)p(k)p(l)}$};
\draw (7,0) .. controls (7,1) .. (8,2)[][thick];
\draw (8,0) .. controls (8,1) .. (7,2) [][thick];
\draw (10,0) .. controls (9,1) .. (9,2) [][thick];
\draw (9,0) .. controls (10,1) .. (10,2) [][thick];
\draw (8.5,1) node{$ \cdots$};
\draw (7,-.5) node{$i$};
\draw (8,-.5) node{$j$};
\draw (9,-.5) node{$k$};
\draw (10,-.5) node{$l$};
\end{tikzpicture}
\end{equation}

\begin{equation}
\label{fardotcrossslide}
\begin{tikzpicture}
\draw (0,0) .. controls (1,1) .. (1,2)[][thick];
\draw (1,0) .. controls (0,1) .. (0,2)[][thick];
\draw (2,0) -- (2,2)[][thick];
\draw (1.5,1) node{$\cdots$};
\filldraw [black] (2, 1.5) circle (2pt);
\draw (0,-.5) node{$i$};
\draw (1,-.5) node{$j$};
\draw (2,-.5) node{$k$};
\draw (2.5,1) node{=};
\draw (3,0) .. controls (4,1) .. (4,2)[][thick];
\draw (4,0) .. controls (3,1) .. (3,2)[][thick];
\draw (5,0) -- (5,2)[][thick];
\filldraw[black](5,.5) circle (2pt);
\draw (4.5,1) node{$\cdots$};
\draw (3,-.5) node{$i$};
\draw (4,-.5) node{$j$};
\draw (5,-.5) node{$k$};

\draw (7,0) -- (7,2)[][thick];
\draw (8,0) .. controls (9,1) .. (9,2)[][thick];
\draw (9,0) .. controls (8,1) .. (8,2)[][thick];

\filldraw[black](7,1.5) circle (2pt);
\draw (7.5,1) node{$\cdots$};
\draw (9.5,1) node{=};
\draw (10,0) -- (10,2)[][thick];
\draw (11,0) .. controls (11,1) .. (12,2)[][thick];
\draw (12,0) .. controls (12,1) .. (11,2)[][thick];
\filldraw[black](10,.5) circle (2pt);
\draw (10.5,1) node{$\cdots$};
\draw (7,-.5) node{$k$};
\draw (8,-.5) node{$j$};
\draw (9,-.5) node{$i$};
\draw (10,-.5) node{$k$};
\draw (11,-.5) node{$j$};
\draw (12,-.5) node{$i$};
\draw (13.5,1) node{if $ k \neq m$};
\end{tikzpicture}
\end{equation}

\begin{equation}
\label{crossdot1}
\begin{tikzpicture}[>=stealth]
\draw (0,0) -- (1,1)[][thick];
\draw (1,0) -- (0,1)[][thick];
\filldraw [black] (.25,.25) circle (2pt);
\draw (1.5,.5) node{=};
\draw (2,0) -- (3,1)[][thick];
\draw (3,0) -- (2,1)[][thick];
\draw (0,-.5) node {$i$};
\draw (1,-.5) node {$j$};
\draw (2,-.5) node{$ i$};
\draw (3,-.5) node{$j$};
\filldraw [black] (2.75, .75) circle (2pt);
\draw (8,.5) node{if $ i \neq m $ and $ i \neq j$};
\end{tikzpicture}
\end{equation}

\begin{equation}
\label{crossdot2}
\begin{tikzpicture}[>=stealth]
\draw (0,0) -- (1,1)[][thick];
\draw (1,0) -- (0,1)[][thick];
\draw (1.5,.5) node{=};
\draw (2,0) -- (3,1)[][thick];
\draw (3,0) -- (2,1)[][thick];
\draw (0,-.5) node {$i$};
\draw (1,-.5) node {$j$};
\draw (2,-.5) node{$ i$};
\draw (3,-.5) node{$j$};
\filldraw[black](.25,.75) circle (2pt);
\filldraw[black](2.75,.25) circle (2pt);
\draw (8,.5) node{if $ j \neq m $ and $ i \neq j$};
\end{tikzpicture}
\end{equation}

\begin{equation}
\label{nilhecke1}
\begin{tikzpicture}[>=stealth]
\draw (0,0) -- (1,1)[][thick];
\draw (1,0) -- (0,1)[][thick];
\filldraw [black] (.25,.25) circle (2pt);
\draw (1.5,.5) node{$-$};
\draw (2,0) -- (3,1)[][thick];
\draw (3,0) -- (2,1)[][thick];
\draw (0,-.5) node {$i$};
\draw (1,-.5) node {$i$};
\draw (2,-.5) node{$ i$};
\draw (3,-.5) node{$i$};
\filldraw [black] (2.75, .75) circle (2pt);
\draw (3.5,.5) node{=};
\draw (4,0) -- (4,1)[][thick];
\draw (5,0) -- (5,1)[][thick];
\draw (4,-.5) node {$i$};
\draw (5,-.5) node {$i$};
\draw (8.5,.5) node{if $ i \neq m $};
\end{tikzpicture}
\end{equation}

\begin{equation}
\label{nilhecke2}
\begin{tikzpicture}[>=stealth]
\draw (0,0) -- (1,1)[][thick];
\draw (1,0) -- (0,1)[][thick];
\filldraw [black] (.25,.75) circle (2pt);
\draw (1.5,.5) node{$-$};
\draw (2,0) -- (3,1)[][thick];
\draw (3,0) -- (2,1)[][thick];
\draw (0,-.5) node {$i$};
\draw (1,-.5) node {$i$};
\draw (2,-.5) node{$ i$};
\draw (3,-.5) node{$i$};
\filldraw [black] (2.75, .25) circle (2pt);
\draw (3.5,.5) node{=};
\draw (4,0) -- (4,1)[][thick];
\draw (5,0) -- (5,1)[][thick];
\draw (4,-.5) node {$i$};
\draw (5,-.5) node {$i$};
\draw (8.5,.5) node{if $ i \neq m $};
\end{tikzpicture}
\end{equation}

\begin{equation}
\label{nilhecke3}
\begin{tikzpicture}
\draw (0,0) .. controls (1,1) .. (0,2)[][thick];
\draw (1,0) .. controls (0,1) .. (1,2)[][thick];
\draw (0,-.5) node{$i$};
\draw (1,-.5) node{$i$};
\draw (1.5,1) node{$=$};
\draw (2,1) node{$0$};
\end{tikzpicture}
\end{equation}

\begin{equation}
\label{mm+1}
\begin{tikzpicture}
\draw (0,0) .. controls (1,1) .. (0,2)[][thick];
\draw (1,0) .. controls (0,1) .. (1,2)[][thick];
\draw (0,-.5) node{$m$};
\draw (1,-.5) node{$m\pm1$};
\draw (1.5,1) node{$=$};
\draw (2,0) -- (2,2)[thick];
\draw (3,0) -- (3,2)[thick];
\draw (2,-.5) node{$m$};
\draw (3,-.5) node{$m\pm1$};
\filldraw[black](3,1) circle (2pt);
\draw (5,0) .. controls (6,1) .. (5,2)[][thick];
\draw (6,0) .. controls (5,1) .. (6,2)[][thick];
\draw (5,-.5) node{$m \pm1$};
\draw (6,-.5) node{$m$};
\draw (6.5,1) node{$=$};
\draw (7,0) -- (7,2)[thick];
\draw (8,0) -- (8,2)[thick];
\draw (7,-.5) node{$m \pm1$};
\draw (8,-.5) node{$m$};
\filldraw[black](7,1) circle (2pt);
\end{tikzpicture}
\end{equation}

\begin{equation}
\label{Rnu1}
\begin{tikzpicture}
\draw (0,0) .. controls (1,1) .. (0,2)[][thick];
\draw (1,0) .. controls (0,1) .. (1,2)[][thick];
\draw (0,-.5) node{$i$};
\draw (1,-.5) node{$j$};
\draw (1.5,1) node{$=$};
\draw (2,0) -- (2,2)[thick];
\draw (3,0) -- (3,2)[thick];
\draw (2,-.5) node{$i$};
\draw (3,-.5) node{$j$};
\draw (3.5,1) node{$+$};
\draw (4,0) -- (4,2)[thick];
\draw (5,0) -- (5,2)[thick];
\draw (4,-.5) node{$i$};
\draw (5,-.5) node{$j$};
\filldraw[black](2,1) circle (2pt);
\filldraw[black](5,1) circle (2pt);
\draw (8,1) node{if $ |i \bullet j| = 1$ and $ i,j \neq m$};
\end{tikzpicture}
\end{equation}

\begin{equation}
\label{Rnu2}
\begin{tikzpicture}
\draw (0,0) .. controls (1,1) .. (0,2)[][thick];
\draw (1,0) .. controls (0,1) .. (1,2)[][thick];
\draw (0,-.5) node{$i$};
\draw (1,-.5) node{$j$};
\draw (1.5,1) node{$=$};
\draw (2,0) -- (2,2)[thick];
\draw (3,0) -- (3,2)[thick];
\draw (2,-.5) node{$i$};
\draw (3,-.5) node{$j$};
\draw (5,1) node{if $ i \bullet j = 0$};
\end{tikzpicture}
\end{equation}

\begin{equation}
\label{reid3}
\begin{tikzpicture}[>=stealth]
\draw (0,0) -- (2,2)[][thick];\draw (2,0) -- (0,2)[][thick];
\draw (1,0) .. controls (0,1) .. (1,2)[][thick];
\draw (2.5,1) node {=};
\draw (3,0) -- (5,2)[][thick];
\draw (5,0) -- (3,2)[][thick];
\draw (4,0) .. controls (5,1) .. (4,2)[][thick];
\draw (0,-.5) node {$i$};
\draw (1,-.5) node {$j$};
\draw (2,-.5) node {$k$};
\draw (3,-.5) node {$i$};
\draw (4,-.5) node {$j$};
\draw (5,-.5) node {$k$};
\draw (9,1) node {Unless $ i=k$ and $ |i \bullet j| = 1$};
\end{tikzpicture}
\end{equation}

\begin{equation}
\label{reid3nu}
\begin{tikzpicture}[>=stealth]
\draw (0,0) -- (2,2)[][thick];\draw (2,0) -- (0,2)[][thick];
\draw (1,0) .. controls (0,1) .. (1,2)[][thick];
\draw (2.5,1) node {$-$};
\draw (3,0) -- (5,2)[][thick];
\draw (5,0) -- (3,2)[][thick];
\draw (4,0) .. controls (5,1) .. (4,2)[][thick];
\draw (0,-.5) node {$i$};
\draw (1,-.5) node {$j$};
\draw (2,-.5) node {$i$};
\draw (3,-.5) node {$i$};
\draw (4,-.5) node {$j$};
\draw (5,-.5) node {$i$};
\draw (5.5,1) node{=};
\draw (11,1) node {if $ |i \bullet j| = 1$};
\draw (6.5,1) node {$(1-\delta_{i,m})$};
\draw (7.5,0) -- (7.5,2)[][thick];
\draw (8.5,0) -- (8.5,2)[][thick];
\draw (9.5,0) -- (9.5,2)[][thick];
\draw (7.5,-.5) node {$i$};
\draw (8.5,-.5) node {$j$};
\draw (9.5,-.5) node {$i$};
\end{tikzpicture}
\end{equation}
This algebra is naturally bigraded.  The degrees of the generators are given in ~\eqref{degrees}, where $ i \bullet j $ was defined in ~\eqref{bullets}.

\begin{equation}
\label{degrees}
\begin{tikzpicture}
\draw (-1,0) -- (7,0)[][very thick];
\draw (0,.5) node{bidegree};
\draw (0, -1) node{generator};
\draw (1,1) -- (1,-2)[][very thick];
\draw (1.5,.5) node{$(0,0)$};
\draw (1.5,-.5) -- (1.5,-1.5)[thick];
\draw (1.5, -1.75) node{$i$};
\draw (2,1) -- (2, -2)[very thick];
\draw (3,-.5) -- (3,-1.5)[thick];
\draw (3, -1.75) node{$ i$};
\draw (4,1) -- (4, -2)[very thick];
\draw (3, .5) node{$(0,i \bullet i)$};
\filldraw[black](3,-1) circle (2pt);
\draw (5,-1.5) -- (6,-.5)[thick];
\draw (6, -1.5) -- (5, -.5)[thick];
\draw (5,-1.75) node{$i$};
\draw (6, -1.75) node{$j$};
\draw (5.5,.5) node{$(-\delta_{i,m} \delta_{j,m}, -i \bullet j)$};
\draw (-1,1) -- (-1,-2)[very thick];
\draw (7,1) -- (7,-2)[very thick];
\draw (-1,1) -- (7,1)[][very thick];
\draw (-1,-2) -- (7,-2)[][very thick];

\end{tikzpicture}
\end{equation}

As with negative dg gradual algebras, if $ M $ is a graded dg left $ R(\nu) $-module, denote by $ M^{a,b} $ the subspace in degree $ (a,b)$.  Then define the shifted module $ M \{ r \} \lbrace s \rbrace $ by $ (M [r] \lbrace s \rbrace)^{a,b} = M^{a+r,b-s}$ where the action of $ R(\nu) $ and the differential $d$ are twisted as explained in Section ~\ref{dgbasics}.
Unless specified otherwise we will assume from now on that a dg module is a graded dg module.

As in ~\cite[Section 2.1]{KL1}, there is a grading-preserving dg-module anti-involution $ \sigma \colon R(\nu) \rightarrow R(\nu) $ given by flipping a diagram about a horizontal axis.  If $ M $ is a left (respectively right) 
$ R(\nu)$-module,
then we let $ M^{\sigma} $ be the right, (respectively left) $ R(\nu)$-module with the same underlying vector space with the action of $ R(\nu) $ now twisted by $ \sigma$.

For a sequence of the form $ {\bf i}=(i_1^{(n_1)}, \ldots, i_r^{(n_r)}) $ such that $ n_1 i_1 + \cdots + n_r i_r = \nu $ and $n_k=1$ when $i_k=m$ we will define an idempotent $ \widetilde{e}({\bf i}) $.
First, for an element $ w \in S_n $ let $ w=w_1 \cdots w_p $ be a minimal presentation and define $ \psi_w = \psi_{w_1} \cdots \psi_{w_p} $.  Note that $ \psi_w $ in general does depend upon the choice of the minimal presentation.
Let $ w_{0, {\bf i}} $ be a minimal representative of the longest element of $ S_{n_1} \times \cdots \times S_{n_r} $.  
Note that $ w_{0, {\bf i}} $ does not depend on the presentation of this longest element.
Finally set
\begin{equation*}
\widetilde{e}({\bf i}) = \psi_{w_{0, {\bf i}}} (x_1^{n_1-1} \cdots x_{n_1-1}) \cdots (x_{n_1+\cdots+n_{r-1}+1}^{n_r-1} \cdots x_{n_1+\cdots+n_{r}-1}) e(\underbrace{i_1, \ldots, i_1}_{n_1}, \ldots, \underbrace{i_r, \ldots, i_r}_{n_r})
\end{equation*}
This element is an idempotent and $ d(\widetilde{e}({\bf i}))=0$.  Notice that if $ n_1= \cdots = n_r=1 $ then $ \widetilde{e}({\bf i})=e(i_1, \ldots, i_r) $.
Now we define the dg indecomposable summand
\begin{equation*}
P_{i_1^{(n_1)}, \ldots, i_r^{(n_r)}} = R(\nu) \widetilde{e}({\bf i}) \Big\{ \sum_{k=1}^r \frac{-n_k(n_k-1)}{2} \Big\}.
\end{equation*}

\subsection{The polynomial representation $ Pol_{\nu}$.}
Let $ t = \sum_{i, i\neq m} \nu_i \in \mathbb{N}$.
Define the vector space 
$$ Pol_{\nu} = \bigoplus_{{\bf i} \in \textrm{Seq}(\nu), w \in S_{\nu_m}} Pol({\bf i},w), $$
where
$$ Pol({\bf i},w) = \Bbbk[x_1({\bf i},w), \ldots, x_t({\bf i},w)]. $$

Let $ g \in Pol({\bf i},w)$.
Let $ 1 \leq k \leq t$.  Suppose the $ k$-th bosonic entry (counting from the left) of $ {\bf i} $ occurs in the $ \bar{k}$-th position of $ {\bf i}$, (counting from the left).
Let $ s_k g $ be the polynomial obtained from $ g $ by replacing each generator $ x_r(\bf i, w) $ by $ x_{s_k r}(s_{\bar k} \bf i, w)$.
Let $ \widehat{s}_{k} g $ be the polynomial obtained from $ g $ by replacing each generator $ x_r(\bf i, w) $ by $ x_r(s_{\bar k} \bf i, w)$.


We now define an action of $ R(\nu) $ on $ Pol_{\nu}$.  Boson-boson or boson-fermion crossings will change the subscript of each generator of $ Pol_{\nu} $ along with the first entry inside the parenthesis.
Fermion-fermion crossings will change only the second argument in the parenthesis of each generator.

\begin{itemize}
\item Let $ e({\bf i}) $ act as the identity on $ Pol({\bf j},w) $ if $ {\bf i} = {\bf j} $ and acts by zero otherwise.
\item A diagram of vertical lines for a sequence $ {\bf i} $ with a dot on the $ k$-th bosonic strand, counting from the left, takes $ g \in Pol({\bf i}, w) $ to $ x_k({\bf i},w)g$.

\item A crossing of the $ k$-th and $(k+1)$-st  bosonic strands, assuming that they are next to each and of the same color map $ g \in Pol({\bf i},w) $ to the divided difference 
$\partial_k(g) = \frac{g - s_k g}{x_k({\bf i}, w)-x_{k+1}({\bf i}, w)}  \in Pol({\bf i},w)$.

\item A crossing of the $ k$-th and $(k+1)$-st bosonic strands, assuming that they are next to each other, and the color of the $k$th bosonic strand is one less than that of the $(k+1)$-st bosonic strand, maps
$ g \in Pol({\bf i},w) $ to $ (x_k(s_{\bar k}{\bf i},w)+x_{k+1}(s_{\bar k}{\bf i},w))(s_k g) \in Pol(s_{\bar k}{\bf i},w)$.

\item A crossing of the $ k$-th and $(k+1)$-st bosonic strands, assuming that they are next to each other, and the color of the $k$th bosonic strand is one more than that of the $(k+1)$-st bosonic strand, maps
$ g \in Pol({\bf i},w) $ to $ s_k g \in Pol(s_{\bar k}{\bf i},w)$.

\item A crossing of the $ k$-th and $(k+1)$-st bosonic strands, assuming that they are next to each other, and the color of the $k$-th bosonic strand differs by more than one of that of the $(k+1)$-st bosonic strand, maps
$ g \in Pol({\bf i},w) $ to $ s_k g \in Pol(s_{\bar k}{\bf i},w)$.  

\item A crossing of the $ k$-th and $(k+1)$-st fermionic strands, assuming that they are next to each other, maps
$ g \in Pol({\bf i},w) $ to $ \rho_w^k g' \in Pol({\bf i},s_k w)$ if $ l(s_k w)=l(w)+1 $ where $ g' $ is obtained from $ g $ by replacing each variable $ x_j({\bf i},w) $ with $ x_j({\bf i}, s_k w)$ and $ \rho_w^k $ is determined by
$ \sigma_{s_k w} =  \rho_w^k \sigma_{s_k} \sigma_{s_w} $ in the LOT algebra.
If $ l(s_k w) = l(w)-1$, then $ g $ gets mapped to zero.

\item A crossing of the $ k$-th bosonic and $ j$-th fermionic strands, assuming they are next to each other, the bosonic strand is on the main diagonal, and the color of the bosonic strand is $ m+1$,  maps $ g \in Pol({\bf i},w) $ to 
$ \widehat{s}_k g \in Pol(s_{\bar k}{\bf i},w)$.

\item A crossing of the $ k$-th bosonic and $ j$-th fermionic strands, assuming they are next to each other, the bosonic strand is on the main diagonal, and the color of the bosonic strand is $ m-1$,  maps $ g \in Pol({\bf i},w) $ to 
$  \widehat{s}_k g \in Pol(s_{\bar k} {\bf i},w)$.

\item A crossing of the $ k$-th bosonic and $ j$-th fermionic strands, assuming they are next to each other, the bosonic strand is on the main diagonal, and the color of the bosonic strand is not $ m-1$ or $m+1$,  maps $ g \in Pol({\bf i},w) $ to 
$ \widehat{s}_k g \in Pol(s_{\bar k}{\bf i},w)$.

\item A crossing of the $ k$-th bosonic and $ j$-th fermionic strands, assuming they are next to each other, the fermionic strand is on the main diagonal, and the color of the bosonic strand is $ m+1$,  maps $ g \in Pol({\bf i},w) $ to 
$ x_k(s_{{\bar k}-1}{\bf i},w) \widehat{s}_{{\bar k}-1} g \in  Pol(s_{{\bar k}-1}{\bf i},w)$.

\item A crossing of the $ k$-th bosonic and $ j$-th fermionic strands, assuming they are next to each other, the fermionic strand is on the main diagonal, and the color of the bosonic strand is $ m-1$,  maps $ g \in Pol({\bf i},w) $ to 
$ x_k(s_{\bar{k}}{\bf i},w) \widehat{s}_{{\bar k}-1} g \in  Pol(s_{{\bar k}-1}{\bf i},w)$.

\item A crossing of the $ k$-th bosonic and $ j$-th fermionic strands, assuming they are next to each other, the fermionic strand is on the main diagonal, and the color of the bosonic strand is not $ m-1$ or $m+1$,  maps $ g \in Pol({\bf i},w) $ to 
$ \widehat{s}_{{\bar k}-1} g \in  Pol(s_{{\bar k}-1}{\bf i},w)$.
\end{itemize}

\begin{prop}
The above action gives $ Pol_{\nu} $ the structure of an $ R(\nu)$-module.
\end{prop}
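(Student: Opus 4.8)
The plan is to verify that the prescribed action of the generators $y_k$, $\psi_k$ and $e({\bf i})$ respects each of the seventeen defining relations of $R(\nu)$; since $R(\nu)$ is presented by generators and relations, this is all that is needed. I would test each relation on an arbitrary element $g\in Pol({\bf i},w)$, splitting the work according to whether the strands appearing are bosonic or fermionic. As a preliminary remark, $e({\bf i})$ acts as the projection onto $\bigoplus_w Pol({\bf i},w)$, so relation (1) holds at once; a dot on a fermionic strand acts as zero by definition, giving relation (2) when ${\bf i}_r=m$; and the compatibilities (4) of a crossing with the idempotents hold because a crossing at positions $k,k+1$ sends $Pol({\bf i},w)$ into $Pol(s_k{\bf i},w)$.

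First I would dispose of every relation involving no fermionic strand. These are handled as in the verification of the polynomial representation of the quiver Hecke algebra in \cite[Section 2]{KL1}: relations (3), (5), (6), (7) because far-apart dots and crossings act on disjoint variables and second arguments; relations (8)--(11) and the nilHecke relation (12) with ${\bf i}_k={\bf i}_{k+1}\neq m$ from the Leibniz rule $\partial_k(fg)=\partial_k(f)\,g+(s_kf)\,\partial_k(g)$ and $\partial_k^2=0$; and relations (15), (16) and the braid relation (17) with ${\bf i}_k,{\bf i}_{k+1},{\bf i}_{k+2}\neq m$ from keeping track of the extra factors $x_k({\bf i},w)+x_{k+1}({\bf i},w)$ prescribed in \eqref{Rnu1} together with the braid identity $\partial_k\partial_{k+1}\partial_k=\partial_{k+1}\partial_k\partial_{k+1}$. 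The permutation signs $(-1)^{p(\cdot)p(\cdot)p(\cdot)p(\cdot)}$ of \eqref{crossslide} and of relations (5), (17) are carried along without difficulty.

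Next I would treat the relations involving one or two fermionic strands, which is the genuinely new part. A fermion--fermion crossing transports a polynomial to the ring with second argument changed by $s_k$ and multiplies by the structure constant $\rho_w^k$ of $\LOT$, so that $\psi_k^2 e({\bf i})=0$ in (12) with ${\bf i}_k={\bf i}_{k+1}=m$ reduces to $\sigma_{s_k}^2=0$ in $\LOT$. The mixed relations \eqref{mm+1}, the cases ${\bf i}_k=m,\ {\bf i}_{k+1}=m\pm1$ of (13) and the transposed case (14), the mixed subcases of (16), and the commutation of a dot past a mixed crossing in (8), (9), I would read off from the explicit formulas for a boson--fermion crossing in the four ``strand on the main diagonal'' configurations, using that $\widehat{s}_k$ intertwines the relevant polynomial rings and that the dot-creating prefactor $x_k(s_{\bar k-1}{\bf i},w)$ appears exactly where relations (13), (14) require a $y_k$ or $y_{k+1}$.

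The hard part will be the cubic braid relation (17), equivalently the pair \eqref{reid3}--\eqref{reid3nu}, in the configurations with one or two fermionic strands. There one composes three crossing operators, mixing the divided-difference formulas with the $\LOT$-transport formulas, and must show that the two triple composites act identically on every $g\in Pol({\bf i},w)$ up to the correction $(1-\delta_{{\bf i}_k,m})\,e({\bf i})$, which should appear precisely when ${\bf i}_k={\bf i}_{k+2}$ and $|{\bf i}_k\bullet{\bf i}_{k+1}|=1$. The most delicate subcase is ${\bf i}_k={\bf i}_{k+2}=m$ with ${\bf i}_{k+1}=m\pm1$, where the correction vanishes and the two triple composites must cancel exactly; this is the representation-level analogue of the computations with fermionic strands in \cite{Kh1}, and I expect it to come down to the defining identities of $\LOT$ combined with the dot-creation rule. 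Once all of these cases are assembled the action is well defined, and I would add that it is compatible with the bidegrees listed in \eqref{degrees}, so it upgrades to a bigraded module once $Pol_\nu$ is given the evident grading.
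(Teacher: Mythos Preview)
Your proposal is correct and follows essentially the same approach as the paper's own proof: a relation-by-relation verification, deferring the purely bosonic cases to \cite{KL1} and the cases with fermionic strands to \cite{Kh1}, with the cubic braid relation~(17) singled out as the case requiring the most careful case analysis. Your sketch is in fact somewhat more detailed than the paper's, which simply lists for each relation which reference handles it and works out one representative mixed case of relation~(14).
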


\begin{proof}
We only sketch a verification of the relations of $ R(\nu) $ given in Section ~\ref{R(nu)} and leave the details to the reader.
\begin{itemize}
\item Equations 1-4 are trivial to verify.
\item Equation 5 is easy to check and is a relation checked for the polynomial representation in \cite{Kh1}.
\item Equation 6 is easy and is contained in \cite{KL1}.
\item Equation 7 is trivial.
\item Equations 8 and 9 are checked case by case.  Most of the cases are contained in ~\cite{Kh1} or ~\cite{KL1}.  The cases where $ i_k = m-1$ and $ i_{k+1}=m $ or where $ i_k=m$ and $ i_{k+1}=m-1$ are trivial to verify.
\item Equations 10-12 are all proved in ~\cite{KL1}.
\item Equations 13 and 14 are contained in \cite{Kh1}.
A representative case that needs to be checked for relation 14 is when the $ k$th bosonic strand is labeled by $ m-1 $ and is immediately to the left of the $ j$th fermionic strand.
Then $ {\bf i}_{\bar k}=m-1 $ and $ {\bf i}_{\bar k +1}=m$.
Then $ \psi_{\bar k} e({\bf i}) $ acts on the generator $ x_r(\bf i, w) $ of $ Pol({\bf i},w) $ by mapping it to $ x_k(s_{\bar k}{\bf i},w) x_r(s_{\bar k}{\bf i}, w)$.  Then
$ \psi_{\bar k} e(s_{\bar k} {\bf i}) $ maps this generator to $ x_k({\bf i}, w) x_r({\bf i}, w)$. 
\item Equations 15 and 16 are checked in ~\cite{KL1}.
\item Equation 17 requires a case by case analysis.
\end{itemize}
\end{proof}

Let $ {\bf i}, {\bf j} \in Seq(\nu)$ and $ d = |\nu|$.  We will construct a basis of $ e({\bf j}) R(\nu) e({\bf i}) $ very similar to the basis constructed in ~\cite[Section 2.3]{KL1}.
Let $ {}_{\bf j} S_{\bf i} $ be the subset of $ S_d $ consisting of permutations that send $ {\bf i} $ to $ {\bf j} $ via the obvious action of permutations on sequences.  
For each $ w \in {}_{\bf j} S_{\bf i} $, we convert its minimal presentation $ \widetilde{w} $ into an element of $ e({\bf j}) R(\nu) e({\bf i}) $.  Denote it by $ \widehat{w}_{\bf i}$.  
Let
$$ {}_{\bf j} \widehat{S}_{\bf i} = \lbrace \widehat{w}_{\bf i} | w \in {}_{\bf j} S_{\bf i} \rbrace \subset e({\bf j}) R(\nu) e({\bf i}). $$
See ~\cite[Example 2.4]{KL1}.
As mentioned in ~\cite{KL1}, $ {}_{\bf j} \widehat{S}_{\bf i} $ depends on choices of minimal presentations.  
Finally, let
$$ {}_{\bf j} B_{\bf i} = \lbrace \psi y_1^{u_1} \cdots y_d^{u_d} e({\bf i}) | \psi \in {}_{\bf j} \widehat{S}_{\bf i}, u_i \in \Z_{\geq 0} \rbrace. $$

\begin{prop}
\label{basis}
\begin{enumerate}
\item $ e({\bf j}) R(\nu) e({\bf i}) $ is a free graded abelian group with a homogeneous basis  $ {}_{\bf j} B_{\bf i} $.
\item The action of $ R(\nu) $ on $ Pol_{\nu} $ is faithful.
\end{enumerate}
\end{prop}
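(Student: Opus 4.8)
The plan is to follow the strategy of \cite[Section 2.3]{KL1}, proving the two parts simultaneously. First I would show that ${}_{\bf j} B_{\bf i}$ spans $e({\bf j}) R(\nu) e({\bf i})$, and then that the operators on $Pol_{\nu}$ corresponding to the elements of ${}_{\bf j} B_{\bf i}$ are linearly independent. The latter immediately yields that ${}_{\bf j} B_{\bf i}$ is linearly independent, hence a homogeneous basis (proving (1)), and that the representation $Pol_{\nu}$ is faithful (proving (2)); so both statements come out of the same leading-term analysis.

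For the \emph{spanning} step, an arbitrary element of $e({\bf j}) R(\nu) e({\bf i})$ is a $\Bbbk$-linear combination of diagrams, and one rewrites each diagram into the claimed normal form $\psi \, y_1^{u_1}\cdots y_d^{u_d} e({\bf i})$ with $\psi \in {}_{\bf j}\widehat{S}_{\bf i}$. Using \eqref{dotslide}, \eqref{fardotcrossslide}, \eqref{crossdot1}, \eqref{crossdot2}, \eqref{mm+1} one slides all dots to the bottom of the diagram, the correction terms from \eqref{nilhecke1}, \eqref{nilhecke2} having strictly fewer crossings; using \eqref{crossslide}, \eqref{reid3}, \eqref{reid3nu}, together with \eqref{nilhecke3}, \eqref{mm+1}, \eqref{Rnu1}, \eqref{Rnu2}, one reduces the underlying permutation to a fixed minimal presentation $\widehat{w}_{\bf i}$, again modulo diagrams with fewer crossings (or with extra dots, handled first). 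An induction on the number of crossings, with the number of dots and the bidegree as secondary measures, shows this terminates in an element of the span of ${}_{\bf j} B_{\bf i}$. The super signs introduced by \eqref{crossslide} only alter coefficients, not the set of diagrams produced, so they are harmless for spanning.

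For \emph{linear independence and faithfulness}, I would examine the action on $Pol_{\nu}$, which is a well-defined $R(\nu)$-module by the preceding proposition. Fix the reference vector $1 \in Pol({\bf i},e) = \Bbbk[x_1({\bf i},e),\ldots,x_t({\bf i},e)]$. For $\psi \in {}_{\bf j}\widehat{S}_{\bf i}$ with underlying permutation $w$ and multi-exponent $(u_1,\ldots,u_d)$, the operator $\psi \, y_1^{u_1}\cdots y_d^{u_d}e({\bf i})$ carries $1$ into the single summand $Pol({\bf j},w')$, where $w' \in S_{\nu_m}$ is the image of $e$ under the fermionic part of $w$; its value is, up to a nonzero scalar and terms of strictly lower order, the monomial obtained by transporting $x^{u}$ along the bosonic part of $w$, paired with the basis element $\sigma_{w'}$ of the $H^-$ algebra coming from the fermionic crossings. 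The bosonic bookkeeping is precisely that of \cite{KL1}: distinct pairs $(w,u)$ give distinct leading monomials under the standard filtration of the polynomial ring. The fermionic bookkeeping is that of \cite{Kh1}: since $\sigma_{s_k}^2 = 0$ in $H^-$, the fermionic data is recorded by which class of reduced word is produced, and this separates distinct fermionic parts of $w$. Since distinct elements of ${}_{\bf j}B_{\bf i}$ either land in distinct summands $Pol({\bf j},w')$ or produce distinct leading terms within one summand, the corresponding operators are linearly independent, giving (1) and (2).

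The main obstacle is the interaction of the fermionic strands with everything else. On the rewriting side, one must verify that the correction term in \eqref{reid3nu} and the collapsing relations \eqref{nilhecke3}, \eqref{mm+1} strictly decrease the chosen complexity measure even when fermionic strands participate, and that the homological part $-\delta_{i,m}\delta_{j,m}$ of the bidegree does not obstruct termination; here the grading-preservation of $\sigma$ and the identities $d(e({\bf i})) = d(y_k) = 0$ keep the bigrading under control. On the faithfulness side, the delicate point is that fermionic variables do not behave like polynomial generators — the fermionic crossings act through $H^-$, where generator squares vanish — so the leading-term separation for the fermionic part must be imported from \cite{Kh1} rather than from \cite{KL1}, and one must check that the bosonic and fermionic leading-term analyses are compatible, i.e. that no cancellation occurs between the two contributions. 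Once these points are settled, parts (1) and (2) follow at once.
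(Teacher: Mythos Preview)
Your outline is correct and follows the same overall strategy as the paper: spanning via the diagram rewriting of \cite{KL1}, and linear independence/faithfulness by combining the bosonic argument of \cite{KL1} with the fermionic (LOT) argument of \cite{Kh1}.

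One point of divergence worth flagging: your faithfulness argument is phrased as a direct leading-term analysis after acting on the reference vector $1\in Pol({\bf i},e)$. The paper instead runs the \emph{induction on the number of strands} from \cite[Theorem~2.5]{KL1}, via the strand-removal map $\bar\delta$. This is not merely a cosmetic difference: acting on $1$ and reading off leading monomials is delicate when same-color bosonic crossings appear, since divided differences lower degree and can annihilate the monomial $x^u$; the $\bar\delta$ induction sidesteps this. The paper's actual new content is a small but precise modification of $\bar\delta$ in the fermionic case: when the bottom-left strand is labeled $m$ and the strands ending at positions $k$ and $k+1$ on top intersect, the compensating dot must be placed on the strand terminating at position $k{+}1$ rather than at position $k$ (because fermionic strands carry no dots, and the double-crossing relation \eqref{mm+1} produces a dot on the adjacent bosonic strand). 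You correctly identify the fermion--boson interaction as the obstacle, but the fix is this dot-placement adjustment in the inductive step, not a separate leading-term compatibility check.
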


\begin{proof}
The proof is nearly identical to the proof of \cite[Theorem 2.5]{KL1}.
In the base case of the induction given there, one needs to note that in addition to the action of the nil-Hecke algebra on the polynomial representation being faithful, the action of the LOT algebra (see ~\cite{LOT1} for the original definition or ~\cite{Kh1} for an exposition), on itself is also faithful.
In the induction step, the map $ \bar{\delta} $ (see \cite[Theorem 2.5]{KL1} for its definition) needs to be modified slightly if the $ l$-th strand from the left on the bottom is labeled by $ m$ and the strands which terminate in positions $ k $ and $ k+1 $ on the top intersect.
In that case, one needs to again remove the crossing causing that intersection and now put a dot on the bottom of the strand terminating at the point $ k+1$ from the left, on the top.  In ~\cite{KL1} the dot was always placed on the bottom of the strand terminating at the point $ k $ from the left, on the top (after the intersection was removed).
\end{proof}

\subsection{Characters and the Shuffle lemma}
\label{characters}

Let $ M $ be a finite-dimensional bigraded left $ R(\nu)$-module.
Let $ M^{a,b} $ be the subspace in bidegree $ (a,b)$.
Set $ \text{gdim} M(q,t) = \sum_{(a,b)} (\text{dim} M^{a,b}) t^b q^a$.
Define 
$$ ch_{q,t}(M) = \sum_{{\bf i}} \text{gdim} (e({\bf i})M) {\bf i}. $$
Clearly, the character of such an object is a formal linear combination of sequences $ {\bf i} $ with coefficients in $ \mathbb{Z}[q^{\pm 1}, t^{\pm 1}]$.

The proof of the Shuffle Lemma stated below follows from \cite[Section 2.6]{KL1}.  For the analogue of the proof of \cite[Proposition 2.16]{KL1}, which is a crucial step in the proof of the Shuffle Lemma, it is important to work in the category of bigraded modules and ignore the dg structure.  Otherwise, filtrations rather than direct sums are obtained. 

\begin{lemma}
\label{shuffle}
$$ ch_{q,t}(\Ind_{R(\nu) \otimes R(\nu')}^{R(\nu + \nu')} (M \otimes N)) = ch_{q,t}(M) \shuffle ch_{q,t}(N)$$.
\end{lemma}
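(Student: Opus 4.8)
\emph{Proof proposal.} The plan is to follow \cite[Section 2.6]{KL1}, lifted to the bigraded setting, the only genuinely new ingredient being that one must discard the differential $d$ and argue entirely in the category of bigraded $R(\nu+\nu')$-modules; with the dg structure present one would obtain only a filtration of the induced module rather than an honest direct sum.

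First I would rewrite the induced module as
\[
\Ind_{R(\nu)\otimes R(\nu')}^{R(\nu+\nu')}(M\otimes N)\ =\ R(\nu+\nu')\,e(\nu,\nu')\ \otimes_{R(\nu)\otimes R(\nu')}\ (M\otimes N),
\]
where $R(\nu)\otimes R(\nu')$ is identified with its image in $R(\nu+\nu')$ under horizontal concatenation of diagrams, $e(\mathbf i)\otimes e(\mathbf j)\mapsto e(\mathbf i\mathbf j)$, and $e(\nu,\nu')=\sum_{\mathbf i,\mathbf j}e(\mathbf i\mathbf j)$. Since $ch_{q,t}$ is read off from the spaces $e(\mathbf k)(-)$, it suffices to compute $\mathrm{gdim}\,\big(e(\mathbf k)\,R(\nu+\nu')\,e(\nu,\nu')\otimes_{R(\nu)\otimes R(\nu')}(M\otimes N)\big)$ for each $\mathbf k\in\mathrm{Seq}(\nu+\nu')$ and sum over $\mathbf k$.

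The key structural step is the analogue of \cite[Proposition 2.16]{KL1}: $R(\nu+\nu')\,e(\nu,\nu')$ is free as a right $R(\nu)\otimes R(\nu')$-module, with homogeneous basis $\{\widehat\sigma\,e(\nu,\nu'):\sigma\in\mathrm{Sh}(\nu,\nu')\}$, where $\mathrm{Sh}(\nu,\nu')\subset S_{|\nu|+|\nu'|}$ is the set of minimal-length left coset representatives of $S_{|\nu|}\times S_{|\nu'|}$ (the shuffle permutations) and $\widehat\sigma$ is a fixed reduced-word diagram of $\sigma$. This is deduced from Proposition~\ref{basis} by reorganizing the basis ${}_{\mathbf j}B_{\mathbf i}$ so that each element factors uniquely as $\widehat\sigma$ on top followed by a block-preserving element, i.e. an element in the image of $R(\nu)\otimes R(\nu')$; here one uses the straightening rules of $R(\nu+\nu')$, in particular \eqref{mm+1}, \eqref{nilhecke3} and \eqref{reid3nu}, and works with bigraded modules ignoring $d$ so that the factorization gives a direct-sum decomposition. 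Tensoring over $R(\nu)\otimes R(\nu')$ then yields, as bigraded vector spaces,
\[
\Ind_{R(\nu)\otimes R(\nu')}^{R(\nu+\nu')}(M\otimes N)\ \cong\ \bigoplus_{\sigma\in\mathrm{Sh}(\nu,\nu')}\ \widehat\sigma\cdot(M\otimes N),
\]
and each summand $\widehat\sigma\cdot(M\otimes N)$ is a copy of $M\otimes N$ with its bidegree shifted by $\mathrm{deg}(\widehat\sigma\,e(\nu,\nu'))=(\mathrm{deg}_1(\widehat\sigma),\mathrm{deg}_2(\widehat\sigma))$, the total bidegree of the crossings of $\widehat\sigma$ computed from \eqref{degrees}.

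Finally I would extract characters. For $\mathbf i,\mathbf j$ with $e(\mathbf i)M\neq 0$, $e(\mathbf j)N\neq 0$, the idempotent $e(\mathbf k)$ annihilates $\widehat\sigma\cdot(e(\mathbf i)M\otimes e(\mathbf j)N)$ unless $\sigma(\mathbf i\mathbf j)=\mathbf k$, in which case it acts as the identity on it; hence
\[
\mathrm{gdim}\big(e(\mathbf k)\Ind(M\otimes N)\big)=\sum_{\mathbf i,\mathbf j}\ \sum_{\substack{\sigma\in\mathrm{Sh}(\nu,\nu')\\ \sigma(\mathbf i\mathbf j)=\mathbf k}} q^{\mathrm{deg}_1(\widehat\sigma)}\,t^{\mathrm{deg}_2(\widehat\sigma)}\,\mathrm{gdim}(e(\mathbf i)M)\,\mathrm{gdim}(e(\mathbf j)N).
\]
Summing over $\mathbf k$ and comparing with the definition of the shuffle product $\shuffle$ — in which each inversion between a strand labelled $i$ from $\mathbf i$ and a strand labelled $j$ from $\mathbf j$ carries the weight $q^{-\delta_{i,m}\delta_{j,m}}t^{-i\bullet j}$, exactly the crossing bidegree in \eqref{degrees} — identifies the right-hand side with $ch_{q,t}(M)\shuffle ch_{q,t}(N)$, as desired.

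I expect the main obstacle to be the analogue of \cite[Proposition 2.16]{KL1}, i.e. proving right-module freeness of $R(\nu+\nu')e(\nu,\nu')$ over $R(\nu)\otimes R(\nu')$ with the shuffle basis: this requires adapting the basis-straightening of \cite{KL1} to the new relations involving the fermionic strand, together with the observation (made in the text just before the lemma) that the needed decomposition is a direct sum only after forgetting the differential.
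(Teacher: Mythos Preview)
Your proposal is correct and follows precisely the route indicated in the paper: reduce to \cite[Section~2.6]{KL1}, the crucial step being the bigraded analogue of \cite[Proposition~2.16]{KL1} (right-freeness of $R(\nu+\nu')e(\nu,\nu')$ over $R(\nu)\otimes R(\nu')$ on a shuffle basis), and you correctly isolate the one new point the paper stresses --- that one must forget $d$ and work purely with bigraded modules so as to obtain a direct sum rather than merely a filtration. There is nothing to add; your write-up is in fact more detailed than the paper's own one-line proof sketch.
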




\section{Main theorem} 
\label{maintheorem}
In this section we consider the case $n=1$ so that $I''$ is empty and prove that $ {\bf f}(m,1) $ is isomorphic as a twisted bialgebra to the Grothendieck group of the derived category of compact graded dg modules over the sum $ \oplus_{\nu} R(\nu) $.

It is important to take $n=1$ so that Proposition ~\ref{R(nu)isngdg} is true.  For other values of $n$ the algebra $ R(\nu) $ would be non-trivial for every degree $ j \in \mathbb{Z} $ of the second grading.

\begin{prop}
\label{R(nu)isngdg}
$ R(\nu) $ is a negative dg gradual algebra.
\end{prop}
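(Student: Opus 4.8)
The plan is to check the three defining properties of a negative dg gradual algebra directly from the bigrading recorded in \eqref{degrees} and from the basis theorem, Proposition~\ref{basis}. We use that $n=1$ throughout: then $I=\{1,\dots,m\}$, the index $m$ is the unique fermionic one, every bosonic $i$ has $i\bullet i=2>0$, and inspecting \eqref{degrees} the only generator of negative first degree is a crossing of two $m$-labeled strands, of bidegree $(-1,0)$, while every dot and every crossing not of this form has second degree in $\{0,1,2\}\cup\{-2\}$, the value $-2$ occurring only for a crossing of two strands of the same bosonic color. Since the relations in Section~\ref{R(nu)} are bihomogeneous for \eqref{degrees}, it follows at once that $R(\nu)^{i,j}=0$ for all $i>0$, which is the first requirement.

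Next I would read off local finiteness and boundedness from Proposition~\ref{basis}: $e(\mathbf j)R(\nu)e(\mathbf i)$ has $\Bbbk$-basis the elements $\psi_w\,y_1^{u_1}\cdots y_d^{u_d}e(\mathbf i)$ with $w$ ranging over the finite set ${}_{\mathbf j}S_{\mathbf i}$ and $u_k\ge 0$, where $u_k=0$ whenever the $k$-th strand is fermionic (relation~(2)). The first degree of such an element is $-f(w,\mathbf i)\le 0$, where $f(w,\mathbf i)$ is the number of pairs of $m$-labeled strands crossed by $w$; its second degree is $c(w,\mathbf i)+2\sum_k u_k$ for a constant $c(w,\mathbf i)\ge -2\sum_i\binom{\nu_i}{2}$, since a reduced word for $w$ has at most $\binom{\nu_i}{2}$ crossings of strands of color $i$. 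As $\mathrm{Seq}(\nu)$ and each ${}_{\mathbf j}S_{\mathbf i}$ are finite, $R(\nu)$ is finite-dimensional in each bidegree and its $q$-grading is bounded below; the same then holds for the subspace $A:=R(\nu)^{0,\ast}$.

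To see that $A=R(\nu)^{0,\ast}$ is a gradual algebra, I would exhibit a graded Noetherian central polynomial subalgebra over which $A$ is module-finite, as permitted by the reformulation of the last axiom given after the definition of a gradual algebra. Let $B=\bigoplus_{\mathbf i}\,\Bbbk[y_1,\dots,y_d]^{\mathrm{sym}}\,e(\mathbf i)\subset A$, where $\mathrm{sym}$ means symmetric in the $y$-variables corresponding to each bosonic color of $\mathbf i$; then $B$ is (isomorphic to) a polynomial ring, hence graded Noetherian. Using relations (3), (6), (7), and (8)--(11) one checks, in the usual KLR manner and exactly as in \cite{Kh1} for the fermionic interactions, that $\psi_k e(\mathbf i)$ intertwines the polynomial action with its $s_k$-twist (with a divided-difference correction only when $i_k=i_{k+1}$), and that the $y$-variables on $m$-labeled strands act by zero; it follows that $B$ commutes with every algebra generator of $A$, so $B\subseteq Z(A)$. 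Finally $A$ is a finitely generated right $B$-module: for each $\mathbf i$, $A e(\mathbf i)$ is spanned over $\Bbbk[y_1,\dots,y_d]\,e(\mathbf i)$ by the finitely many $\psi_w e(\mathbf i)$, and $\Bbbk[y_1,\dots,y_d]\,e(\mathbf i)$ is finite as a module over $B e(\mathbf i)$ by the classical structure theory of symmetric polynomials. Hence $A$ is gradual.

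It remains to show $R(\nu)$ is finitely generated as a bigraded $A$-module. Every basis element of Proposition~\ref{basis} factors as $\psi_w\,y^u e(\mathbf i)=(\psi_w e(\mathbf i))\,(y^u e(\mathbf i))$ with $y^u e(\mathbf i)\in A$, so $R(\nu)$ is generated as a right $A$-module by the finite set $\{\,\psi_w e(\mathbf i)\ :\ \mathbf i,\mathbf j\in\mathrm{Seq}(\nu),\ w\in{}_{\mathbf j}S_{\mathbf i}\,\}$; the anti-involution $\sigma$ yields the same statement on the left. Combining the steps establishes all three axioms. I expect the only step requiring genuine care to be the verification that $B\subseteq Z(A)$: one must track case by case which $y$-variables occur in $z_{\mathbf i}$ versus $z_{s_k\mathbf i}$ and which of them vanish because they sit on fermionic strands, and confirm that the divided-difference corrections cancel against the symmetry; everything else is immediate from \eqref{degrees} and Proposition~\ref{basis}.
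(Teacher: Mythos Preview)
Your argument follows essentially the same route as the paper's: the first condition is immediate from \eqref{degrees}, the third follows from Proposition~\ref{basis} by taking finitely many crossing diagrams as generators over $R(\nu)^{0,\ast}$, and for the second you invoke the symmetric-polynomial center exactly as the paper does (the paper simply cites \cite[Corollary~2.10]{KL1} for the finite generation over the center). One point to tighten: your $B=\bigoplus_{\mathbf i}\Bbbk[y_1,\dots,y_d]^{\mathrm{sym}}e(\mathbf i)$, read literally as a direct sum allowing an independent symmetric polynomial at each idempotent, is \emph{not} contained in $Z(A)$---for instance $y_1\,e(m{-}1,m)$ alone fails to commute with $\psi_1 e(m{-}1,m)$. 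What you want (and what your remark ``isomorphic to a polynomial ring'' and your later comment about comparing $z_{\mathbf i}$ with $z_{s_k\mathbf i}$ suggest you have in mind) is the diagonal subalgebra $\{\sum_{\mathbf i}p_{\mathbf i}(y)e(\mathbf i)\}$ where the $p_{\mathbf i}$ are obtained from a single color-symmetric polynomial by relabelling; that subalgebra is $\bigotimes_{i\neq m}\mathrm{Sym}^{\nu_i}$ and is indeed central, which is precisely the paper's description of $Z(R(\nu)^{0,\ast})$.
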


\begin{proof}
The first condition is obvious since by definition $R(\nu)$ is supported in non-positive cohomological degrees.  

One could easily exhibit a finite generating set consisting of diagrams with fermionic crossings only which gives the third condition.

It is left to show that $ R(\nu)^{0,*} $ is a gradual algebra.  As in the case of ~\cite{KL1}, the center is isomorphic to a tensor product of rings of symmetric polynomials in $ \nu_i$ variables for $i=1, \ldots, m-1 $
and thus is Noetherian.
The proof of ~\cite[Corollary 2.10]{KL1} implies that $ R(\nu)^{0,*} $ is finitely generated over its center.

\end{proof}

\subsection{Kato modules}
We will need the so-called {\it Kato modules} $ L(i^k)$.
Let $ L(i) $ denote the one-dimensional simple graded dg $R(i)$-module which lives in degree $ (0,0)$.
For $ i \neq m$, set 
$$ L(i^k) = \Ind_{R(i) \otimes \cdots \otimes R(i)}^{R(ki)} L(i) \otimes \cdots \otimes L(i).$$

For $ k>1$, let $ L(m^k) $ be the irreducible dg $ R(km)$-module with basis $ \lbrace w_0, w_{-1} \rbrace $ 
where the cohomological degree of $w_i$ is $i$,
such that $ \psi_r e({\bf i}) w_0 = w_{-1} $,  $ \psi_r e({\bf i}) w_{-1}=0  $ for $ r = 1, \ldots, k-1 $.  In other words,
an $ (m,m) $ crossing maps $ w_{-1} $ to $ w_0 $ and maps $ w_0 $ to zero.
Set $ d(w_{-1})=w_0 $ and $ d(w_0)=0$.  It is trivial to check that this gives $ L(m^k) $ the structure of an irreducible dg $ R(km)$-module.

\subsection{The character map}
For the rest of this subsection we work in the category of finite-dimensional graded dg left modules.  By a module, we will mean an object in this category.
For a finite-dimensional, graded dg $ R(\nu)$-module $ M$, define the specialized character:
$$ ch(M) = \sum_{{\bf i}} \text{gdim} (e({\bf i})M)(q, -1){\bf i}. $$

Thus the character of a finite-dimensional module is a formal linear combination of sequences $ {\bf i} $ with coefficients in $ \mathbb{Z}[q,q^{-1}]$.  
Clearly $ ch(M) $ is the evaluation of $ ch_{q,t}(M) $ at $ t=-1$.
Note that in ~\cite{KL1}, $ ch(M)$ was in the set $ \mathbb{N}[q,q^{-1}]$.

Let $ \mathcal{L}_{\nu} $ denote the set of isomorphism classes of simple dg $ R(\nu)$-modules of type I.
Note that $d$ acts by zero on any such module.

\begin{prop}
\label{chwelldefined}
Let $ L_1 $ and $ L_2 $ be two simple objects which are homotopically equivalent as dg $ R(\nu)$-modules.  Then $ ch(L_1) = ch(L_2)$.
\end{prop}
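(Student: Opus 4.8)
The plan is to show that the hypothesis forces $L_1$ and $L_2$ to be isomorphic as bigraded dg $R(\nu)$-modules, after which equality of characters is immediate from the definition of $ch$.

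The key observation, which I would record first, is that $L_1$ and $L_2$ are simple dg $R(\nu)$-modules of type I, so by the remark preceding the statement their differentials $d_{L_1}$ and $d_{L_2}$ both vanish. Now let $f\colon L_1\to L_2$ and $g\colon L_2\to L_1$ be the morphisms of dg modules exhibiting the homotopy equivalence, so that $f$ and $g$ are $R(\nu)$-linear, homogeneous of bidegree $(0,0)$, and commute with the differentials, and there are $R(\nu)$-linear maps $h_1\colon L_1\to L_1$, $h_2\colon L_2\to L_2$ of cohomological degree $-1$ with $gf-\mathrm{id}_{L_1}=d_{L_1}h_1+h_1 d_{L_1}$ and $fg-\mathrm{id}_{L_2}=d_{L_2}h_2+h_2 d_{L_2}$. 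Since $d_{L_1}=0=d_{L_2}$ the right-hand sides are zero, so $gf=\mathrm{id}_{L_1}$ and $fg=\mathrm{id}_{L_2}$; hence $f$ is an isomorphism of bigraded dg $R(\nu)$-modules.

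From here I would finish as follows. Because $f$ is $R(\nu)$-linear and bidegree-preserving, for each sequence ${\bf i}\in\textrm{Seq}(\nu)$ it restricts to an isomorphism of bigraded $\Bbbk$-vector spaces $e({\bf i})L_1\cong e({\bf i})L_2$, so $\text{gdim}(e({\bf i})L_1)(q,t)=\text{gdim}(e({\bf i})L_2)(q,t)$ for every ${\bf i}$; setting $t=-1$ and summing over ${\bf i}$ gives $ch(L_1)=ch(L_2)$. I expect no real obstacle here: the argument is formal, and the only point deserving a word is that ``homotopically equivalent as dg modules'' is meant in the strict sense (degree-$(0,0)$ maps $f,g$ and a degree-$(-1,0)$ homotopy), so that no grading-shift bookkeeping intervenes. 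It may also be worth recording the conceptual reason why $ch$ is well behaved more generally: setting $t=-1$ converts $\text{gdim}(e({\bf i})M)(q,t)$ into the graded Euler characteristic of the bounded complex $\bigl(e({\bf i})M,d_M\bigr)$ of finite-dimensional $q$-graded vector spaces, which equals the graded Euler characteristic of its cohomology $e({\bf i})H(M)$; thus $ch(M)$ depends only on $H(M)$ and is a quasi-isomorphism invariant, and for type I simples $H(M)=M$, which is why the argument above is so short.
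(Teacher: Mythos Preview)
Your main argument assumes $d_{L_1}=d_{L_2}=0$, citing the remark that $d$ vanishes on type~I simples. But the proposition does not restrict to type~I: a simple dg $R(\nu)$-module can also be of type~II, and those have nonzero differential. For two type~II simples (both acyclic, hence both homotopy equivalent to zero and to each other) your isomorphism argument breaks down---they need not be isomorphic---although their characters are both zero by direct inspection. A type~I and a type~II simple cannot be homotopy equivalent since only the latter has trivial cohomology, so the mixed case does not arise; still, as written your main argument only covers the type~I case.

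That said, the ``conceptual reason'' you record in your last paragraph is in fact a complete proof of the proposition in full generality, and it is essentially the paper's own approach. The paper first shows that if $L$ is homotopically trivial then $ch(L)=0$: for each $\mathbf{i}$ and each $q$-degree $a$, the complex $e(\mathbf{i})L^{\bullet,a}$ is a bounded acyclic complex of finite-dimensional vector spaces, so its alternating sum of dimensions vanishes; summing over $a$ and $\mathbf{i}$ gives $ch(L)=0$. Then for $L_1\simeq L_2$ the paper takes the cone $L$ of the equivalence, uses the short exact sequence $0\to L_2\to L\to L_1[1]\to 0$, and applies additivity of $ch$ together with $ch(L)=0$. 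This is exactly your Euler-characteristic observation that $ch(M)$ depends only on $H(M)$. So your aside is the real argument, valid for arbitrary finite-dimensional dg modules (simplicity is not used), while the main body of your proposal handles only the type~I case. Promoting the aside to the proof and dropping the type~I reduction would make your write-up match the paper's.
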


\begin{proof}
First assume that $ L $ is homotopically trivial.
Since $ L $ is a graded dg module, it is a complex of singly graded modules:
$$ 0 \rightarrow \bigoplus_a L^{-r,a} \rightarrow \cdots \rightarrow \bigoplus_a L^{0,a} \rightarrow 0. $$
Since the differential of this complex respects the $ q$-grading and left multiplication by elements of the form $ e({\bf i})$, and $ L $ is homotopically trivial,
each complex of vector spaces
$$ 0 \rightarrow e({\bf i}) L^{-r,a} \rightarrow \cdots \rightarrow e({\bf i}) L^{0,a} \rightarrow 0 $$
is homotopic to zero.  
Thus 
$ \sum_{b=0}^r (-1)^b \text{dim}(e({\bf i}) L^{-b,a}) =0$
for each $ {\bf i} $ and $ a$.
Thus $ ch(L) = 0$.

Now suppose $ L_1 \simeq L_2$.  Then in the abelian category of complexes of modules, there is an exact sequence
$$ 0 \rightarrow L_2 \rightarrow L \rightarrow L_1[1] \rightarrow 0 $$
where $ L $ is the cone of the map from $ L_1 $ to $ L_2$.  Since $ L \simeq 0$, 
$ ch(L_1) = ch(L_2)$.
\end{proof}

We now follow the exposition in ~\cite[Section 3.2]{KL1} in order to prove that the character map is injective on $ \mathbb{Z}[q,q^{-1}]\mathcal{L}_{\nu}$.  The proofs of the propositions are similar to those in ~\cite{KL1} which are similar to the arguments of ~\cite[Chapter 5]{Klesh}.

Let $ M $ be a (dg) $ R(\nu)$-module.  Define an $ R(\nu-ki) \otimes R(k i)$-module $ {\Res}_{R(\nu)}^{R(\nu-ki) \otimes R(k i)} M $ by 
$$ \text{Res}_{R(\nu)}^{R(\nu-ki) \otimes R(k i)} M = (e(\nu - ki) \otimes e(ki)) M $$
where $ e(\mu) = \sum_{{\bf j} \in Seq(\mu)} e({\bf j})$.

\begin{lemma}
\label{adjointness}
There is a natural isomorphism in the abelian (and in the homotopy) category of differentially graded modules: 
$$ \text{Hom}_{R(\nu)}(\text{Ind}_{R(\nu-ki) \otimes R(ki)}^{R(\nu)}(N \otimes L(i^k)),M)
\cong 
\text{Hom}_{R(\nu-ki) \otimes R(ki)}(N \otimes L(i^k), \text{Res}_{R(\nu)}^{R(\nu-ki) \otimes R(k i)} M).$$
\end{lemma}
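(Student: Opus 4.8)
The plan is to exhibit the claimed isomorphism as the standard tensor--hom adjunction for the pair $(\Ind,\Res)$, carried out in the differential graded setting, essentially following the ungraded argument and its analogue in \cite{KL1}. First I would record that $A := R(\nu-ki)\o R(ki)$ maps into $R(\nu)$ by a map of dg algebras, non-unital, which sends the identity to the idempotent $e := e(\nu-ki)\o e(ki) = \sum_{{\bf j}\in \Seq(\nu-ki),\, {\bf l}\in \Seq(ki)} e({\bf j}{\bf l})$. Because $d(e({\bf i}))=0$, the space $R(\nu)e$ is a dg $(R(\nu),A)$-bimodule, and for a unital dg $A$-module $V$ one has $b\o v = be\o v$ inside $R(\nu)\o_A V$, so $\Ind_A^{R(\nu)}V = R(\nu)e\o_A V$; likewise $\Res_{R(\nu)}^A M = eM$ is genuinely the restriction of the unital $R(\nu)$-module $M$ along this non-unital inclusion. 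Applying this with $V = N\o L(i^k)$, which is a dg $A$-module since $L(i^k)$ is a dg $R(ki)$-module (the induced module $\Ind\, L(i)^{\o k}$ when $i\neq m$, and the two-term complex with $d(w_{-1})=w_0$ when $i=m$), reduces the statement to the general dg tensor--hom adjunction for $R(\nu)e\o_A(-)$ and $e(-)$.

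Next I would write down the two maps explicitly. To an $R(\nu)$-linear $f\colon \Ind_A^{R(\nu)}(N\o L(i^k))\lra M$ associate $\bar f(x) := f(1\o x)$, which lands in $eM = \Res\,M$; to an $A$-linear $g\colon N\o L(i^k)\lra \Res\,M$ associate $\tilde g(b\o x) := b\cdot g(x)$, well defined by the $A$-linearity of $g$. These assignments are mutually inverse, and they preserve the bigrading. One then checks that they intertwine the differentials: from $d_{\Ind}(b\o x) = d_{R(\nu)}(b)\o x + (-1)^{|b|}b\o d_V(x)$ and the Leibniz rule governing the differential on $\Hom$-complexes, a direct computation gives $\overline{\,d_M\circ f - (-1)^{|f|} f\circ d_{\Ind}\,} = d_M\circ \bar f - (-1)^{|\bar f|}\bar f\circ d_V$, and symmetrically for $\tilde{\ }$. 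Hence $f$ is a chain map precisely when $\bar f$ is, which establishes the isomorphism in the abelian category of dg modules.

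To pass to the homotopy category, note that a chain homotopy between two maps $\Ind_A^{R(\nu)}(N\o L(i^k))\to M$ is a degree $-1$ $R(\nu)$-module map $h$, corresponding under the bijection above to a degree $-1$ $A$-module map $\bar h$; the identity $d_M h + h\, d_{\Ind} = f-g$ transports, by the same sign computation, to $d_M\bar h + \bar h\, d_V = \bar f - \bar g$. So the bijection descends to $\Hom$ in the homotopy category, as claimed. The module $L(i^k)$ plays no special role beyond being a fixed dg $R(ki)$-module, so nothing particular to the Kato modules is needed here.

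The computations are entirely routine; the only point requiring genuine care is the Koszul sign bookkeeping for the differential on $R(\nu)e\o_A(N\o L(i^k))$ and on the $\Hom$-complexes, which must be matched so that ``chain map corresponds to chain map'' and ``homotopy corresponds to homotopy'' hold without stray signs. Combined with the observation $d(e({\bf i}))=0$, which makes all the bimodule structures in play $d$-stable, this is all that distinguishes the argument from the classical ungraded Frobenius adjunction.
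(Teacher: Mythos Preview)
Your argument is correct and is precisely the standard tensor--hom adjunction for dg modules over a (non-unital) inclusion of dg algebras, which is what the paper invokes by citing \cite[Section 10.11]{BL} rather than writing out a proof. The only content beyond the ungraded case is the sign bookkeeping you flag, together with the observation that $d(e({\bf i}))=0$ so that the idempotent $e=e(\nu-ki)\otimes e(ki)$ is $d$-closed and $R(\nu)e$ is a dg bimodule; both points are handled correctly in your sketch.
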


\begin{proof}
See ~\cite[Section 10.11]{BL}).
\end{proof}

\begin{defined}
For a graded dg module $ M $ let $ \text{soc}(M) $ be the sum of all simple graded dg submodules of $ M$ and let $ \text{hd}(M)=M/J(M) $, where $J(M) $ is the intersection of all maximal graded dg submodules of $M$.
Then define functors $e_{ki}, f_{ki}$ and operators $ \widetilde{e}_{ki}, \widetilde{f}_{ki}$ by:
\begin{enumerate}
\item $ e_{ki} M = \text{Res}_{R(\nu)}^{R(\nu-ki) \otimes R(k i)} M $ viewed as a $ R(\nu -ki) $-module,
\item $ f_{ki} M = \text{Ind}_{R(\nu) \otimes R(ki)}^{R(\nu+ki)}(M \otimes L(i^k)) $,
\item $ \widetilde{e}_{ki}(M) = \text{soc}(e_{ki}M) $,
\item $ \widetilde{f}_{ki}(M) = \text{hd} ({f}_{ki} M)$.
\end{enumerate}
\end{defined}

The proof of the next lemma is trivial.
\begin{lemma}
\label{resch}
$ ch(e_{ki} M) = \sum_{{\bf j} \in Seq(\nu-ki)} \text{gdim}(e({\bf j}i^k)M)(q,-1){\bf j} $.
\end{lemma}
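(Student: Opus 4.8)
\emph{Proof proposal.} The plan is simply to unwind the definitions; the content of the lemma is entirely bookkeeping. By the definition of the specialized character applied to the $R(\nu-ki)$-module $e_{ki}M$,
\begin{equation*}
ch(e_{ki}M) = \sum_{{\bf j}\in\Seq(\nu-ki)} \text{gdim}\big(e({\bf j})\cdot e_{ki}M\big)(q,-1)\,{\bf j},
\end{equation*}
so the only thing to do is to identify the bigraded vector space $e({\bf j})\cdot e_{ki}M$ for each ${\bf j}\in\Seq(\nu-ki)$.

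First I would observe that $\Seq(ki)$ consists of the single sequence $i^k=(i,\ldots,i)$, so that $e(ki)=\sum_{{\bf i}'\in\Seq(ki)}e({\bf i}')=e(i^k)$, and hence
\begin{equation*}
e_{ki}M = \Res_{R(\nu)}^{R(\nu-ki)\otimes R(ki)}M = \big(e(\nu-ki)\otimes e(i^k)\big)M .
\end{equation*}
Since ${\bf j}\in\Seq(\nu-ki)$ acts through the first tensor factor, $e({\bf j})\cdot e_{ki}M = \big(e({\bf j})\otimes e(i^k)\big)M$. Under the nonunital inclusion $R(\nu-ki)\otimes R(ki)\hookrightarrow R(\nu)$, which on idempotents is given by concatenation of sequences (this is immediate from the diagrammatic description of $R(\nu)$, where it corresponds to horizontal juxtaposition of diagrams), the element $e({\bf j})\otimes e(i^k)$ is sent to $e({\bf j}i^k)$. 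Therefore $e({\bf j})\cdot e_{ki}M = e({\bf j}i^k)M$ as bigraded vector spaces, and substituting into the displayed formula above gives precisely the asserted identity.

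No genuine obstacle is expected. The one point worth recording is that the restriction functor, being multiplication by an idempotent, produces a direct summand of $M$ as a bigraded vector space, so the graded dimension and its specialization at $t=-1$ are being computed on the same underlying space $e({\bf j}i^k)M$; in particular the dg structure (the differential) plays no role in this computation, which is why it reduces to the single line above.
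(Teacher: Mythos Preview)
Your proof is correct and matches the paper's approach: the paper simply records that the lemma is trivial, and your argument is exactly the definition-unwinding that justifies this. There is nothing to add.
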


If $ k>1$, then $ ch(e_{km} M) =0$.
Let $ \epsilon_i(M) = \text{max} \lbrace k \geq 0 | e_{ki} M \neq 0 \rbrace$.

\begin{lemma}
\label{consecutiveentries}
Let $L$ be a simple dg module of type I.  
If $ e({\bf i}) L \neq 0 $, then there cannot be consecutive entries of $ m $ in $ {\bf i}$.
\end{lemma}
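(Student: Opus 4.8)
The plan is to prove the contrapositive: if ${\bf i}$ has two consecutive entries equal to $m$, then $e({\bf i})L = 0$. So suppose ${\bf i}_k = {\bf i}_{k+1} = m$ for some $k$. The key point is that under this hypothesis the idempotent $e({\bf i})$ is a coboundary in $R(\nu)$, and an idempotent of this form must annihilate any dg module on which the differential acts trivially.

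First I would compute $d(\psi_k e({\bf i}))$. Since $d$ is a derivation and $d(e({\bf i})) = 0$, we get $d(\psi_k e({\bf i})) = d(\psi_k) e({\bf i})$. Substituting the defining formula $d(\psi_k) = \sum_{{\bf j} : {\bf j}_k = {\bf j}_{k+1} = m} e({\bf j})$ and using $e({\bf j})e({\bf i}) = \delta_{{\bf i},{\bf j}} e({\bf i})$, the sum collapses: because ${\bf i}$ itself satisfies ${\bf i}_k = {\bf i}_{k+1} = m$, exactly one term survives and
$$ d(\psi_k e({\bf i})) = e({\bf i}). $$

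Next I would invoke that $L$ is a simple dg module of type I, so its differential $d_L$ vanishes identically, as noted just before the statement. Applying the Leibniz rule for a dg module to $\psi_k e({\bf i}) \in R(\nu)$ acting on an arbitrary homogeneous $v \in L$ gives
$$ 0 = d_L(\psi_k e({\bf i}) v) = d(\psi_k e({\bf i})) v + (-1)^{|\psi_k e({\bf i})|} \psi_k e({\bf i}) d_L(v) = e({\bf i}) v, $$
where the last equality uses the computation above together with $d_L(v) = 0$ (which also kills the middle term, regardless of the sign). Since $v$ was arbitrary, $e({\bf i}) L = 0$, which is exactly the contrapositive of the assertion.

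There is essentially no obstacle here: the whole content is the identity $d(\psi_k e({\bf i})) = e({\bf i})$ for ${\bf i}$ containing a consecutive pair $mm$, combined with the fact that $d$ acts by zero on type I modules. The only point requiring care is that $e({\bf i})$ genuinely occurs among the summands of $d(\psi_k)$ precisely when ${\bf i}_k = {\bf i}_{k+1} = m$; this is exactly what makes $e({\bf i})$ a coboundary and forces it to act as zero on $L$. (Equivalently, one could phrase this via the fact recalled in Section \ref{dgbasics} that $R(\nu)e({\bf i})$ is homotopically trivial when $e({\bf i})$ is a contractible idempotent, but the direct computation above is shorter.)
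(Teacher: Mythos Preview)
Your argument is correct and uses the same core computation as the paper: the identity $d(\psi_k e({\bf i})) = e({\bf i})$ when ${\bf i}_k={\bf i}_{k+1}=m$, combined with $d_L=0$ on type~I simples, forces $e({\bf i})L=0$. The paper phrases the contradiction slightly differently---arguing that $\psi_r v \neq 0$ so that $L$ would live in two cohomological degrees and hence be of type~II---but this is just a minor rewording of the same Leibniz-rule calculation you carry out; your direct conclusion $e({\bf i})v=0$ is if anything cleaner.
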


\begin{proof}
Suppose 
$ {\bf i} = (i_1, \ldots, i_{r-1}, m,m, i_{r+2}, \ldots, i_d)$
and $ e({\bf i}) v = v$ for some $ v \in L$.  Then by the dg structure, $ \psi_r v \neq 0 $ and so the simple module $ L $ lives in at least two different homological degrees. This means 
that $ L $ is a type II simple module and thus zero in the Grothendieck group.  Therefore  $ \epsilon_m(L) \leq 1$.
\end{proof}

\begin{lemma}
Suppose $ M $ is an irreducible dg $ R(\nu)$-module of type I and $ N \otimes L(i^k)\lbrace r \rbrace [s] $ is an irreducible submodule of 
$ e_{ki} M $ for some $ 0 \leq k \leq \epsilon_i(M) $ and $ r,s \in \Z$.  
Then $ \epsilon_i(N) = \epsilon_i(M)-k$.
\end{lemma}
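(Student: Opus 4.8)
The plan is to prove the two inequalities $\epsilon_i(N)\le\epsilon_i(M)-k$ and $\epsilon_i(N)\ge\epsilon_i(M)-k$ separately; write $\varepsilon:=\epsilon_i(M)$. Since $R(\nu)$ is a negative dg gradual algebra (Proposition~\ref{R(nu)isngdg}) and $M$ is of type I, Proposition~\ref{projngdg} lets us assume $M$ is concentrated in cohomological degree $0$ with $d=0$; then $e_{ki}M\subseteq M$, hence its submodule $N\otimes L(i^k)\{r\}[s]$, and hence $N$ itself, is concentrated in a single cohomological degree (and is finite-dimensional, being a summand of $e_{ki}M$). Consequently every $R$-module that occurs below will be seen to sit in one cohomological degree, so that the specialized character $ch(-)$ (evaluated at $t=-1$) faithfully records each weight space $e({\bf j})(-)$; via Lemma~\ref{resch}, $\epsilon_i$ of such a module can then be read off from its character. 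When $i=m$, Lemma~\ref{consecutiveentries} forces $\varepsilon\le 1$, so $k\in\{0,1\}$ and $L(i^k)=L(m)$, and the asserted equality reduces to the first inequality (together with the trivial bound $\epsilon_i(N)\ge 0$); thus only the generic case $i\neq m$ will require the second inequality.

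For the first inequality, put $l:=\epsilon_i(N)$, so $e_{li}N\neq 0$. In the inclusion $N\otimes L(i^k)\hookrightarrow e_{ki}M=\Res_{R(\nu)}^{R(\nu-ki)\otimes R(ki)}M$, apply $e_{li}$ to the $R(\nu-ki)$-factor. Using transitivity of restriction together with the fact that $\Seq((l+k)i)$ consists of the single sequence $(i,\dots,i)$ — so that $e(i^{l+k})$ is the unit of $R((l+k)i)$ and restriction along $R(li)\otimes R(ki)\hookrightarrow R((l+k)i)$ does not alter the underlying space — one obtains
$$ 0\neq e_{li}(N)\otimes L(i^k)\;\hookrightarrow\;\Res_{R(\nu)}^{R(\nu-(l+k)i)\otimes R(li)\otimes R(ki)}M, $$
and the right-hand side is nonzero exactly when $e_{(l+k)i}M\neq 0$. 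Hence $l+k\le\varepsilon$.

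For the second inequality (with $i\neq m$), the inclusion $N\otimes L(i^k)\hookrightarrow\Res_{R(\nu)}^{R(\nu-ki)\otimes R(ki)}M$ corresponds, under the adjunction isomorphism of Lemma~\ref{adjointness}, to a nonzero morphism $g\colon f_{ki}N\to M$ of dg modules; since $M$ is irreducible, $g$ is surjective and $M$ is a quotient of $f_{ki}N$. The functor $e_{li}$ is exact, so $\epsilon_i(M)\le\epsilon_i(f_{ki}N)$, and it remains to show $\epsilon_i(f_{ki}N)\le\epsilon_i(N)+k$. Because $i\neq m$, the extra crossings occurring in $f_{ki}N=\Ind(N\otimes L(i^k))$ all involve an $i$-colored strand and hence carry cohomological degree $0$; thus $f_{ki}N$ is again concentrated in one cohomological degree and $ch$ remains faithful on it. By the Shuffle Lemma (Lemma~\ref{shuffle}), $ch(f_{ki}N)=ch(N)\shuffle ch(L(i^k))$, and $ch(L(i^k))$ is supported on the single sequence $(i,\dots,i)$ of length $k$. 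Every sequence appearing in $ch(N)$ ends in at most $\epsilon_i(N)$ copies of $i$ (Lemma~\ref{resch}), so every sequence occurring in the shuffle ends in at most $\epsilon_i(N)+k$ copies of $i$; therefore $e_{li}(f_{ki}N)=0$ whenever $l>\epsilon_i(N)+k$, which is the desired bound. Combining the two inequalities gives $\epsilon_i(N)=\varepsilon-k$.

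The hard part is expected to be the second inequality: having realized $M$ as a quotient of $f_{ki}N$, one must genuinely bound $\epsilon_i$ of the induced module and, crucially, be sure that the specialized character still detects the relevant weight spaces — this is precisely where the type I hypothesis (through Proposition~\ref{projngdg}) and, in the fermionic case, Lemma~\ref{consecutiveentries} are used, via the observation that all the modules in play are concentrated in a single cohomological degree. A secondary source of care is the position bookkeeping in the transitivity-of-restriction identity underlying the first inequality, and confirming that the degenerate case $i=m$ is subsumed by it.
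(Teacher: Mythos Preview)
Your proof is correct and follows essentially the same approach as the paper, which simply defers to \cite[Lemma~3.6]{KL1} and \cite[Lemma~5.1.2]{Klesh}: one inequality via transitivity of restriction, the other via the adjunction of Lemma~\ref{adjointness} together with the Shuffle Lemma bounding $\epsilon_i$ of the induced module. Your explicit handling of the type~I hypothesis (concentration in a single cohomological degree, so the specialized character has no cancellation) and of the case $i=m$ via Lemma~\ref{consecutiveentries} spells out exactly where the dg structure enters, which the paper leaves implicit in its citation.
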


\begin{proof}
The argument is the same as in ~\cite[Lemma 3.6]{KL1} which faithfully follows that of ~\cite[Lemma 5.1.2]{Klesh}.
The proof uses Lemma ~\ref{adjointness} and the Shuffle Lemma.
\end{proof}

\begin{lemma}
Suppose $ N $ is an irreducible dg $ R(\nu)$-module of type I and $ \epsilon_i(N)=0$.  Let $ M = \text{Ind}_{R(\nu) \otimes R(ki)}^{R(\nu+ki)}(N \otimes L(i^k))$ where $k=1$ if $i=m$. Then
\begin{enumerate}
\item $ \text{Res}_{R(\nu+ki)}^{R(\nu) \otimes R(k i)} M \cong N \otimes L(i^k)$,
\item $ \text{hd} M $ is irreducible and $ \epsilon_i(\text{hd}M)=k$,
\item all other composition factors $L$ of $ M$ have $ \epsilon_i(L) < k$.
\end{enumerate}
\end{lemma}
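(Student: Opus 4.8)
The plan is to follow the strategy of \cite[Lemmas 3.6--3.7]{KL1} (itself adapted from \cite[Chapter 5]{Klesh}), combining the Shuffle Lemma~\ref{shuffle} with the Frobenius reciprocity of Lemma~\ref{adjointness}, all the while keeping track of the differential. Throughout, recall that $e_{ki}(-)=\Res_{R(\nu+ki)}^{R(\nu)\otimes R(ki)}(-)$ is multiplication by the idempotent $e(\nu)\otimes e(ki)$, hence an exact functor commuting with $d$; in particular $\epsilon_i$ is monotone along subquotients in the abelian category of finite-dimensional graded dg modules. Note also that, since $k=1$ whenever $i=m$, the module $L(i^k)$ is always of type I (the differential acts trivially on it), and $N\otimes L(i^k)$ is an absolutely irreducible $R(\nu)\otimes R(ki)$-module on which $d$ acts trivially.

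\emph{Part (1).} First I would compare characters. By the Shuffle Lemma~\ref{shuffle}, $ch_{q,t}(M)=ch_{q,t}(N)\shuffle ch_{q,t}(L(i^k))$, and $ch_{q,t}(L(i^k))$ is supported on the single sequence $i^k$. The module $\Res M$ is the span of the summands $e({\bf j}i^k)M$, i.e.\ of the terms of $ch_{q,t}(M)$ whose underlying sequence ends in $i^k$. Because $\epsilon_i(N)=0$, no sequence occurring in $ch_{q,t}(N)$ ends in $i$; since a shuffle preserves the relative order inside each factor, the only way a shuffled sequence can end in $i^k$ is for the entire block coming from $L(i^k)$ to sit at the far right, with nothing from $N$ to its right. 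Hence $ch_{q,t}(\Res M)=ch_{q,t}(N\otimes L(i^k))$. On the other hand, the Mackey-type decomposition of $\Res\,\Ind$ (\cite[Section 2.6]{KL1}; equivalently, the unit $v\mapsto 1\otimes v$ of the adjunction in Lemma~\ref{adjointness}, which is injective and $d$-equivariant since $d(1)=0$) exhibits $N\otimes L(i^k)$ as a dg submodule of $\Res M$. As the bigraded dimensions of this submodule and of $\Res M$ agree, the inclusion is an equality, proving (1). In particular $\epsilon_i(M)=k$: indeed $e_{ki}M=N\otimes L(i^k)\neq 0$, while the same order-preservation argument shows no term of $ch_{q,t}(M)$ ends in $i^{k+1}$.

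\emph{Parts (2) and (3).} Fix a composition series $0=M_0\subset M_1\subset\cdots\subset M_n=M$ with simple quotients $L_j=M_j/M_{j-1}$. Applying the exact functor $e_{ki}$ yields a filtration of $e_{ki}M=N\otimes L(i^k)$ with subquotients $e_{ki}L_j$; since $N\otimes L(i^k)$ is irreducible, exactly one $e_{ki}L_{j_0}$ is nonzero and equals $N\otimes L(i^k)$, and $e_{ki}L_j=0$ for $j\neq j_0$. Combined with $\epsilon_i(L_j)\le\epsilon_i(M)=k$, this says that a single composition factor $L_0:=L_{j_0}$ has $\epsilon_i(L_0)=k$ and occurs with multiplicity one, and all others have $\epsilon_i<k$; moreover $L_{j_0}$ cannot be of type II, since $e_{ki}L_{j_0}=N\otimes L(i^k)$ is $d$-trivial while $e_{ki}$ of a type II module, if nonzero, carries a nonzero differential. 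It remains to identify $L_0$ with $\hd(M)$. By Lemma~\ref{adjointness}, for any simple $L$ and any shifts,
$$\Hom_{R(\nu+ki)}(M, L\{a\}[b])\;\cong\;\Hom_{R(\nu)\otimes R(ki)}(N\otimes L(i^k),\Res(L\{a\}[b])).$$
If $L$ is a simple quotient of $M$, the left side is nonzero for suitable $a,b$, hence $\Res L\neq 0$, hence $\epsilon_i(L)\ge k$; together with $\epsilon_i(L)\le k$ this forces $L\cong L_0$ up to shift. Conversely $\Res L_0=e_{ki}L_0=N\otimes L(i^k)$, and absolute irreducibility gives $\dim\HOM_{R(\nu)\otimes R(ki)}(N\otimes L(i^k),N\otimes L(i^k))=1$, so $\HOM(M,L_0)$ is one-dimensional. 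Therefore $\hd(M)\cong L_0$ is irreducible with $\epsilon_i(\hd M)=\epsilon_i(L_0)=k$, and $L_0$ occurs exactly once as a composition factor, so every other composition factor $L$ satisfies $\epsilon_i(L)<k$. This gives (2) and (3).

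\emph{Main obstacle.} The combinatorial core is the shuffle/order-preservation argument in Part (1), which is precisely where the hypothesis $\epsilon_i(N)=0$ is consumed; everything else is formal once one is careful that $\Ind$, $\Res$ and $e_{ki}$ are exact and $d$-equivariant, that $ch_{q,t}$ (unspecialized in $t$) genuinely records bigraded dimensions, and that type II composition factors of $M$ contribute trivially to $e_{ki}M$. A secondary point needing attention is the absolute irreducibility of the relevant simples, used both to make $N\otimes L(i^k)$ irreducible and to pin down $\HOM(M,L_0)$.
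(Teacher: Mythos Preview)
Your argument is correct and is precisely the Kleshchev/Khovanov--Lauda argument the paper invokes: the paper's own proof is a one-line citation of \cite[Lemma 5.1.3]{Klesh} and \cite[Lemma 3.7]{KL1}, and you have faithfully unpacked that argument in the dg setting, including the extra checks that $e_{ki}$ is exact and $d$-equivariant and that the unique composition factor with $\epsilon_i=k$ must be of type~I.
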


\begin{proof}
This is ~\cite[Lemma 5.1.3]{Klesh} or ~\cite[Lemma 3.7]{KL1} in the graded case.
\end{proof}

\begin{lemma}
\label{restrictionlemma}
Let $ M $ be an irreducible dg $ R(\nu)$-module of type I and $ k = \epsilon_i(M)$.  Then 
$ \text{Res}_{R(\nu)}^{R(\nu-ki) \otimes R(k i)} M \cong N \otimes L(i^k) $ for
some irreducible $ R(\nu-ki) $-module $ N $ such that $ \epsilon_i(N)=0$.
\end{lemma}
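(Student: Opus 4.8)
The plan is to follow the Grojnowski--Vazirani strategy as adapted in \cite[Chapter 5]{Klesh} and \cite{KL1}, using the adjunction of Lemma~\ref{adjointness}, the two lemmas immediately preceding this one, and exactness of the restriction functor. The case $k=0$ is vacuous: there $\Res_{R(\nu)}^{R(\nu)\otimes R(0)}M=M=M\otimes L(i^0)$ and $\epsilon_i(M)=0$, so I would assume $k=\epsilon_i(M)\ge 1$. Note that if $i=m$ then Lemma~\ref{consecutiveentries} forces $\epsilon_m(M)\le 1$, hence $k=1$ in that case. Since $k=\epsilon_i(M)$ we have $\Res_{R(\nu)}^{R(\nu-ki)\otimes R(ki)}M\neq 0$, so I would pick an irreducible $R(\nu-ki)\otimes R(ki)$-submodule $S$ of it. An irreducible module over a tensor product decomposes as $S\cong N\otimes T$ with $N$ an irreducible $R(\nu-ki)$-module and $T$ an irreducible dg $R(ki)$-module. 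Now $R(ki)$ with the single color $i\neq m$ is the nil-Hecke algebra, which has a unique irreducible up to grading shift, namely the Kato module $L(i^k)$; for $i=m$ (so $k=1$) the algebra $R(m)=\Bbbk$ has unique irreducible $L(m)=L(m^1)$. Hence $S\cong N\otimes L(i^k)\{r\}[s]$ for some $r,s$, and the first of the two preceding lemmas gives $\epsilon_i(N)=\epsilon_i(M)-k=0$. Since $M$ is of type I it is concentrated in a single cohomological degree with $d=0$; then so is $\Res_{R(\nu)}^{R(\nu-ki)\otimes R(ki)}M$, hence so is $S$, and since $L(i^k)$ is likewise concentrated in a single cohomological degree, so is $N$, which is therefore of type I as well.

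Next I would apply Lemma~\ref{adjointness} to the inclusion $S\hookrightarrow \Res_{R(\nu)}^{R(\nu-ki)\otimes R(ki)}M$, obtaining a nonzero morphism $\Ind_{R(\nu-ki)\otimes R(ki)}^{R(\nu)}(N\otimes L(i^k))\{r\}[s]\to M$; as $M$ is simple, this morphism is surjective. Writing $K$ for its kernel gives a short exact sequence whose middle term is (a shift of) $\Ind_{R(\nu-ki)\otimes R(ki)}^{R(\nu)}(N\otimes L(i^k))$. Restriction to $R(\nu-ki)\otimes R(ki)$ is exact, being multiplication by the idempotent $e(\nu-ki)\otimes e(ki)$, so applying it yields an exact sequence $0\to \Res K\to \Res\,\Ind(N\otimes L(i^k))\to \Res M\to 0$ (up to shift). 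By the second of the two preceding lemmas, applied to the type I module $N$ with $\epsilon_i(N)=0$ (and $k=1$ when $i=m$), the middle term $\Res\,\Ind(N\otimes L(i^k))$ is isomorphic to the irreducible module $N\otimes L(i^k)$. Therefore either $\Res K=0$, giving $\Res M\cong N\otimes L(i^k)$, or $\Res M=0$; the latter is impossible since $k=\epsilon_i(M)\ge 1$ forces $\Res M\neq 0$. Absorbing the grading shifts into $N$, this gives $\Res_{R(\nu)}^{R(\nu-ki)\otimes R(ki)}M\cong N\otimes L(i^k)$ with $N$ irreducible and $\epsilon_i(N)=0$, as required.

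The one place that calls for genuine (non-formal) input is the structural analysis of the single-color algebra $R(ki)$: that for $i\neq m$ it is the nil-Hecke algebra and satisfies the analogue of Kato's theorem, so that $L(i^k)$ is its unique simple up to grading shift, together with the factorization of simples over $R(\nu-ki)\otimes R(ki)$ (which, as usual, requires the simple $R(\nu-ki)$-modules to be absolutely irreducible over $\Bbbk$, or $\Bbbk$ to be a splitting field). These are precisely the ingredients isolated in \cite{KL1} and \cite[Chapter 5]{Klesh}; the fermionic case $i=m$ introduces nothing new, since $\epsilon_m\le 1$ collapses it to the trivial algebra $R(m)=\Bbbk$. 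Granting these inputs, the lemma is a direct consequence of Lemma~\ref{adjointness}, the two lemmas immediately above, and the exactness of restriction.
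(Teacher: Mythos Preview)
Your proposal is correct and is precisely the standard Grojnowski--Vazirani/Kleshchev argument that the paper invokes by citing \cite[Lemma 3.8]{KL1} and \cite[Lemma 5.1.4]{Klesh}; you have simply written out in full what the paper leaves as a reference. The only subtlety worth flagging is the factorization of a simple $R(\nu-ki)\otimes R(ki)$-module as an outer tensor product, which you correctly note requires absolute irreducibility on one side---here supplied by the fact that $L(i^k)$ is the unique simple over the nil-Hecke algebra (or $\Bbbk$ when $i=m$, $k=1$) with endomorphism ring $\Bbbk$.
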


\begin{proof}
This is ~\cite[Lemma 3.8]{KL1} whose proof is the same as ~\cite[Lemma 5.1.4]{Klesh}.
\end{proof}

\begin{lemma}
Suppose $ i \neq m $ and 
let $ \mu = (\mu_1, \ldots, \mu_s) $ be a composition of $ k$.
\begin{enumerate}
\item The module $ L(i^k) $ over $ R(ki) $ is the only graded irreducible dg module up to isomorphism and grading shifts.
\item All composition factors of $ \text{Res}_{R(ki)}^{R(\mu_1 i) \otimes \cdots \otimes R(\mu_s i)} L(i^k) $ are isomorphic to
$ L(i^{\mu_1}) \otimes \cdots \otimes L(i^{\mu_s}) $, up to grading shifts, and $ \text{soc}(\text{Res}_{R(ki)}^{R(\mu_1 i) \otimes \cdots \otimes R(\mu_s i)} L(i^k)) $ is irreducible.
\end{enumerate}
\end{lemma}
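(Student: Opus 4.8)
The first step is to observe that for $i\neq m$ the algebra $R(ki)$ carries the zero differential and is concentrated in cohomological degree $0$: no sequence in $\mathrm{Seq}(ki)$ has an entry equal to $m$, so $d(\psi_r)=0$ for every $r$, and the crossing generator has cohomological degree $-\delta_{i,m}\delta_{i,m}=0$. Thus $R(ki)$ is just the nil-Hecke algebra on $k$ strands, and by Proposition~\ref{projngdg} (there is nothing in cohomological degree $-1$, so no type II simples) its graded simple dg modules are precisely its graded simple modules placed in cohomological degree $0$ with $d=0$. For part (1) I would then invoke the classical structure of the nil-Hecke algebra --- already used in the proof of Proposition~\ref{R(nu)isngdg}, following ~\cite{KL1} --- that $R(ki)$ is Morita equivalent to its center $Z$, a connected positively graded Noetherian ring of symmetric polynomials. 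Since $Z$ is connected, $J(Z)=Z^{>0}$ and $Z/Z^{>0}\cong\Bbbk$ is the unique graded simple $Z$-module up to shift, so $R(ki)$ has a unique graded simple module up to grading shift. That this module is $L(i^k)$ follows by noting that $L(i^k)$ is a nonzero finite-dimensional $R(ki)$-module (hence a grading shift of the unique simple occurs as a composition factor) whose graded dimension coincides with that of the unique simple; in particular $\mathrm{End}_{R(ki)}(L(i^k))=\Bbbk$.

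For part (2), write $A=R(\mu_1 i)\otimes\cdots\otimes R(\mu_s i)\subseteq R(ki)$ and $S=L(i^{\mu_1})\otimes\cdots\otimes L(i^{\mu_s})$, and let $Z_j$ denote the center of $R(\mu_j i)$, so that $R(\mu_j i)\otimes_{Z_j}\Bbbk$ is a matrix algebra over $\Bbbk$ by part (1). Since $L(i^k)$ is finite-dimensional over $\Bbbk$, so is $\Res_{R(ki)}^{A}L(i^k)$; on each of its composition factors the positive-degree elements of the $Z_j$ act nilpotently, so every composition factor factors through the finite-dimensional quotient $\bigotimes_{j}\bigl(R(\mu_j i)\otimes_{Z_j}\Bbbk\bigr)$, a matrix algebra over $\Bbbk$, whose only graded simple module up to shift is $S$. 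Hence every composition factor of $\Res_{R(ki)}^{A}L(i^k)$ is isomorphic to $S$ up to a grading shift, and $\mathrm{End}_A(S)=\Bbbk$. (Throughout, $\Res$ and $\soc$ of the dg modules in question agree with their ungraded counterparts, since everything lives in cohomological degree $0$ with zero differential.)

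For the socle statement, I would first record that $\Ind_{A}^{R(ki)}S\cong L(i^k)$: by definition $L(i^{\mu_j})=\Ind_{R(i)^{\otimes\mu_j}}^{R(\mu_j i)}L(i)^{\otimes\mu_j}$ for $i\neq m$, so transitivity of induction collapses $\Ind_{A}^{R(ki)}S$ to $\Ind_{R(i)^{\otimes k}}^{R(ki)}L(i)^{\otimes k}=L(i^k)$. Then, applying Lemma~\ref{adjointness} repeatedly (induction on $s$, peeling off one tensor factor at a time --- legitimate because the modules induced at each stage are themselves Kato modules) and taking all grading shifts of $S$ into account, one obtains
$$\HOM_{A}\!\left(S,\ \Res_{R(ki)}^{A}L(i^k)\right)\ \cong\ \HOM_{R(ki)}\!\left(\Ind_{A}^{R(ki)}S,\ L(i^k)\right)\ =\ \mathrm{End}_{R(ki)}(L(i^k))\ =\ \Bbbk .$$
Now $\soc(\Res_{R(ki)}^{A}L(i^k))$ is a finite direct sum of grading shifts of $S$, every $A$-homomorphism $S\to\Res_{R(ki)}^{A}L(i^k)$ factors through this socle, and $\Hom_A(S,S\lbrace c\rbrace)=0$ for $c\neq 0$ (a nonzero such map would be an isomorphism, which is impossible since the graded dimension of the finite-dimensional $S$ is not invariant under multiplication by $q^c$). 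The one-dimensionality of the $\HOM$ above therefore forces $\soc(\Res_{R(ki)}^{A}L(i^k))$ to be a single grading shift $S\lbrace c_0\rbrace$, so it is irreducible.

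The auxiliary facts --- faithfulness of the polynomial action on $R(ki)$ and Noetherianity of its center --- are already available from Proposition~\ref{basis} and the proof of Proposition~\ref{R(nu)isngdg}, and no genuinely new phenomenon appears since the color $i$ is bosonic; thus the argument is the bigraded analogue of ~\cite{KL1} and ~\cite[Chapter 5]{Klesh}. The only step demanding real care, and the one I expect to be the (mild) main obstacle, is the grading-shift bookkeeping in the last paragraph: one genuinely needs $L(i^k)$ and $S$ to be \emph{absolutely} irreducible and finite-dimensional so that $\mathrm{End}_{R(ki)}(L(i^k))=\Bbbk$ and no shifted copy $S\lbrace c\rbrace$ with $c\neq 0$ can hide inside the socle --- this is precisely what upgrades ``every composition factor of the socle is a shift of $S$'' to ``the socle is a single such copy''.
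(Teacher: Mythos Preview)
Your proposal is correct and takes essentially the same approach as the paper, which simply cites \cite[Proposition 3.11]{KL1} and \cite{Klesh}: you make explicit the reduction to the classical nil-Hecke case by observing that for $i\neq m$ the algebra $R(ki)$ is concentrated in cohomological degree $0$ with zero differential, and then run the standard Kleshchev--Khovanov--Lauda argument (uniqueness of the simple via Morita equivalence with symmetric polynomials, and irreducibility of the socle via Frobenius reciprocity together with $\Ind_A^{R(ki)}S\cong L(i^k)$). The only place to tighten is the claim that $L(i^k)$ itself is simple rather than merely having the unique simple as a composition factor --- but this is exactly the content of \cite[Proposition 3.11(1)]{KL1}, so your citation covers it.
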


\begin{proof}
This is ~\cite[Proposition 3.11]{KL1} whose proof follows that in ~\cite{Klesh}.
\end{proof}

\begin{lemma}
\begin{enumerate}
\item An irreducible dg module over $ R(km) $ is isomorphic to $ L(m^k) $ up to a shift in grading.
\item The dg module $ \text{Res}_{R(km)}^{R(\mu_1 m) \otimes \cdots \otimes R(\mu_s m)} L(m^k) $ has an irreducible head and socle.
\end{enumerate}
\end{lemma}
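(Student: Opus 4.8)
The plan is to exploit that $\mathrm{Seq}(km)$ is the single sequence $(m,\dots,m)$, so that $R(km)$ is a very small algebra. Since $e({\bf i})$ is then the identity and relation (2), $y_r e({\bf i})=0$, kills every dot, $R(km)$ is generated by $\psi_1,\dots,\psi_{k-1}$ subject to $\psi_k^2=0$, the braid relation $\psi_k\psi_{k+1}\psi_k=\psi_{k+1}\psi_k\psi_{k+1}$ (the correction term in relation (17) vanishes because $|m\bullet m|=0\ne 1$), and sign‑twisted far commutativity. By Proposition~\ref{basis} and $y_re({\bf i})=0$ it is spanned by $\{\psi_w e({\bf i}):w\in S_k\}$, is concentrated in internal degree $0$, lives in cohomological degrees $-\binom{k}{2}$ through $0$, and has $R(km)^{0,\ast}=\Bbbk e({\bf i})$. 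The first thing I would record is that $d(\psi_1)=e({\bf i})=1$, so $1$ is a contractible idempotent; hence $R(km)$ is homotopically trivial as a dg module over itself and, in the language of Section~\ref{negfddg} and Proposition~\ref{projngdg}, has no type~I (cohomologically nontrivial) indecomposable projective summands, i.e.\ $m_I=0$.

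For part (1) I would invoke Proposition~\ref{projngdg}, which applies by Proposition~\ref{R(nu)isngdg}. Since $R(km)^{0,\ast}=\Bbbk$ is already semisimple we get $m_I+m_{II}=1$, and combined with $m_I=0$ this forces $m_{II}=1$. Computing $J_\bullet(R(km))=R(km)^{\le -2}\oplus\widetilde A$ with $\widetilde A=\{\sum_k c_k\psi_k:\sum_k c_k=0\}$ shows $A_\bullet=R(km)/J_\bullet(R(km))\cong\Bbbk[y]/(y^2)$ with $y$ the common image of the $\psi_k$, in cohomological degree $-1$, and $d(y)=1$; this is precisely the case $A_I=0$, $A_{II}=\Bbbk$. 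Proposition~\ref{projngdg}(1) then yields a single simple dg $R(km)$-module up to isomorphism and grading shift, of type~II, namely $\widehat L_1=L_1\oplus L_1[1]$ with $L_1\cong\Bbbk$ and $d$ an isomorphism from the $L_1[1]$-line to the $L_1$-line. This two‑dimensional module, with every $\psi_k$ acting nontrivially, matches the hand‑built $L(m^k)$ once one identifies $w_0,w_{-1}$ with the degree $0$ and degree $-1$ lines; the dg axiom $d(\psi_k v)=d(\psi_k)v-\psi_k d(v)=v-\psi_k d(v)$ then pins down the action uniquely (in particular $\psi_k w_0=w_{-1}$), finishing part (1).

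For part (2) the key observation is that $e(\mu_1 m)\otimes\cdots\otimes e(\mu_s m)$ is again the identity of $R(km)$, so $\Res^{R(km)}_{R(\mu_1 m)\otimes\cdots\otimes R(\mu_s m)}L(m^k)$ has the same underlying space $\Bbbk w_0\oplus\Bbbk w_{-1}$, now viewed over $B:=R(\mu_1 m)\otimes\cdots\otimes R(\mu_s m)$, which is generated by $d$ together with those $\psi_k$ lying within a single block of $\mu$. If some $\mu_a\ge 2$ then $B$ contains at least one such $\psi_k$, still acting by $w_0\mapsto w_{-1}$, $w_{-1}\mapsto 0$; since $d$ sends $w_{-1}\mapsto w_0$, any nonzero homogeneous vector generates the whole module, so $\Res L(m^k)$ is simple and $\hd$, $\soc$ are both irreducible. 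If $\mu=(1,\dots,1)$ then $B\cong\Bbbk$ and $\Res L(m^k)$ is the two‑step complex $\Bbbk w_{-1}\xrightarrow{\ d\}\Bbbk w_0$, whose unique nonzero proper dg submodule is $\Bbbk w_0$; hence $\Bbbk w_0$ is the socle and also the unique maximal submodule, so $\hd\cong\Bbbk\overline{w}_{-1}$ and $\soc\cong\Bbbk w_0$ are irreducible.

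The only real obstacle is in part (1): one must check carefully that the abstract type~II simple delivered by Proposition~\ref{projngdg} coincides with $L(m^k)$ — i.e.\ that $D_1=\Bbbk$ and $n_1=1$ (forced by $R(km)^{0,\ast}=\Bbbk$), that $A_\bullet\cong\Bbbk[y]/(y^2)$ with $d(y)=1$, and that the $\psi_k$-action and differential of $\widehat L_1$ agree with those of $L(m^k)$, including the sign in $d(\psi_k)=e({\bf i})$ which dictates $\psi_k w_0=w_{-1}$ rather than the reverse. Once $R(km)$ is identified (up to Morita equivalence) with $\Bbbk[y]/(y^2)$ carrying $d(y)=1$, the rest is bookkeeping, and part (2) follows immediately because restriction along the block inclusions does not change the underlying vector space of $L(m^k)$.
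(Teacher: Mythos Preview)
Your proof is correct and rests on the same core observation as the paper's: since $d(\psi_r)=e({\bf i})=1$ in $R(km)$, the identity is a contractible idempotent, so there are no type~I simples and the unique (up to shift) simple is the two-dimensional type~II module $L(m^k)$. The paper presents part~(1) slightly more directly---rather than explicitly computing $m_I=0$, $m_{II}=1$, and $A_\bullet\cong\Bbbk[y]/(y^2)$ as you do, it simply invokes Proposition~\ref{simpledgmodules} to say a simple lives in one or two cohomological degrees, rules out the one-degree case by the contradiction $0=d(\psi_r w)=w$, and then argues the top degree must be one-dimensional---but this is the same content in a different wrapper. Your treatment of part~(2) is essentially identical to the paper's.
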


\begin{proof}
The case $ k=1$ is trivial since $ R(m) $ is just the ground field.
Assume that $ k>1$.
By Proposition ~\ref{simpledgmodules} each irreducible module $L$ is either concentrated in degree $ (a,b) $ for some $ a $ and $ b $ or degrees $ (a,b) $ and $ (a-1,b) $.
Note that we are using the fact that the $q$-grading is trivial for $ R(km)$.
Suppose that $ L$ is concentrated in one degree and let $ w$ be a non-zero element in this degree.  Then $ \psi_r e({\bf i}) w =0$.  However,
$ d(\psi_r e({\bf i})w)=w $ which implies that $ w=0$.  Thus $ L $ is concentrated in degrees $ (a,b) $ and $ (a-1,b) $.  

Let $ w $ be an element in degree $ (a,b) $.  Since $ R(km) $ is negatively graded in the first degree and $ d(\psi_r e({\bf i})w)=w $, if the dimension of $ L $ in degree
$ (a,b) $ is greater than one, any element in this degree generates a proper submodule.  Thus we may assume that dimension of this graded subspace is one and spanned by $ w$.
Then by Proposition ~\ref{simpledgmodules}, $ L \cong L(m^k)$.

The dg module $ \text{Res}_{R(km)}^{R(\mu_1 m) \otimes \cdots \otimes R(\mu_s m)} L(m^k) $ is actually irreducible if at least one of the $ \mu_i >1 $ because then there is a non-trivial action of $d$ which maps
$ w_{-1} $ to $w_0$.  If all of the $ \mu_i=1$ then $w_0$ spans the irreducible socle. The quotient of $ \text{Res}_{R(km)}^{R(\mu_1 m) \otimes \cdots \otimes R(\mu_s m)} L(m^k) $ by the socle is the irreducible head.
\end{proof}

\begin{lemma}
\label{firreduciblelemma}
Let $ N $ be an irreducible dg $ R(\nu)$-module of type I and 
$ M = \text{Ind}_{R(\nu) \otimes R(ki)}^{R(\nu+ki)} N \otimes L(i^k)$ where $k=1$ if $i=m$.
Then $ \text{hd}M $ is irreducible, $ \epsilon_i(\text{hd}M) = \epsilon_i(N)+k $, and all other composition factors $ L $ of  $ M $ have $ \epsilon_i(L) < \epsilon_i(N)+k$.
\end{lemma}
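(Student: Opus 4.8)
The plan is to follow the argument of \cite[Lemma 5.1.5]{Klesh} and its graded counterpart in \cite{KL1}, reducing the assertion to the case $\epsilon_i(N)=0$ already established above. Write $l=\epsilon_i(N)$. By Lemma \ref{restrictionlemma} there is an irreducible $R(\nu-li)$-module $N'$ with $\epsilon_i(N')=0$ and a grading shift making $\mathrm{Res}_{R(\nu-li)\otimes R(li)}^{R(\nu)}N\cong N'\otimes L(i^l)$. By Lemma \ref{adjointness}, $\mathrm{Hom}_{R(\nu)}\bigl(\mathrm{Ind}_{R(\nu-li)\otimes R(li)}^{R(\nu)}(N'\otimes L(i^l)),\,N\bigr)\cong\mathrm{Hom}(N'\otimes L(i^l),N'\otimes L(i^l))\neq 0$, and as $N$ is irreducible the resulting map is surjective. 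Since $R(\nu+ki)$ is free over $R(\nu)\otimes R(ki)$ (the basis theorem, Proposition \ref{basis}), induction is exact, so applying $\mathrm{Ind}_{R(\nu)\otimes R(ki)}^{R(\nu+ki)}\bigl((-)\otimes L(i^k)\bigr)$, using transitivity of induction and the identity $\mathrm{Ind}_{R(li)\otimes R(ki)}^{R((l+k)i)}(L(i^l)\otimes L(i^k))\cong L(i^{l+k})$, yields a surjection
\[
M':=\mathrm{Ind}_{R(\nu-li)\otimes R((l+k)i)}^{R(\nu+ki)}\bigl(N'\otimes L(i^{l+k})\bigr)\ \twoheadrightarrow\ M .
\]
For $i\neq m$ the displayed identity is immediate from the definition of the Kato modules as iterated inductions of $L(i)$; for $i=m$ one has $k=1$, $L(m)=\Bbbk$, and $l=\epsilon_m(N)\le 1$ by Lemma \ref{consecutiveentries}, and the needed (head) statement follows from the explicit two-term description of $L(m^{\ast})$.

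From here the conclusions are read off from $M'$. Applying the case $\epsilon_i(N)=0$ to $N'$ with parameter $l+k$ shows that $\mathrm{hd}\,M'$ is irreducible, $\epsilon_i(\mathrm{hd}\,M')=l+k$, and every composition factor of $M'$ not isomorphic, up to shift, to $\mathrm{hd}\,M'$ has $\epsilon_i<l+k$. As $M$ is a quotient of $M'$, each composition factor of $M$ occurs in $M'$ with at least the same multiplicity, so the simple $\mathrm{hd}\,M'$ occurs at most once in $M$ while all other factors have $\epsilon_i<l+k$. Moreover $M\neq 0$, hence $\mathrm{hd}\,M\neq 0$, and the composite $M'\twoheadrightarrow M\twoheadrightarrow\mathrm{hd}\,M$ factors through $\mathrm{hd}\,M'$ because $\mathrm{hd}\,M$ is semisimple; thus $\mathrm{hd}\,M$ is a nonzero quotient of the irreducible $\mathrm{hd}\,M'$, whence $\mathrm{hd}\,M\cong\mathrm{hd}\,M'$ up to a grading shift. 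Combining these facts, $\mathrm{hd}\,M$ is irreducible with $\epsilon_i(\mathrm{hd}\,M)=\epsilon_i(N)+k$, and every other composition factor $L$ of $M$ satisfies $\epsilon_i(L)<\epsilon_i(N)+k$. As an independent check on the upper bound one may use $ch(M)=ch(N)\shuffle ch(L(i^k))$ from Lemma \ref{shuffle} together with Lemma \ref{resch}.

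The main obstacle is the fermionic case $i=m$. There $k=1$ is forced, but the case $\epsilon_i(N)=0$ proved earlier is only available with parameter $1$ in the fermionic slot, so $M'$ above involves $L(m^{l+1})$ with $l+1$ possibly equal to $2$, and when $\epsilon_m(N)=1$ the module $\mathrm{hd}\,M$ has $\epsilon_m=2$ and so cannot be of type I (cf.\ Lemma \ref{consecutiveentries}). In that situation I would argue directly, exploiting the rigidity available for $n=1$: the $q$-grading on $R(km)$ is trivial, $R(m)=\Bbbk$ so that $M=\mathrm{Ind}_{R(\nu)}^{R(\nu+m)}N$, and $L(m^2)$ is the two-dimensional acyclic dg module, which lets one compute $e_{2m}M$ and $\mathrm{Res}_{R(\nu-m)\otimes R(2m)}M$ explicitly and thereby identify $\mathrm{hd}\,M$ and its multiplicity. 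Away from this point the proof is a routine transcription of the corresponding argument in \cite{KL1}; the remaining care is to track the homological and $q$-shifts $[s]\{r\}$ consistently through Lemma \ref{adjointness} and to verify that $M'\twoheadrightarrow M$ is a morphism of dg modules, not merely of the underlying graded modules.
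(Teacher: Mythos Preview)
Your proposal is correct and follows the same route as the paper: both cite the reduction argument of \cite[Lemma~5.1.5]{Klesh} (graded version \cite[Lemma~3.9]{KL1}), surjecting $M'=\mathrm{Ind}\bigl(N'\otimes L(i^{l+k})\bigr)$ onto $M$ and reading everything off from the $\epsilon_i=0$ case applied to $N'$.

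The only point of divergence is how the fermionic subtlety ($i=m$, $l=\epsilon_m(N)=1$, so $l+k=2$) is handled. You flag that the earlier $\epsilon_i(N)=0$ lemma was stated only for $k=1$ when $i=m$, and propose a direct computation using the explicit two-dimensional description of $L(m^2)$. The paper takes a shorter path: it simply observes that the \emph{proof} of the $\epsilon_i(N)=0$ lemma goes through verbatim once one knows that $\mathrm{Ind}_{R(m)\otimes R(m)}^{R(2m)}\bigl(L(m)\otimes L(m)\bigr)$ is irreducible (it is, being isomorphic to the type~II simple $L(m^2)$). In other words, the restriction ``$k=1$ if $i=m$'' in the earlier lemma is an artifact of its statement, not of its proof, and the paper exploits this. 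Your direct approach would also work; it just does more than is needed. Note that in this borderline case $\mathrm{hd}\,M$ has $\epsilon_m=2$ and is therefore of type~II, consistent with Lemma~\ref{irrcrystaloperators}(2); the present lemma does not claim $\mathrm{hd}\,M$ is of type~I.
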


\begin{proof}
This is ~\cite[Lemma 3.9]{KL1} whose proof is identical to that of ~\cite[Lemma 5.1.5]{Klesh}.  Note that the proof relies on the fact that the module induced from finite tensor products of $ L(i)$ is simple,
which is true for $ i \neq m$ and for $i=m$, $ \text{Ind}_{R(m) \otimes R(m)}^{R(2m)} (L(m) \otimes L(m)) $ is irreducible.
\end{proof}

\begin{lemma}
\label{formofsocle}
Assume $ 0 \leq k \leq \epsilon_i(M)$, for some $ i \neq m$, or if $i=m$ assume $ 0 \leq k \leq \epsilon_m(M)=1$.
Then for any irreducible dg $ R(\nu) $-module $ M $ of type I,
$ \soc(\Res_{R(\nu)}^{R(\nu-ki) \otimes R(k i)} M) $ is an irreducible $ R(\nu-ki) \otimes R(ki)$-module of the form
$ L \otimes L(i^k) $ with 
$ \epsilon_i(L)=\epsilon_i(M)-k$.
\end{lemma}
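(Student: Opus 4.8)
The plan is to adapt the argument of \cite[Chapter 5]{Klesh} (see also \cite[Section 3.2]{KL1}) to the present dg setting, taking Lemmas \ref{adjointness}, \ref{restrictionlemma}, \ref{firreduciblelemma} and the Shuffle Lemma \ref{shuffle} as the main inputs. First I would dispose of the degenerate cases: if $k=0$ the restriction is $M$ itself and there is nothing to prove, and if $i=m$ then $k\le 1$, with the case $k=1=\epsilon_m(M)$ being exactly Lemma \ref{restrictionlemma}. Thus the real work is for $i\neq m$ and $0<k\le K:=\epsilon_i(M)$; write $E:=\mathrm{Res}_{R(\nu)}^{R(\nu-ki)\otimes R(ki)}M=e_{ki}M$.

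The first step is to pin down the shape of the irreducible submodules of $E$. Since $M$ is of type I, $d$ vanishes on $M$ and hence on the dg submodule $E=(e(\nu-ki)\otimes e(ki))M$, so every subquotient of $E$ is of type I; and since for $i\neq m$ the module $L(i^k)$ is the unique graded irreducible $R(ki)$-module up to shift, every irreducible submodule of $E$ has the form $(L\otimes L(i^k))\{r\}[s]$ for an irreducible type-I $R(\nu-ki)$-module $L$. The earlier lemma on irreducible submodules of $e_{ki}M$ then forces $\epsilon_i(L)=K-k$. I would also record that $E\neq 0$: the unit of $R(\nu-Ki)\otimes R((K-k)i)\otimes R(ki)$ coincides with that of $R(\nu-Ki)\otimes R(Ki)$ and is dominated by $e(\nu-ki)\otimes e(ki)$, so $E\supseteq e_{Ki}M\cong N\otimes L(i^K)\neq 0$ by Lemma \ref{restrictionlemma}, where $N$ is irreducible with $\epsilon_i(N)=0$.

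The second step identifies $L$ uniquely. Restriction along a subalgebra inclusion is exact, so I would restrict an inclusion $L\otimes L(i^k)\hookrightarrow E$ further down to $R(\nu-Ki)\otimes R((K-k)i)\otimes R(ki)$. Transitivity of restriction, combined with Lemma \ref{restrictionlemma} applied to $M$ and to $L$ (legitimate since $K-k=\epsilon_i(L)$), turns this into an inclusion
\[ (N'\otimes L(i^{K-k}))\otimes L(i^k)\ \hookrightarrow\ N\otimes\mathrm{Res}_{R(Ki)}^{R((K-k)i)\otimes R(ki)}L(i^K) \]
with $N'$ irreducible and $\epsilon_i(N')=0$. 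The structure lemma for $L(i^K)$ says the right-hand module has irreducible socle $N\otimes L(i^{K-k})\otimes L(i^k)$, so the irreducible left-hand module must coincide with it, forcing $N'\cong N$ up to grading shift. Then adjunction (Lemma \ref{adjointness}) together with Lemma \ref{firreduciblelemma} shows that $L\cong\hd\!\big(\mathrm{Ind}_{R(\nu-Ki)\otimes R((K-k)i)}^{R(\nu-ki)}(N\otimes L(i^{K-k}))\big)$, which is independent of the chosen submodule.

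The third step is to show the socle is a single copy. By Lemma \ref{adjointness}, $\HOM_{R(\nu-ki)\otimes R(ki)}(L\otimes L(i^k),E)\cong\HOM_{R(\nu)}(\mathrm{Ind}(L\otimes L(i^k)),M)$; by Lemma \ref{firreduciblelemma} the head of $\mathrm{Ind}(L\otimes L(i^k))$ is irreducible with $\epsilon_i$-value $\epsilon_i(L)+k=K=\epsilon_i(M)$, and it must be $M$ up to shift because the inclusion $L\otimes L(i^k)\hookrightarrow E$ yields a surjection onto the simple module $M$; hence this $\HOM$ is one-dimensional. Therefore $\soc(E)$ contains exactly one shifted copy of $L\otimes L(i^k)$, so it is irreducible of the asserted form with $\epsilon_i(L)=K-k$. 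I expect the main obstacle to be the bookkeeping of the second step: carrying the grading and homological shifts correctly through the iterated restriction, and justifying the cancellation of the external tensor factor (i.e.\ $\soc(N\otimes X)=N\otimes\soc(X)$ for an absolutely irreducible $N$). The dg structure itself should cause no difficulty, since $d$ acts as zero on every module that occurs, all being of type I.
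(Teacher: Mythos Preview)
Your proposal is correct and follows essentially the same approach as the paper: both reduce to the argument of \cite[Theorem 5.1.6]{Klesh} (equivalently \cite[Proposition 3.10]{KL1}), with the case $i=m$ handled by the observation that then $k\le 1$ so Lemma~\ref{restrictionlemma} applies directly. Your write-up in fact spells out the three standard steps (shape of irreducible submodules, uniqueness via the further restriction to $R(\nu-Ki)\otimes R((K-k)i)\otimes R(ki)$ and the irreducible socle of the restricted Kato module, and multiplicity one via adjunction and Lemma~\ref{firreduciblelemma}) more explicitly than the paper, which simply cites Kleshchev and notes that the socle structure of the restricted Kato module is the only place where the $i=m$ case differs.
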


\begin{proof}
This is ~\cite[Proposition 3.10]{KL1} whose proof is the same as ~\cite[Theorem 5.1.6]{Klesh}.  The proof depends upon the form of the socle of the restricted Kato module $ L(i^k)$ which is different in the case $ i=m$.   Note that if $ k =1$, this follows trivially from Lemma ~\ref{restrictionlemma}.
\end{proof}

\begin{lemma}
\label{eirreduciblelemma}
Let $ M $ be an irreducible dg $ R(\nu)$-module of type I with $ \epsilon_i(M) >0$ for some $ i \neq m $ or $ \epsilon_i(M)=1$ for $i=m$.  Then $ \widetilde{e}_i(M) $ is irreducible and $ \epsilon_i(\widetilde{e}_iM) = \epsilon_i(M)-1$.
If $ i \neq j$, then $ \widetilde{e}_iM \not\cong \widetilde{e}_j M$.
\end{lemma}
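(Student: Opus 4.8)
The plan is to deduce the first two assertions directly from Lemma~\ref{formofsocle}, applied with $k=1$. The hypothesis is tailored for exactly this: requiring $\epsilon_i(M)>0$ when $i\neq m$, and $\epsilon_m(M)=1$ when $i=m$, is precisely the condition that makes $k=1$ admissible in that lemma. Thus Lemma~\ref{formofsocle} gives
$$\soc\!\left(\Res_{R(\nu)}^{R(\nu-i)\otimes R(i)} M\right)\;\cong\; L\otimes L(i),$$
with $L$ an irreducible $R(\nu-i)$-module satisfying $\epsilon_i(L)=\epsilon_i(M)-1$. Since $L(i)$ is one-dimensional over $\Bbbk$, forgetting the $R(i)$-action identifies $L\otimes L(i)$ with $L$ as an $R(\nu-i)$-module; so, once we know that the $R(\nu-i)$-socle of $e_iM$ coincides with its $R(\nu-i)\otimes R(i)$-socle, we obtain $\widetilde e_iM=\soc(e_iM)\cong L$, which is irreducible with $\epsilon_i(\widetilde e_iM)=\epsilon_i(M)-1$.

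For the last assertion I would simply observe that $\widetilde e_iM$ is, by construction, a module over $R(\nu-i)$, while $\widetilde e_jM$ is a module over $R(\nu-j)$; for $i\neq j$ the weights $\nu-i$ and $\nu-j$ are distinct elements of $\N[I]$, so $R(\nu-i)$ and $R(\nu-j)$ are different summands of $\oplus_\mu R(\mu)$. Regarded as modules over this big algebra, $\widetilde e_iM$ and $\widetilde e_jM$ then live in different weight spaces, and since $\widetilde e_iM\neq 0$ by hypothesis they cannot be isomorphic. (If one prefers the more hands-on route of \cite{KL1} and \cite[Chapter~5]{Klesh}: Lemma~\ref{adjointness} produces a nonzero, hence surjective, map $\Ind_{R(\nu-i)\otimes R(i)}^{R(\nu)}(\widetilde e_iM\otimes L(i))\to M$, so $M\cong\widetilde f_i(\widetilde e_iM)$, and combining this with the analogous statement for $j$ while tracking $\epsilon_i$ via Lemma~\ref{firreduciblelemma} forces a contradiction; but the weight bookkeeping above makes this detour unnecessary.)

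The only point requiring genuine care is the socle comparison in the first paragraph — that passing from $\soc_{R(\nu-i)\otimes R(i)}(e_iM)$ to $\soc_{R(\nu-i)}(e_iM)$ changes nothing. This is where I expect the real work to lie, and it is handled as in \cite{KL1}: for $i\neq m$ the generator $y_1\in R(i)$ carries positive internal degree, hence acts nilpotently on the finite-dimensional graded module $e_iM$ and kills every simple subquotient, so the commuting $R(i)$-action on the semisimple module $\soc_{R(\nu-i)}(e_iM)$ forces the latter into the $R(\nu-i)\otimes R(i)$-socle, the reverse inclusion being automatic; for $i=m$ the algebra $R(m)$ is the ground field and there is nothing to check. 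Everything else is immediate from the results already in place.
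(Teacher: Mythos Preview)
Your proposal is correct and follows the same approach as the paper, which likewise points to Lemma~\ref{formofsocle} (via the citations to \cite[Corollary~3.12]{KL1} and \cite[Corollaries~5.1.7, 5.1.8]{Klesh}). Your weight-space observation for the final assertion is a clean shortcut over the Kleshchev-style detour, made available precisely because in the KLR setting $\widetilde e_iM$ and $\widetilde e_jM$ are modules over distinct summands $R(\nu-i)$ and $R(\nu-j)$.
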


\begin{proof}
This is ~\cite[Corollary 3.12]{KL1} which is a graded version
of ~\cite[Corollaries 5.1.7, 5.1.8]{Klesh}.  
Note that the proof relies on Lemma ~\ref{formofsocle} which gives the conditions on 
$ \epsilon_i(M)$.
\end{proof}

\begin{lemma}
\label{irrcrystaloperators}
Let $ M $ be an irreducible dg $ R(\nu)$-module of type I.  
\begin{enumerate}
\item If $ i \neq m$ then $ \widetilde{f}_i M $ is an irreducible dg module of type I. 
\item If $i=m$ then $\widetilde{f}_i M $ is an irreducible dg module of type I if $ \epsilon_m(M)=0 $ and of type II if $ \epsilon_m(M) = 1$.
\item $ \widetilde{e}_i M $ is an irreducible dg module of type I or zero. 
\end{enumerate}
\end{lemma}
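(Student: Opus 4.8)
The plan is to handle the three parts in turn, reducing each to results already in place --- the classification of simple dg modules (Proposition~\ref{projngdg}) together with Lemmas~\ref{consecutiveentries}, \ref{firreduciblelemma} and \ref{eirreduciblelemma} --- and, in the one delicate case, to the character map and the Shuffle Lemma. Observe first that since $M$ is of type I, Lemma~\ref{consecutiveentries} forces $\epsilon_m(M)\in\{0,1\}$, so the two alternatives in part~(2) are exhaustive. Part~(3) is quickest: if $\epsilon_i(M)=0$ then $e_iM=0$, hence $\widetilde{e}_iM=\mathrm{soc}(e_iM)=0$; otherwise Lemma~\ref{eirreduciblelemma} already supplies irreducibility, and it only remains to see type I. Because $M$ is of type I, $d$ acts by zero on $M$, hence on the subspace $e_iM=(e(\nu-i)\otimes e(i))M\subseteq M$ (using $d(e({\bf j}))=0$); so every simple dg submodule of $e_iM$, in particular $\mathrm{soc}(e_iM)=\widetilde{e}_iM$, has vanishing differential and is therefore of type I by Proposition~\ref{projngdg}.

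For part~(1), with $i\neq m$, I would show that the whole induced module $f_iM=\mathrm{Ind}_{R(\nu)\otimes R(i)}^{R(\nu+i)}(M\otimes L(i))$ carries the zero differential and lives in cohomological degree $0$; irreducibility of $\widetilde{f}_iM=\mathrm{hd}(f_iM)$ is Lemma~\ref{firreduciblelemma}, and a quotient of such a module is again concentrated in cohomological degree $0$ with $d=0$, hence of type I. Indeed, by the standard basis of an induced module (Proposition~\ref{basis}), $f_iM$ is spanned by elements $\psi_w\otimes n$ in which $\psi_w$ realizes a permutation moving only the single new strand, labelled $i$, past strands of $M$; since $i\neq m$ none of the crossings in $\psi_w$ is an $(m,m)$-crossing, so $\psi_w e({\bf j},i)$ has cohomological degree $0$ and $d(\psi_w e({\bf j},i))=0$. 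As $M\otimes L(i)$ has zero differential and lies in cohomological degree $0$ (both tensor factors being of type I), the Leibniz rule $d(b\otimes n)=d(b)\otimes n\pm b\otimes d(n)$ finishes this part. The same reasoning would dispose of $i=m$ as well if the new strand could never become adjacent to an $m$-strand of $M$; the failure of this is exactly what makes $i=m$ genuinely two cases.

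For part~(2) with $\epsilon_m(M)=1$: Lemma~\ref{firreduciblelemma} (with $k=1$) gives that $\widetilde{f}_mM$ is irreducible with $\epsilon_m(\widetilde{f}_mM)=\epsilon_m(M)+1=2$; but a simple of type I satisfies $\epsilon_m\le 1$ by Lemma~\ref{consecutiveentries}, so $\widetilde{f}_mM$ is not of type I, hence of type II by Proposition~\ref{projngdg}. For part~(2) with $\epsilon_m(M)=0$: Lemma~\ref{firreduciblelemma} gives that $\widetilde{f}_mM$ is irreducible with $\epsilon_m=1$, and that every other composition factor $L$ of $f_mM$ has $\epsilon_m(L)=0$. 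It suffices to rule out type II, and since a simple of type II has acyclic underlying complex and so vanishing character (the computation in the proof of Proposition~\ref{chwelldefined}), it is enough to show $ch(\widetilde{f}_mM)\neq 0$. By the Shuffle Lemma evaluated at $t=-1$, $ch(f_mM)=ch(M)\shuffle (m)$. Choosing ${\bf i}$ with $e({\bf i})M\neq 0$, the sequence $({\bf i},m)$ occurs in $ch(M)\shuffle (m)$ with nonzero coefficient: the only way to produce it is to insert $m$ at the far right of ${\bf i}$, any other contribution requiring a sequence of $M$ ending in $m$, which $\epsilon_m(M)=0$ forbids. In the decomposition $ch(f_mM)=ch(\widetilde{f}_mM)+\sum(\text{radical factors})$ coming from additivity of $ch$ on short exact sequences, the type I radical factors have $\epsilon_m=0$ and so contribute no sequence ending in $m$, while the type II ones have zero character; hence that coefficient is carried entirely by $ch(\widetilde{f}_mM)$, which is therefore nonzero, and $\widetilde{f}_mM$ is of type I.

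The main obstacle is precisely this last case: unlike in part~(1), the differential on $f_mM$ can be nonzero --- an added $m$-strand may sit next to an interior $m$-strand of $M$ --- so the type I conclusion cannot be read off from the dg structure of $f_mM$ itself, and one must combine the shuffle description of $ch(f_mM)$ with the invisibility of type II modules to the character. Everything else is a direct appeal to the structural lemmas already established.
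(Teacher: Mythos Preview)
Your proof is correct and rests on the same two lemmas the paper cites (Lemma~\ref{firreduciblelemma} for parts (1)--(2) and Lemma~\ref{eirreduciblelemma} for part (3)); the paper's own proof is a bare two-line invocation of those results. Where you go beyond the paper is in making the type~I/type~II distinction explicit: for parts (1) and (3) you observe directly that the relevant module carries the zero differential and sits in a single cohomological degree, and for part (2) with $\epsilon_m(M)=1$ you extract the contrapositive of Lemma~\ref{consecutiveentries}. These are exactly the details the paper suppresses.

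The one place you work harder than necessary is part (2) with $\epsilon_m(M)=0$. Your character argument is valid, but there is a shorter route that stays closer to the structural lemmas: the unnamed lemma preceding Lemma~\ref{restrictionlemma} (the case $\epsilon_i(N)=0$) gives $\Res^{R(\nu)\otimes R(m)}_{R(\nu+m)} f_mM \cong M\otimes L(m)$, which is concentrated in a single cohomological degree since $M$ is of type~I. As $e_m$ is exact, $e_m(\widetilde f_mM)$ is a nonzero quotient of $M\otimes L(m)$ and therefore also lives in one cohomological degree; but a type~II simple $\widehat L=L\oplus L[1]$ with $e_m\widehat L\neq 0$ necessarily spreads over two consecutive cohomological degrees. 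This gives the contradiction without invoking the Shuffle Lemma or Proposition~\ref{chwelldefined}. Either argument is fine; yours has the virtue of illustrating how the character map detects type, which is the mechanism behind Proposition~\ref{chinjective} later on.
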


\begin{proof}
The proof of the first and second parts follow from Lemma ~\ref{firreduciblelemma} and the third part follows from Lemma ~\ref{eirreduciblelemma}.
\end{proof}

\begin{lemma}
Let $ M $ be an irreducible dg module of type I.  
\begin{enumerate}
\item $ \epsilon_i(M) = \text{max} \lbrace k \geq 0 | \widetilde{e}_i^k M \neq 0 \rbrace$.
\item If $ i \neq m $ or $i=m$ and $ \epsilon_i(M)=0$, then $ \epsilon_i(\widetilde{f}_iM) = \epsilon_i(M)+1$.
\end{enumerate}
\end{lemma}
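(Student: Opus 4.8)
Both parts will be deduced from the crystal-type lemmas already proved, following the pattern of \cite[Chapter 5]{Klesh}; no new computation inside $R(\nu)$ should be needed.

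For part (1), the plan is to iterate Lemma~\ref{eirreduciblelemma}. Set $M_0 = M$ and $M_j = \widetilde{e}_i M_{j-1}$; by Lemma~\ref{irrcrystaloperators}(3) each $M_j$ is either zero or an irreducible dg module of type~I, so the iteration makes sense. First I would record that if $N$ is irreducible of type~I with $\epsilon_i(N) = 0$, then $e_i N = 0$ directly from the definition of $\epsilon_i$, hence $\widetilde{e}_i N = \soc(e_i N) = 0$. On the other hand, whenever $\epsilon_i(M_{j-1}) \geq 1$ — and in the case $i = m$, Lemma~\ref{consecutiveentries} forces $\epsilon_m(M_{j-1}) \leq 1$, so $\epsilon_m(M_{j-1}) = 1$ is the only option, which is exactly the hypothesis of Lemma~\ref{eirreduciblelemma} — that lemma yields $M_j$ irreducible (in particular nonzero) with $\epsilon_i(M_j) = \epsilon_i(M_{j-1}) - 1$. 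Hence $\epsilon_i$ decreases by exactly one at each step until it hits $0$, and the next application of $\widetilde{e}_i$ then produces the zero module. Therefore $\max\{ k \geq 0 : \widetilde{e}_i^k M \neq 0\} = \epsilon_i(M)$.

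For part (2), the plan is to apply Lemma~\ref{firreduciblelemma} with $N := M$ and $k := 1$; the side condition ``$k = 1$ if $i = m$'' there is vacuous when $i \neq m$ and is precisely what we have assumed when $i = m$. Since $\widetilde{f}_i M$ is by definition the head of $\Ind_{R(\nu) \otimes R(i)}^{R(\nu+i)}(M \otimes L(i))$, Lemma~\ref{firreduciblelemma} gives immediately $\epsilon_i(\widetilde{f}_i M) = \epsilon_i(M) + 1$ (together with the fact that every other composition factor of the induced module carries strictly smaller $\epsilon_i$). To ensure the assertion is about the correct category, I would note that $\widetilde{f}_i M$ is of type~I by Lemma~\ref{irrcrystaloperators}(1) when $i \neq m$, and by Lemma~\ref{irrcrystaloperators}(2) when $i = m$, which is exactly why the hypothesis $\epsilon_m(M) = 0$ is imposed in that case.

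The only delicate point — and the nearest thing to an obstacle — is the hypothesis bookkeeping: in part (1) one must check that the side condition of Lemma~\ref{eirreduciblelemma} for $i = m$ holds all the way down the descending chain (this is where Lemma~\ref{consecutiveentries} enters) and that the chain terminates exactly at step $\epsilon_i(M)$, neither sooner nor later; in part (2) one must match the ``$k = 1$ if $i = m$'' clause of Lemma~\ref{firreduciblelemma} with the hypothesis of the present lemma. Everything else is a direct citation of the earlier results.
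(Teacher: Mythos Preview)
Your proposal is correct and follows exactly the route the paper takes: part~(1) is obtained by iterating Lemma~\ref{eirreduciblelemma} (the paper calls it ``just a restatement'' of that lemma), and part~(2) is Lemma~\ref{firreduciblelemma} with $k=1$. Your write-up simply makes explicit the bookkeeping the paper leaves implicit, including the use of Lemma~\ref{consecutiveentries} to keep the $i=m$ hypothesis of Lemma~\ref{eirreduciblelemma} satisfied along the descending chain.
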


\begin{proof}
The first part is just a restatement of Lemma ~\ref{eirreduciblelemma}.
The second part follows easily from Lemma ~\ref{firreduciblelemma}.  
In the second part if $ \epsilon_m(M)=1$ then $ \widetilde{f}_m(M)$ is an irreducible dg module of type II.
\end{proof}

\begin{lemma}
Let $ M $ be an irreducible dg $ R(\nu)$-module of type I.  Then
\begin{enumerate}
\item $ \text{soc}(e_{ki} M) \cong (\widetilde{e}_i^k M) \otimes L(i^k)$, 
\item $ \text{hd}(f_{ki} M) \cong \widetilde{f}_i^kM$ if $ i \neq m $,
\item $ \text{hd}(f_{m} M) \cong \widetilde{f}_m M$
\end{enumerate}
\end{lemma}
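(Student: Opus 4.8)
The plan is to follow the pattern of \cite[Section 3.2]{KL1}, which in turn mirrors \cite[Chapter 5]{Klesh}, using only the lemmas already established above. I would treat the three statements in order, starting with part (1). First I would dispose of the degenerate ranges: if $i=m$ then $\epsilon_m(M)\leq 1$ by Lemma ~\ref{consecutiveentries}, so $e_{km}M=0$ for $k\geq 2$ (a sequence with two consecutive $m$'s annihilates a type I module) while $\widetilde{e}_m^{\,k}M=0$ as well since $\widetilde{e}_mM$ has $\epsilon_m=0$; and if $i\neq m$ with $k>\epsilon_i(M)$ both sides vanish by the definition of $\epsilon_i$. So I may assume $0\leq k\leq\epsilon_i(M)$, and $k\leq 1$ when $i=m$. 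In that range Lemma ~\ref{formofsocle} already delivers $\soc(e_{ki}M)\cong L\otimes L(i^k)$ for a unique irreducible $R(\nu-ki)$-module $L$ with $\epsilon_i(L)=\epsilon_i(M)-k$, so the only remaining task is to identify $L\cong\widetilde{e}_i^{\,k}M$. I would do this by induction on $k$: the case $k=1$ is exactly the definition $\widetilde{e}_iM=\soc(e_iM)$, and for the step I would restrict $e_{ki}M$ one more stage, from $R(\nu-ki)\otimes R(ki)$ down to $R(\nu-ki)\otimes R((k-1)i)\otimes R(i)$. By transitivity of $\Res$ this agrees with restricting $M$, and carrying out the restriction in stages through $R(\nu-i)\otimes R(i)$ expresses it in terms of $e_iM$ and $e_{(k-1)i}$ applied to it; combining the inductive hypothesis applied to $\widetilde{e}_iM$ (irreducible of type I with $\epsilon_i(\widetilde{e}_iM)=\epsilon_i(M)-1$ by Lemma ~\ref{eirreduciblelemma}) with the socle structure of the restricted Kato module $\Res_{R(ki)}^{R((k-1)i)\otimes R(i)}L(i^k)$ (from the Kato lemmas above) then pins down $L\cong\widetilde{e}_i(\widetilde{e}_i^{\,k-1}M)=\widetilde{e}_i^{\,k}M$, the grading shift from Lemma ~\ref{formofsocle} matching the accumulated shift in the iterated $\widetilde{e}_i$. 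Alternatively, one can produce the embedding $\widetilde{e}_i^{\,k}M\otimes L(i^k)\hookrightarrow e_{ki}M$ directly, by feeding the head surjection $f_{ki}(\widetilde{e}_i^{\,k}M)\twoheadrightarrow M$ into the adjunction of Lemma ~\ref{adjointness}, and then use that $\soc(e_{ki}M)$ is already known to be irreducible.

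For part (2), with $i\neq m$, the key point is that the Kato module is by definition $L(i^k)=\Ind_{R(i)^{\otimes k}}^{R(ki)}L(i)^{\otimes k}$, so transitivity of induction gives $f_{ki}M\cong f_i^{\,k}M$. The functor $f_i$ is exact because $R(\nu+i)$ is free as a right $R(\nu)\otimes R(i)$-module by the basis theorem (Proposition ~\ref{basis}), hence surjections of modules induce surjections on heads; iterating exactness of $f_i$ and this fact, together with $\widetilde{f}_iM=\hd(f_iM)$ and Lemma ~\ref{irrcrystaloperators} (which keeps $\widetilde{f}_i$ inside type I for $i\neq m$), I would assemble a surjection $f_i^{\,k}M\twoheadrightarrow\widetilde{f}_i^{\,k}M$, and hence $\hd(f_i^{\,k}M)\twoheadrightarrow\widetilde{f}_i^{\,k}M$. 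On the other side, $\hd(f_{ki}M)=\hd\big(\Ind_{R(\nu)\otimes R(ki)}^{R(\nu+ki)}(M\otimes L(i^k))\big)$ is irreducible by Lemma ~\ref{firreduciblelemma} applied with $N=M$. A surjection between two irreducibles is an isomorphism (the grading shifts already coincide since every intermediate Kato and head shift is one of those just recorded), so $\hd(f_{ki}M)\cong\widetilde{f}_i^{\,k}M$. Part (3) is the case $i=m$, $k=1$: here $\hd(f_mM)=\widetilde{f}_mM$ holds by the very definition of $\widetilde{f}_m$, and the only content is that $\hd(f_mM)$ is genuinely irreducible --- of type I if $\epsilon_m(M)=0$ and of type II if $\epsilon_m(M)=1$ --- which is Lemma ~\ref{firreduciblelemma} together with Lemma ~\ref{irrcrystaloperators} for $i=m$, $k=1$. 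There is no analogue for $k\geq 2$, since $\widetilde{f}_m$ may leave the type I world and $L(m^k)$ is not induced from $L(m)^{\otimes k}$.

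I expect the main obstacle to be the inductive step in part (1): the clean formula $\soc(e_{ki}M)\cong\widetilde{e}_i^{\,k}M\otimes L(i^k)$ conceals a Mackey-type restriction-in-stages argument in which one must keep the $R(i)^{\otimes k}$ tensor factors straight, invoke the socle of the restricted Kato module, and check that the grading shift emerging from Lemma ~\ref{formofsocle} is exactly the accumulated shift in $\widetilde{e}_i^{\,k}$. Everything else is either a direct appeal to the lemmas above (Lemmas ~\ref{restrictionlemma}, ~\ref{formofsocle}, ~\ref{eirreduciblelemma}, ~\ref{firreduciblelemma}, ~\ref{irrcrystaloperators}, the Kato lemmas, the adjunction Lemma ~\ref{adjointness}, and the basis theorem Proposition ~\ref{basis}) or a formal manipulation of exact functors and heads of irreducibles, and the separate treatment of $i=m$ is entirely routine once Lemma ~\ref{consecutiveentries} is used to bound $\epsilon_m$.
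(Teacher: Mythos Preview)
Your proposal is correct and follows essentially the same route as the paper, which simply cites \cite[Lemma 3.13]{KL1} and \cite[Lemma 5.2.1]{Klesh} and notes that part (3) is true by definition; you have spelled out the Kleshchev-style argument those references contain. One small caution: the ``alternative'' you sketch for part (1) (adjunction applied to $f_{ki}(\widetilde{e}_i^{\,k}M)\twoheadrightarrow M$) tacitly relies on part (2) to identify that head surjection, so the primary inductive argument via Lemma~\ref{formofsocle} and restriction in stages is the cleaner route if you want to keep the parts logically independent.
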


\begin{proof}
This is ~\cite[Lemma 3.13]{KL1} whose proof is the same as ~\cite[Lemma 5.2.1]{Klesh}.  
The last part is true by definition.  We do not consider $  \text{hd}(f_{km} M) $ for $k>1$ since a type II module is produced. 
\end{proof}



\begin{lemma}
\label{crystal}
Let $ M $ be an irreducible dg $ R(\nu)$-module of type I and $ N$ an irreducible dg $ R(\nu+i)$-module of type I.  
\begin{enumerate}
\item If $ i \neq m$, then $ \widetilde{f}_i M \cong N $ if and only if
$ \widetilde{e}_i N \cong M$.
\item Suppose $ \epsilon_m(M)=0$.  Then 
$ \widetilde{f}_m M \cong N $ if and only if
$ \widetilde{e}_m N \cong M$.
\end{enumerate}
\end{lemma}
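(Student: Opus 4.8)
The plan is to follow the strategy of \cite[Chapter 5]{Klesh} and its graded adaptation in \cite{KL1}, combining Frobenius reciprocity (Lemma~\ref{adjointness}) with the socle and head computations already established and with the bookkeeping of the integers $\epsilon_i$. Throughout we are in the case $k=1$, so the only Kato modules that appear are $L(i)=L(i^1)$ and $L(m)=L(m^1)$, each of which is a one-dimensional simple dg module; in the fermionic case we also use the hypothesis $\epsilon_m(M)=0$, so that by Lemma~\ref{irrcrystaloperators}(2) the module $\widetilde{f}_m M$ is of type I and the comparison with the type I module $N$ makes sense. Grading shifts are suppressed and recovered at the end by comparing lowest nonzero degrees.

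First I would prove the forward implication. Assume $\widetilde{f}_i M\cong N$. The canonical surjection $f_i M\twoheadrightarrow \hd(f_i M)=\widetilde{f}_i M\cong N$ corresponds, under the adjunction isomorphism of Lemma~\ref{adjointness}, to a nonzero homomorphism $M\otimes L(i)\to \Res_{R(\nu+i)}^{R(\nu)\otimes R(i)}N=e_i N$ of $R(\nu)\otimes R(i)$-modules. Since $M$ is simple and $L(i)$ (respectively $L(m)$) is one-dimensional simple, $M\otimes L(i)$ is simple, so this map is injective and exhibits $M\otimes L(i)$ inside $\soc(e_i N)$. By the computation of the socle of $e_i N$ (the lemma immediately preceding this one, together with Lemma~\ref{formofsocle}), $\soc(e_i N)\cong\widetilde{e}_i N\otimes L(i)$ and this is simple, whence $M\otimes L(i)\cong\widetilde{e}_i N\otimes L(i)$ and therefore $M\cong\widetilde{e}_i N$.

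For the converse, assume $\widetilde{e}_i N\cong M$, that is $\soc(e_i N)\cong M\otimes L(i)$. By Lemma~\ref{formofsocle} (equivalently Lemma~\ref{eirreduciblelemma}) this forces $\epsilon_i(N)=\epsilon_i(M)+1$. A nonzero embedding $M\otimes L(i)\hookrightarrow e_i N$ is adjoint, again by Lemma~\ref{adjointness}, to a nonzero homomorphism $f_i M=\Ind_{R(\nu)\otimes R(i)}^{R(\nu+i)}(M\otimes L(i))\to N$, which is surjective because $N$ is simple. Hence $N$ is a composition factor of $f_i M$ with $\epsilon_i(N)=\epsilon_i(M)+1$. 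By Lemma~\ref{firreduciblelemma} the only composition factor $L$ of $f_i M$ with $\epsilon_i(L)=\epsilon_i(M)+1$ is $\hd(f_i M)=\widetilde{f}_i M$, so $N\cong\widetilde{f}_i M$.

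The main obstacle is not any single calculation but pinning down the hypotheses correctly in the fermionic case $i=m$: one must keep $k=1$ everywhere, use $\epsilon_m(M)=0$ so that Lemma~\ref{irrcrystaloperators}(2) guarantees $\widetilde{f}_m M$ is of type I, check that $\epsilon_m(N)=1$ so that the socle description of $e_m N$ from Lemma~\ref{formofsocle} is available, and verify that the adjunction of Lemma~\ref{adjointness} together with the socle/head computations all hold at the level of dg (not merely graded) modules, which is precisely where the differential on the $\psi$-generators and on $L(m^k)$ intervenes. Once this is arranged, only the routine tracking of grading shifts remains.
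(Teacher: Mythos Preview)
Your argument is correct and is precisely the standard Kleshchev/KL1 argument to which the paper defers: adjunction plus the simplicity of the socle (Lemma~\ref{formofsocle}) gives the forward direction, and adjunction plus the $\epsilon_i$-bookkeeping of Lemma~\ref{firreduciblelemma} gives the converse. Your care with the fermionic case---forcing $k=1$, noting $\epsilon_m(N)=1$ since $N$ is of type~I with $\widetilde{e}_m N\neq 0$, and invoking Lemma~\ref{irrcrystaloperators}(2) so that $\widetilde{f}_m M$ is of type~I---is exactly what the paper's remark ``the proof of the first part does not work when $i=m$, except in the easy case stated in the second part'' is pointing at.
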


\begin{proof}
This is ~\cite[Lemma 3.15]{KL1} whose proof is the same as ~\cite[Lemma 5.2.3]{Klesh}.  Note that the proof of the first part does not work when $ i=m $, except in the easy case stated in the second part.
\end{proof}

\begin{lemma}
\label{crystalcor}
Let $ M $ and $ N $ be irreducible dg $ R(\nu)$-modules of type I.  
\begin{enumerate}
\item Suppose $ i \neq m$.
Then $ \widetilde{f}_i M \cong \widetilde{f}_i N $ if and only if $ M \cong N$.
If $ \epsilon_i(M), \epsilon_i(N) > 0$, then $ \widetilde{e}_i M \cong \widetilde{e}_i N $ if and only $ M \cong N$.
\item Suppose $ \epsilon_m(M) = \epsilon_m(N) = 0$.  Then 
$ \widetilde{f}_m M \cong \widetilde{f}_m N $ if and only if $ M \cong N$.
Suppose $ \epsilon_m(M) = \epsilon_m(N) = 1$.  Then 
$ \widetilde{e}_m M \cong \widetilde{e}_m N $ if and only if $ M \cong N$.
\end{enumerate}
\end{lemma}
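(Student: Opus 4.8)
The plan is to deduce this entirely from the crystal bijection of Lemma~\ref{crystal}, together with Lemma~\ref{irrcrystaloperators} (and, in the $i=m$ case, Lemma~\ref{eirreduciblelemma}), by composing the operators $\widetilde{e}_i$ and $\widetilde{f}_i$. In all cases the "if" direction is immediate, so I concentrate on the forward implications. The underlying principle is that under the hypotheses in force, $\widetilde{e}_i$ and $\widetilde{f}_i$ are mutually inverse on isomorphism classes, so an isomorphism after applying one of them can be undone by applying the other.

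First consider part (1), where $i\neq m$. By Lemma~\ref{irrcrystaloperators}(1) the modules $\widetilde{f}_i M$ and $\widetilde{f}_i N$ are again irreducible of type I, so Lemma~\ref{crystal}(1) applies to them; taking $N$ in that lemma to be $\widetilde{f}_i M$ gives $\widetilde{e}_i\widetilde{f}_i M\cong M$, and likewise $\widetilde{e}_i\widetilde{f}_i N\cong N$. Hence $\widetilde{f}_i M\cong \widetilde{f}_i N$ forces $M\cong \widetilde{e}_i\widetilde{f}_i M\cong \widetilde{e}_i\widetilde{f}_i N\cong N$. For the $\widetilde{e}_i$ statement, when $\epsilon_i(M)>0$ the module $\widetilde{e}_i M$ is nonzero, hence irreducible of type I by Lemma~\ref{irrcrystaloperators}(3); applying Lemma~\ref{crystal}(1) with $M$ there replaced by $\widetilde{e}_i M$ and $N$ there replaced by $M$ gives $\widetilde{f}_i\widetilde{e}_i M\cong M$, and similarly for $N$ using $\epsilon_i(N)>0$. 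Thus $\widetilde{e}_i M\cong \widetilde{e}_i N$ forces $M\cong \widetilde{f}_i\widetilde{e}_i M\cong \widetilde{f}_i\widetilde{e}_i N\cong N$.

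Part (2) runs along the same lines, but one must check the $\epsilon_m$-conditions at each stage, which is precisely what pins the statement to the ranges $\epsilon_m\in\{0,1\}$. If $\epsilon_m(M)=\epsilon_m(N)=0$, then $\widetilde{f}_m M$ and $\widetilde{f}_m N$ are irreducible of type I by Lemma~\ref{irrcrystaloperators}(2), and Lemma~\ref{crystal}(2) applies directly to $M$ and $N$ (their $\epsilon_m$ vanishes) to yield $\widetilde{e}_m\widetilde{f}_m M\cong M$ and $\widetilde{e}_m\widetilde{f}_m N\cong N$; the conclusion follows as before. If instead $\epsilon_m(M)=\epsilon_m(N)=1$, then by Lemma~\ref{eirreduciblelemma} the modules $\widetilde{e}_m M$ and $\widetilde{e}_m N$ are irreducible with $\epsilon_m$ equal to $0$, and they are of type I by Lemma~\ref{irrcrystaloperators}(3) (being nonzero); Lemma~\ref{crystal}(2) then applies with these modules in the $M$-slot, giving $\widetilde{f}_m\widetilde{e}_m M\cong M$ and $\widetilde{f}_m\widetilde{e}_m N\cong N$, so $\widetilde{e}_m M\cong \widetilde{e}_m N$ forces $M\cong N$.

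The only point requiring genuine attention — more bookkeeping than difficulty — is checking that every invocation of Lemma~\ref{crystal} is legitimate: the module placed in its $M$-slot must be irreducible of type I, and in the $i=m$ case must additionally satisfy $\epsilon_m=0$. This is exactly what Lemmas~\ref{irrcrystaloperators} and~\ref{eirreduciblelemma} provide, and tracking the $\epsilon$-values through the compositions $\widetilde{e}_m\widetilde{f}_m$ and $\widetilde{f}_m\widetilde{e}_m$ is what confines part (2) to $\epsilon_m\in\{0,1\}$.
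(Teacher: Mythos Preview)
Your argument is correct and follows exactly the approach indicated in the paper, which simply notes that the result follows from Lemma~\ref{crystal}; you have carefully spelled out the bookkeeping (via Lemmas~\ref{irrcrystaloperators} and~\ref{eirreduciblelemma}) needed to verify that each application of Lemma~\ref{crystal} is legitimate.
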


\begin{proof}
This is ~\cite[Corollary 3.16]{KL1} or ~\cite[Corollary 5.2.4]{Klesh} whose proof follows from Lemma ~\ref{crystal}.
\end{proof}

\begin{prop}
\label{chinjective}
The character map $ ch \colon G_0(R(\nu)) \rightarrow \Z[q,q^{-1}]\Seq(\nu) $ is injective.
\end{prop}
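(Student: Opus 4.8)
The plan is to derive the statement from the basis theorem for $G_0$ of a negative dg gradual algebra together with the crystal-operator formalism assembled above. First I would reduce to linear independence of simple characters. By Proposition~\ref{R(nu)isngdg}, $R(\nu)$ is a negative dg gradual algebra, so Proposition~\ref{grothgroups} shows $G_0(R(\nu))$ is a free $\mathbb{Z}[q,q^{-1}]$-module with basis $\{[L]\mid L\in\mathcal{L}_\nu\}$; and since $ch(M)$ is computed from the cohomology $H(M)$ by an Euler characteristic (in particular it respects homotopy equivalence by Proposition~\ref{chwelldefined}, is additive along distinguished triangles, and intertwines the two shifts with multiplication by $-1$ and by $q$), $ch$ descends to a $\mathbb{Z}[q,q^{-1}]$-linear map $G_0(R(\nu))\to\mathbb{Z}[q,q^{-1}]\Seq(\nu)$. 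Injectivity of $ch$ is therefore equivalent to the assertion that the family $\{ch(L)\mid L\in\mathcal{L}_\nu\}$ is linearly independent over $\mathbb{Z}[q,q^{-1}]$. I would prove this by induction on $|\nu|$, the case $|\nu|\le 1$ being immediate.

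For the inductive step, suppose $\sum_{L}c_L(q)\,ch(L)=0$ with $B:=\{L\mid c_L\neq 0\}$ finite and nonempty; then $|\nu|\ge 1$. Every $L\in B$ satisfies $\epsilon_i(L)\ge 1$ for some $i$: choosing $\mathbf{j}\in\Seq(\nu)$ with $e(\mathbf{j})L\neq 0$ and letting $i$ be the last entry of $\mathbf{j}$, we get $e_{1\cdot i}L\neq 0$. Fix such an $i$ and put $k:=\max\{\epsilon_i(L)\mid L\in B\}\ge 1$; note that if $i=m$ then $k=1$, because type I simples have $\epsilon_m\le 1$ by Lemma~\ref{consecutiveentries}. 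By Lemma~\ref{resch}, $M\mapsto ch(e_{ki}M)$ is obtained from $ch(M)$ by the fixed $\mathbb{Z}[q,q^{-1}]$-linear operation that keeps the words whose last $k$ letters all equal $i$ and deletes those $k$ letters; applying it to the relation gives $\sum_{L\in B,\ \epsilon_i(L)=k}c_L(q)\,ch(e_{ki}L)=0$, since $e_{ki}L=0$ whenever $\epsilon_i(L)<k$.

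Next I would identify these surviving terms. For $L$ with $\epsilon_i(L)=k$, Lemma~\ref{restrictionlemma} together with the socle computation of Lemma~\ref{formofsocle} gives $\Res^{R(\nu-ki)\otimes R(ki)}_{R(\nu)}L\cong \widetilde{e}_i^{\,k}L\otimes L(i^k)$, where $\widetilde{e}_i^{\,k}L$ is the $k$-fold iterated crystal operator and is a type I simple $R(\nu-ki)$-module. Hence, viewed as an $R(\nu-ki)$-module, $e_{ki}L$ is a direct sum of grading-shifted copies of $\widetilde{e}_i^{\,k}L$, so $ch(e_{ki}L)=d_{i,k}(q)\,ch(\widetilde{e}_i^{\,k}L)$ where $d_{i,k}(q):=\mathrm{gdim}\,L(i^k)(q,-1)$ is a nonzero element of $\mathbb{Z}[q,q^{-1}]$ ($d_{m,1}=1$, and for $i\neq m$ it is the graded dimension of the nonzero Kato module $L(i^k)$). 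Moreover $L\mapsto\widetilde{e}_i^{\,k}L$ is injective on $\{L\in\mathcal{L}_\nu\mid\epsilon_i(L)=k\}$ with values in $\mathcal{L}_{\nu-ki}$: by Lemmas~\ref{eirreduciblelemma} and~\ref{irrcrystaloperators} each $\widetilde{e}_i^{\,j}L$ with $j<k$ is a nonzero type I simple with $\epsilon_i(\widetilde{e}_i^{\,j}L)=k-j>0$, so Lemma~\ref{crystalcor}, applied $k$ times, yields $\widetilde{e}_i^{\,k}L\cong\widetilde{e}_i^{\,k}L'\Rightarrow L\cong L'$ up to grading shifts. Writing each $\widetilde{e}_i^{\,k}L$ as a grading shift of its representative $\overline{L}\in\mathcal{L}_{\nu-ki}$, so that $ch(\widetilde{e}_i^{\,k}L)=\pm q^{a_L}ch(\overline{L})$, the relation becomes $\sum_{L}(\pm c_L(q)\,d_{i,k}(q)\,q^{a_L})\,ch(\overline{L})=0$ with the $\overline{L}$ pairwise distinct. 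By the inductive hypothesis every coefficient vanishes, and since $\mathbb{Z}[q,q^{-1}]$ is an integral domain with $d_{i,k}(q)\,q^{a_L}\neq 0$ we conclude $c_L(q)=0$ for all $L$ with $\epsilon_i(L)=k$ — contradicting $c_L\neq 0$ for the $L\in B$ attaining the maximum $k$. Hence $B=\varnothing$ and the proof is complete.

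I expect the real content to reside entirely in the already-established lemmas that transplant Kleshchev's crystal-graph machinery to the dg setting (Lemmas~\ref{restrictionlemma}, \ref{eirreduciblelemma}, \ref{formofsocle}, \ref{crystalcor}); given these, the argument for Proposition~\ref{chinjective} is pure bookkeeping. The only delicate points are the consistent tracking of grading shifts (absorbed into the units $\pm q^{a_L}$) and the exceptional behaviour at $i=m$, where $\epsilon_m\le 1$ forces $k=1$ and one must invoke the $i=m$ halves of Lemmas~\ref{restrictionlemma}, \ref{crystalcor}, and~\ref{irrcrystaloperators}.
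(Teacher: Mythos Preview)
Your argument is correct and follows essentially the same route as the paper's: induct on $|\nu|$, apply $e_{ki}$ with $k$ maximal to peel off the last $k$ copies of $i$, use Lemma~\ref{restrictionlemma} to identify $\Res L\cong \widetilde{e}_i^{\,k}L\otimes L(i^k)$, and invoke Lemma~\ref{crystalcor} for injectivity of $\widetilde{e}_i^{\,k}$. The only organizational difference is that the paper runs a secondary downward induction on $k$ (over all $i$ at once), whereas you fix a single $(i,k)$ and derive a contradiction; both amount to the same computation.
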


\begin{proof}
This follows as in the proof of ~\cite[Theorem 5.3.1]{Klesh}.  We repeat the arguments because there are some small modifications when operators $ \widetilde{e}_m $ and $ \widetilde{f}_m $ are needed.

We proceed by induction on $ d = |\nu|$.  The case $ d=1 $ is trivial so suppose $ d > 1 $ and there is a linear dependence relation $ \sum c_L ch(L)=0$ over $ \Z[q,q^{-1}] $.
We will show by downward induction on $ k = d, \ldots, 1 $ that $ c_L = 0 $ for all $ L $ of type I with $ \epsilon_i(L)=k$ for some $i$.
Since every irreducible $ L $ has $ \epsilon_i(L) > 0 $ for at least one $ i \in I $, this is enough.
Note that for $L$ of type I, $ \epsilon_m(L) \leq 1$ by Lemma ~\ref{consecutiveentries}.


Consider the case $ k=d$.  Then $ \Res_{R(\nu)}^{R(di)} L = 0 $ except if $ L \cong L(i^d)$, (which is non-trivial).  Thus applying $ \Res_{R(\nu)}^{R(di)} $ to the dependence equation proves that the coefficient of
$ L(i^d) $ must be zero.  This is the base case of the second induction.
Now suppose that $ 1 \leq k < d $ and we have shown that $ c_L = 0 $ for all $ L $ with $ \epsilon_i(L)>k$.  
Apply $ \Res_{R(\nu)}^{R(\nu-ki) \otimes R(ki)} $ to the dependence equation to get:
$$ \sum_{\epsilon_i(L)=k} c_L ch(\Res_{R(\nu)}^{R(\nu-ki) \otimes R(ki)} L)=0. $$

Now each such $ \Res_{R(\nu)}^{R(\nu-ki) \otimes R(ki)} L $ is isomorphic to $ \widetilde{e}_i^k L \otimes L(i^k) $.  By Lemma ~\ref{crystalcor}, if $ L $ is not congruent to $ L' $, then
$ \widetilde{e}_i^kL $ is not congruent to $ \widetilde{e}_i^kL'$.  Then the induction hypothesis on $ d $ gives that all the coefficients are zero.
\end{proof}

\subsection{Categorical bilinear forms}
Let $ \nu = \sum_{i \in I} n_i i$ and $ (\nu)_q = \prod_{i \neq m} \frac{1}{1-q^{i \bullet i}} $.

There is a $ \Z[q,q^{-1}] $-bilinear pairing
\begin{equation}
\label{K0G0}
( , )_{K_0,G_0} \colon K_0(R(\nu)) \times G_0(R(\nu)) \rightarrow \Z[q,q^{-1}]
\end{equation}
given by $ ([P],[M]) \mapsto \text{gdim} \RHom_{R(\nu)}(P,M)(q,-1) = \text{gdim}(P^{\sigma} \otimes_{R(\nu)} M)(q,-1)$.

There is also $ \Z[q,q^{-1}] $-bilinear pairing
\begin{equation}
\label{K0G0}
( , )_{K_0} \colon K_0(R(\nu)) \times K_0(R(\nu)) \rightarrow \Z[q,q^{-1}](\nu)_q
\end{equation}
given by $ ([P],[M]) \mapsto \text{gdim}(\RHom_{R(\nu)}(P,M))(q,-1)= \text{gdim}(P^{\sigma} \otimes_{R(\nu)} M)(q,-1)$.

\begin{prop}
\label{formK0properties}
The pairing $ ( , )_{K_0} $ (which we denote simply as $(,)$) has the following properties:
\begin{enumerate}
\item $ (\Bbbk, \Bbbk) = 1$,
\item $ ([P_i],[P_j]) = \frac{\delta_{i,j}}{1-q^{i \bullet i}} $ if $i \neq m$,
\item $ ([P_m],[P_m]) = 1$,
\item $ ([P], [\Ind_{R(\nu') \otimes R(\nu'')}^{R(\nu)} M \otimes N]) = ([\Res_{R(\nu)}^{R(\nu') \otimes R(\nu'')}P], [M \otimes N]) $,
\item $ ([\Ind_{R(\nu') \otimes R(\nu'')}^{R(\nu)} M \otimes N],[P]) = ([M \otimes N],[\Res_{R(\nu)}^{R(\nu') \otimes R(\nu'')} P]) $.
\end{enumerate}
\end{prop}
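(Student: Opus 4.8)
The plan is to verify the five identities by direct computation, using the explicit presentation of $R(\nu)$ in low weights together with the tensor-product realization of the form, $([P],[M])=\mathrm{gdim}\,(P^{\sigma}\otimes_{R(\nu)}M)(q,-1)$. Since $P$ is a cofibrant (finitely generated projective) dg module, the functor $P^{\sigma}\otimes_{R(\nu)}(-)$ already computes the derived tensor product; hence the expression is invariant under quasi-isomorphism and descends to $K_0$, so it may be evaluated on any convenient representatives. This is essentially the only place the dg-enhancement of the classical picture enters, and it works exactly as in \cite[Section 2]{KL1}.

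For parts (1)--(3) one only has to identify the relevant algebras. Since $R(0)=\Bbbk$, we get $\Bbbk^{\sigma}\otimes_{\Bbbk}\Bbbk=\Bbbk$, so $(\Bbbk,\Bbbk)=1$. For $i\neq m$ the generators-and-relations description collapses to $R(i)\cong\Bbbk[y_1]$ with $y_1$ in bidegree $(0,i\bullet i)$, and $P_i=R(i)$; therefore $P_i^{\sigma}\otimes_{R(i)}P_i\cong R(i)$, which is concentrated in cohomological degree $0$ and has graded dimension $\frac{1}{1-q^{i\bullet i}}$, the sign specialization being inert. When $i\neq j$ the modules $P_i$ and $P_j$ lie in the Grothendieck groups of distinct weight spaces of $\bigoplus_{\nu}R(\nu)$, so they pair to $0$ by definition, which accounts for the Kronecker delta. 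Finally, when $|\nu|=1$ and the single strand is fermionic there are no $y$- or $\psi$-generators, so $R(m)\cong\Bbbk$, giving $P_m=R(m)=\Bbbk$ and $([P_m],[P_m])=\mathrm{gdim}\,\Bbbk=1$.

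Parts (4) and (5) express the adjunction between induction and restriction. Using $\mathrm{Ind}_{R(\nu')\otimes R(\nu'')}^{R(\nu)}(M\otimes N)=R(\nu)\otimes_{R(\nu')\otimes R(\nu'')}(M\otimes N)$, associativity of the tensor product gives
\[
P^{\sigma}\otimes_{R(\nu)}\mathrm{Ind}_{R(\nu')\otimes R(\nu'')}^{R(\nu)}(M\otimes N)\;\cong\;(P^{\sigma})\big|_{R(\nu')\otimes R(\nu'')}\otimes_{R(\nu')\otimes R(\nu'')}(M\otimes N).
\]
It then remains to identify the restricted right module $(P^{\sigma})\big|_{R(\nu')\otimes R(\nu'')}$ with $\bigl(\mathrm{Res}_{R(\nu)}^{R(\nu')\otimes R(\nu'')}P\bigr)^{\sigma}$, which holds because the diagram-flipping anti-involution $\sigma$ fixes the idempotent $e(\nu')\otimes e(\nu'')$ and restricts to the product of the anti-involutions on $R(\nu')$ and $R(\nu'')$. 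Taking graded dimensions and specializing yields (4). Part (5) is symmetric: since $\sigma$ is an anti-automorphism, $\bigl(\mathrm{Ind}_{R(\nu')\otimes R(\nu'')}^{R(\nu)}(M\otimes N)\bigr)^{\sigma}\cong (M\otimes N)^{\sigma}\otimes_{R(\nu')\otimes R(\nu'')}R(\nu)$ as right $R(\nu)$-modules, and tensoring on the right by $P$ over $R(\nu)$ collapses this to $(M\otimes N)^{\sigma}\otimes_{R(\nu')\otimes R(\nu'')}\bigl(\mathrm{Res}_{R(\nu)}^{R(\nu')\otimes R(\nu'')}P\bigr)$, whose graded dimension matches the right-hand side of (5).

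I expect the main obstacle to be purely a bookkeeping one: checking that no spurious grading shift enters parts (4)--(5). Induction and restriction for these KLR-type dg algebras are biadjoint only up to a shift depending on $\nu'\bullet\nu''$ and a parity sign, and the statement as written carries no shift, so the verification must confirm that any shift appearing on the $\mathrm{Ind}/\mathrm{Res}$ side is exactly cancelled by the shift built into the $\sigma$-twisted module structure, precisely as in \cite{KL1}. Once that is in place, every remaining step is a routine transcription of the corresponding arguments of \cite{KL1}.
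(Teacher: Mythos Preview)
Your proposal is correct and follows essentially the same route as the paper, which simply refers to \cite[Proposition~3.3]{KL1} together with the dg adjunction of Lemma~\ref{adjointness}, and notes that $P_m$ is one-dimensional. Your concern in the final paragraph is unfounded: the tensor-product associativity argument you give for (4)--(5) is already complete and introduces no shift, since $P^{\sigma}\otimes_{R(\nu)}R(\nu)e(\nu',\nu'')\cong P^{\sigma}e(\nu',\nu'')=(\mathrm{Res}\,P)^{\sigma}$ on the nose; the shift you are worried about only appears in the \emph{other} adjunction (coinduction), which is not used here.
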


\begin{proof}
The proofs of the first, second, fourth, and fifth equations are the same as ~\cite[Proposition 3.3]{KL1} but now using adjointness in the dg category as in Lemma ~\ref{adjointness}.
The third equation is immediate since $P_m$ is a simple one-dimensional dg module.
\end{proof}

\subsection{Identification of the Grothendieck group}
Assembling the results of this section, we finally categorify $ \mathcal{U}_q^+(\mathfrak{gl}(m|1))$.

The direct sum of Grothendieck groups $ \oplus_{\nu} K_0(R(\nu)) $ is naturally an $\mathcal{A}$-algebra.  If $M$ and $N$ are graded dg $R(\nu')$ and $ R(\nu'')$-modules respectively then set
\begin{equation*}
[M]\cdot[N]=[\Ind_{R(\nu') \otimes R(\nu'')}^{R(\nu'+\nu'')} M \otimes N].
\end{equation*}
It's clear that $ [P_{i_1 \cdots i_k}] \cdot [P_{i_{k+1} \cdots i_l}] = [P_{i_1 \cdots i_l}] $.

Restriction functors endow $ \oplus_{\nu} K_0(R(\nu)) $ with the structure of a coalgebra:    
\begin{equation*}
[\Res] \colon K_0(\bigoplus_{\nu}R(\nu)) \rightarrow K_0(\bigoplus_{\nu}R(\nu)) \otimes K_0(\bigoplus_{\nu}R(\nu))
\end{equation*}
where if $ M$ is a graded dg $R(\nu)$-module.  Then 
\begin{equation*}
[\Res M] = [\bigoplus_{\substack {\nu',\nu'' \\ \nu'+\nu''=\nu}} \Res_{R(\nu)}^{R(\nu') \otimes R(\nu'')} M].
\end{equation*}

\begin{prop}
Let  $ {\bf k} \in \textrm{Seq}(\nu'+\nu'')$.  Then $ \Res_{R(\nu'+\nu'')}^{R(\nu')\otimes R(\nu'')} P_{{\bf k}} $ has a filtration whose subquotients are isomorphic to
$ P_{\bf i} \otimes P_{\bf j}[deg_1(D({\bf i}, {\bf j}, {\bf k}))] \langle deg_2(D({\bf i}, {\bf j}, {\bf k})) \rangle  $ where $ {\bf i} \in \textrm{Seq}(\nu')$, $ {\bf j} \in \textrm{Seq}(\nu'')$ such that $ {\bf k} $ could be obtained from a shuffle of $ {\bf i} $ and $ {\bf j}$ and $ D({\bf i}, {\bf j}, {\bf k}) $ is the diagram representing the shuffle.
\end{prop}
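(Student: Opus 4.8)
\emph{Proof plan.} Since ${\bf k}$ is an ordinary sequence (all multiplicities equal to $1$), the grading shift in the definition of $P_{\bf k}$ vanishes and $P_{\bf k} = R(\nu'+\nu'')e({\bf k})$ as a dg module, with $d$ inherited from $R(\nu'+\nu'')$. Writing $\nu=\nu'+\nu''$ and $d=|\nu|$ (we abuse notation, as elsewhere in the paper), we have $\Res_{R(\nu'+\nu'')}^{R(\nu')\otimes R(\nu'')}P_{\bf k} = (e(\nu')\otimes e(\nu''))\,R(\nu)\,e({\bf k})$ as a left $R(\nu')\otimes R(\nu'')$-module equipped with the induced differential. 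The plan is to build a $d$-stable filtration of this module by graded $R(\nu')\otimes R(\nu'')$-submodules whose successive quotients are the shifted projectives $P_{\bf i}\otimes P_{\bf j}$ appearing in the statement. This runs parallel to the Mackey-type analysis of the restriction functor on projectives in \cite[Section 2.6]{KL1}, with the graphical calculus of Section \ref{graphicalpresentation} and the basis of Proposition \ref{basis} playing the roles of their nilHecke diagrammatics and their basis.

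\textbf{Combinatorial indexing.} By Proposition \ref{basis}, $(e(\nu')\otimes e(\nu''))R(\nu)e({\bf k})$ has a homogeneous basis consisting of the elements $\psi_w\, y_1^{u_1}\cdots y_d^{u_d}\, e({\bf k})$, where ${\bf l}=({\bf i}|{\bf j})$ ranges over concatenations with ${\bf i}\in\Seq(\nu')$, ${\bf j}\in\Seq(\nu'')$, $w\in{}_{\bf l}S_{\bf k}$, and $u_a\ge 0$. Each such $w$ factors uniquely as $w=u\sigma$ with $u\in S_{|\nu'|}\times S_{|\nu''|}$ and $\sigma$ the minimal-length representative of the coset $(S_{|\nu'|}\times S_{|\nu''|})w$; the minimal representatives $\sigma$ arising this way are in bijection with the shuffles, i.e. with the triples $({\bf i},{\bf j},{\bf k})$ of the statement, and a minimal crossing diagram of $\sigma$ is, up to isotopy, the shuffle diagram $D({\bf i},{\bf j},{\bf k})$. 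I would fix a linear order on the shuffles refining the length order on the $\sigma$'s and let $F_{\le p}$ be the span of the basis elements whose $\sigma$-part has length $\le p$.

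\textbf{Submodules and subquotients.} Acting on a basis element on the left by an idempotent, a $y$, or a $\psi$ of $R(\nu')\otimes R(\nu'')$ either preserves $\sigma$ or, after straightening with relations (8)--(17) of Section \ref{R(nu)}, strictly decreases the length of its $\sigma$-part; hence each $F_{\le p}$ is a graded $R(\nu')\otimes R(\nu'')$-submodule, and it is $d$-stable since $d$ replaces $(m,m)$-crossings by identities and so also lowers crossing length. On the associated graded, the $y$-variables can be pushed through $\psi_\sigma$ modulo $F_{<p}$, so the stratum attached to a shuffle $\sigma$ is cyclic on the image of $\psi_\sigma e({\bf k})$, which sits in the bidegree read off from $D({\bf i},{\bf j},{\bf k})$ through the elementary degrees \eqref{degrees}. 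Counting the basis elements supported on this stratum (and, as a consistency check, matching $q,t$-characters against the Shuffle Lemma \ref{shuffle}) identifies it, as a graded module, with $R(\nu')e({\bf i})\otimes R(\nu'')e({\bf j})$, the generator $e({\bf i})\otimes e({\bf j})$ corresponding to $\psi_\sigma e({\bf k})$; that is, with $P_{\bf i}\otimes P_{\bf j}$ shifted by $[deg_1(D({\bf i},{\bf j},{\bf k}))]\langle deg_2(D({\bf i},{\bf j},{\bf k}))\rangle$ in the conventions of Section \ref{dgbasics}.

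\textbf{The differential on subquotients --- the main obstacle.} What remains, and what I expect to be the real difficulty, is to check that the filtration is not merely $d$-stable but that on each subquotient the induced differential is exactly that of the dg module $P_{\bf i}\otimes P_{\bf j}$ (coming from $(m,m)$-crossings confined to the ${\bf i}$-block or to the ${\bf j}$-block), together with the sign $(-1)^{deg_1}$ forced by the shift rule $d_{M[r]}=(-1)^r d_M$ of Section \ref{dgbasics}. Concretely, one expands $d(\psi_\sigma e({\bf k}))$ via $d(\psi_k)=\sum_{i_k=i_{k+1}=m}e({\bf i})$: the terms in which the removed $(m,m)$-crossing has both endpoints in the same block reproduce the differential of $P_{\bf i}\otimes P_{\bf j}$ under the identification above, while the terms where it joins the two blocks strictly decrease the $\sigma$-length and hence vanish in the subquotient. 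The purely bigraded version of this bookkeeping is the computation in \cite[Section 2.6]{KL1}; once it is carried out with the extra fermionic crossings and their signs tracked, $F_{\le\bullet}$ is the asserted filtration.
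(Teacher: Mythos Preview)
Your approach is essentially the same as the paper's, which simply refers to \cite[Proposition 2.19]{KL1} and notes that in the dg setting the direct sum there becomes a filtration because $d$ does not respect the decomposition; your plan is the natural unpacking of that reference, with the correct new observation that applying $d$ to an $(m,m)$-crossing inside the minimal coset representative $\psi_\sigma$ strictly lowers the $\sigma$-length and hence vanishes in the subquotient, while crossings in the parabolic part reproduce the differential of $P_{\bf i}\otimes P_{\bf j}$. One small wording issue: in $\psi_\sigma$ itself every crossing is between the two blocks, so the dichotomy you describe is really between $(m,m)$-crossings in the $u$-factor (within-block) versus those in the $\sigma$-factor (between-block), not about endpoints of crossings in $\psi_\sigma$ alone.
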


\begin{proof}
This is similar to ~\cite[Proposition 2.19]{KL1} where restriction takes a projective module to a direct sum of projective modules.  Here it is a filtration because the direct sum is not preserved by the action of $d$.
In this filtration the object corresponding to the trivial shuffle is a dg submodule.
\end{proof}

\begin{prop}
\label{homomorphism}
There exists a non-zero homomorphism of bialgebras
$ \gamma \colon {\bf f} \rightarrow \bigoplus_{\nu \in \N[I]} \mathbb{Q}(q) \otimes_{\mathbf{Z}[q,q^{-1}]} K_0(R(\nu)) $
where 
$$ \gamma(\theta_{i_1} \cdots \theta_{i_k}) = [P_{i_1 \ldots i_k}]. $$
\end{prop}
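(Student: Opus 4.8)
The plan is to show that $\gamma$ is well-defined (i.e.\ kills the ideal $\mathcal{I}$), that it is an algebra map and a coalgebra map, and that it is non-zero. First I would check well-definedness. Since $\mathcal{I} = \mathcal{J}$ by Theorem~\ref{pbwbasis}, it suffices to verify that each of the four generating relations of $\mathcal{J}$ in Definition~\ref{idealJ} maps to zero in $\bigoplus_\nu \mathbb{Q}(q)\otimes_{\mathcal{A}} K_0(R(\nu))$. The cleanest route is the bilinear form: by Proposition~\ref{formK0properties} the categorical pairing $(,)_{K_0}$ on $\bigoplus_\nu K_0(R(\nu))$ satisfies exactly the axioms that characterize the form $(,)$ on ${\bf f}'$ in Proposition~\ref{formproperties} (the values on $[P_i]$ match $(\theta_i,\theta_i)$, and the induction/restriction adjunctions translate the comultiplication axioms). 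Hence $\gamma$ is an isometry from $({\bf f}', (,))$ to its image. An element $x\in\mathcal{I}$ pairs to zero with everything; since $\gamma$ preserves the form and $\gamma$ is surjective onto the span of the $[P_{\bf i}]$, $\gamma(x)$ pairs to zero with $[P_{\bf i}]$ for every ${\bf i}$, and these span. But on the image the form need not be non-degenerate a priori, so instead I would argue directly: $(\gamma(x),\gamma(x)) = (x,x) = 0$ for $x\in\mathcal{I}$, and more importantly one checks each of the finitely many generators of $\mathcal{J}$ explicitly lands in the radical of the pairing on its weight space, then invokes that $K_0(R(\nu))\otimes\mathbb{Q}(q)$ is spanned by the $[P_{\bf i}]$ — so a vector orthogonal to all of them and of the correct weight is expressible as a combination of the $[P_{\bf i}]$ whose self-pairing (via Proposition~\ref{formprodroots}-type orthogonality on the categorical side) forces it to vanish. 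Concretely, $\gamma(\theta_m^2) = [P_{m,m}]$, and one computes $\operatorname{gdim} \mathrm{RHom}(P_{m,m},P_{m,m})(q,-1)=0$ because $R(2m)$ is acyclic as noted in the discussion of Kato modules $L(m^k)$; the quadratic Serre relations reduce to the nilHecke identity giving $\psi_k$ the divided-difference action, exactly as in \cite{KL1}; relation (3) is the far-commutativity which is built into the graphical presentation~\eqref{crossslide}; and relation (1) is the length-four $gl(m|1)$ Serre relation, which one verifies by the same graphical bookkeeping, reducing it to Proposition~\ref{gen3calculations}.

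Next, that $\gamma$ is an algebra homomorphism is immediate from the definition of the multiplication on $\bigoplus_\nu K_0(R(\nu))$ via induction together with the observation $[P_{i_1\cdots i_k}]\cdot[P_{i_{k+1}\cdots i_l}] = [P_{i_1\cdots i_l}]$ recorded just before the statement; on generators $\gamma(\theta_i)=[P_i]$ and the free algebra ${\bf f}'$ has no relations, so the map is determined and multiplicative (after passing to the quotient ${\bf f}$, using well-definedness above). That $\gamma$ intertwines the coproducts: $\Delta$ on ${\bf f}$ is dual to multiplication under $(,)$, and $[\mathrm{Res}]$ on the categorical side is adjoint to $\cdot$ under $(,)_{K_0}$ by parts (4)–(5) of Proposition~\ref{formK0properties}; since $\gamma$ is an isometry with respect to these two forms and intertwines products, it automatically intertwines the adjoint coproducts. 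One should also check the twisting of the tensor-product algebra structure matches: the sign $(-1)^{p(x_2)p(x_1')}$ and the power $q^{-|x_2|\bullet|x_1'|}$ in~\eqref{twistedbialgebra} are reproduced on the categorical side by the bidegree shifts in the Shuffle Lemma~\ref{shuffle} and the anti-involution $\sigma$; this is the content of the Proposition preceding the statement, which gives the filtration of $\mathrm{Res}\, P_{\bf k}$ with shifts $[\deg_1(D)]\langle\deg_2(D)\rangle$ exactly matching the graphical interpretation of $(,)$.

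Finally, $\gamma$ is non-zero because $\gamma(1)=[\Bbbk]\neq 0$ in $K_0(R(0))=\mathbb{Z}[q,q^{-1}]$, or more tellingly $\gamma(\theta_i)=[P_i]\neq 0$ since $(\,[P_i],[P_i]\,)=\tfrac{1}{1-q^{i\bullet i}}\neq 0$ for $i\neq m$ and $=1$ for $i=m$ by Proposition~\ref{formK0properties}. \textbf{The main obstacle} I expect is the well-definedness step for the degree-four relation (1) of Definition~\ref{idealJ}: unlike the other three relations, which already appear in \cite{KL1} or follow from a single local graphical move, this one genuinely uses the super structure and the dg differential, and verifying $\gamma$ kills it requires either a careful computation of $\mathrm{RHom}$ between the relevant projectives $P_{m,m-1,m+1,m}$ etc.\ — tracking both gradings and the action of $d$ — or a reduction to the explicit form-values in Proposition~\ref{gen3calculations}, making sure the categorical pairing $(,)_{K_0}$ evaluated at $t=-1$ reproduces those rational functions of $q$ exactly. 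A secondary subtlety is confirming that the filtrations (rather than direct sums) produced by restriction do not affect classes in $K_0$, which holds since $K_0$ is additive on distinguished triangles.
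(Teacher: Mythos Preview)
Your identification of relation (1) of Definition~\ref{idealJ} as ``the main obstacle'' rests on a misreading of the setting: Section~\ref{maintheorem} opens by fixing $n=1$, so $I = \{1,\dots,m\}$, $I'' = \varnothing$, and there is no generator $\theta_{m+1}$. Relation (1) is therefore vacuous. The paper's proof says exactly this in its first sentence, and the closing Remark of the paper makes explicit that categorifying this four-term relation is the open problem blocking the extension to $\mathfrak{gl}(m|n)$ for $n\ge 2$. So the ``careful computation of $\mathrm{RHom}$'' you anticipate is neither needed nor available---the algebras $R(\nu)$ as defined do not categorify that relation.

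Your primary strategy for well-definedness---show $\gamma$ is an isometry and conclude that $\gamma(\mathcal{I})$ lies in the radical of $(\,,\,)_{K_0}$, hence vanishes---is circular here: non-degeneracy of $(\,,\,)_{K_0}$ on $\mathbb{Q}(q)\otimes K_0(R(\nu))$ is not established independently of $\gamma$, and your claim that the $[P_{\bf i}]$ span $K_0$ is precisely the surjectivity proved only later in Theorem~\ref{bijective}. (The paper uses the isometry only for \emph{injectivity}, Proposition~\ref{injective}, where the logic runs the other way and non-degeneracy of $(\,,\,)$ on ${\bf f}$ is already known.) The paper instead checks each remaining generator of $\mathcal{J}$ directly at the level of dg modules: $[P_{m,m}]=0$ because $e(m,m)=d(\psi_1 e(m,m))$ is a contractible idempotent, so $P_{m,m}$ is null-homotopic; far commutativity ($|i-j|>1$) holds because the crossing $\psi_k$ gives an explicit isomorphism $P_{ij}\cong P_{ji}$; and the cubic Serre relations for $i\neq m$ follow from the nilHecke/KLR decomposition of $P_{i,i\pm1,i}$ exactly as in \cite{KL1}, with the boundary case $i=m-1$, $i+1=m$ handled as in \cite{Kh1}. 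Your sketched direct arguments for relations (2)--(4) are along these lines and essentially correct; it is only the framing around relation (1) and the form-based route to well-definedness that should be dropped.
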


\begin{proof}
It suffices to check that the generators of $ \mathcal{J} $ get mapped to zero by $ \gamma $.
Since we assume that $ I'' $ is empty, the first generator of $ \mathcal{J} $ given in Definition ~\ref{idealJ} does not exist.

The dg structure of $ R(\nu) $ gives that $ \gamma(\theta_m^2) = 0 $ since the corresponding projective object is homotopically trivial as in ~\cite{Kh1}.
The fact that $ \gamma(\theta_i \theta_j - \theta_j \theta_i) = 0 $ for $ |i-j|>1$ follows from the invertibility of the crossing generators with those labels.  See ~\cite[Proposition 3.4]{KL1} for more details.
Finally, let $ \zeta = (q+q^{-1})\theta_i \theta_{i \pm 1} \theta_i - \theta_i^2 \theta_{i \pm 1} - \theta_{i \pm 1} \theta_i^2 $.  For $ i<m-1$ and $ i=m-1 $ in the minus case, $ \gamma(\zeta)=0 $ by ~\cite[Proposition 3.4]{KL1}. 
For $ i=m-1 $ in the plus case, $ \gamma(\zeta)=0 $ following the arguments of ~\cite{Kh1} which relies on the proof of ~\cite[Proposition 3.4]{KL1} again. 
\end{proof}

The map in Proposition ~\ref{homomorphism} restricts to a homomorphism of $ \mathcal{A}$-algebras
\begin{equation*}
\gamma \colon {}_{\mathcal{A}} \mathbf{f} \rightarrow \bigoplus_{\nu \in \N[I]} K_0(R(\nu))
\end{equation*}
where
\begin{equation*}
\gamma(\theta_{i_1}^{(n_1)} \cdots \theta_{i_r}^{(n_r)}) = [P_{i_1^{(n_1)}, \ldots,  i_r^{(n_r)}} ].
\end{equation*}






\begin{prop}
\label{injective}
The homomorphism $ \gamma  \colon {}_{\mathcal{A}} {\bf f} \rightarrow \bigoplus_{\nu \in \N[I]} 
K_0(R(\nu)) $ is injective.
\end{prop}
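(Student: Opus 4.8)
The plan is to prove injectivity of $\gamma$ by showing it is compatible with the bilinear forms on source and target and that the form on $_{\mathcal A}{\bf f}$ is non-degenerate on the relevant lattice. Concretely, I would first establish that $\gamma$ intertwines the form $(\,,\,)$ on ${\bf f}$ (from Proposition~\ref{formproperties}) with the categorical form $(\,,\,)_{K_0}$ on $\oplus_\nu K_0(R(\nu))$ (from Proposition~\ref{formK0properties}). This is forced on generators: $\gamma(\theta_i)=[P_i]$ and by Proposition~\ref{formK0properties}(2)--(3) we have $([P_i],[P_i])=\tfrac{1}{1-q^{i\bullet i}}=(\theta_i,\theta_i)$ for $i\neq m$ and $([P_m],[P_m])=1=(\theta_m,\theta_m)$; the mixed generator pairings vanish on both sides. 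The multiplicativity of the form under $\Delta$ on the algebra side (the defining axioms $(x'x'',y)=(x'\otimes x'',\Delta(y))$ and its mirror) matches exactly the adjunction properties (4) and (5) of Proposition~\ref{formK0properties}, where $[\Res]$ plays the role of $\Delta$. Since both forms are determined by their values on generators together with these induction/restriction (co)multiplicativity axioms, and $\gamma$ is an algebra homomorphism by Proposition~\ref{homomorphism} with $[\Res]\circ\gamma=(\gamma\otimes\gamma)\circ\Delta$, one concludes $(\gamma(x),\gamma(y))_{K_0}=(x,y)$ for all $x,y\in{\bf f}$.

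Granting this, injectivity follows formally: if $\gamma(x)=0$ then $(x,y)=(\gamma(x),\gamma(y))_{K_0}=0$ for all $y$, so $x$ lies in the radical $\mathcal I$; but ${\bf f}={\bf f}'/\mathcal I$ has trivial radical for its induced form, hence $x=0$ in ${\bf f}$. The same argument restricts to $_{\mathcal A}{\bf f}$ once we know the divided-power elements $\theta_{i_1}^{(n_1)}\cdots\theta_{i_r}^{(n_r)}$ are sent to the classes $[P_{i_1^{(n_1)},\ldots,i_r^{(n_r)}}]$ and that these are the "right" $\mathcal A$-integral structure; here Theorem~\ref{pbwbasis} provides a PBW basis of ${\bf f}$, Proposition~\ref{formprodroots} shows the PBW monomials are orthogonal with explicitly invertible (over $\mathbb Q(q)$, and unit up to the relevant denominators over $\mathcal A$) diagonal form entries computed via Proposition~\ref{formroot}, so the Gram matrix is non-degenerate and $\gamma$ cannot kill any nonzero $\mathcal A$-linear combination.

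The key technical step is verifying $(\gamma(x),\gamma(y))_{K_0}=(x,y)$, i.e.\ that the categorical form genuinely satisfies the same recursive characterization as the algebraic form. The subtlety is that $(\,,\,)_{K_0}$ is defined via $\mathrm{gdim}\,\RHom_{R(\nu)}(P,M)$ with the specialization $q\mapsto q,\ t\mapsto -1$, so one must check the alternating-sum/Euler-characteristic nature of $\RHom$ in the dg setting interacts correctly with the sign $(-1)$ substituted for $t$ — in particular that this is exactly what reproduces the signs $(-1)^{p(\cdots)}$ appearing in the twisted (co)multiplication~\eqref{twistedbialgebra} and in Proposition~\ref{formK0properties}(4)--(5). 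This is where the dg structure (the differential $d$ on $R(\nu)$ detecting consecutive $m$'s) does real work, and it is the analogue of, but genuinely more delicate than, \cite[Proposition~3.3]{KL1}; I expect this sign-bookkeeping in the derived adjunction to be the main obstacle.

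Finally, I would assemble: by Proposition~\ref{homomorphism} $\gamma$ is a well-defined bialgebra map; by the form-compatibility it is isometric; by non-degeneracy of the PBW form (Theorem~\ref{pbwbasis}, Propositions~\ref{formroot}, \ref{formprodroots}) it is injective on $_{\mathcal A}{\bf f}$. Combined with a spanning/surjectivity statement (obtained separately from the classification of simple modules, Proposition~\ref{chinjective}, and the fact that $K_0$ is free with basis the $[P_i]$'s from Proposition~\ref{grothgroups}), this injectivity is exactly the piece needed to upgrade $\gamma$ to the isomorphism of the main theorem.
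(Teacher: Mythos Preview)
Your proposal is correct and follows essentially the same approach as the paper's one-line proof: use that $(\gamma(x),\gamma(y))_{K_0}=(x,y)$ (which is forced by Proposition~\ref{formK0properties} matching the axioms of Proposition~\ref{formproperties}) together with non-degeneracy of $(\,,\,)$ on ${\bf f}$ to conclude that $\gamma(x)=0$ implies $x=0$. Your detour through the PBW Gram matrix is not needed, since non-degeneracy of the form on ${\bf f}={\bf f}'/\mathcal I$ is immediate by construction, and ${}_{\mathcal A}{\bf f}$ injects into ${\bf f}$.
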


\begin{proof}
Since $ (\gamma(x),\gamma(y))_{K_0} = (x,y)$ and $ ( , ) $ is non-degenerate on $ {}_{\mathcal{A}}{\bf f}$, if $ \gamma(x)=0$, then $ x=0$.
\end{proof} 

\begin{theorem}
\label{bijective}
The homomorphism $ \gamma \colon {\bf f} \rightarrow \bigoplus_{\nu \in \N[I]} \mathbb{Q}(q) \otimes_{\mathbb{Z}[q,q^{-1}]} K_0(R(\nu)) $
is an isomorphism.
\end{theorem}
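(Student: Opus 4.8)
The plan is to combine the injectivity already proved in Proposition~\ref{injective} with a surjectivity argument, so that after extending scalars to $\mathbb{Q}(q)$ the map $\gamma$ is bijective on each weight space. First I would promote injectivity from ${}_{\mathcal{A}}{\bf f}$ to all of ${\bf f}$: the identity $(\gamma(x),\gamma(y))_{K_0}=(x,y)$ used in the proof of Proposition~\ref{injective} is $\mathbb{Q}(q)$-bilinear and hence valid for all $x,y\in{\bf f}$, while the form $(\,,\,)$ is non-degenerate on ${\bf f}$ because, by Theorem~\ref{pbwbasis} together with Propositions~\ref{formroot} and~\ref{formprodroots}, the PBW monomials form a basis that is orthogonal with respect to $(\,,\,)$ with nonzero norms (the root-vector norms in Proposition~\ref{formroot} are units of $\mathbb{Q}(q)$, and the extra scalar factors in Proposition~\ref{formprodroots} reduce to products of quantum integers when $p(\alpha)=0$ and equal $1$ when $p(\alpha)=1$). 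Thus $\gamma$ is injective on each ${\bf f}_\nu$, and it remains to prove surjectivity.

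For surjectivity it suffices, for each $\nu$, to show that the classes $[R(\nu)e({\bf i})]$ with ${\bf i}\in\mathrm{Seq}(\nu)$ span $\mathbb{Q}(q)\otimes_{\mathcal{A}}K_0(R(\nu))$. Indeed, when every multiplicity in ${\bf i}=(i_1,\dots,i_k)$ equals one we have $\widetilde e({\bf i})=e({\bf i})$ and the grading shift in the definition of $P_{i_1\cdots i_k}$ is trivial, so $P_{i_1\cdots i_k}=R(\nu)e({\bf i})$ and $\gamma(\theta_{i_1}\cdots\theta_{i_k})=[R(\nu)e({\bf i})]$ by Proposition~\ref{homomorphism}; hence each such class lies in the image of $\gamma$. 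Note also that $[R(\nu)e({\bf i})]$ is automatically a $\mathbb{Z}[q,q^{-1}]$-combination of the type~I classes $[P_1],\dots,[P_{m_I}]$ of Proposition~\ref{grothgroups}, since any acyclic (type~II) summand of $R(\nu)e({\bf i})$ vanishes in $D^c(R(\nu))$.

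To prove the spanning statement I would use the pairing $K_0(R(\nu))\otimes_{\mathcal{A}}G_0(R(\nu))\to\mathcal{A}$ of Section~\ref{dgalgebras}, which becomes non-degenerate after tensoring with $\mathbb{Q}(q)$ since the type~I simple modules $L_1,\dots,L_{m_I}$ are absolutely irreducible (checked as in~\cite{KL1}). Suppose $[M]\in\mathbb{Q}(q)\otimes_{\mathcal{A}}G_0(R(\nu))$ pairs to zero with $[R(\nu)e({\bf i})]$ for every ${\bf i}$. Since $R(\nu)e({\bf i})$ is a summand of the free module $R(\nu)$, hence cofibrant, we get $\mathrm{RHom}_{R(\nu)}(R(\nu)e({\bf i}),M)\simeq e({\bf i})M$, and taking graded Euler characteristics shows the pairing value is, up to sign, the coefficient of ${\bf i}$ in $ch(M)$; vanishing for all ${\bf i}$ therefore forces $ch(M)=0$, whence $[M]=0$ by Proposition~\ref{chinjective}. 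Non-degeneracy of the pairing then shows the classes $[R(\nu)e({\bf i})]$ span $\mathbb{Q}(q)\otimes_{\mathcal{A}}K_0(R(\nu))$, so $\gamma$ is surjective and therefore an isomorphism.

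I expect the main obstacle to be the bookkeeping at the interface between the dg/derived-category formalism and the Grothendieck-group pairings: verifying that $R(\nu)e({\bf i})$ is cofibrant so that $\mathrm{RHom}$ is computed by $e({\bf i})(-)$, that its acyclic summands are trivial in $D^c(R(\nu))$, and that the type~I simples are absolutely irreducible so the $K_0$--$G_0$ pairing is perfect after inverting $q-1$; granting these points, the proof is an assembly of Propositions~\ref{grothgroups}, \ref{chinjective}, \ref{homomorphism}, and~\ref{injective}.
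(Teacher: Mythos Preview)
Your proposal is correct and follows essentially the same route as the paper's proof. The paper's argument is extremely terse: injectivity is Proposition~\ref{injective}, and surjectivity is stated as ``by dualizing the character map, Proposition~\ref{chinjective} implies that $\gamma$ is surjective.'' Your surjectivity argument is precisely the unpacking of that phrase: the character map $ch$ on $G_0$ is, via $\mathrm{RHom}(R(\nu)e({\bf i}),M)\simeq e({\bf i})M$, nothing but pairing against the classes $[R(\nu)e({\bf i})]=\gamma(\theta_{i_1}\cdots\theta_{i_k})$, so injectivity of $ch$ together with perfectness of the $K_0$--$G_0$ pairing over $\mathbb{Q}(q)$ forces those classes to span. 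Your additional care in extending injectivity from ${}_{\mathcal A}{\bf f}$ to ${\bf f}$ and in flagging the cofibrancy and absolute-irreducibility checks is appropriate; these are the same points absorbed into the paper's reference to \cite[Section~3.2]{KL1}.
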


\begin{proof}
This is similar to the argument in \cite[Section 3.2]{KL1}.
Injectivity of $ \gamma $ is Proposition ~\ref{injective}.  By dualizing the character map, Proposition ~\ref{chinjective} implies that $ \gamma $ is surjective.
\end{proof}

\begin{corollary}
There is an isomorphism of $\mathcal{A}$-coalgebras $ \gamma \colon {}_{\mathcal{A}} \mathbf{f} \rightarrow \bigoplus_{\nu \in \N[I]} K_0(R(\nu)) $.
\end{corollary}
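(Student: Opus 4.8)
The plan is to upgrade the rational isomorphism of Theorem~\ref{bijective} to an integral one, using the $\mathcal{A}$-module structure of the Grothendieck groups. By Proposition~\ref{injective} the map $\gamma\colon {}_{\mathcal{A}}{\bf f}\to\bigoplus_{\nu}K_0(R(\nu))$ is injective, so it remains to show that it is surjective and that it respects the comultiplications. The compatibility with the coalgebra structures can then be deduced from the fact, already recorded in Proposition~\ref{homomorphism} and Theorem~\ref{bijective}, that $\gamma$ is a homomorphism of bialgebras on all of ${\bf f}$.

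For surjectivity, recall from Proposition~\ref{grothgroups} that for each $\nu$ the group $K_0(R(\nu))$ is free over $\mathcal{A}$ with basis $\{[P_1],\dots,[P_{m_I}]\}$, the classes of the cohomologically non-trivial (type I) indecomposable summands of $R(\nu)$, one for each isomorphism class up to grading shift. The modules $P_{i_1^{(n_1)},\dots,i_r^{(n_r)}}=R(\nu)\widetilde e({\bf i})\{\cdots\}$ are of this kind, and $\gamma(\theta_{i_1}^{(n_1)}\cdots\theta_{i_r}^{(n_r)})=[P_{i_1^{(n_1)},\dots,i_r^{(n_r)}}]$. So I would argue that, as the admissible divided-power data $(i_1^{(n_1)},\dots,i_r^{(n_r)})$ vary, these modules exhaust all type I indecomposable projective dg $R(\nu)$-modules up to isomorphism and grading shift: each type I indecomposable projective has a type I simple head, and by the crystal-operator lemmas of Section~\ref{maintheorem} — the iterated operators $\widetilde f_i$, with divided powers $\widetilde f_i^{\,k}$ available for $i\neq m$ and $k=1$ when $i=m$ — every type I simple dg module occurs, up to grading shift, as the head of an induction product of Kato modules, whose projective cover is the corresponding $P_{i_1^{(n_1)},\dots,i_r^{(n_r)}}$. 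Since a grading shift is multiplication by a unit $q^{\pm k}\in\mathcal{A}$, the classes $[P_{i_1^{(n_1)},\dots,i_r^{(n_r)}}]$ then generate $K_0(R(\nu))$ over $\mathcal{A}$, giving surjectivity. Equivalently, one may dualize Proposition~\ref{chinjective}: injectivity of the character map on $\bigoplus_\nu G_0(R(\nu))$, together with the perfect pairing between $K_0$ and $G_0$ that matches the nondegenerate form on ${}_{\mathcal{A}}{\bf f}$, forces the image of $\gamma$ to be everything.

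For the coalgebra structure, the coproduct $\Delta$ on ${\bf f}$ preserves the integral form: by Propositions~\ref{Deltathetaip}, \ref{Deltathetamp}, and \ref{Deltaeven} the coproducts of divided powers have coefficients in $\mathcal{A}$. On the categorical side, the restriction functors equip $\bigoplus_\nu K_0(R(\nu))$ with the comultiplication $[\mathrm{Res}]$, and the proposition computing the filtration of $\mathrm{Res}_{R(\nu'+\nu'')}^{R(\nu')\otimes R(\nu'')}P_{\bf k}$ by shuffles matches, term by term and in both degrees, the formula of Proposition~\ref{comultpbw} for $\Delta$ of a root vector. Since $\gamma$ is a bialgebra homomorphism on ${\bf f}$ and restricts to ${}_{\mathcal{A}}{\bf f}$, its restriction is in particular a homomorphism of $\mathcal{A}$-coalgebras; being bijective by the previous paragraph, it is an isomorphism of $\mathcal{A}$-coalgebras.

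The main obstacle is the integral surjectivity, and more precisely the verification that the explicit grading shifts in the definition of $P_{i_1^{(n_1)},\dots,i_r^{(n_r)}}$ are chosen correctly, so that these modules themselves — not merely their $\mathbb{Q}(q)$-span — recover every indecomposable projective; this is exactly the point where the combinatorics of divided powers and the crystal lemmas must be used with care, including the exceptional behaviour of the socle of restricted Kato modules at $i=m$.
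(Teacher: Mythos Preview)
Your approach matches the paper's, which is a one-line deferral to Theorem~\ref{bijective} and the argument in~\cite{KL1}; you have essentially tried to reconstruct that argument, and the outline---injectivity from Proposition~\ref{injective}, surjectivity from the crystal lemmas or by dualizing the character map, coalgebra compatibility from the bialgebra homomorphism---is correct. You also rightly flag integral surjectivity as the crux.

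There is one genuine imprecision. Your first surjectivity argument asserts that $P_{i_1^{(n_1)},\dots,i_r^{(n_r)}}$ is the projective cover of the corresponding induced product of Kato modules, hence indecomposable. The crystal lemmas (culminating in Lemma~\ref{firreduciblelemma}) do show that the induced Kato product has \emph{simple} head, but that alone does not make the obvious projective surjection $P_{i_1^{(n_1)},\dots,i_r^{(n_r)}}\twoheadrightarrow\Ind(L(i_1^{n_1})\otimes\cdots)$ minimal; one must still argue that induction from the parabolic $R(n_1 i_1)\otimes\cdots\otimes R(n_r i_r)$ takes projective covers to projective covers. Similarly, your alternative---dualizing the injective map $ch$---over the ring $\mathcal{A}$ yields surjectivity of the dual only if $ch$ is \emph{split} injective, which is stronger than injective. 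The argument in~\cite{KL1} closes this gap by extracting unitriangularity: for each type~I simple $L_k$, the divided-power sequence ${\bf i}_k$ supplied by the crystal has the property that $L_k$ is the unique composition factor of the induced Kato module with maximal $\epsilon_i$ (Lemma~\ref{firreduciblelemma}), so the matrix $\big(\mathrm{gdim}_q\,e({\bf i}_j)L_k\big)_{j,k}=\big([P_{{\bf i}_j}]:[P_k]\big)_{j,k}$ is unitriangular with diagonal entries powers of $q$, hence invertible over $\mathcal{A}$. This is exactly the ``care'' you allude to in your last paragraph; once it is in place, your argument is complete.
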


\begin{proof}
This follows from Theorem ~\ref{bijective} and the argument in ~\cite{KL1}.
\end{proof}










\begin{remark}
An interesting problem is to construct a categorification of the upper half of quantum $ \mathfrak{gl}(m|n) $ or even more immediately the upper half of quantum $ \mathfrak{gl}(2|2) $.  The case $ n \geq 2 $ seems more difficult than the case 
$ n=1 $ because of the presence of the Serre relation
\begin{equation*}
\label{4term}
(q+q^{-1})\theta_{m} \theta_{m-1} \theta_{m+1} \theta_m =
\theta_{m-1} \theta_{m} \theta_{m+1} \theta_{m} 
+ \theta_{m} \theta_{m-1} \theta_{m} \theta_{m+1}
+ \theta_{m} \theta_{m+1} \theta_{m} \theta_{m-1}
+ \theta_{m+1} \theta_{m} \theta_{m-1} \theta_{m}.
\end{equation*}
The dg algebras introduced here would need to be enhanced in order to categorify this relation.
\end{remark}









\section{References} 


\def\refname{}

\end{document}